\def\red{\color{red}}
\def\rr{{\mathbb R}}
\def\rn{{\mathbb{R}^n}}
\def\nn{{\mathbb N}}
\def\zz{{\mathbb Z}}
\def\cc{{\mathbb C}}
\def\CG{{\mathcal G}}
\def\CS{{\mathcal S}}
\def\CM{{\mathcal M}}
\def\CA{{\mathcal M}}
\def\CX{{\mathcal X}}
\def\CY{{\mathcal Y}}
\def\CB{{\mathcal B}}
\def\CJ{{\mathcal J}}
\def\CD{{\mathcal D}}
\newcommand{\RA}{\mathrm A}
\newcommand{\cw}{\mathrm{cw}}
\newcommand{\RB}{\mathrm B}
\newcommand{\RZ}{\mathrm Z}
\newcommand{\CN}{\mathcal{N}}
\newcommand{\CQ}{\mathcal{Q}}
\newcommand{\FN}{\mathfrak{N}}
\newcommand{\FM}{\mathfrak{M}}
\def\fz{\infty }
\def\az{\alpha}
\def\bz{\beta}
\def\dz{\delta}
\def\ez{\epsilon}
\def\kz{\kappa}
\def\thz{\theta}
\def\vz{\varphi}
\def\lf{\left}
\def\r{\right}
\def\ls{\lesssim}
\def\noz{\nonumber}
\def\wz{\widetilde}
\newcommand{\om}{\omega/(\omega+\eta)}
\def\loc{{\mathrm{loc}}}
\DeclareMathOperator{\supp}{supp}
\def\XXint#1#2#3{{\setbox0=\hbox{$#1{#2#3}{\int}$ }
\vcenter{\hbox{$#2#3$ }}\kern-.6\wd0}}
\def\lz{{\lambda}}
\def\CG{{\mathcal G}}
\def\CA{{\mathcal A}}
\def\CS{{\mathcal S}}
\def\CY{\mathcal Y}
\def\RY{{\mathrm Y}}
\def\gz{\gamma}
\newcommand{\RJ}{\mathrm J}
\def\UC{{\mathrm{UC}}}
\DeclareMathOperator{\diam}{diam}
\newcommand{\fin}{\mathrm{fin}}
\newcommand{\bmo}{\mathrm{bmo}}
\newcommand{\lip}{\mathrm{lip}}
\newtheorem{theorem}{Theorem}[section]
\newtheorem{lemma}[theorem]{Lemma}
\newtheorem{corollary}[theorem]{Corollary}
\newtheorem{proposition}[theorem]{Proposition}
\theoremstyle{definition}
\newtheorem{remark}[theorem]{Remark}
\newtheorem{definition}[theorem]{Definition}
\renewcommand{\appendix}{\par
   \setcounter{section}{0}%
   \setcounter{subsection}{0}%
   \setcounter{subsubsection}{0}%
   \gdef\thesection{\@Alph\c@section}%
   \gdef\thesubsection{\@Alph\c@section.\@arabic\c@subsection}%
   \gdef\theHsection{\@Alph\c@section.}%
   \gdef\theHsubsection{\@Alph\c@section.\@arabic\c@subsection}%
   \csname appendixmore\endcsname
 }
\newcommand{\go}[1]{\CG_0^\eta(#1)}
\newcommand{\GO}[1]{\mathring{\CG}(#1)}
\newcommand{\GOO}[1]{\mathring{\CG}^\eta_0(#1)}
\newcommand{\at}{\mathrm{at}}
\numberwithin{equation}{section}
\begin{document}
\title{\bf\Large Real-Variable Characterizations of Local Hardy Spaces on Spaces of Homogeneous Type
\footnotetext{\hspace{-0.35cm} 2010 {\it Mathematics Subject Classification}. Primary 42B30;
Secondary 42B35, 42B20, 30L99.\endgraf
{\it Key words and phrases.} local Hardy space, space of homogeneous type, maximal function,
Littlewood--Paley function, atom.\endgraf
This project is supported by the National Natural Science Foundation of China (Grant Nos. 11571039,
11761131002, 11671185 and 11871100)}}
\author{Ziyi He, Dachun Yang\footnote{Corresponding author/{\red July 20, 2019}/Final version.}
\ and Wen Yuan}
\date{}
\maketitle
	
\vspace{-0.8cm}

\begin{center}
\begin{minipage}{13cm}
{\small {\bf Abstract}\quad Suppose that $(X,d,\mu)$ is a space of homogeneous type, with upper dimension
$\mu$, in the sense of R. R. Coifman and G. Weiss. Let $\eta$ be the H\"{o}lder regularity index of wavelets
constructed by P. Auscher and T. Hyt\"{o}nen. In this article, the authors introduce the local Hardy space
$h^{*,p}(X)$ via local grand maximal functions and also characterize $h^{*,p}(X)$ via local
radial maximal functions, local non-tangential maximal functions, locally atoms and
local Littlewood--Paley functions. Furthermore, the authors establish the relationship between the global and
the local Hardy spaces. Finally, the authors also obtain the finite atomic characterizations of $h^{*,p}(X)$.
As an application, the authors give the dual spaces of $h^{*,p}(X)$ when $p\in(\omega/(\omega+\eta),1)$,
which further completes the result of G. Dafni and H. Yue on the dual space of $h^{*,1}(X)$. This article
also answers the question of R. R. Coifman and G. Weiss on the nonnecessity of any additional geometric
assumptions except the doubling condition for the radial maximal function characterization of
$H^1_{\mathrm{cw}}(X)$ when $\mu(X)<\fz$.}
\end{minipage}
\end{center}

\tableofcontents

\vspace{0.2cm}

\section{Introduction}\label{intro}
The real-variable theory of global Hardy spaces on the Euclidean space $\rn$ were essentially developed by
Stein and Weiss \cite{sw60} and later by Fefferman and Stein \cite{fs72}. Moreover, Coifman \cite{coi74}
obtained the atomic characterization of global Hardy spaces on $\rr$ and, later, Latter
\cite{lat78} generalized the result of Coifman to $\rn$ for any $n\in\nn$. For any $p\in(0,1]$, by the
atomic characterization of $H^p(\rn)$, it can be seen that any element in $H^p(\rn)$ has a kind of
cancellations, which implies that the space $\CS(\rn)$ of all Schwartz functions is not dense in $H^p(\rn)$.
For more studies on $H^p(\rn)$, see, for instance, \cite{stein93,lu95,gr85,gra14,gra14b}.

In 1979, Goldberg \cite{goldberg79} introduced the local Hardy space $h^p(\rn)$, with any given $p\in(0,1]$,
via using the local Riesz transforms (see \cite[p.\ 28]{goldberg79} for the details), obtained their
atomic characterizations and proved the boundedness of pseudo-differential operators on
$h^p(\rn)$. Differently from the global Hardy space $H^p(\rn)$, in the atomic characterization of
$h^p(\rn)$, the cancellation property is needed only for those atoms with small supports, while, in the
atomic characterization of $H^p(\rn)$, the cancellation property is needed for all atoms.
This makes $\CS(\rn)$ dense in $h^p(\rn)$. Therefore, local Hardy spaces are more suitable to
problems of strong locality such as these problems associated with partial differential equations and
manifolds.

From then on, local Hardy spaces have become an important topic in harmonic analysis and attracted many
attentions. In \cite{goldberg79}, Goldberg obtained the maximal function characterizations of
$h^p(\rn)$ when $p\in((n-1)/n,1]$. Peloso and Silvia \cite{ps08} extended the result of Goldberg to all
$p\in(0,1]$. Later, Chang et al.\ \cite{cdy09} obtained a div-curl decomposition for local
Hardy space $h^1(\rn)$.
In 1983, Triebel \cite{tri83} proved that $h^p(\rn)$ coincides with the Triebel--Lizorkin
space $F^0_{p,2}(\rn)$, via first establishing the Littlewood--Paley characterizations of $h^p(\rn)$. In
1981, Bui \cite{bui81} introduced the weighted local Hardy space $h^p_w(\rn)$ for any Muckenhoupt
$A_\fz(\rn)$-weight $w$. Later,
Rychkov \cite{rych01} generalized the result of Bui to $A^\loc_\fz(\rn)$-weights and obtained the
Littlewood--Paley function characterizations of $h^p_w(\rn)$. In 2012, Nakai and Sawano \cite{ns12}
introduced the local Hardy spaces with exponent variables. Meanwhile, Yang et al. \cite{yy12}
introduced the local Musielak--Orlicz Hardy spaces (see also \cite{ylk17}). Recently, Sawano et al.\
\cite{shyy17} introduced the local Hardy type spaces built on ball quasi-Banach spaces. On another hand,
Betancor and Dami\'{a}n \cite{bd10} introduced the anisotropic local Hardy spaces, whose underlying space is
more likely to be a space of homogeneous type.

Let us recall the notion of spaces of homogeneous type introduced by Coifman and Weiss \cite{cw71}.
A \emph{quasi-metric space} $(X,d)$ is a non-empty set $X$ equipped with a \emph{quasi-metric} $d$, that is,
a non-negative function on $X\times X$ satisfying that, for any $x,\ y,\ z\in X$,
\begin{enumerate}
\item $d(x,y)=0$ if and only if $x=y$;
\item $d(x,y)=d(y,x)$;
\item there exists a constant $A_0\in[1,\fz)$, independent of $x$, $y$ and $z$, such that
$d(x,y)\le A_0[d(x,z)+d(z,y)]$.
\end{enumerate}
It is easy to see that, if $A_0=1$, then $(X,d)$ is a \emph{metric space}. The \emph{ball $B$} centered at
$x_0\in X$ with radius $r\in(0,\fz)$ is defined by setting
$$
B=B(x_0,r):=\{y\in X:\ d(x,y)<r\}.
$$
Recall that a subset $U$ of $X$ is said to be \emph{open} if, for any $x\in U$, there exists $\ez\in(0,\fz)$
such that $B(x,\ez)\subset U$. It can be seen that the family of all open sets of $X$ determines a topology
of $X$. A triple $(X,d,\mu)$ is called a \emph{space of homogeneous type} if $(X,d)$ is a quasi-metric space
and $\mu$ is a positive measure, defined on the $\sigma$-algebra generated by all open sets and balls,
satisfying the following \emph{doubling condition}: there exists a constant $C_{(\mu)}\in[1,\fz)$ such that,
for any ball $B$,
$$
\mu(2B)\le C_{(\mu)}\mu(B).
$$
Here and hereafter, for any $\tau\in(0,\fz)$ and ball $B$, $\tau B$ denotes the ball centered at the same
center as $B$ with radius $\tau$ times that of $B$. It follows from the doubling condition that, for any
$\lz\in[1,\fz)$ and ball $B$,
\begin{equation}\label{eq-doub}
\mu(\lz B)\le C_{(\mu)}\lz^\omega\mu(B),
\end{equation}
where $\omega:=\log_2 C_{(\mu)}$ is called the \emph{upper dimension} of $X$. If $d$ is a metric, then we
call $X$ a \emph{doubling metric measure space}.

According to \cite[pp.\,587--588]{cw77}, we \emph{always make} the following assumptions throughout this
article. For any point $x\in X$, assume that the balls $\{B(x,r)\}_{r\in(0,\infty)}$ form a \emph{basis} of
open neighborhoods of $x$, but these balls themselves are not necessary to be open;
assume that $\mu$ is \emph{Borel regular}, which means that open sets are
measurable and every set $A\subset X$ is contained in a Borel set $E$ satisfying that $\mu(A)=\mu(E)$; we
also assume that $\mu(B(x, r))\in(0,\fz)$ for any $x\in X$ and $r\in(0,\infty)$. For the presentation
concision, we \emph{always assume} that $(X,d,\mu)$ is non-atomic [namely, $\mu(\{x\})=0$ for any $x\in X$].

In 1977, Coifman and Weiss \cite{cw77} introduced Hardy spaces $H^p_{\cw}(X)$ on a space $X$ of
homogeneous type by using atoms. Observe that, to some extend, when $\mu(X)<\fz$,
$H^p_\cw(X)$ has many similarities to $h^p(\rn)$, such as the property of atoms, dual spaces and so on.
Recently, Dafni and Yue \cite{dy12} introduced the local Hardy space $h^1(X)$ on a space $X$ of
homogeneous type and proved the dual space of $h^1(X)$ is $\mathrm{bmo}(X)$, the local space of functions of
bounded mean oscillation (see also \cite{dmy16}). Later, Gong et al.\ \cite{gly13} introduced the local
Hardy spaces associated with operators on doubling metric measure spaces.
In 2014, Bui and Duong \cite{bd14} introduced Hardy spaces associated with discrete Laplacian on
doubling graphs which can be seen as special case of spaces of homogeneous type, via atomic decomposition.
In 2018, Bui et al. \cite{bdl18a} obtained the maximal function characterizations of local Hardy spaces associated with some
non-negative self-adjoint operator $L$.
Recently, Bui et al.\ \cite{bdl18}
showed that, when $\mu(X)<\fz$, for any operator $L$ with its heat kernels having good decay properties,
then the Hardy spaces associated with $L$ coincide with the Hardy spaces of Coifman and Weiss.

Recall that $(X,d,\mu)$ is called an \emph{RD-space} if it is a doubling metric measure space and satisfies
the following \emph{reverse doubling condition}: there exist positive constants $\wz{C}\in(0,1]$
and $\kz\in(0,\omega]$ such that, for any ball $B(x,r)$ with $x\in X$, $r\in(0,\diam X/2)$ and
$\lz\in[1,\diam X/[2r])$,
$$
\wz C\lz^\kz\mu(B(x,r))\le\mu(B(x,\lz r)),
$$
here and hereafter, $\diam X:=\sup_{x,\ y\in X}d(x,y)$.
In \cite{hmy08}, Han et al.\ introduced the inhomogeneous Besov and Triebel--Lizorkin spaces on RD-spaces by
using the \emph{inhomogeneous approximations of the identity}. They also introduced the local Hardy space
$h^p(X)$ with $p\le 1$ but near to $1$ on RD-spaces as a special case of inhomogeneous Triebel--Lizorkin
spaces and obtained its atomic characterization by using
the inhomogeneous Calder\'{o}n reproducing formulae on RD-spaces established in \cite{hmy08}. As an
application, Yang and Zhou \cite{yz11} characterized inhomogeneous Besov and Triebel--Lizorkin spaces by
using these local Hardy spaces. Moreover, the real-variable theory of Hardy-type spaces on RD-spaces
has been completed; see, for instance, \cite{hmy06,hmy08,gly08,zsy16}.

Now we return to the case that $(X,d,\mu)$ is a space of homogeneous type. Recently, the
inhomogeneous Calder\'{o}n reproducing formulae on spaces of homogeneous type without any additional reverse
doubling condition were established in \cite{hlyy18}. It is well known that
Calder\'on reproducing formulae are very useful and important tools in the study on function spaces,
including Hardy spaces, Besov spaces and Triebel--Lizorkin spaces. Therefore, the existence of inhomogeneous
Calder\'on reproducing formulae provides the possibilities to study local Hardy spaces on $X$. This is one
motivation of this article.

Another motivation of this article is the real-variable theory of Hardy spaces established in
\cite{hhllyy18}. As an application of the Calder\'on reproducing formulae obtained in \cite{hlyy18}, He et
al.\ \cite{hhllyy18} developed a complete real-variable theory of global Hardy spaces $H^p(X)$ on a space $X$
of homogeneous type with $p\in(\om,1]$, where $\omega$, as in \eqref{eq-doub}, denotes the upper dimension of
$X$ and $\eta$ represents the H\"{o}lder regularity of inhomogeneous approximations of the identity with
exponential decay (see Definition \ref{def-iati} below). In what follows, to distinguish the local Hardy
space studied in this article, we call $H^p(X)$ the \emph{global Hardy space}. As
is well known from \cite{goldberg79b}, the local Hardy space has similar properties to the global Hardy
space. This motivates us to establish the related theory of local Hardy spaces on spaces of homogeneous type.
Moreover, it should be mentioned that Hardy spaces on spaces of homogeneous type can be used in
endpoint estimates on commutators and bilinear decompositions; see, for instance, \cite{fcy17,lcfy17,lcfy18}.

In this article, we introduce various local Hardy spaces with any given $p\in(\omega/(\omega+\eta),\fz]$ on
a space $X$ of homogeneous type via various maximal functions, and then prove that all these local Hardy
spaces are mutually equivalent, which are hence denoted by $h^p(X)$. When $p\in(1,\fz]$, the equivalence
between the local Hardy space $h^p(X)$ and the Lebesgue space $L^p(X)$ is also obtained. Moreover, using the
Calder\'{o}n--Zygmund decomposition proved in \cite{hhllyy18}, we obtain the atomic characterization of
$h^p(X)$ for any given $p\in(\om,1]$.
We also characterize $h^p(X)$ via local Littlewood--Paley functions by using the molecular characterization
of $h^p(X)$. Moreover, we compare the local Hardy space $h^p(X)$ with the global Hardy space $H^p(X)$.
Finally, we obtain the finite atomic characterization of local Hardy spaces and, as an application, we give
the dual space of local Hardy spaces.

Comparing our results with those in \cite{dy12}, we introduce the local Hardy space $h^p(X)$, not only
$h^1(X)$, with any given $p\in(\om,1]$ which, according to \cite{hhllyy18}, should be the optimal range of
$p$. Moreover, in \cite{dy12}, Dafni and Yue obtained the atomic characterization of $h^1(X)$,
while, in this article, we not only obtain the atomic characterization of $h^p(X)$, but also
characterize $h^p(X)$ via various maximal functions and Littlewood--Paley functions.

Also, note that the anisotropic Euclidean space $\rn$ is a special space of homogeneous type. Thus, our
results cover the results in \cite{bd10} when $p\in(n/(n+1),1]$. Moreover, compared with the results in
\cite{bd10}, ours also contain the Littlewood--Paley function and finite atomic characterizations of local
Hardy spaces.

In addition, we point out that local Hardy spaces also play an important role in geometric analysis and
partial differential equations. For instance, Taylor \cite{tay09} introduced the Hardy space on a completely
Riemannian manifold with bounded geometry by using a similar way to the definition of local Hardy
spaces; Chang et al.\ \cite{cks93} introduced two kinds of Hardy spaces on a smooth and bounded domain by
using local Hardy spaces on $\rr^N$ and studied the boundary problems of elliptic equations.

Recall that, in \cite{cw77}, Coifman and Weiss obtained the radial maximal function characterization of
$H^1_\cw(X)$ under an additional geometrical assumption. They also asked whether or not such an
additional geometrical assumption can be removed to obtain the radial maximal function characterization of
$H^1_\cw(X)$. In \cite{hhllyy18}, He et al.\ answered this question when $\mu(X)=\fz$ by showing that no
additional geometrical assumption is necessary to guarantee the radial maximal function characterization of
$H^1_\cw(X)$. In this article, we give the same answer in the case $\mu(X)<\fz$ by establishing the radial
maximal function characterization of $h^1(X)$ and the equivalence between $H^1_\cw(X)$ and $h^1(X)$.
Moreover, a similar conclusion also holds true for $h^p(X)$ when $p\le 1$ but near $1$.

The organization of this article is as follows.

In Section \ref{s-pre}, we recall the notions of test functions and distributions introduced in
\cite{hmy08} (see also \cite{hmy06}). Moreover, we recall the notion of inhomogeneous approximations of
the identity with exponential decaying (for short, $\exp$-IATI) which play a very important role in
introducing the local Hardy spaces.

In Section \ref{s-max}, we introduce three kinds of local Hardy spaces via grand maximal functions, radial
maximal functions and non-tangential maximal functions, respectively. We prove that all three kinds of Hardy
spaces are mutually equivalent by using the inhomogeneous Calder\'on reproducing formulae established in
\cite{hlyy18}. Moreover, when $p\in(1,\fz]$, we further show that the aforementioned three kinds of Hardy
spaces are essentially the Lebesgue space. Observe that the definition of the radial maximal function is
different from the Euclidean case, due to the special forms of inhomogeneous discrete Calder\'on reproducing
formulae.

Section \ref{s-at} mainly concerns about the atomic characterizations of local Hardy spaces. We introduce the
notion of local atoms. Compared with the global atom of Hardy spaces, the local atom has the cancellation
property only when its support is small. We first show that any local atom belongs to local Hardy spaces with
its (quasi-)norm uniformly bounded. To show the contrary conclusion, we find a dense space of the local Hardy
space and obtain an atomic decomposition for elements in this dense subspace. Then we use a density argument
(see \cite{hhllyy18} or \cite{ms79b}) to obtain an atomic decomposition for any element in the local Hardy
space.

In Section \ref{s-lp}, we obtain the Littlewood--Paley function characterizations of local Hardy spaces
via using the molecular characterization. We first introduce another kind of local Hardy spaces by using Lusin
area functions. We show that these local Hardy spaces are independent of the choice of $\exp$-IATIs (see
Theorem \ref{thm-la} below). Then we prove that these local Hardy spaces have molecular
characterizations, which are equivalent to the atomic ones (see Theorem \ref{thm-m=a} below). Moreover, by
the atomic characterizations, we further establish the Littlewood--Paley $g$-function and
$g_\lz^*$-function characterizations of these local Hardy spaces.

In Section \ref{s-hh}, we consider the differences and the relationship between global and local Hardy
spaces. Generally speaking, when $\mu(X)=\fz$, we show that all test functions belong to local Hardy spaces,
while only test functions with cancellation property belong to $H^p(X)$. Moreover, we obtain a relationship
between the local and the global Hardy spaces; see Section \ref{ss-infz} below. When $\mu(X)<\fz$, we
show that these two kinds of Hardy spaces are essentially the same; see Section \ref{ss-fz} below.

In Section \ref{s-dual}, we mainly concern about the dual space of local Hardy spaces. To this end, we first
obtain the finite atomic characterization of local Hardy spaces, whose proof is similar to the proof of the
finite atomic characterization of global Hardy spaces obtained in \cite{hhllyy18}; see Section
\ref{ss-fin}. By this, we introduce the local versions of Campanato spaces and Lipschitz spaces. We prove
that these spaces are the dual spaces of local Hardy spaces by using some ideas from the proof of
\cite[Theorem B]{cw77}.

At the end of this section, we make some conventions on notation. We \emph{always assume} that $\omega$ is as
in \eqref{eq-doub} and $\eta$ is the H\"{o}lder regularity index of inhomogeneous approximations of the
identity with exponential decay (see Definition \ref{def-iati} below). We assume that $\dz$ is a very small
positive number, for instance, $\dz\le(2A_0)^{-10}$ in order to construct the dyadic cube system and the
wavelet system on $X$ (see \cite[Theorem 2.2]{hk12} or Lemma \ref{cube} below).
For any $x,\ y\in X$ and $r\in(0,\fz)$, let
$$
V_r(x):=\mu(B(x,r))\quad \mathrm{and}\quad V(x,y):=\mu(B(x,d(x,y))),
$$
where $B(x,r):=\{y\in X:\ d(x,y)<r\}$.
We always let $\nn:=\{1,2,\ldots\}$ and $\zz_+:=\nn\cup\{0\}$.
For any $p\in[1,\fz]$, we use $p'$ to denote its \emph{conjugate index}, namely, $1/p+1/p'=1$.
The symbol $C$ denotes a positive constant which is independent of the main parameters, but it may vary from
line to line. We also use $C_{(\az,\bz,\ldots)}$ to denote a positive constant depending on the indicated
parameters $\az$, $\bz$, \ldots. The symbol $A \ls B$ means that there exists a positive constant $C$ such
that $A \le CB$. The symbol $A \sim B$ is used as an abbreviation of $A \ls B \ls A$. We also use
$A\ls_{\az,\bz,\ldots}B$ to indicate that here the implicit positive constant depends on $\az$, $\bz$,
$\ldots$ and, similarly, $A\sim_{\az,\bz,\ldots}B$. We use the following convention: If $f\le Cg$ and $g=h$ or
$g\le h$, we then write $f\ls g\sim h$ or $f\ls g\ls h$, \emph{rather than} $f\ls g=h$
or $f\ls g\le h$. For any $s,\ t\in\rr$, denote the \emph{minimum} of $s$
and $t$ by $s\wedge t$ and the \emph{maximum} by $s\vee t$.
For any finite set $\CJ$, we use $\#\CJ$ to denote its \emph{cardinality}. Also, for any set
$E$ of $X$, we use $\mathbf{1}_E$ to denote its \emph{characteristic function} and $E^\complement$ the set
$X\setminus E$.

\section{Inhomogeneous Calder\'on reproducing formulae}\label{s-pre}
In this section, we recall the inhomogeneous Calder\'on reproducing formulae established in \cite{hlyy18}. To
this end, we first recall the definitions of test functions and distributions.
\begin{definition}\label{def-test}
Let $x_1\in X$, $r\in(0,\fz)$, $\bz\in(0,1]$ and $\gz\in(0,\fz)$. A function $f$ defined on $X$ is called a
\emph{test function of type $(x_1,r,\bz,\gz)$}, denoted by $f\in\CG(x_1,r,\bz,\gz)$, if there exists a
positive constant $C$ such that
\begin{enumerate}
\item (the \emph{size condition}) for any $x\in X$,
$$
|f(x)|\le C\frac{1}{V_1(x_1)+V(x_1,x)}\lf[\frac{r}{r+d(x_1,x)}\r]^\gz;
$$
\item (the \emph{regularity condition}) for any $x,\ y\in X$ satisfying $d(x,y)\le (2A_0)^{-1}[r+d(x_1,x)]$,
\begin{equation*}
|f(x)-f(y)|\le C\lf[\frac{d(x,y)}{r+d(x_1,x)}\r]^\bz
\frac{1}{V_r(x_1)+V(x_1,x)}\lf[\frac r{r+d(x_1,x)}\r]^\gz.
\end{equation*}
\end{enumerate}
For any $f\in\CG(x_1,r,\bz,\gz)$, define its norm
$$
\|f\|_{\CG(x_1,r,\bz,\gz)}:=\inf\{C\in(0,\fz):\ C \textup{\ satisfies (i) and (ii)}\}.
$$
Moreover, define
$$
\GO{x_1,r,\bz,\gz}:=\lf\{f\in\CG(x_1,r,\bz,\gz):\ \int_X f(x)\,d\mu(x)=0\r\}
$$
equipped with the norm $\|\cdot\|_{\GO{x_1,r,\bz,\gz}}:=\|\cdot\|_{\CG(x_1,r,\bz,\gz)}$.
\end{definition}

The above version of $\CG(x_1,r,\bz,\gz)$ was originally introduced by Han et al.\ \cite{hmy08}
(see also \cite{hmy06}).

Fix $x_0\in X$. It can be seen that, for any $x\in X$ and $r\in(0,\fz)$, $\CG(x,r,\bz,\gz)=\CG(x_0,1,\bz,\gz)$
with equivalent norms, but the positive equivalence  constants depend on $x$ and $r$. Moreover,
$\CG(x_0,1,\bz,\gz)$ is a Banach space. In what follows, we simply write $\CG(\bz,\gz):=\CG(x_0,1,\bz,\gz)$
and $\GO{\bz,\gz}:=\GO{x_1,1,\bz,\gz}$.

Fix $\ez\in(0,1]$ and $\bz,\ \gz\in(0,\ez]$. Denote by $\CG_0^\ez(\bz,\gz)$ [resp.,
$\mathring{\CG}_0^\ez(\bz,\gz)$] the completion of $\CG(\ez,\ez)$ [resp., $\GO{\ez,\ez}$]
in $\CG(\bz,\gz)$. More precisely, for any $f\in\CG_0^\ez(\bz,\gz)$
[resp., $f\in\mathring\CG_0^\ez(\bz,\gz)$], there exists $\{f_n\}_{n=1}^\fz\subset\CG(\ez,\ez)$
[resp., $\{f_n\}_{n=1}^\fz\subset\GO{\ez,\ez}$] such that $\|f_n-f\|_{\CG(\bz,\gz)}\to 0$ as $n\to\fz$.
For any $f\in\CG_0^\ez(\bz,\gz)$ [resp., $f\in\mathring\CG^\epsilon_0(\bz,\gz)$],
let $\|f\|_{\CG_0^\ez(\bz,\gz)}:=\|f\|_{\CG(\bz,\gz)}$ [resp.,
$\|f\|_{\mathring\CG^\epsilon_0(\bz,\gz)}:=\|f\|_{\CG(\bz,\gz)}$]. Then $\CG_0^\ez(\bz,\gz)$
and $\mathring\CG^\epsilon_0(\bz,\gz)$ are closed subspaces of $\CG(\bz,\gz)$. The \emph{dual space}
$(\CG^\ez_0(\bz,\gz))'$ is defined to be the set of all continuous linear functionals on $\CG^\ez_0(\bz,\gz)$
and equipped with the weak-$*$ topology, which is also called the \emph{space of distributions}.

Let the \emph{symbol $L_\loc^1(X)$} denote the space of all locally integrable functions on $X$. For any
$f\in L^1_\loc(X)$, define the \emph{Hardy--Littlewood maximal function $\CM(f)$} of $f$ by setting, for any
$x\in X$,
\begin{equation}\label{2.1x}
\CM(f)(x):=\sup_{B\ni x}\frac 1{\mu(B)}\int_{B} |f(y)|\,d\mu(y),
\end{equation}
where the supremum is taken over all balls $B$ of $X$ that contain $x$. For any $p\in(0,\fz]$,  the
\emph{Lebesgue space $L^p(X)$} is defined to be the set of all $\mu$-measurable functions $f$ such that
$$
\|f\|_{L^p(X)}:=\lf[\int_X |f(x)|^p\,d\mu(x)\r]^{1/p}<\fz
$$
with the usual modification made when $p=\fz$; the \emph{weak Lebesgue space  $L^{p,\fz}(X)$} is defined to be
the set of all $\mu$-measurable functions $f$ such that
$$
\|f\|_{L^{p,\fz}(X)}:=\sup_{\lz\in(0,\fz)}\lz[\mu(\{x\in X:\ |f(x)|>\lz\})]^{1/p}<\fz.
$$
It is shown in \cite{cw71} that $\CM$ is bounded on $L^p(X)$ when $p\in(1,\fz]$ and bounded from $L^1(X)$ to
$L^{1,\fz}(X)$.

Then we state some estimates from \cite[Lemma 2.1]{hmy08} (see also \cite[Lemma 2.2]{hhllyy18}).

\begin{lemma}\label{lem-add}
Let $\bz,\ \gz\in(0,\infty)$.
\begin{enumerate}
\item For any $x,\ y\in X$ and $r\in(0,\fz)$, $V(x,y)\sim V(y,x)$ and
$$
V_r(x)+V_r(y)+V(x,y)\sim  V_r(x)+V(x,y)\sim V_r(y)+V(x,y)\sim\mu(B(x,r+d(x,y))),
$$
where the positive equivalence constants are independent of $x$, $y$ and $r$.

\item There exists a positive constant $C$ such that, for any $x_1\in X$ and $r\in(0,\infty)$,
$$
\int_X\frac{1}{V_r(x_1)+V(x_1,x)}\lf[\frac r{r+d(x_1,x)}\r]^\gz\,d\mu(x)\le C.
$$

\item There exists a positive constant $C$ such that, for any $x\in X$ and $R\in(0,\infty)$,
$$
\int_{d(x,y)\le R}\frac 1{V(x,y)}\lf[\frac{d(x,y)}{R}\r]^\bz\,d\mu(y)\le C
\quad \textit{and}\quad\int_{d(x, y)\ge R}\frac{1}{V(x,y)}\lf[\frac R{d(x,y)}\r]^\bz\,d\mu(y)\le C.
$$

\item There exists a positive constant $C$ such that, for any $x_1\in X$ and $R,\ r\in(0,\infty)$,
$$
\int_{d(x, x_1)\ge R}\frac{1}{V_r(x_1)+V(x_1,x)}\lf[\frac r{r+d(x_1,x)}\r]^\gz\,d\mu(x)\
\le C\lf(\frac{r}{r+R}\r)^\gz.
$$

\item There exists a positive constant $C$ such that, for any $r\in(0,\fz)$, $f\in L^1_\loc(X)$ and $x\in X$,
$$
\int_X \frac{1}{V_r(x)+V(x,y)}\lf[\frac{r}{r+d(x,y)}\r]^\gz|f(y)|\,d\mu(y)\le C\CM(f)(x),
$$
where $\CM$ is as in \eqref{2.1x}.
\end{enumerate}
\end{lemma}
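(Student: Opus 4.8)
The plan is to establish (i)--(v) in this order, since each item relies only on the doubling condition \eqref{eq-doub}, the quasi-triangle inequality with its constant $A_0$, and the preceding items; no tool beyond elementary estimates and dyadic annular decompositions is needed. For (i), I would first record the two inclusions coming from the quasi-triangle inequality: for any $s\in(0,\fz)$, $B(y,s)\subset B(x,A_0[s+d(x,y)])$, and $B(x,r)\cup B(x,d(x,y))\subset B(x,r+d(x,y))\subset B(x,2[r\vee d(x,y)])$. Applying \eqref{eq-doub} to the first inclusion with $s=d(x,y)$ gives $V(y,x)\ls V(x,y)$, and symmetry yields $V(x,y)\sim V(y,x)$; with $s=r$ it gives $V_r(y)\ls V_r(x)+V(x,y)$, whence $V_r(x)+V_r(y)+V(x,y)\sim V_r(x)+V(x,y)$ and, by symmetry, also $\sim V_r(y)+V(x,y)$. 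The second chain of inclusions, together with \eqref{eq-doub}, gives $V_r(x)+V(x,y)\sim\mu(B(x,r+d(x,y)))$, and applying the first inclusion with $s=r+d(x,y)$ (and its symmetric version) shows $\mu(B(x,r+d(x,y)))\sim\mu(B(y,r+d(x,y)))$, which closes all the equivalences.

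Items (ii), (iii) and (v) then follow from one and the same annular decomposition. For (ii), I would split $X$ into $B(x_1,r)$ and the annuli $B(x_1,2^{k+1}r)\setminus B(x_1,2^kr)$ with $k\in\zz_+$. On $B(x_1,r)$ the integrand is at most $V_r(x_1)^{-1}$, contributing at most a constant. On the $k$-th annulus, item (i) gives $V_r(x_1)+V(x_1,x)\gtrsim V_{2^kr}(x_1)$ and $[r/(r+d(x_1,x))]^\gz\le 2^{-k\gz}$, while its measure is at most $V_{2^{k+1}r}(x_1)\ls V_{2^kr}(x_1)$ by \eqref{eq-doub}; hence its contribution is $\ls 2^{-k\gz}$, and $\sum_{k\in\zz_+}2^{-k\gz}<\fz$. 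Item (iii) is the same argument: decompose $\{y:\ d(x,y)\le R\}$ into the annuli $\{2^{-k-1}R\le d(x,y)<2^{-k}R\}$ and use $[d(x,y)/R]^\bz\le 2^{-k\bz}$ with $V(x,y)^{-1}\le\mu(B(x,2^{-k-1}R))^{-1}$ and $\mu(B(x,2^{-k}R))\ls\mu(B(x,2^{-k-1}R))$; decompose $\{d(x,y)\ge R\}$ into $\{2^kR\le d(x,y)<2^{k+1}R\}$ and use $[R/d(x,y)]^\bz\le 2^{-k\bz}$. Item (v) is identical once one notes that $\int_{B(x,2^{k+1}r)}|f|\,d\mu\le\mu(B(x,2^{k+1}r))\,\CM(f)(x)$ by the definition \eqref{2.1x} of $\CM$.

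For item (iv), rather than redo an annular computation, I would deduce it from (ii) and (iii). If $R\le r$, then $r/(r+R)\sim1$ and (iv) is exactly (ii). If $R>r$, then on $\{d(x_1,x)\ge R\}$ one has $r+d(x_1,x)\sim d(x_1,x)$ and, by (i), $V_r(x_1)+V(x_1,x)\sim V(x_1,x)$, so the left-hand side of (iv) is comparable to $r^\gz\int_{d(x_1,x)\ge R}[V(x_1,x)\,d(x_1,x)^\gz]^{-1}\,d\mu(x)$, which by the second inequality in (iii), used with exponent $\gz$ in place of $\bz$, is $\ls r^\gz R^{-\gz}\sim[r/(r+R)]^\gz$.

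I do not expect a serious obstacle: the entire content is bookkeeping the constants $A_0$ and $C_{(\mu)}$ through the quasi-triangle inequality in item (i) and organizing the geometric series in (ii), (iii) and (v). The only point to keep in mind is that the balls $B(x,r)$ need not be open; but they are measurable with finite positive measure, and all the estimates above are carried out over nested balls and their differences, so no measurability issue arises.
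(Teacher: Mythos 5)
Your proposal is correct, and it is essentially the standard argument: the paper itself gives no proof of this lemma, simply quoting it from \cite[Lemma 2.1]{hmy08} (see also \cite[Lemma 2.2]{hhllyy18}), where the estimates are obtained exactly by the quasi-triangle inclusions, the doubling condition \eqref{eq-doub} and dyadic annular decompositions as you describe. Your only deviation is deducing (iv) from the second estimate in (iii) (with exponent $\gz$) instead of redoing the annular sum, which is a harmless and clean shortcut; all the minor points (non-openness of balls, the null set $\{y:\ d(x,y)=0\}$, and splitting the case $R\le r$ via (ii)) are handled correctly.
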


Next we recall the system of dyadic cubes established in \cite[Theorem 2.2]{hk12} (see also \cite{ah13}),
which is restated as in the following version.

\begin{lemma}\label{cube}
Fix constants $0<c_0\le C_0<\fz$ and $\dz\in(0,1)$ such that $12A_0^3C_0\dz\le c_0$. Assume that
a set of points, $\{z_\az^k:\ k\in\zz,\ \az\in\CA_k\}\subset X$ with $\CA_k$ for any $k\in\zz$ being a
countable set of indices, has the following properties: for any $k\in\zz$,
\begin{enumerate}
\item[\rm (i)] $d(z_\az^k,z_\bz^k)\ge c_0\dz^k$ if $\az\neq\bz$;
\item[\rm (ii)] $\min_{\az\in\CA_k} d(x,z_\az^k)\le C_0\dz^k$ for any $x\in X$.
\end{enumerate}
Then there exists a family of sets, $\{Q_\az^k:\  k\in\zz,\ \az\in\CA_k\}$, satisfying
\begin{enumerate}
\item[\rm (iii)] for any $k\in\zz$, $\bigcup_{\az\in\CA_k} Q_\az^k=X$ and $\{ Q_\az^k:\;\; {\az\in\CA_k}\}$
is disjoint;
\item[\rm (iv)] if $k,\ l\in\zz$ and $k\le l$, then either $Q_\az^k\supset Q_\bz^l$ or
$Q_\az^k\cap Q_\bz^l=\emptyset$;
\item[\rm (v)] for any $k\in\zz$ and $\az\in\CA_k$, $B(z_\az^k,c_\natural\dz^k)\subset Q_\az^k\subset
B(z_\az^k,C^\natural\dz^k)$,
where $c_\natural:=(3A_0^2)^{-1}c_0$, $C^\natural:=2A_0C_0$ and $z_\az^k$ is called ``the center'' of
$Q_\az^k$.
\end{enumerate}
\end{lemma}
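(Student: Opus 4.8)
The plan is to reproduce the construction of a dyadic cube system due to Hyt\"{o}nen and Kairema \cite{hk12} (in the tradition of M.~Christ's construction; see also \cite{ah13}). Since the centers $\{z_\az^k\}_{k\in\zz,\,\az\in\CA_k}$ and the separation/covering properties (i) and (ii) are \emph{given}, nothing has to be chosen among the points; only the sets $Q_\az^k$ must be manufactured. First I would organize the centers into a \emph{forest}: for each $k\in\zz$ and $\bz\in\CA_{k+1}$, property (ii) applied to $x=z_\bz^{k+1}$ produces some $\az\in\CA_k$ with $d(z_\bz^{k+1},z_\az^k)\le C_0\dz^k$, and I fix one such $\az=\widehat\bz$ (say the least in a once-and-for-all enumeration of $\CA_k$, or the nearest center, ties broken by index) as the \emph{parent} of $\bz$. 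Iterating the parent map, for $l\ge k$ and $\az\in\CA_k$ I write $\bz\preceq\az$ when the $(l-k)$-fold parent of $\bz\in\CA_l$ equals $\az$, and set $\CD(\az,k):=\{(\bz,l):\,l\ge k,\ \bz\preceq\az\}$, the \emph{tube of descendants} of $(\az,k)$; it decomposes as $\CD(\az,k)=\{(\az,k)\}\sqcup\bigsqcup_{\widehat\bz=\az}\CD(\bz,k+1)$.

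Next I would introduce pre-cubes sandwiched between balls: for $\az\in\CA_k$ let $F_\az^k:=\overline{\bigcup_{(\bz,l)\in\CD(\az,k)}B(z_\bz^l,c_\natural\dz^l)}$ be the \emph{closed pre-cube}, and let $U_\az^k$ be its \emph{open core} (the set of points lying in such a generating ball together with a whole neighbourhood). Using only the quasi-triangle inequality and the smallness hypothesis $12A_0^3C_0\dz\le c_0$ to sum the geometric series $\sum_{l\ge k}\dz^l$, I would verify: (a) the two-sided inclusion $B(z_\az^k,c_\natural\dz^k)\subset F_\az^k\subset B(z_\az^k,C^\natural\dz^k)$ with $c_\natural=(3A_0^2)^{-1}c_0$ and $C^\natural=2A_0C_0$ (the left inclusion because $B(z_\az^k,c_\natural\dz^k)$ is itself one of the generating balls, the right one because a descendant ball cannot stray farther than $C^\natural\dz^k$ from $z_\az^k$); (b) \emph{nesting along the forest}, i.e.\ $\bz\preceq\az$ forces $F_\bz^l\subset F_\az^k$ and $U_\bz^l\subset U_\az^k$, immediate from $\CD(\bz,l)\subset\CD(\az,k)$; (c) for each fixed $k$ the closed pre-cubes cover $X$: given $x$, property (ii) supplies at every level $l\ge k$ a center within $C_0\dz^l$ of $x$, whose level-$k$ ancestors all lie in the \emph{finite} set $\{\az\in\CA_k:\,d(z_\az^k,x)\ls C^\natural\dz^k\}$ --- finite because the doubling of $\mu$ makes $(X,d)$ geometrically doubling and $\CA_k$ is $c_0\dz^k$-separated --- so some ancestor $\az$ recurs for infinitely many $l$ and then $x\in F_\az^k$; and (d) for each fixed $k$ the open cores $\{U_\az^k\}_{\az\in\CA_k}$ are pairwise disjoint, again by the separation (i) together with the geometric-series control.

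With these in hand, I would convert, level by level from coarse to fine, the overlapping closed cover $\{F_\az^k\}_{\az\in\CA_k}$ together with the disjoint open cores $\{U_\az^k\}_{\az\in\CA_k}$ into an exact partition $\{Q_\az^k\}$ with $U_\az^k\subset Q_\az^k\subset F_\az^k$ that respects the forest, i.e.\ $\bigcup_{\widehat\bz=\az}Q_\bz^{k+1}=Q_\az^k$ for every $\az\in\CA_k$. Having partitioned $X$ at level $k$, one splits each $Q_\az^k$ among the children $\bz$ with $\widehat\bz=\az$: by (b) and the tube decomposition, the closed pre-cubes $F_\bz^{k+1}$ of these children, together with $\overline{B(z_\az^k,c_\natural\dz^k)}$, cover $F_\az^k\supset Q_\az^k$, while the cores $U_\bz^{k+1}$ are disjoint; one then assigns each remaining ("boundary") point of $Q_\az^k$ to a single designated child by a fixed rule and checks that the inner ball $B(z_\bz^{k+1},c_\natural\dz^{k+1})$ is never taken away from $Q_\bz^{k+1}$. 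Granting this, (iii) and (iv) are read off directly from the construction, and (v) follows from (a) because the carving keeps the inner ball of each cube and never leaves its closed pre-cube.

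I expect this last upgrade to be the main obstacle. Closed pre-cubes overlap a great deal --- their radius $\sim C_0\dz^k$ can far exceed the separation $c_0\dz^k$, so the inner ball of one cube may well sit inside a neighbour's closed pre-cube --- so only a carefully chosen boundary-distribution rule awards each such point correctly, and this rule, the choice of parent map, and the generating radius $c_\natural\dz^l$ must all be coordinated so that at \emph{every} level one simultaneously obtains an exact partition, a refinement of the coarser partition, the two-sided ball bound, and survival of the full inner ball; getting the constants to close here is the delicate point. The hypothesis $12A_0^3C_0\dz\le c_0$ is precisely what makes the competing geometric series small enough for all of this to hold at once, and it is what pins down the explicit constants $c_\natural=(3A_0^2)^{-1}c_0$ and $C^\natural=2A_0C_0$. (When $\mu(X)<\fz$ and $k$ is so negative that $C_0\dz^k>\diam X$, the whole space is a single cube and the construction degenerates harmlessly to that.)
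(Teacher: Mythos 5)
The paper does not prove this lemma at all: it is recalled verbatim from Hyt\"onen--Kairema \cite[Theorem 2.2]{hk12} (see also \cite{ah13}), so the only fair comparison is with the proof in that reference, which your plan is indeed trying to reproduce. Your outline gets the right scaffolding --- a parent map built from (ii), descendant tubes, pre-cubes sandwiched between $B(z_\az^k,c_\natural\dz^k)$ and $B(z_\az^k,C^\natural\dz^k)$, covering at each level and disjointness of the cores --- and those preliminary claims (a)--(d) would go through by the geometric-series estimates you indicate.

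The genuine gap is exactly the step you defer: converting the overlapping closed pre-cubes into an exact partition that is simultaneously nested across all levels and still contains every inner ball. You state that ``a carefully chosen boundary-distribution rule'' and ``getting the constants to close'' are needed, but you neither specify the rule nor verify the constants, and this is the entire content of the Hyt\"onen--Kairema argument. Two ingredients are missing concretely. First, the parent map cannot be arbitrary among admissible choices: one needs the forced-adoption property that if $d(z_\bz^{k+1},z_\az^k)$ is small (of order $(2A_0)^{-1}c_0\dz^k$) then necessarily $\widehat\bz=\az$; with this and the hypothesis $12A_0^3C_0\dz\le c_0$ one proves quantitatively that the inner ball $B(z_\az^k,c_\natural\dz^k)$ is \emph{disjoint} from the closed pre-cube of every $\gz\neq\az$ at the same level, so the inner ball is never in dispute at all --- your worry that ``the inner ball of one cube may well sit inside a neighbour's closed pre-cube'' signals that this key quantitative lemma is absent from your plan. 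Second, the actual partition is not obtained by an unspecified level-by-level carving: in \cite{hk12} one fixes once and for all a suitable (total) order on each index set $\CA_k$, defines $Q_\az^k$ by an explicit inductive formula in terms of the closed/open pre-cubes of $\az$ and its predecessors, and then checks that this single rule yields disjointness (iii), the coherence $Q_\az^k=\bigcup_{\widehat\bz=\az}Q_\bz^{k+1}$ giving (iv), and the two-sided ball inclusion (v) simultaneously. Without these two pieces your argument is a plausible roadmap rather than a proof: the construction could not be checked or the constants $c_\natural=(3A_0^2)^{-1}c_0$, $C^\natural=2A_0C_0$ confirmed from what you have written.
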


Throughout this article, we keep the notation appearing in Lemma \ref{cube}. Moreover, for any $k\in\zz$, let
$$
\CX^k:=\{z_\az^k\}_{\az\in\CA_k},\qquad \CG_k:=\CA_{k+1}\setminus\CA_k\qquad \textup{and}\qquad
\CY^k:=\{z_\az^{k+1}\}_{\az\in\CG_k}=:\{y_\az^{k}\}_{\az\in\CG_k}.
$$
The existence of $\{\CX_k\}_{k=-\fz}^\fz$ was ensured in \cite[Section 2.21]{hk12} (see also
\cite[Section 2.3]{ah13}). Moreover, by the construction of $\{\CX_k\}_{k=-\fz}^\fz$, we find that, for any
$k\in\zz$, $\CX_k\subset\CX_{k+1}$, which makes the definition of $\CG_k$ well defined.

Next we recall the definition of $\exp$-IATIs, which can be used to establish the inhomogeneous Calder\'on
reproducing formulae.
\begin{definition}\label{def-iati}
A sequence $\{Q_k\}_{k=0}^\fz$ of bounded operators on $L^2(X)$ is called an \emph{inhomogeneous
approximation of the identity with exponential decay} (for short, $\exp$-IATI) if there exist constants
$C,\ \nu\in(0,\fz)$, $a\in(0,1]$ and $\eta\in(0,1)$ such that, for any $k\in\zz_+$, the kernel of the
operator $Q_k$, which is still denoted by $Q_k$, has the following properties:
\begin{enumerate}
\item (the \emph{identity condition}) $\sum_{k=0}^\fz Q_k=I$ in $L^2(X)$, where $I$ is the identity operator
on $L^2(X)$;
\item when $k\in\nn$, then $Q_k$ satisfies
\begin{enumerate}
\item (the \emph{size condition}) for any $x,\ y\in X$,
\begin{align}\label{eq:etisize}
|Q_k(x,y)|&\le C\frac1{\sqrt{V_{\dz^k}(x)\,V_{\dz^k}(y)}}
\exp\lf\{-\nu\lf[\frac{d(x,y)}{\dz^k}\r]^a\r\}\\
&\quad\times\exp\lf\{-\nu\lf[\frac{\max\{d(x, \CY^k),\,d(y,\CY^k)\}}{\dz^k}\r]^a\r\};\noz
\end{align}
\item (the \emph{regularity condition}) for any $x,\ x',\ y\in X$ with $d(x,x')\le\dz^k$,
\begin{align}\label{eq:etiregx}
&|Q_k(x,y)-Q_k(x',y)|+|Q_k(y,x)-Q_k(y, x')|\\
&\quad\le C\lf[\frac{d(x,x')}{\dz^k}\r]^\eta
\frac1{\sqrt{V_{\dz^k}(x)\,V_{\dz^k}(y)}} \exp\lf\{-\nu\lf[\frac{d(x,y)}{\dz^k}\r]^a\r\}\noz\\
&\qquad\times\exp\lf\{-\nu\lf[\frac{\max\{d(x, \CY^k),\,d(y,\CY^k)\}}{\dz^k}\r]^a\r\};\noz
\end{align}
\item (the \emph{second difference regularity condition}) for any $x,\ x',\ y,\ y'\in X$ with $d(x,x')\le\dz^k$
and $d(y,y')\le\dz^k$, then
\begin{align}\label{eq:etidreg}
&|[Q_k(x,y)-Q_k(x',y)]-[Q_k(x,y')-Q_k(x',y')]|\\
&\quad \le C\lf[\frac{d(x,x')}{\dz^k}\r]^\eta\lf[\frac{d(y,y')}{\dz^k}\r]^\eta
\frac1{\sqrt{V_{\dz^k}(x)\,V_{\dz^k}(y)}} \exp\lf\{-\nu\lf[\frac{d(x,y)}{\dz^k}\r]^a\r\}\noz\\
&\qquad\times\exp\lf\{-\nu\lf[\frac{\max\{d(x, \CY^k),\,d(y,\CY^k)\}}{\dz^k}\r]^a\r\};\noz
\end{align}
\item (the \emph{cancelation condition}) for any $x\in X$,
\begin{equation*}
\int_X Q_k(x,y)\,d\mu(y)=0=\int_X Q_k(y,x)\,d\mu(y);
\end{equation*}
\end{enumerate}
\item $Q_0$ satisfies (a), (b) and (c) in (ii) with $k=0$ but without the term
$$
\exp\lf\{-\nu\lf[\max\lf\{d\lf(x,\CY^0\r),d\lf(y,\CY^0\r)\r\}\r]^a\r\};
$$
moreover, for any $x\in X$, $\int_X Q_k(x,y)\,d\mu(y)=1=\int_X Q_k(y,x)\,d\mu(y)$.
\end{enumerate}
\end{definition}

\begin{remark}
We point out that \cite[Lemma 10.1]{ah13} and the proof of \cite[Theorem 10.2]{ah13} guarantee the existence
of an $\exp$-IATI on $X$ no matter $\diam X=\fz$ or not. Using this, we can establish the inhomogeneous
Calder\'{o}m reproducing formulae (see \cite[Section 6]{hlyy18} or Theorems \ref{thm-icrf} and \ref{thm-idrf}
below). Here and hereafter, the \emph{H\"{o}lder regularity index $\eta$} is the same as the H\"{o}lder
regularity index of the wavelet system appearing in \cite[Theorem 7.1]{ah13}.
\end{remark}

Next we recall the inhomogeneous continuous Calder\'{o}n reproducing formula established in \cite{hlyy18}.
\begin{theorem}\label{thm-icrf}
Let $\{Q_k\}_{k=0}^\fz$ be an $\exp$-{\rm IATI} and $\bz,\ \gz\in(0,\eta)$. Then there exist $N\in\nn$ and
a sequence $\{\wz Q_k\}_{k=0}^\fz$ of bounded linear operators on $L^2(X)$ such that, for any
$f\in(\go{\bz,\gz})'$,
\begin{equation}\label{eq-icrf}
f=\sum_{k=0}^\fz\wz Q_kQ_kf,
\end{equation}
where the series converges in $(\go{\bz,\gz})'$. Moreover, there exists a positive constant $C$ such that,
for any $k\in\zz_+$, the kernel of $\wz Q_k$, still denoted by $\wz Q_k$, has the following properties:
\begin{enumerate}
\item for any $x,\ y\in X$,
$$
\lf|\wz Q_k(x,y)\r|\le C\frac{1}{V_{\dz^k}(x)+V(x,y)}\lf[\frac{\dz^k}{\dz^k+d(x,y)}\r]^\gz;
$$
\item for any $x,\ x',\ y\in X$ with $d(x,x')\le(2A_0)^{-1}[\dz^k+d(x,y)]$,
$$
\lf|\wz Q_k(x,y)-\wz Q_k(x',y)\r|\le C\lf[\frac{d(x,x')}{\dz^k+d(x,y)}\r]^\bz
\frac{1}{V_{\dz^k}(x)+V(x,y)}\lf[\frac{\dz^k}{\dz^k+d(x,y)}\r]^\gz;
$$
\item for any $x\in X$,
$$
\int_X \wz Q_k(x,y)\,d\mu(y)=\int_X \wz Q_k(y,x)\,d\mu(y)=\begin{cases}
1 & \textit{if}\ k\in\{0,\ldots,N\},\\
0 & \textit{if}\ k\in\{N+1,N+2,\ldots\}.
\end{cases}
$$
\end{enumerate}
\end{theorem}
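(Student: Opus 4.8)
The plan is to follow Coifman's decomposition trick in the inhomogeneous form suited to spaces of homogeneous type. The identity condition $\sum_{k=0}^\infty Q_k=I$ on $L^2(X)$ lets me write, for a large integer $N$ to be chosen later,
\begin{equation*}
I=\Bigl(\sum_{k\ge 0}Q_k\Bigr)\Bigl(\sum_{l\ge 0}Q_l\Bigr)=\sum_{l\ge 0}D_lQ_l+R_N=:T_N+R_N,
\end{equation*}
where $D_l:=\sum_{k\ge 0,\ |k-l|\le N}Q_k$ and $R_N:=\sum_{l\ge 0}\sum_{k\ge 0,\ |k-l|>N}Q_kQ_l$. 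If $R_N$ is bounded on $L^2(X)$ (and later on the relevant test function spaces) with operator norm strictly less than $1$ once $N$ is large enough, then $T_N=I-R_N$ is invertible there, $T_N^{-1}=\sum_{m\ge 0}R_N^m$ converges, and, setting $\widetilde Q_k:=T_N^{-1}D_k$, one obtains $f=T_N^{-1}T_Nf=\sum_{k\ge 0}(T_N^{-1}D_k)Q_kf=\sum_{k\ge 0}\widetilde Q_kQ_kf$, which is \eqref{eq-icrf}.

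The key input is a family of almost-orthogonality estimates for the compositions $Q_kQ_l$: for $k,l\in\nn$, pairing the cancellation condition in Definition \ref{def-iati}(ii)(d) on one factor against the H\"older regularity in Definition \ref{def-iati}(ii)(b) of the other (that is, a first-order Taylor argument) and then integrating out the intermediate variable via Lemma \ref{lem-add}, one shows that the kernel of $Q_kQ_l$ obeys a size bound at scale $\delta^{k\wedge l}$ with an extra gain $\delta^{|k-l|\eta}$ coming from the mismatch of scales. Summing these bounds over $|k-l|>N$ and applying a Schur-type argument (again via Lemma \ref{lem-add}) yields that $R_N$ is bounded on $L^2(X)$ with norm $\lesssim\delta^{N\eta}$, which is $<1$ for $N$ large. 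The genuinely inhomogeneous difficulty is that $Q_0$ satisfies $\int_XQ_0(x,y)\,d\mu(y)=1$ rather than $0$, so the pairs $Q_kQ_l$ appearing in $R_N$ that involve $Q_0$ (or $Q_l$ with $l$ close to $0$) carry no cancellation to exploit; these have to be treated separately, using only the size and regularity of $Q_0$ and the fact that, for each fixed scale, there are only boundedly many such pairs. I expect this bookkeeping near scale $0$, together with verifying that $R_N$, and hence $T_N^{-1}$, acts continuously on $\go{\beta,\gamma}$ and on $(\go{\beta,\gamma})'$, to be the main obstacle.

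To obtain the kernel estimates (i) and (ii) for $\widetilde Q_k$, I would check that the class of operators whose kernels satisfy bounds of the form (i) and (ii) at a given scale is stable under the compositions used above, with quantitative control of the constants, and that composing with $R_N^m$ produces an additional factor decaying in both $N$ and $m$; since $D_k$ already belongs to this class at scale $\delta^k$, the Neumann series $\widetilde Q_k=\sum_{m\ge 0}R_N^mD_k$ converges in the class and its sum again satisfies (i) and (ii). The restriction $\beta,\gamma\in(0,\eta)$ enters here because the H\"older and decay exponents degrade slightly at each composition and must remain below the regularity index $\eta$. For the cancellation property (iii), note that $Q_k\mathbf 1=0$ for $k\ge 1$ while $Q_0\mathbf 1=\mathbf 1$, so that $T_N\mathbf 1=D_0Q_0\mathbf 1=D_0\mathbf 1=\mathbf 1$ and, by the same computation applied to adjoints, $T_N^*\mathbf 1=\mathbf 1$; hence $T_N^{-1}\mathbf 1=\mathbf 1=(T_N^*)^{-1}\mathbf 1$. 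Since $\int_XD_k(x,y)\,d\mu(y)=\int_XD_k(y,x)\,d\mu(y)$ equals $1$ when $k\in\{0,\dots,N\}$ (because $Q_0$ is then one of the summands of $D_k$) and equals $0$ when $k>N$, applying $\widetilde Q_k=T_N^{-1}D_k$ and its adjoint to $\mathbf 1$ gives precisely the stated values of $\int_X\widetilde Q_k(x,y)\,d\mu(y)$ and $\int_X\widetilde Q_k(y,x)\,d\mu(y)$.

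It remains to promote the $L^2(X)$-identity $f=\sum_k\widetilde Q_kQ_kf$ to convergence in $(\go{\beta,\gamma})'$. Using the kernel bounds and cancellation just established, the adjoint operators $Q_k^*\widetilde Q_k^*$ map $\go{\beta,\gamma}$ into itself and, for any fixed $\varphi\in\go{\beta,\gamma}$, the series $\sum_kQ_k^*\widetilde Q_k^*\varphi$ converges to $\varphi$ in $\go{\beta,\gamma}$. Consequently, for $f\in(\go{\beta,\gamma})'$ the partial sums $\sum_{k=0}^K\langle\widetilde Q_kQ_kf,\varphi\rangle=\sum_{k=0}^K\langle f,Q_k^*\widetilde Q_k^*\varphi\rangle$ converge to $\langle f,\varphi\rangle$, which is the asserted convergence of \eqref{eq-icrf} in $(\go{\beta,\gamma})'$.
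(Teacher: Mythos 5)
Note first that the paper does not prove Theorem \ref{thm-icrf} at all: it is quoted from \cite{hlyy18}, and the proof there follows exactly the Coifman-type scheme you outline, namely $I=T_N+R_N$ with $T_N=\sum_{l}D_lQ_l$, inversion of $T_N$ by a Neumann series, and $\wz Q_k:=T_N^{-1}D_k$. So your strategy coincides with that of the cited source, and the mechanisms you propose for the identity \eqref{eq-icrf}, for the kernel bounds (stability of the size/regularity class with exponents below $\eta$ under the Neumann series), and for (iii) (via $T_N\mathbf 1=\mathbf 1=T_N^*\mathbf 1$, $R_N\mathbf 1=0$, and $D_k\mathbf 1=\mathbf 1$ or $0$ according to $k\le N$ or $k>N$) are the right ones.

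As a standalone proof, however, the proposal has a genuine gap at its technical heart: everything hinges on showing that $R_N$ is bounded, with operator norm $\ls\dz^{N\eta'}<1$, not merely on $L^2(X)$ but on the test-function space $\go{\bz,\gz}$ (equivalently, that $T_N^{-1}$ preserves this space with uniform bounds), since \eqref{eq-icrf} is asserted for distributions and the estimates (i)--(ii) must survive the Neumann series. You name this as ``the main obstacle'' but give no argument, and the computation you do sketch --- cancellation of one factor against the H\"older regularity of the other, Lemma \ref{lem-add}, and a Schur/Cotlar-type summation --- is the classical RD-space argument; on a general space of homogeneous type without any reverse doubling it is known not to suffice, and it is precisely here that the extra features of the $\exp$-IATI (the second-difference regularity condition and the exponential factors involving $d(\cdot,\CY^k)$) are indispensable. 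This is the content of the lengthy estimates in \cite{hlyy18}, and without it the invertibility of $T_N$ on $\go{\bz,\gz}$, hence the meaning of $\wz Q_kQ_kf$ for $f\in(\go{\bz,\gz})'$, is not established. A smaller correction: your worry that the pairs $Q_kQ_l$ in $R_N$ involving $Q_0$ ``carry no cancellation'' is unfounded, because $|k-l|>N\ge 1$ forces at least one factor to have vanishing integral; the genuinely inhomogeneous bookkeeping shows up instead in obtaining the exact values $1$ and $0$ in (iii) at the threshold $N$, which your adjoint computation does handle correctly.
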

Next we recall the inhomogeneous discrete Calder\'{o}n reproducing formulae established in \cite{hlyy18}. To
this end, let $j_0\in\nn$ be a sufficiently large integer such that $\dz^{j_0}\le(2A_0)^{-4}C^\natural$,
where $C^\natural$ is as in Lemma \ref{cube}. Based on Lemma \ref{cube}, for any $k\in\zz$ and $\az\in\CA_k$,
we let
$$
\CN(k,\az):=\{\tau\in\CA_{k+j_0}:\ Q_\tau^{k+j_0}\subset Q_\az^k\}
$$
and $N(k,\az)$ be the \emph{cardinality} of the set $\CN(k,\az)$. For any $k\in\zz$ and $\az\in\CA_k$, we
rearrange the set $\{Q_\tau^{k+j_0}:\ \tau\in\CN(k,\az)\}$ as $\{Q_\az^{k,m}\}_{m=1}^{N(k,\az)}$, whose
centers are denoted, respectively, by $\{z_\az^{k,m}\}_{m=1}^{N(k,\az)}$.

\begin{theorem}\label{thm-idrf}
Let $\{Q_k\}_{k=0}^\fz$ be an $\exp$-{\rm IATI} and $\bz,\ \gz\in(0,\eta)$. Then there exist $N,\ j_0\in\nn$
such that, for any $y_\az^{k,m}\in Q_\az^{k,m}$ with $k\in\{N+1,N+2,\ldots\}$, $\az\in\CA_k$ and
$m\in\{1,\ldots,N(k,\az)\}$, there exists a sequence $\{\wz Q_k\}_{k=0}^\fz$ of bounded linear operators on
$L^2(X)$ such that, for any $f\in(\GOO{\bz,\gz})'$,
\begin{align}\label{eq-idrf}
f(\cdot)&=\sum_{k=0}^N\sum_{\az\in\CA_k}\sum_{m=1}^{N(k,\az)}\int_{Q_\az^{k,m}}\wz{Q}_k(\cdot,y)\,d\mu(y)
Q_{\az,1}^{k,m}(f)\\
&\quad+\sum_{k=N+1}^\fz\sum_{\az\in\CA_k}\sum_{m=1}^{N(k,\az)}\mu\lf(Q_\az^{k,m}\r)
\wz{Q}_k\lf(\cdot,y_\az^{k,m}\r)Q_kf\lf(y_\az^{k,m}\r),\noz
\end{align}
where \eqref{eq-idrf} converges in $(\GOO{\bz,\gz})'$ and, for any $k\in\{0,\ldots,N\}$, $\az\in\CA_k$ and
$m\in\{1,\ldots,N(k,\az)\}$,
$$
Q_{\az,1}^{k,m}(f):=\frac 1{\mu(Q_\az^{k,m})}\int_{Q_\az^{k,m}}Q_kf(u)\,d\mu(u).
$$
Moreover, for any $k\in\zz_+$, the kernel of $\wz{Q}_k$ satisfies (i), (ii) and (iii) of Theorem
\ref{thm-icrf}, with the implicit positive constant independent of the choice of $y_\az^{k,m}$.
\end{theorem}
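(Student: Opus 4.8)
The plan is to deduce the discrete formula \eqref{eq-idrf} from the continuous one in Theorem \ref{thm-icrf} by a discretization-plus-Neumann-series argument of Coifman type. Let $\{\wz Q_k\}_{k=0}^\fz$ and $N$ be as in Theorem \ref{thm-icrf}, so that $f=\sum_{k=0}^\fz\wz Q_kQ_kf$, and fix $j_0\in\nn$ with $\dz^{j_0}\le(2A_0)^{-4}C^\natural$, to be enlarged later. For each $k$ write $\wz Q_kQ_kf(x)=\int_X\wz Q_k(x,y)Q_kf(y)\,d\mu(y)$ and split $X$ into the dyadic cubes $\{Q_\az^{k,m}\}$ of generation $k+j_0$. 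When $k\ge N+1$, the kernel $\wz Q_k(x,\cdot)$ has both cancellation and H\"older regularity, so we replace $\int_{Q_\az^{k,m}}\wz Q_k(x,y)Q_kf(y)\,d\mu(y)$ by $\mu(Q_\az^{k,m})\wz Q_k(x,y_\az^{k,m})Q_kf(y_\az^{k,m})$; when $0\le k\le N$ the operator $\wz Q_k$ has no cancellation (its integral is $1$), so instead we use the H\"older regularity of $Q_k$ — equivalently, the smoothness of $Q_kf$ — to replace $Q_kf(y)$ on $Q_\az^{k,m}$ by its average $Q_{\az,1}^{k,m}(f)$, which produces the term $\int_{Q_\az^{k,m}}\wz Q_k(\cdot,y)\,d\mu(y)\,Q_{\az,1}^{k,m}(f)$. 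Writing $\mathfrak D_{j_0}f$ for the resulting discrete double sum (of the same shape as the right side of \eqref{eq-idrf}, but with the operators $\wz Q_k$ of Theorem \ref{thm-icrf}) and $\mathfrak R_{j_0}f:=f-\mathfrak D_{j_0}f$ for the total discretization error, everything reduces to inverting $I-\mathfrak R_{j_0}$.

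The key point is that $\mathfrak R_{j_0}=\sum_{k=0}^\fz E_k$, where, up to composition with $Q_k$, each $E_k$ is an integral operator whose kernel obeys the size and regularity bounds \textup{(i)} and \textup{(ii)} of Theorem \ref{thm-icrf} at scale $\dz^k$ but carries the extra small factor $\dz^{j_0(\bz\wedge\eta)}$: in the high-frequency range the gain comes from $d(y,y_\az^{k,m})\le C^\natural\dz^{k+j_0}$ together with the H\"older estimate for $\wz Q_k$ in Theorem \ref{thm-icrf}\textup{(ii)}, and in the low-frequency range from the same bound together with the H\"older estimate \eqref{eq:etiregx} for $Q_k$. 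Moreover, expanding the differences defining each $E_k$ and using the cancellation of $Q_k$ for $k\ge1$ and the normalization $\int_XQ_0(x,y)\,d\mu(y)=1$ for $k=0$, one checks that $\mathfrak R_{j_0}$ has vanishing integral in \emph{each} of its two variables. Summing over $k$ by the same almost-orthogonality (Cotlar--Stein) estimates and the pointwise bounds of Lemma \ref{lem-add} that underlie Theorem \ref{thm-icrf} then gives $\|\mathfrak R_{j_0}\|_{L^2(X)\to L^2(X)}\le C\dz^{j_0(\bz\wedge\eta)}$, and running these estimates in the norm of $\CG(\bz,\gz)$ shows that $\mathfrak R_{j_0}$ is bounded on $\go{\bz,\gz}$ and on $\GOO{\bz,\gz}$, hence by duality on $(\go{\bz,\gz})'$ and $(\GOO{\bz,\gz})'$, with operator norm at most $C\dz^{j_0(\bz\wedge\eta)}$ in every case; all these bounds are uniform in the choice of $\{y_\az^{k,m}\}$.

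Now fix $j_0$ large enough that $C\dz^{j_0(\bz\wedge\eta)}\le1/2$. Then $I-\mathfrak R_{j_0}$ is invertible on $L^2(X)$, on $\GOO{\bz,\gz}$ and on $(\GOO{\bz,\gz})'$, with inverse $\sum_{l=0}^\fz(\mathfrak R_{j_0})^l$, so that $f=(I-\mathfrak R_{j_0})^{-1}\mathfrak D_{j_0}f$. Since $\mathfrak D_{j_0}f(x)$ is a sum, over $k$ and $(\az,m)$, of products of a function of $x$ alone — namely $\wz Q_k(\cdot,y_\az^{k,m})$ when $k\ge N+1$, and $\wz Q_k(\cdot,y)$ integrated over $y\in Q_\az^{k,m}$ when $k\le N$ — with a functional of $f$ not involving $x$, applying $(I-\mathfrak R_{j_0})^{-1}$ in the first variable (and interchanging it with the $y$-integration, which is legitimate by its boundedness) yields exactly \eqref{eq-idrf}, where the new operators are given by
$$
\wz Q_k^{\mathrm{new}}(x,y):=\lf[(I-\mathfrak R_{j_0})^{-1}\lf(\wz Q_k(\cdot,y)\r)\r](x)
$$
and relabelled $\wz Q_k$ in the statement. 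From $(I-\mathfrak R_{j_0})^{-1}=I+\mathfrak R_{j_0}(I-\mathfrak R_{j_0})^{-1}$ and the kernel bounds for $\mathfrak R_{j_0}$ one checks that $\wz Q_k^{\mathrm{new}}$ still obeys \textup{(i)} and \textup{(ii)} of Theorem \ref{thm-icrf} at scale $\dz^k$, with constants independent of $\{y_\az^{k,m}\}$; and since $\mathfrak R_{j_0}$ annihilates constants and preserves integrals, $\wz Q_k^{\mathrm{new}}$ inherits \textup{(iii)} — integral $1$ when $k\le N$ and $0$ when $k>N$, in both variables. Finally, \eqref{eq-icrf} converges in the distribution space and $(I-\mathfrak R_{j_0})^{-1}$ is continuous there, so \eqref{eq-idrf} converges in $(\GOO{\bz,\gz})'$; the passage from $(\go{\bz,\gz})'$ to the larger space $(\GOO{\bz,\gz})'$ is harmless because each of $Q_k$, $\wz Q_k$ and $\mathfrak R_{j_0}$ acts boundedly on $\GOO{\bz,\gz}$, the low-frequency terms of integral $1$ being exactly what reproduces the cancellation-free part of a test function.

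\textbf{The main obstacle} is the estimate $\|\mathfrak R_{j_0}\|\le C\dz^{j_0(\bz\wedge\eta)}$: one must organize the several error terms — of different shape in the high- and low-frequency ranges — into integral operators carrying precisely the size, regularity and cancellation structure of a ``$\dz^{j_0}$-small'' copy of the family $\{\wz Q_kQ_k\}_k$, and then carry out the almost-orthogonal summation over $k$ on a space of homogeneous type with no extra geometric assumption, \emph{simultaneously} in $L^2(X)$ and in the test-function (hence distribution) norms, so that the Neumann inversion is valid on $(\GOO{\bz,\gz})'$. The companion delicate point is tracking the moment conditions through the inversion so that the relabelled kernels satisfy \textup{(iii)} with the correct split at $k=N$ in both variables; the role of $\wz Q_0,\dots,\wz Q_N$ and of $Q_0$ (all of integral $1$) is exactly what makes this bookkeeping close up.
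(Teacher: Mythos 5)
A preliminary remark on the comparison you ask for: the paper itself does not prove Theorem \ref{thm-idrf}; it is recalled from \cite{hlyy18}, so your proposal can only be measured against the argument given there. Your scheme --- discretize a reproducing identity over the cubes $Q_\az^{k,m}$ of generation $k+j_0$, organize the discretization error into an operator with norm $\ls\dz^{j_0\eta'}$ on $L^2(X)$ and on the test-function classes (hence, by transposition, on the distribution spaces), invert by a Neumann series, define the new kernels by applying the inverse in the first variable, and track size, regularity and the integral normalizations with the split at $k=N$ through the inversion --- is indeed the Coifman-type argument used in \cite{hlyy18}, except that there the discretization is performed on $I=\sum_{k}Q_k$ (together with the almost-orthogonality remainder) rather than on the already established continuous formula taken as a black box. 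Your bookkeeping claims (finitely many low-frequency error terms plus Cotlar--Stein for the high-frequency ones; $\mathfrak{R}_{j_0}1=0$ and preservation of integrals, which is what gives (iii) with integral $1$ for $k\le N$ and $0$ for $k>N$ in both variables; uniformity in the choice of $y_\az^{k,m}$) are the right ones and do close up.

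There is, however, one concrete gap created precisely by using Theorem \ref{thm-icrf} as a black box. For $k\ge N+1$ you replace $\wz Q_k(x,y)$ by $\wz Q_k(x,y_\az^{k,m})$ for $y\in Q_\az^{k,m}$ and control the error by ``the H\"older estimate for $\wz Q_k$ in Theorem \ref{thm-icrf}(ii)''; but (ii) of Theorem \ref{thm-icrf} is a regularity condition in the \emph{first} variable only, whereas this step needs H\"older continuity of $\wz Q_k(x,\cdot)$ in the \emph{second} variable at scale $\dz^k$ (no splitting of the product $\wz Q_k(x,y)Q_kf(y)$ avoids it, since the second argument of $\wz Q_k$ is changed in any case). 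That regularity is true for the kernels constructed in \cite{hlyy18} --- there $\wz Q_k(x,y)$ arises by applying an inverse operator in the first variable to a kernel smooth in $y$, so $y$-differences pass through the inverse and are controlled by its boundedness on test functions --- but it is not part of the statement you quote, so you must either extract and prove this second-variable estimate before discretizing, or run the discretization directly on $I=\sum_kQ_k$ as in \cite{hlyy18}, where both factors enjoy two-sided regularity by \eqref{eq:etiregx}. With that point repaired (and with the expected small loss in the exponent, $\dz^{j_0\eta'}$ for some $\eta'<\bz\wedge\gz$ rather than $\dz^{j_0(\bz\wedge\eta)}$, which is harmless), your outline matches the known proof.
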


\section{Local Hardy spaces via maximal functions}\label{s-max}

In this section, we introduce Hardy spaces via maximal functions. To this end, we first introduce
$1$-$\exp$-ITAI, which is used to define the radial and the non-tangential maximal functions.
\begin{definition}\label{def-1ieti}
A sequence $\{P_k\}_{k=0}^\fz$ of bounded linear operators on $L^2(X)$ is called an \emph{inhomogeneous
approximation of the identity with exponential decay and integration $1$} (for short, $1$-$\exp$-IATI) if
$\{P_k\}_{k=0}^\fz$ has the following properties:
\begin{enumerate}
\item for any $k\in\zz_+$, $P_k$ satisfies (a) and (b) of Definition \ref{def-iati}(ii) but without the term
$$
\exp\lf\{-\nu\lf[\frac{\max\{d(x, \CY^k),\,d(y,\CY^k)\}}{\dz^k}\r]^a\r\};
$$
\item (the \emph{conservation property}) for any $k\in\zz_+$ and $x\in X$,
$$
\int_X P_k(x,y)\,d\mu(y)=1=\int_X P_k(y,x)\,d\mu(y);
$$
\item letting $Q_0=P_0$ and $Q_k=P_k-P_{k-1}$ for any $k\in\nn$, then $\{Q_k\}_{k=0}^\fz$ is an $\exp$-IATI.
\end{enumerate}
\end{definition}
Fix $\bz,\ \gz\in(0,\eta)$. Let $\{P_k\}_{k=0}^\fz$ be a $1$-$\exp$-IATI as in Definition
\ref{def-1ieti}. Define the \emph{local radial maximal function $\CM^+_0(f)$} of $f$ by setting, for any
$x\in X$,
$$
\CM^+_0(f)(x):=\max\lf\{\max_{k\in\{0,\ldots,N\}}\lf\{\sum_{\az\in\CA_k}\sum_{m=1}^{N(k,\az)}
\sup_{z\in Q_\az^{k,m}}|P_kf(z)|\mathbf{1}_{Q_\az^{k,m}}(x)\r\},\sup_{k\in\{N,N+1,\ldots\}}|P_kf(x)|\r\}.
$$
Define the \emph{local non-tangential maximal function $\CM_{\thz,0}(f)$ of $f$ with aperture
$\thz\in(0,\fz)$} by setting, for any $x\in X$,
$$
\CM_{\thz,0}(f)(x):=\sup_{k\in\zz_+}\sup_{y\in B(x,\thz\dz^k)}|P_kf(y)|.
$$
Also, define the \emph{local grand maximal function $f^*$} of $f$ by setting, for any $x\in X$,
$$
f^*_0(x):=\sup\lf\{|\langle f,\vz \rangle|:\ \vz\in\go{\bz,\gz} \textup{ and } \|\vz\|_{\CG(x,r_0,\bz,\gz)}
\le 1\textup{ for some } r_0\in(0,1]\r\}.
$$
Corresponding, for any $p\in(0,\fz]$, the \emph{local Hardy spaces} $h^{+,p}(X)$, $h^p_\thz(X)$ with
$\thz\in(0,\fz)$ and $h^{*,p}(X)$ are defined, respectively, by setting
\begin{align*}
h^{+,p}(X)&:=\lf\{f\in\lf(\go{\bz,\gz}\r)':\ \|f\|_{h^{+,p}(X)}:=\|\CM^+_0(f)\|_{L^p(X)}<\fz\r\},\\
h^p_\thz(X)&:=\lf\{f\in\lf(\go{\bz,\gz}\r)':\ \|f\|_{h_\thz^{p}(X)}:=\|\CM_{\thz,0}(f)\|_{L^p(X)}<\fz\r\}
\end{align*}
and
$$
h^{*,p}(X):=\lf\{f\in\lf(\go{\bz,\gz}\r)':\ \|f\|_{h^{*,p}(X)}:=\|f^*_0\|_{L^p(X)}<\fz\r\}.
$$
By the definitions of the above local maximal functions, we easily conclude that, for any fixed
$\bz,\ \gz\in(0,\eta)$ and $\thz\in(0,\fz)$, there exists a positive constant $C$ such that, for any
$f\in(\go{\bz,\gz})'$ and $x\in X$,
\begin{equation}\label{eq-bmax}
\max\lf\{\CM^+_0(f)(x),\CM_{\thz,0}(f)(x)\r\}\le Cf^*_0(x).
\end{equation}

\begin{remark}\label{rem-max}
Recall that, in \cite{hhllyy18}, the \emph{radial maximal function} $\CM^+(f)$ of $f\in(\go{\bz,\gz})'$ is
defined by setting, for any $x\in X$,
$$
\CM^+(f)(x):=\sup_{k\in\zz} |P_kf(x)|,
$$
where $\{P_k\}_{k\in\zz}$ is a $1$-$\exp$-ATI (see \cite[Definition 2.8]{hhllyy18}). The main difference
between $\CM^+(f)$ and $\CM^+_0(f)$ lies that, in the definition of the local radial maximal function
$\CM^+_0(f)$, we use the term
$$
\max_{k\in\{0,\ldots,N\}}\sum_{\az\in\CA_k}\sum_{m=1}^{N(k,\az)}
\sup_{z\in Q_\az^{k,m}}|P_kf(z)|\mathbf{1}_{Q_\az^{k,m}}
$$
to replace $\max_{k\in\{0,\ldots,N\}}|P_kf(y)|$ appearing in the definition of the radial maximal function
$\CM^+(f)$. This is because the discrete inhomogeneous reproducing
formula has the term $Q_{\az,1}^{k,m}(f)$ [see \eqref{eq-idrf}]. Similar way is used to define the
inhomogeneous Littlewood--Paley $g$-functions; see Section \ref{s-lp} below and \cite{hmy08,hy02}.
\end{remark}

Using a similar argument to that used in the proof of \cite[Theorem 3.4]{hhllyy18}, we obtain the following
relation between the local Hardy spaces and the Lebesgue space when $p\in(1,\fz]$ and we omit the details here.

\begin{theorem}\label{thm-p>1}
Let $p\in(1,\fz]$ and $\bz,\ \gz\in(0,\eta)$ with $\eta$ as in Definition \ref{def-iati}. Then, for any
fixed $\thz\in(0,\fz)$, as subspaces of $(\go{\bz,\gz})'$, $h^{+,p}(X)=h^p_\thz(X)=h^{*,p}(X)=L^p(X)$ in the
sense of representing the same distributions and having equivalent norms.
\end{theorem}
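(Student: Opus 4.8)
The plan is to follow the model of the proof of \cite[Theorem 3.4]{hhllyy18}, adapting it to the inhomogeneous (local) setting. The statement has two directions. For the easy direction, that $L^p(X)$ embeds (as distributions) into each of the three local Hardy spaces with control of norms, I would fix $f\in L^p(X)$ and note that, since each $P_k$ has a kernel obeying the size estimate in Definition \ref{def-iati}(ii)(a) without the $\CY^k$-factor, Lemma \ref{lem-add}(v) yields the pointwise bound $|P_kf(y)|\ls\CM(f)(y)$ for all $k\in\zz_+$ and $y\in X$; taking suprema over $y$ in a fixed cube $Q_\az^{k,m}$ (whose radius is comparable to $\dz^k$) still produces $\ls\inf_{x\in Q_\az^{k,m}}\CM(f)(x)$ up to the doubling constant, because $\CM(f)$ on such a small cube is comparable to its infimum after enlarging the ball. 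Hence $\CM^+_0(f)\ls\CM(f)$ pointwise; the same argument, using that $B(x,\thz\dz^k)$ has measure comparable to $V_{\dz^k}(x)$, gives $\CM_{\thz,0}(f)\ls\CM(f)$. For the grand maximal function one tests against $\vz\in\go{\bz,\gz}$ with $\|\vz\|_{\CG(x,r_0,\bz,\gz)}\le1$ for some $r_0\in(0,1]$ and again invokes Lemma \ref{lem-add}(v) to get $|\langle f,\vz\rangle|\ls\CM(f)(x)$. Since $p>1$, the boundedness of $\CM$ on $L^p(X)$ recalled after \eqref{2.1x} (with the trivial bound when $p=\fz$) then gives $\|f\|_{h^{*,p}(X)}+\|f\|_{h^{+,p}(X)}+\|f\|_{h^p_\thz(X)}\ls\|f\|_{L^p(X)}$, and \eqref{eq-bmax} shows it suffices to bound the grand maximal piece plus handle the non-tangential one directly.

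For the reverse direction I would show that each of $h^{+,p}(X)$, $h^p_\thz(X)$, $h^{*,p}(X)$ embeds into $L^p(X)$ with norm control. By \eqref{eq-bmax} it is enough to treat, say, $h^{+,p}(X)$ and $h^p_\thz(X)$ and to recover $f$ as an $L^p$-function from a control on $\CM^+_0(f)$ or $\CM_{\thz,0}(f)$. The key tool is the inhomogeneous discrete Calder\'on reproducing formula, Theorem \ref{thm-idrf}: writing $f=\sum_{k=0}^N(\cdots)Q_{\az,1}^{k,m}(f)+\sum_{k=N+1}^\fz(\cdots)Q_kf(y_\az^{k,m})$, one observes that $|Q_{\az,1}^{k,m}(f)|\le\sup_{z\in Q_\az^{k,m}}|Q_kf(z)|$ and, recalling $Q_0=P_0$ and $Q_k=P_k-P_{k-1}$ from Definition \ref{def-1ieti}(iii), that $|Q_kf(y_\az^{k,m})|=|P_kf(y_\az^{k,m})-P_{k-1}f(y_\az^{k,m})|\ls\CM^+_0(f)(x)$ for $x$ near $y_\az^{k,m}$ — so all the coefficients in the reproducing formula are dominated, on the relevant cube, by $\CM^+_0(f)$. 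Plugging these dominations into the kernel estimates (i) and (ii) of Theorem \ref{thm-icrf} for $\wz Q_k$ and summing, one obtains a pointwise estimate of the form $|\langle f,\vz\rangle|\ls\CM(\CM^+_0(f)^r)(x)^{1/r}$ for a suitable $r<1$ when we take $\vz\in\go{\bz,\gz}$ normalized at scale $r_0$, hence $f^*_0\ls[\CM(\CM^+_0(f)^r)]^{1/r}$; since $p/r>1$, the $L^{p/r}$-boundedness of $\CM$ gives $\|f^*_0\|_{L^p(X)}\ls\|\CM^+_0(f)\|_{L^p(X)}$, and together with \eqref{eq-bmax} this closes the circle $h^{+,p}\hookrightarrow h^{*,p}\hookrightarrow h^{+,p}$, $h^p_\thz$. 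Finally, that the distribution $f$ with $f^*_0\in L^p(X)$ is actually represented by an $L^p$-function follows by feeding the same reproducing formula into a standard limiting argument (as in \cite[Theorem 3.4]{hhllyy18}): the partial sums converge in $(\go{\bz,\gz})'$, are uniformly bounded in $L^p(X)$ when $p>1$ by the above, and by weak-$*$ compactness of bounded sets in $L^p(X)$ (or, for $p=\fz$, of $L^\fz(X)=(L^1(X))'$) the limit lies in $L^p(X)$ and coincides with $f$, with $\|f\|_{L^p(X)}\ls\|f^*_0\|_{L^p(X)}$.

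I expect the main obstacle to be the bookkeeping around the first $N+1$ scales, where the local radial maximal function uses the ``cube-supremum'' term $\sum_{\az\in\CA_k}\sum_m\sup_{z\in Q_\az^{k,m}}|P_kf(z)|\mathbf 1_{Q_\az^{k,m}}$ rather than $|P_kf(x)|$ itself (cf.\ Remark \ref{rem-max}); one must check that replacing the value at a point by the supremum over the cube containing it costs only a bounded factor, which relies on the cubes $Q_\az^{k,m}$ having radius $\sim\dz^{k+j_0}\lesssim\dz^k$ and on the continuity/regularity of $P_k$ (Definition \ref{def-iati}(ii)(b)) together with the doubling condition \eqref{eq-doub}. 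A related subtlety is that, in the reverse direction, the coefficients $Q_{\az,1}^{k,m}(f)$ for $k\in\{0,\dots,N\}$ involve averages of $Q_kf=P_kf-P_{k-1}f$ over $Q_\az^{k,m}$, so one needs these averages to be dominated by $\CM^+_0(f)$ uniformly — this is exactly why the local radial maximal function is defined with the cube-supremum term, and matching the two is the crux of the argument. Everything else — the passage from the pointwise estimate to the $L^p$-bound via the Fefferman–Stein type inequality built from $\CM$, and the density/weak-$*$ limiting argument — is routine and parallels \cite[Theorem 3.4]{hhllyy18}, so I would simply cite that proof and indicate only the changes forced by the inhomogeneous structure.
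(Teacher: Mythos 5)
Your proposal is correct and follows essentially the route the paper intends: the paper omits the proof and refers to the global case \cite[Theorem 3.4]{hhllyy18}, and your adaptation (pointwise domination of all three local maximal functions by $\CM(f)$ via Lemma \ref{lem-add}(v) for the embedding of $L^p(X)$, the discrete reproducing formula of Theorem \ref{thm-idrf} with Lemma \ref{lem-est1}/\ref{lem-max}-type estimates for the reverse control of $f^*_0$ as in Proposition \ref{prop-rmax}, and the uniform bound $|P_kf|\le\CM^+_0(f)$ plus weak-$*$ compactness to identify $f$ with an $L^p$-function) is exactly the needed local version. The modifications you flag for the scales $k\in\{0,\ldots,N\}$ (the cube-supremum term and the averaged coefficients $Q_{\az,1}^{k,m}(f)$) are precisely the points where the local argument differs from the global one, and they are handled as you describe.
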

Next we consider the case $p\in(\om,1]$. Indeed, we have the following conclusion.
\begin{proposition}\label{prop-rmax}
Let $p\in(\om,1]$ and $\bz,\ \gz\in(\omega(1/p-1),\eta)$, with $\omega$ and $\eta$, respectively, as in
\eqref{eq-doub} and Definition \ref{def-iati}. Then, as subspaces of $(\go{\bz,\gz})'$,
$h^{*,p}(X)=h^{+,p}(X)$ in the sense of equivalent norms.
\end{proposition}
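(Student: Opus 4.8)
The plan is to prove the two inclusions $h^{*,p}(X)\hookrightarrow h^{+,p}(X)$ and $h^{+,p}(X)\hookrightarrow h^{*,p}(X)$ with control of quasi-norms. The first inclusion is immediate from \eqref{eq-bmax}, which gives $\CM^+_0(f)\le Cf^*_0$ pointwise and hence $\|f\|_{h^{+,p}(X)}\le C\|f\|_{h^{*,p}(X)}$; so the entire content lies in the reverse inequality $\|f^*_0\|_{L^p(X)}\ls\|\CM^+_0(f)\|_{L^p(X)}$. To this end, fix a $1$-$\exp$-IATI $\{P_k\}_{k=0}^\fz$, set $Q_0:=P_0$ and $Q_k:=P_k-P_{k-1}$ for $k\in\nn$, and apply the inhomogeneous discrete Calder\'on reproducing formula (Theorem \ref{thm-idrf}) to expand an arbitrary $f\in(\go{\bz,\gz})'$ with $\CM^+_0(f)\in L^p(X)$ as in \eqref{eq-idrf}.

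Next I would fix $x\in X$ and a test function $\vz\in\go{\bz,\gz}$ with $\|\vz\|_{\CG(x,r_0,\bz,\gz)}\le1$ for some $r_0\in(0,1]$, pair $\vz$ with the expansion \eqref{eq-idrf}, and estimate $|\langle f,\vz\rangle|$ term by term. For the ``finite'' part (the sum over $k\in\{0,\ldots,N\}$) the coefficients are the averages $Q_{\az,1}^{k,m}(f)$, each of which is bounded in absolute value by $\sup_{z\in Q_\az^{k,m}}|P_kf(z)|$ — this is exactly the quantity built into the definition of $\CM^+_0(f)$, and since $N$ is a fixed finite integer and the cubes $\{Q_\az^{k,m}\}$ at each level $k$ tile $X$ with bounded overlap, this part is controlled pointwise by a constant multiple of $\CM^+_0(f)(x)$ after integrating the kernel bounds for $\wz Q_k$ from Theorem \ref{thm-icrf}(i). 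For the ``infinite'' part (the sum over $k\ge N+1$) the coefficients are the discrete samples $Q_kf(y_\az^{k,m})=P_kf(y_\az^{k,m})-P_{k-1}f(y_\az^{k,m})$, each bounded by $2\sup_{k\ge N}|P_kf(y_\az^{k,m})|\le 2\CM^+_0(f)(y_\az^{k,m})$; then I would sum $\mu(Q_\az^{k,m})|\wz Q_k(x,y_\az^{k,m})||\vz(y_\az^{k,m})|$ against $\CM^+_0(f)(y_\az^{k,m})$. Using that $\vz\in\go{\bz,\gz}$ has vanishing integral, one upgrades the size bound on $\wz Q_k$ via its cancellation (for $k\ge N+1$, $\int_X\wz Q_k(\cdot,y)\,d\mu(y)=0$) and the regularity condition Theorem \ref{thm-icrf}(ii), producing an extra decay factor $\dz^{k\bz}$ (or $\dz^{k\gz}$) that makes the geometric series in $k$ converge; this is precisely where the hypothesis $\bz,\gz\in(\omega(1/p-1),\eta)$ is used, to absorb the growth coming from the volume factors and the $\ell^p$-to-$\ell^1$ passage below.

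The remaining step is to convert the resulting pointwise/discrete estimate into an $L^p$ bound. Replacing the discrete sums by integrals against $\CM^+_0(f)$ smoothed at scale $\dz^k$, one obtains an inequality of the shape
\begin{equation*}
f^*_0(x)\ls\sum_{k=0}^\fz\dz^{k\bz'}\int_X\frac{1}{V_{\dz^k}(x)+V(x,y)}\lf[\frac{\dz^k}{\dz^k+d(x,y)}\r]^{\gz'}\CM^+_0(f)(y)\,d\mu(y)
\end{equation*}
for suitable $\bz',\gz'$ with $\gz'>\omega(1/p-1)$; this is then handled by the standard $\ell^p$-subadditivity trick: since $p\le1$, one uses $(\sum a_k)^p\le\sum a_k^p$, applies the $L^{p}$ boundedness of the maximal-type operator $f\mapsto\int_X[V_{\dz^k}(x)+V(x,y)]^{-1}[\dz^k/(\dz^k+d(x,y))]^{\gz'}|f(y)|\,d\mu(y)$ controlled by $\CM$ (Lemma \ref{lem-add}(v)) together with the pointwise domination by $[\CM((\CM^+_0 f)^{p_0})]^{1/p_0}$ for some $p_0<p$ via the Fefferman--Stein / power-trick, and sums the geometric series in $\dz^{k\bz'}$. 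I expect the main obstacle to be the bookkeeping for the finite part $k\in\{0,\ldots,N\}$: because the reproducing formula there carries cube-averages rather than point values, one must carefully match $Q_{\az,1}^{k,m}(f)\mathbf{1}_{Q_\az^{k,m}}$ against the definition of $\CM^+_0(f)$ and check that integrating $\wz Q_k(x,y)$ over $y\in Q_\az^{k,m}$ and summing over $\az,m$ genuinely reproduces a controllable kernel — but since $N$ is fixed this part produces only finitely many terms and no convergence issue, so the real analytic work is entirely in the tail $k\ge N+1$, exactly as in the proof of the corresponding global statement in \cite{hhllyy18}.
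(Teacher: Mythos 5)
Your overall skeleton (the easy inclusion via \eqref{eq-bmax}, then the discrete reproducing formula of Theorem \ref{thm-idrf} plus kernel estimates and a maximal-function argument) is the same as the paper's, but three steps in the hard direction do not work as written. First, you assert that a test function $\vz\in\go{\bz,\gz}$ ``has vanishing integral''; it does not --- only $\GOO{\bz,\gz}$ carries the cancellation, while $f^*_0$ is defined using all of $\go{\bz,\gz}$. The gain $\dz^{(l-k)\eta'}$ for $k\le l$ in Lemma \ref{lem-est1} comes precisely from the cancellation of the test function (the operators $\wz Q_k$ with $k\in\{0,\ldots,N\}$ have integral $1$, not $0$), so for a small-scale test function without cancellation your decay in $k$ is simply not there; moreover \eqref{eq-idrf} converges in $(\GOO{\bz,\gz})'$, so pairing it against a non-cancellative $\vz$ needs justification. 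The paper proves \eqref{eq-H=} only for $\phi\in\GOO{\bz,\gz}$ and then, for general $\vz$, subtracts $\sigma P_l(x,\cdot)$ with $\sigma:=\int_X\vz\,d\mu$ and absorbs the extra term $|\sigma|\,|P_lf(x)|$ into $\CM^+_0(f)(x)$; this reduction, which uses the specific structure of $\CM^+_0$, is missing from your proposal.

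Second, in the tail $k\ge N+1$ you bound $|Q_kf(y_\az^{k,m})|\le 2\CM^+_0(f)(y_\az^{k,m})$ at the fixed sample point and then ``replace the discrete sums by integrals against $\CM^+_0(f)$''. That replacement is not legitimate: $\mu(Q_\az^{k,m})\,\CM^+_0(f)(y_\az^{k,m})$ is not comparable to $\int_{Q_\az^{k,m}}\CM^+_0(f)\,d\mu$ for an arbitrary fixed point of the cube, and to apply Lemma \ref{lem-max} the coefficient attached to $Q_\az^{k,m}$ must be dominated by $\inf_{z\in Q_\az^{k,m}}\CM^+_0(f)(z)$. The paper's fix is to exploit the freedom in Theorem \ref{thm-idrf}: choose $y_\az^{k,m}$ with $|Q_kf(y_\az^{k,m})|<\inf_{z\in Q_\az^{k,m}}|Q_kf(z)|+\ez$ and let $\ez\to 0^+$ at the end; this idea is absent from your argument. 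Third, even granting the continuous-integral inequality you display, Lemma \ref{lem-add}(v) only yields the bound $\CM(\CM^+_0(f))(x)$, and $\CM$ is not bounded on $L^p(X)$ for $p\le 1$; the pointwise domination by $[\CM((\CM^+_0 f)^{p_0})]^{1/p_0}$ cannot be recovered from the integral form (by Jensen's inequality the pointwise comparison between these two quantities goes the other way), so your $L^p$ conclusion does not follow. The estimate $f^*_0(x)\ls\{\CM([\CM^+_0(f)]^r)(x)\}^{1/r}$ with $\omega/(\omega+\eta')<r<p$ must be obtained at the discrete level via Lemma \ref{lem-max}, with coefficients that are cube infima; for the same reason the finite part $k\in\{0,\ldots,N\}$ is controlled by this maximal expression, not ``pointwise by a constant multiple of $\CM^+_0(f)(x)$'' as you claim.
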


To show Proposition \ref{prop-rmax}, we need two technical lemmas.

\begin{lemma}[{\cite[Lemma 3.7]{hhllyy18} or \cite[Lemma 3.12]{gly08}}]\label{lem-max}
Let all the notation be as in Theorem \ref{thm-idrf}. Let $k,\ k'\in\zz_+$,
$\{a_\az^{k,m}\in\cc:\ k\in\zz_+,\ \az\in\CA_k,\ m\in\{1,\ldots,N(k,\az)\}\}$,
$\gz\in(0,\eta)$ and $r\in({\omega}/{(\omega+\gz)},1]$, with $\omega$ and $\eta$, respectively, as in
\eqref{eq-doub} and Definition \ref{def-iati}. Then there exists a positive constant $C$,
independent of $k$, $k'$, $y_\az^{k,m}\in Q_\az^{k,m}$ and $a_\az^{k,m}$
with $k\in\zz_+$, $\az\in\CA_k$ and $m\in\{1,\ldots,N(k,\az)\}$, such that, for any $x\in X$,
\begin{align*}
&\sum_{\az\in\CA_k}\sum_{m=1}^{N(k,\az)}\mu\lf(Q_\az^{k,m}\r)\frac{1}{V_{\dz^{k\wedge k'}}(x)+V(x,y_\az^{k,m})}
\lf[\frac{\dz^{k\wedge k'}}{\dz^{k\wedge k'}+d(x,y_\az^{k,m})}\r]^\gz\lf|a_\az^{k,m}\r|\\
&\quad\le C\dz^{[k-(k\wedge k')]\omega(1-1/r)}\lf[\CM\lf(\sum_{\az\in\CA_k}\sum_{m=1}^{N(k,\az)}
\lf|a_\az^{k,m}\r|^r\mathbf{1}_{Q_\az^{k,m}}\r)(x)\r]^{1/r},
\end{align*}
where $\CM$ is as in \eqref{2.1x}.
\end{lemma}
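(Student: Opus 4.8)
The plan is to reduce, via the elementary embedding $\ell^r\hookrightarrow\ell^1$, to estimating the $r$-th powers, and then to run an annular decomposition around $x$ in which, on each shell, the measure $\mu(Q_\az^{k,m})$ is compared to the measure of a ball centred at $x$. Write $\ell:=k\wedge k'$ and, to lighten notation, write $\sum_i$ for $\sum_{\az\in\CA_k}\sum_{m=1}^{N(k,\az)}$ with $Q_i,z_i,y_i,a_i$ denoting $Q_\az^{k,m},z_\az^{k,m},y_\az^{k,m},a_\az^{k,m}$; by the construction preceding Theorem~\ref{thm-idrf} together with Lemma~\ref{cube}(iii)--(iv), the $\{Q_i\}_i$ are exactly the dyadic cubes of generation $k+j_0$, hence pairwise disjoint, covering $X$, with $\diam Q_i\ls\dz^{k+j_0}\le\dz^k\le\dz^\ell$. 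Set
$$
w_i:=\frac1{V_{\dz^\ell}(x)+V(x,y_i)}\lf[\frac{\dz^\ell}{\dz^\ell+d(x,y_i)}\r]^{\gz},\qquad g:=\sum_i|a_i|^r\mathbf{1}_{Q_i},
$$
so the left-hand side of the asserted estimate equals $\sum_i\mu(Q_i)w_i|a_i|$ and the right-hand side is $C\dz^{(k-\ell)\omega(1-1/r)}[\CM(g)(x)]^{1/r}$; we may assume $\CM(g)(x)<\fz$. Since $r\in(0,1]$, the embedding $\ell^r\hookrightarrow\ell^1$, i.e.\ $\sum_i t_i\le(\sum_i t_i^r)^{1/r}$ for $t_i\ge0$, gives $\sum_i\mu(Q_i)w_i|a_i|\le[\sum_i(\mu(Q_i)w_i)^r|a_i|^r]^{1/r}$; so, recalling $(r-1)/r=1-1/r$, it suffices to prove
$$
\sum_i\lf(\mu(Q_i)w_i\r)^r|a_i|^r\ls\dz^{(k-\ell)\omega(r-1)}\CM(g)(x).
$$

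To establish this I would decompose the index set into $A_0:=\{i:\ d(x,y_i)<\dz^\ell\}$ and $A_j:=\{i:\ 2^{j-1}\dz^\ell\le d(x,y_i)<2^j\dz^\ell\}$ for $j\in\nn$. Fix $j\in\zz_+$ and $i\in A_j$. Since $\diam Q_i\ls\dz^\ell\le 2^j\dz^\ell$ and $d(x,y_i)<2^j\dz^\ell$, the quasi-triangle inequality places $Q_i$ inside a ball $B(x,c_1 2^j\dz^\ell)$ with $c_1$ depending only on $A_0$ and the constants of Lemma~\ref{cube}; also $\dz^\ell+d(x,y_i)\sim2^j\dz^\ell$, so by Lemma~\ref{lem-add}(i) and \eqref{eq-doub} we get $V_{\dz^\ell}(x)+V(x,y_i)\sim\mu(B(x,2^j\dz^\ell))$ and $w_i\sim2^{-j\gz}/\mu(B(x,2^j\dz^\ell))$. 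Using $B(z_i,c_\natural\dz^{k+j_0})\subset Q_i$, $\dz^{k+j_0}\sim\dz^k$, Lemma~\ref{lem-add}(i) and \eqref{eq-doub} once more yields $V_{\dz^k}(y_i)\ls\mu(Q_i)$, and then $d(x,y_i)\ls 2^j\dz^\ell$ with $\dz^\ell\ge\dz^k$ and \eqref{eq-doub} gives $\mu(B(x,2^j\dz^\ell))\ls(2^j\dz^\ell/\dz^k)^{\omega}\mu(Q_i)$. Since $r-1\le0$, for $i\in A_j$ this produces
$$
\lf(\mu(Q_i)w_i\r)^r|a_i|^r=\mu(Q_i)^{r-1}w_i^r\,\mu(Q_i)|a_i|^r\ls\lf(\frac{\dz^k}{2^j\dz^\ell}\r)^{\omega(r-1)}\frac{2^{-j\gz r}}{\mu(B(x,2^j\dz^\ell))}\,\mu(Q_i)|a_i|^r.
$$

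Because the $Q_i$ with $i\in A_j$ are disjoint and $\bigcup_{i\in A_j}Q_i\subset B(x,c_1 2^j\dz^\ell)\ni x$, summing over $i\in A_j$ and invoking \eqref{2.1x} and \eqref{eq-doub} gives $\sum_{i\in A_j}\mu(Q_i)|a_i|^r=\int_{\bigcup_{i\in A_j}Q_i}g\,d\mu\ls\mu(B(x,2^j\dz^\ell))\CM(g)(x)$; hence, using $(\dz^k/(2^j\dz^\ell))^{\omega(r-1)}=\dz^{(k-\ell)\omega(r-1)}2^{j\omega(1-r)}$,
$$
\sum_{i\in A_j}\lf(\mu(Q_i)w_i\r)^r|a_i|^r\ls\dz^{(k-\ell)\omega(r-1)}\,2^{-j[\gz r-\omega(1-r)]}\,\CM(g)(x).
$$
The hypothesis $r\in(\omega/(\omega+\gz),1]$ is exactly equivalent to $\gz r-\omega(1-r)>0$, so $\sum_{j=0}^\fz 2^{-j[\gz r-\omega(1-r)]}<\fz$; summing the last display over $j\in\zz_+$ gives the reduced estimate, and hence the lemma. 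All constants here depend only on $A_0$, $C_{(\mu)}$, $\dz$, $j_0$, $\gz$, $r$ and the constants of Lemma~\ref{cube}, so they are independent of $k$, $k'$, of the points $y_\az^{k,m}$, and of $\{a_\az^{k,m}\}$, as required.

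The step I expect to be most delicate is the ordering: one must pass through $\ell^r\hookrightarrow\ell^1$ and reduce to $r$-th powers \emph{before} performing the annular decomposition. If one instead estimates $|a_i|\le(\mu(Q_i)^{-1}\int_{Q_i}g\,d\mu)^{1/r}$ and extracts $[\CM(g)(x)]^{1/r}$ cube by cube, summing over the shells afterwards leaves the sum $\sum_{i\in A_j}\mu(Q_i)^{1-1/r}$, which can be controlled only by $(\inf_{i\in A_j}\mu(Q_i))^{-1/r}\mu(B(x,c_1 2^j\dz^\ell))\ls(2^j\dz^\ell/\dz^k)^{\omega/r}\mu(B(x,2^j\dz^\ell))^{1-1/r}$; this yields the shell decay $2^{-j(\gz-\omega/r)}$ and hence only the strictly smaller range $r>\omega/\gz$. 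The sharp range $r>\omega/(\omega+\gz)$ is recovered precisely because, in the correct order, the measure factor $\mu(Q_i)^{r-1}$ — which is where the upper dimension $\omega$ enters, since $r-1\le0$ — is paired on each shell with the \emph{entire} extracted weight $w_i^r\sim2^{-j\gz r}$, giving the stronger decay $2^{-j[\gz r-\omega(1-r)]}$. Everything else is the routine interplay of the disjointness/covering properties of the cubes in Lemma~\ref{cube}, the elementary estimates of Lemma~\ref{lem-add}, the doubling inequality \eqref{eq-doub}, and the definition \eqref{2.1x} of $\CM$.
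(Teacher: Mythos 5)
Your proof is correct: the paper does not prove this lemma itself (it is quoted from \cite[Lemma 3.7]{hhllyy18} and \cite[Lemma 3.12]{gly08}), and your argument --- the $\ell^r\hookrightarrow\ell^1$ reduction to $r$-th powers combined with the annular decomposition about $x$, the comparison of $\mu(Q_\az^{k,m})$ with $\mu(B(x,2^j\dz^{k\wedge k'}))$ via \eqref{eq-doub}, and the shell decay $2^{-j[\gz r-\omega(1-r)]}$ whose summability is exactly the condition $r>\omega/(\omega+\gz)$ --- is essentially the standard proof given in those references.
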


\begin{lemma}\label{lem-est1}
Let $N\in\nn$, $\{\wz Q_k\}_{k=0}^\fz$ be a sequence of bounded operators on $L^2(X)$ and
$\bz,\ \gz\in(0,\eta]$, with $\eta$ as in Definition \ref{def-1ieti}. Assume that there exists a positive
constant $C$ such that $\{\wz Q_k\}_{k=0}^\fz$ satisfies (i), (ii) and (iii) of Theorem \ref{thm-icrf}. Then,
for any fixed $\eta'\in(0,\bz\wedge\gz)$, there exists a positive constant $C_1$, depending on $N$,
such that, for any $x,\ y\in X$, $k,\ l\in\zz_+$ and $\phi\in\CG(x,\dz^l,\bz,\gz)$ satisfying
$\int_X\phi(z)\,d\mu(z)=0$ when $l\in\{N+1,N+2,\ldots\}$,
\begin{equation}\label{eq-est1}
\lf|\wz Q_k^*\phi(y)\r|\le C_1\|\phi\|_{\CG(x,\dz^l,\bz,\gz)}\dz^{|k-l|\eta'}
\frac 1{V_{\dz^{k\wedge l}}(x)+V(x,y)}\lf[\frac{\dz^{k\wedge l}}{\dz^{k\wedge l}+d(x,y)}\r]^\gz.
\end{equation}
Here and hereafter, $\wz Q_k^*$ denotes the \emph{dual operator} of $\wz Q_k$ on $L^2(X)$.
\end{lemma}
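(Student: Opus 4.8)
The plan is to estimate $\wz Q_k^*\phi(y)=\int_X \wz Q_k(z,y)\phi(z)\,d\mu(z)$ by exploiting the cancellation available either in $\wz Q_k$ (when $k\in\{N+1,N+2,\dots\}$, by Theorem \ref{thm-icrf}(iii)) or in $\phi$ (when $l\in\{N+1,N+2,\dots\}$, by hypothesis), and by splitting into the two regimes $k\le l$ and $k>l$. First I would treat the case $k\ge l$. Writing $\wz Q_k^*\phi(y)=\int_X[\wz Q_k(z,y)-\wz Q_k(x',y)]\phi(z)\,d\mu(z)$ for a suitable reference point — using the cancellation of $\wz Q_k$ when $k>N$, or subtracting nothing and using the cancellation $\int_X\phi\,d\mu=0$ together with the regularity of $\phi$ when only $l>N$ — one gains a factor $[d(z,x)/(\dz^k+d(x,y))]^{\bz}$ (or the analogous factor from $\phi$'s regularity). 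Here the size of $\phi$, namely $\|\phi\|_{\CG(x,\dz^l,\bz,\gz)}$ times the standard kernel bound centered at $x$ with scale $\dz^l$, is integrated against the regularity estimate for $\wz Q_k$ from Theorem \ref{thm-icrf}(ii); invoking Lemma \ref{lem-add}(ii)--(iv) to carry out the $z$-integration, one produces the decay $\dz^{(k-l)\eta'}$ from the mismatch of scales $\dz^l$ versus $\dz^k$, for any $\eta'<\bz\wedge\gz$, together with the claimed kernel factor at scale $\dz^{k\wedge l}=\dz^l$ centered at $x$. One must be slightly careful that the regularity condition of $\wz Q_k$ only applies when $d(z,x')\le(2A_0)^{-1}[\dz^k+d(x',y)]$, so the region where this fails is handled separately by the bare size estimates of both kernels, where $d(z,y)$ and $\dz^k+d(x,y)$ are comparable and the integral is easily controlled.

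Next I would handle the case $k<l$. Now the roles are reversed: the relevant small scale is $\dz^l$, and I would use the regularity (or cancellation) of $\phi$ rather than that of $\wz Q_k$. Since $\phi\in\CG(x,\dz^l,\bz,\gz)$ is concentrated near $x$ at scale $\dz^l$, one splits the integral into $d(z,x)\le(2A_0)^{-1}[\dz^l+\cdots]$ and its complement. On the near region, when $l>N$ we subtract $\int_X\wz Q_k(z,y)\,d\mu(z)\cdot\phi(x)$ — which is harmless because $\int_X\phi\,d\mu=0$ lets us instead write the integral as $\int_X \wz Q_k(z,y)[\phi(z)-\phi(x)]\,d\mu(z)$ wait — more precisely one uses $\int_X\wz Q_k(z,y)\phi(z)\,d\mu(z)=\int_X[\wz Q_k(z,y)-\wz Q_k(x,y)]\phi(z)\,d\mu(z)$ when $\wz Q_k$ has cancellation ($k>N$, impossible here since $k<l$ forces nothing), so instead one genuinely uses $\int\phi=0$ and the size of $\wz Q_k$ combined with... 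Let me restate cleanly: when $l>N$ one writes $\wz Q_k^*\phi(y)=\int_X \wz Q_k(z,y)\phi(z)\,d\mu(z)$ and uses $\int_X\phi=0$ to subtract $\wz Q_k(x,y)\int_X\phi\,d\mu=0$, obtaining $\int_X[\wz Q_k(z,y)-\wz Q_k(x,y)]\phi(z)\,d\mu(z)$; the regularity of $\wz Q_k$ (valid for $d(z,x)$ small relative to $\dz^k+d(x,y)$, which holds on the support-essential region since $\dz^l\ll\dz^k$) yields a factor $[d(z,x)/(\dz^k+d(x,y))]^{\bz}\le \dz^{(l-k)\bz}[\cdots]$, and integrating the size of $\phi$ over $z$ via Lemma \ref{lem-add} gives $\dz^{(l-k)\eta'}$ times the kernel factor at scale $\dz^{k}=\dz^{k\wedge l}$ centered at $x$. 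When $l\le N$, no cancellation of $\phi$ is assumed, but then $k<l\le N$ forces $|k-l|\le N$, so $\dz^{|k-l|\eta'}\ge\dz^{N\eta'}$ is bounded below; here one simply estimates $|\wz Q_k^*\phi(y)|$ by $\|\phi\|_{\CG(x,\dz^l,\bz,\gz)}$ times $\int_X|\wz Q_k(z,y)|\frac{1}{V_{\dz^l}(x)+V(x,z)}[\dz^l/(\dz^l+d(x,z))]^\gz\,d\mu(z)$, and a routine application of Lemma \ref{lem-add} (comparing the two bump functions at scales $\dz^k$ and $\dz^l$ which are comparable up to $\dz^N$) produces the desired bound with the constant $C_1$ depending on $N$.

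The main obstacle will be the bookkeeping in the case $l\le N$ (equivalently, when no cancellation of $\phi$ is available and one cannot extract genuine exponential decay in $|k-l|$): one must verify that the two size bumps — one at scale $\dz^k$ coming from $\wz Q_k$, one at scale $\dz^l$ coming from $\phi$, with $|k-l|\le N$ — combine, after the $z$-integration, into a single bump at scale $\dz^{k\wedge l}$ centered at $x$ with exponent $\gz$, at the cost of a constant depending on $N$ (through $\dz^{-N\omega}$ type factors from \eqref{eq-doub}). This is where the hypothesis that $C_1$ may depend on $N$ is essential. The remaining regime $k>N$ (where $\wz Q_k$ itself has cancellation) can be folded into the $k\ge l$ analysis above, since then we may always subtract $\wz Q_k(x',y)$ freely; and the genuinely delicate interplay — needing cancellation of \emph{at least one} of the two kernels whenever $\max\{k,l\}>N$ — is exactly guaranteed by the hypotheses, so no case is left uncovered. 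Throughout, the standard trick of choosing the subtraction point and the near/far splitting radius in terms of $\dz^{k\wedge l}+d(x,y)$, then summing geometric series, reduces everything to the integral estimates of Lemma \ref{lem-add}.
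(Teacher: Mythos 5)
Your overall architecture (splitting according to the relative size of $k$ and $l$ and to which factor carries cancellation, a near/far decomposition, Lemma \ref{lem-add} for the $z$-integrations, and an $N$-dependent constant when $k,\,l\le N$) is the same as the paper's, and your treatment of the case $k<l$ with $l>N$ (cancellation of $\phi$, regularity of $\wz Q_k$, tails on the far region) and of the case $k,\,l\le N$ is essentially correct. But there is a genuine gap in the case $k\ge l$ with $k>N$, i.e.\ when $\wz Q_k$ lives at the finer scale $\dz^k\le\dz^l$: you claim the decay $\dz^{(k-l)\eta'}$ comes from pairing the regularity of $\wz Q_k$ with the size of $\phi$. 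That pairing produces the factor $[d(z,x)/(\dz^k+d(x,y))]^{\bz}$ with $d(z,x)$ of order $\dz^l$ on the essential support of $\phi$, hence a factor of order $(\dz^l/\dz^k)^{\bz}=\dz^{-(k-l)\bz}$, which is growth, not decay; this mechanism is correct only in the opposite regime $k\le l$. In the regime $k\ge l$, $k>N$, one must instead use the cancellation $\int_X\wz Q_k(z,y)\,d\mu(z)=0$ to write $\wz Q_k^*\phi(y)=\int_X\wz Q_k(z,y)[\phi(z)-\phi(y)]\,d\mu(z)$, and then the \emph{regularity of $\phi$} gives $[d(z,y)/(\dz^l+d(x,y))]^{\bz}$, which, integrated against the size of $\wz Q_k(\cdot,y)$ concentrated at scale $\dz^k$ around $y$, yields the factor $(\dz^k/\dz^l)^{\eta'}=\dz^{(k-l)\eta'}$ (this is the paper's third case). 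Your proposal never states this pairing, and the case $l\le N<k$ — where $|k-l|$ is unbounded and only $\wz Q_k$ has cancellation — cannot be ``folded into'' the analysis you describe.

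A related confusion runs through the write-up: the identity $\wz Q_k^*\phi(y)=\int_X[\wz Q_k(z,y)-\wz Q_k(x',y)]\phi(z)\,d\mu(z)$ is justified by $\int_X\phi\,d\mu=0$ (so it is available exactly when $l>N$), not by the cancellation of $\wz Q_k$; conversely, the cancellation of $\wz Q_k$ (available for $k>N$) is what lets you subtract $\phi$ at a reference point. Keeping this bookkeeping straight is precisely what dictates the three cases of the paper — $k,\,l\le N$; $l>N$ with $k\le l$; $k>N$ with $k>l$ — and guarantees that whenever genuine decay is needed, the factor at the finer scale is the one with cancellation. With the pairings corrected as above, the rest of your outline (near/far split at radius comparable to the relevant scale plus $d(x,y)$, Lemma \ref{lem-add} for the integrals, $N$-dependent constants when $k,\,l\le N$) does go through and reproduces the paper's proof.
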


\begin{proof}
Let all the notation be the same as in this lemma. Without loss of generality, we may
assume $\|\phi\|_{\CG(x,\dz^l,\bz,\gz)}=1$. Then we consider three case.

{\it Case 1) $k,\ l\in\{0,\ldots,N\}$}. In this case, we have $\dz^k\sim 1\sim\dz^l\sim\dz^{k\wedge l}\sim
\dz^{|k-l|}$. Observe that
$$
X\subset\{z\in X:\ d(z,y)\ge(2A_0)^{-1}d(x,y)\}\cup\{z\in X:\ d(z,x)\ge(2A_0)^{-1}d(x,y)\}.
$$
Therefore, by this, the definition of $\wz Q_k^*\phi$ and Theorem \ref{thm-icrf}(i), we conclude
that, for any fixed $\eta'\in(0,\bz\wedge\gz)$ and for any $y\in X$,
\begin{align*}
\lf|\wz Q_k^*\phi(y)\r|&=\lf|\int_X\wz Q_k(z,y)\phi(y)\,d\mu(z)\r|\\
&\ls\int_X\frac 1{V_{\dz^k}(z)+V(z,y)}\lf[\frac{\dz^k}{\dz^k+d(z,y)}\r]^\gz
\frac{1}{V_{\dz^l}(x)+V(x,z)}\lf[\frac{\dz^l}{\dz^l+d(x,z)}\r]^\gz\,d\mu(z)\\
&\ls\frac 1{V_{1}(z)+V(x,y)}\lf[\frac{1}{1+d(x,y)}\r]^\gz\lf\{\int_{d(z,y)\ge(2A_0)^{-1}d(x,y)}
\frac{1}{V_{\dz^l}(x)+V(x,z)}\lf[\frac{\dz^l}{\dz^l+d(x,z)}\r]^\gz\,d\mu(z)\r.\\
&\quad+\lf.
\int_{d(z,x)\ge(2A_0)^{-1}d(x,y)}\frac 1{V_{\dz^k}(z)+V(z,y)}\lf[\frac{\dz^k}{\dz^k+d(z,y)}\r]^\gz
\,d\mu(z)\r\}\\
&\ls\dz^{|l-k|\bz}\frac 1{V_{\dz^{k\wedge l}}(x)+V(x,y)}
\lf[\frac{\dz^{k\wedge l}}{\dz^{k\wedge l}+d(x,y)}\r]^\gz,
\end{align*}
which implies \eqref{eq-est1} in this case.

{\it Case 2) $l\in\{N+1,N+2,\ldots\}$ and $k\le l$.} In this case, we have $\dz^k\ge\dz^l$. By the
cancellation of $\phi$, we conclude that, for any $y\in X$,
\begin{align*}
\lf|\wz Q_k^*\phi(y)\r|&\le\int_X\lf|\wz Q_k(x,y)-\wz Q_k(z,y)\r||\phi(z)|\,d\mu(z)\\
&=\int_{d(z,x)\le(2A_0)^{-1}[\dz^k+d(x,y)]}\lf|\wz Q_k(x,y)-\wz Q_k(z,y)\r||\phi(z)|\,d\mu(z)+
\int_{d(z,x)>(2A_0)^{-1}[\dz^k+d(x,y)]}\cdots\\
&=:\RA_1+\RA_2.
\end{align*}
For the term $\RA_1$, by the size condition of $\phi$ and Theorem \ref{thm-icrf}(ii), we have
\begin{align*}
\RA_1&\ls\frac 1{V_{\dz^k}(x)+V(x,y)}\lf[\frac{\dz^k}{\dz^k+d(x,y)}\r]^\gz\\
&\quad\times\int_{d(x,z)\le(2A_0)^{-1}[\dz^k+d(x,y)]}\lf[\frac{d(x,z)}{\dz^k+d(x,y)}\r]^\eta
\frac 1{V_{\dz^l}(x)+V(x,z)}\lf[\frac{\dz^l}{\dz^l+d(x,z)}\r]^\gz\,d\mu(z)\\
&\ls\dz^{(l-k)\eta'}\frac 1{V_{\dz^k}(x)+V(x,y)}\lf[\frac{\dz^k}{\dz^k+d(x,y)}\r]^\gz
\int_X\frac 1{V_{\dz^l}(x)+V(x,z)}\lf[\frac{\dz^l}{\dz^l+d(x,z)}\r]^{\gz-\eta'}\,d\mu(z)\\
&\sim\dz^{(l-k)\eta'}\frac 1{V_{\dz^k}(x)+V(x,y)}\lf[\frac{\dz^k}{\dz^k+d(x,y)}\r]^\gz.
\end{align*}
For the term $\RA_2$, by the size conditions of $\wz Q_k$ and $\phi$, we conclude that
\begin{align*}
\RA_2&\le\lf|\wz Q_k(x,y)\r|\int_{d(z,x)>(2A_0)^{-1}[\dz^k+d(x,y)]}|\phi(z)|\,d\mu(z)+
\int_{d(z,x)>(2A_0)^{-1}[\dz^k+d(x,y)]}\lf|\wz Q_k(z,y)\phi(z)\r|\,d\mu(z)\\
&\ls\dz^{(l-k)\gz}\frac 1{V_{\dz^k}(x)+V(x,y)}\lf[\frac{\dz^k}{\dz^k+d(x,z)}\r]^\gz
\lf\{\int_{d(z,x)>(2A_0)^{-1}\dz^k}\frac 1{V(x,z)}\lf[\frac{\dz^k}{d(x,z)}\r]^\gz\,d\mu(z)\r.\\
&\quad\lf.+\int_X\frac 1{V_{\dz^k}(z)+V(z,y)}\lf[\frac{\dz^k}{\dz^k+d(z,y)}\r]^\gz\,d\mu(z)\r\}\\
&\ls\dz^{(l-k)\gz}\frac 1{V_{\dz^k}(x)+V(x,y)}\lf[\frac{\dz^k}{\dz^k+d(x,z)}\r]^\gz.
\end{align*}
Combining the estimates of $\RA_1$ and $\RA_2$, we obtain \eqref{eq-est1} in this case.

{\it Case 3) $k\in\{N+1,N+2,\ldots\}$ and $k>l$.} In this case, we have $\dz^l>\dz^k$. Similarly, by the
cancellation of $\wz Q_k$, we conclude that, for any $y\in X$,
\begin{align*}
\lf|\wz Q_k^*\phi(y)\r|&\le\int_X\lf|\wz Q_k(z,y)\r||\phi(z)-\phi(y)|\,d\mu(z)\\
&=\int_{d(z,y)\le(2A_0)^{-1}[\dz^l+d(x,y)]}\lf|\wz Q_k(z,y)\r||\phi(z)-\phi(y)|\,d\mu(z)+
\int_{d(z,y)>(2A_0)^{-1}[\dz^l+d(x,y)]}\cdots\\
&=:\RB_1+\RB_2.
\end{align*}

For the term $\RB_1$, from the regularity condition of $\phi$ and Theorem \ref{thm-icrf}(i), we deduce that
\begin{align*}
\RB_1&\ls\frac 1{V_{\dz^l}(x)+V(x,y)}\lf[\frac{\dz^l}{\dz^l+d(x,y)}\r]^\gz\\
&\quad\times\int_{d(y,z)\le(2A_0)^{-1}[\dz^l+d(x,y)]}\lf[\frac{d(y,z)}{\dz^l+d(x,y)}\r]^\eta
\frac 1{V_{\dz^k}(y)+V(y,z)}\lf[\frac{\dz^k}{\dz^k+d(y,z)}\r]^\gz\,d\mu(z)\\
&\ls\dz^{(k-l)\eta'}\frac 1{V_{\dz^l}(x)+V(x,y)}\lf[\frac{\dz^l}{\dz^l+d(x,y)}\r]^\gz
\int_X\frac 1{V_{\dz^k}(y)+V(y,z)}\lf[\frac{\dz^k}{\dz^k+d(y,z)}\r]^{\gz-\eta'}\,d\mu(z)\\
&\sim\dz^{(l-k)\eta'}\frac 1{V_{\dz^l}(x)+V(x,y)}\lf[\frac{\dz^l}{\dz^l+d(x,y)}\r]^\gz.
\end{align*}
To establish the desired estimate of $\RB_2$, we use the size conditions of $\wz Q_k$ and $\phi$ to conclude
that
\begin{align*}
\RB_2&\le\lf|\phi(y)\r|\int_{d(z,y)>(2A_0)^{-1}[\dz^l+d(x,y)]}\lf|\wz Q_k(z,y)\r|\,d\mu(z)+
\int_{d(z,y)>(2A_0)^{-1}[\dz^l+d(x,y)]}\lf|\wz Q_k(z,y)\phi(z)\r|\,d\mu(z)\\
&\ls\dz^{(k-l)\gz}\frac 1{V_{\dz^l}(x)+V(x,y)}\lf[\frac{\dz^l}{\dz^l+d(x,z)}\r]^\gz
\lf\{\int_{d(z,y)>(2A_0)^{-1}\dz^l}\frac 1{V(y,z)}\lf[\frac{\dz^l}{d(y,z)}\r]^\gz\,d\mu(z)\r.\\
&\quad\lf.+\int_X\frac 1{V_{\dz^l}(x)+V(x,z)}\lf[\frac{\dz^l}{\dz^l+d(x,z)}\r]^\gz\,d\mu(z)\r\}\\
&\ls\dz^{(k-l)\gz}\frac 1{V_{\dz^l}(x)+V(x,y)}\lf[\frac{\dz^l}{\dz^l+d(x,z)}\r]^\gz.
\end{align*}
Thus, by the estimates of $\RB_1$ and $\RB_2$, we find that \eqref{eq-est1} also holds true in this case.

Combining the previous three cases altogether, we obtain \eqref{eq-est1} and hence complete the proof of Lemma
\ref{lem-est1}.
\end{proof}

Now we show Proposition \ref{prop-rmax}. In what follows, we use $\ez\to 0^+$ to denote $\ez\in(0,\fz)$ and
$\ez\to 0$.

\begin{proof}[Proof of Proposition \ref{prop-rmax}]
Let $p\in(\om,1]$ and $\bz,\ \gz\in(\omega(1/p-1),\eta)$. By \eqref{eq-bmax}, we obviously have
$h^{*,p}(X)\subset h^{+,p}(X)$.

It remains to show $h^{+,p}(X)\subset h^{*,p}(X)$. Suppose that $\{P_k\}_{k=0}^\fz$ is a $1$-$\exp$-IATI and
$f\in h^{*,p}(X)$. Then $f\in(\go{\bz,\gz})'$, $\CM^+_0(f)\in L^p(X)$ and, to obtain the desired
inclusion relations, it suffices to show that, for any
$x\in X$ and $\phi\in\GOO{\bz,\gz}$ satisfying $\|\phi\|_{\CG(x,\dz^l,\bz,\gz)}\le 1$ for some $l\in\zz_+$,
\begin{equation}\label{eq-H=}
|\langle f,\phi\rangle|\ls\lf\{\CM\lf(\lf[\CM^+_0(f)\r]^r\r)(x)\r\}^{1/r}
\end{equation}
holds true for some $r\in(0,p)$, where $\CM$ is as in \eqref{2.1x}.

Indeed, assuming this for the moment, for any $x\in X$ and $\vz\in\go{\bz,\gz}$ satisfying
$\|\vz\|_{\CG(x,r,\bz,\gz)}\le 1$ for some $r\in(0,1]$, choose
$l\in\zz$ such that $\dz^{l+1}<r\le\dz^l$. Then $\|\vz\|_{\CG(x,\dz^l,\bz,\gz)}\ls 1$. Let
$\sigma:=\int_X\phi(z)\,d\mu(z)$ and $\phi(\cdot):=\vz(\cdot)-\sigma P_l(x,\cdot)$. By the size conditions
of $\vz$ and $P_l$, and \cite[Lemma 3.6]{hhllyy18}, we find that
$|\sigma|\ls 1$, $\phi\in\GOO{\bz,\gz}$ and
$$
\|\phi\|_{\CG(x,\dz^l,\bz,\gz)}\ls\|\vz\|_{\CG(x,\dz^l,\bz,\gz)}+|\sigma|\cdot
\|P_k(x,\cdot)\|_{\CG(x,\dz^l,\bz,\gz)}\sim 1.
$$
Therefore, by \eqref{eq-H=}, we conclude that
\begin{align*}
|\langle f,\vz \rangle|=|\langle f,\phi(\cdot)+\sigma P_l(x,\cdot)\rangle|
\le|\langle f,\phi\rangle|+|\sigma||P_lf(x)|\ls\lf\{\CM\lf(\lf[\CM^+_0(f)\r]^r\r)(x)\r\}^{1/r}+\CM_0^+(f)(x).
\end{align*}
By the arbitrariness of $\vz$ and the boundedness of $\CM$ on $L^{p/r}(X)$ (see, for instance
\cite[(3.6)]{cw77}), we obtain $\|f^*\|_{L^p(X)}\ls\|\CM_0^+(f)\|_{L^p(X)}$. This finishes the proof of
Proposition \ref{prop-rmax} under the assumption \eqref{eq-H=}.

Now we show \eqref{eq-H=}. Suppose $x\in X$ and $\phi\in\GOO{\bz,\gz}$ with
$\|\phi\|_{\CG(x,\dz^l,\bz,\gz)}\le 1$. Let $Q_k:=P_k-P_{k-1}$ for any $k\in\nn$ and $Q_0:=P_0$. Then, by
Definition \ref{def-1ieti}, we find that $\{Q_k\}_{k=0}^\fz$ is an $\exp$-IATI and hence, by Theorem
\ref{thm-idrf}, we have
\begin{align}\label{eq-max1}
|\langle f,\phi\rangle|
&\le\sum_{k=0}^N\sum_{\az\in\CA_k}\sum_{m=1}^{N(k,\az)}\lf|\int_{Q_\az^{k,m}}\wz{Q}_k^*\phi(y)\,d\mu(y)
Q_{\az,1}^{k,m}(f)\r|\\
&\quad+\sum_{k=N+1}^\fz\sum_{\az\in\CA_k}\sum_{m=1}^{N(k,\az)}\mu\lf(Q_\az^{k,m}\r)
\lf|\wz{Q}_k^*\phi\lf(y_\az^{k,m}\r)Q_kf\lf(y_\az^{k,m}\r)\r|=:\RY_1+\RY_2,\noz
\end{align}
where $N$, $\wz Q_k$, $Q_{\az,1}^{k,m}$ and $y_\az^{k,m}$ are defined as in Theorem \ref{thm-idrf}
and $\wz Q_k^*$ denotes the \emph{dual operator} of $\wz Q_k$ on $L^2(X)$.

We first estimate $\RY_1$. Indeed, by Lemma \ref{lem-est1}, we find that, for any fixed
$\eta'\in(0,\bz\wedge\gz)$, any $k\in\{0,\ldots,N\}$, $\az\in\CA_k$, $m\in\{1,\ldots,N(k,\az)\}$ and
$y\in Q_\az^{k,m}$
\begin{align}\label{eq-qk1}
\lf|\wz Q_k^*\phi(y)\r|&\ls\dz^{|l-k|\eta'}\frac 1{V_{\dz^{k\wedge l}}(x)+V(x,y)}
\lf[\frac{\dz^{k\wedge l}}{\dz^{k\wedge l}+d(x,y)}\r]^\gz\\
&\sim \dz^{|l-k|\eta'}\frac 1{V_{\dz^{k\wedge l}}(x)+V(x,z_\az^{k,m})}
\lf[\frac{\dz^{k\wedge l}}{\dz^{k\wedge l}+d(x,z_\az^{k,m})}\r]^\gz.\noz
\end{align}
Moreover, for any $y,\ z\in Q_\az^{0,m}$, we have $|Q_0f(y)|=|P_0f(y)|\le \CM^+_0(f)(z)$, which further
implies that $|Q_{\az,1}^{0,m}(f)|\le\inf_{z\in Q_\az^{0,m}}\CM^+_0(f)(z)$. When $k\in\{1,\ldots,N\}$, then
$Q_k=P_k-P_{k-1}$ and hence, for any $y,\ z\in Q_\az^{k,m}$,
$|Q_kf(y)|\le|P_kf(y)|+|P_{k-1}f(y)|\le 2\CM^+_0(f)(z)$. Thus,
$|Q_{\az,1}^{k,m}(f)|\le2\inf_{z\in Q_\az^{k,m}}\CM^+_0(f)(z)$. By this, \eqref{eq-qk1} and Lemma
\ref{lem-max}, we find that, for any fixed $r\in(\omega/(\omega+\gz),p)$,
\begin{align}\label{eq-y1a}
\RY_1&\ls\sum_{k=0}^N\dz^{|k-l|\eta'}\sum_{\az\in\CA_k}\sum_{m=1}^{N(k,\az)}\mu\lf(Q_\az^{k,m}\r)
\frac{1}{V_{\dz^{k\wedge l}}(x)+V(x,z_\az^{k,m})}
\lf[\frac{\dz^{k\wedge l}}{\dz^{k\wedge l}+d(x,z_\az^{k,m})}\r]^\gz
\inf_{z\in Q_\az^{k,m}}\CM^+_0(f)(z)\\
&\ls\sum_{k=0}^N\dz^{|k-l|\eta'}\dz^{[k-(k\wedge l)]\omega(1-1/r)}
\lf\{\CM\lf(\sum_{\az\in\CA_k}\sum_{m=1}^{N(k,\az)}\inf_{z\in Q_\az^{k,m}}\lf[\CM^+_0(f)(z)\r]^r
\mathbf{1}_{Q_\az^{k,m}}\r)(x)\r\}^{1/r}\noz\\
&\ls\sum_{k=0}^N\dz^{|k-l|\eta'}\dz^{[k-(k\wedge l)]\omega(1-1/r)}
\lf\{\CM\lf(\lf[\CM^+_0(f)\r]^r\r)(x)\r\}^{1/r}.\noz
\end{align}
Choose $\eta'$ and $r$ such that $\omega/(\omega+\eta')<r<p$. Therefore,
\eqref{eq-y1a} becomes
\begin{equation}\label{eq-y1}
\RY_1\ls\lf\{\CM\lf(\lf[\CM^+_0(f)\r]^r\r)(x)\r\}^{1/r},
\end{equation}
which is the desired estimate.

Next we estimate $\RY_2$. For any $k\in\{N+1,N+2,\ldots\}$, $\az\in\CA_k$ and $m\in\{1,\ldots,N(k,\az)\}$,
choose $y_\az^{k,m}$ such that, for any fixed $\ez\in(0,\fz)$,
$|Q_kf(y_\az^{k,m})|<\inf_{z\in Q_\az^{k,m}}|Q_kf(z)|+\ez$, which implies that
$$
\lf|Q_kf\lf(y_\az^{k,m}\r)\r|<\inf_{z\in Q_\az^{k,m}}[|P_kf(z)|+|P_{k-1}f(z)|]+\ez
\le 2\inf_{z\in Q_\az^{k,m}}\CM^+_0(f)(z)+\ez.
$$
By this, Lemmas \ref{lem-est1} and \ref{lem-max}, we conclude that, for any fixed $\eta'\in(0,\bz\wedge\gz)$
and $r\in(\omega/(\omega+\gz),p)$,
\begin{align*}
\RY_2&\ls\sum_{k=N+1}^\fz\dz^{|k-l|\bz'}\sum_{\az\in\CA_k}\sum_{m=1}^{N(k,\az)}\mu\lf(Q_\az^{k,m}\r)
\frac {\inf_{z\in Q_{\az}^{k,m}}\CM^+_0(f)(z)+\ez}{V_{\dz^{k\wedge l}}(x)+V(x,y_\az^{k,m})}
\lf[\frac{\dz^{k\wedge l}}{\dz^{k\wedge l}+d(x,y_\az^{k,m})}\r]^\gz\\
&\ls\sum_{k=N+1}^\fz\dz^{|k-l|\eta'}\dz^{[k-(k\wedge l)]\omega(1-1/r)}
\lf\{\CM\lf(\sum_{\az\in\CA_k}\sum_{m=1}^{N(k,\az)}\lf[\inf_{z\in Q_{\az}^{k,m}}\CM^+_0(f)(z)+\ez\r]^r
\mathbf{1}_{Q_\az^{k,m}}\r)(x)\r\}^{\frac 1r}\\
&\ls\sum_{k=N+1}^\fz\dz^{|k-l|\eta'}\dz^{[k-(k\wedge l)]\omega(1-1/r)}
\lf\{\CM\lf(\lf[\CM^+_0(f)\r]^r\r)(x)+\ez^r\r\}^{1/r}.
\end{align*}
Choosing $\eta'$ such that $\omega/(\omega+\eta')<p$ and $r$ as in \eqref{eq-y1} and taking $\ez\to 0^+$, we
have
\begin{equation*}
\RY_2\ls \lf\{\CM\lf(\lf[\CM^+_0(f)\r]^r\r)(x)\r\}^{1/r}.
\end{equation*}
Combining this with \eqref{eq-y1} and \eqref{eq-max1}, we obtain \eqref{eq-H=}, which completes the proof of
Proposition \ref{prop-rmax}.
\end{proof}

Next we show $h^{*,p}(X)=h^{p}_{\thz}(X)$ for any given $p\in(\om,1]$ and $\thz\in(0,\fz)$. Note that, in
the global case, we have \cite[(3.1) and (3.4)]{hhllyy18}, which imply this equivalence
directly. However, in the local case, it is still \emph{unknown} whether or not there exists a positive
constant $C$ such that, for any $f\in(\go{\bz,\gz})'$ and $x\in X$,
$$
\CM_0^+(f)(x)\le C\CM_{\thz,0}(f)(x).
$$
This occurs because of the different structures of the radial maximal functions between the global
case and the local case. Thus, we need the following proposition.

\begin{proposition}\label{prop-tmax}
Let $p\in(\om,1]$, $\bz,\ \gz\in(\omega(1/p-1),\eta)$ and $\thz\in(0,\fz)$. Then, as subspaces of
$(\go{\bz,\gz})'$, $h^{*,p}(X)=h^p_\thz(X)$ in the sense of equivalent norms.
\end{proposition}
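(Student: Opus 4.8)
The plan is to mimic the proof of Proposition \ref{prop-rmax}. One inclusion, $h^{*,p}(X)\subset h^p_\thz(X)$ together with $\|f\|_{h^p_\thz(X)}\ls\|f\|_{h^{*,p}(X)}$, is immediate from \eqref{eq-bmax}. For the reverse inclusion, it suffices, exactly as in Proposition \ref{prop-rmax}, to fix a suitable $r\in(\om,p)$ and to prove that, for any $x\in X$ and any $\phi\in\GOO{\bz,\gz}$ with $\|\phi\|_{\CG(x,\dz^l,\bz,\gz)}\le1$ for some $l\in\zz_+$,
\begin{equation*}
|\langle f,\phi\rangle|\ls\lf\{\CM\lf(\lf[\CM_{\thz,0}(f)\r]^r\r)(x)\r\}^{1/r};
\end{equation*}
the reduction of a general test function $\vz\in\go{\bz,\gz}$ with $\|\vz\|_{\CG(x,r_0,\bz,\gz)}\le1$ to such a $\phi$ (subtract $\sigma P_l(x,\cdot)$ with $\sigma:=\int_X\vz\,d\mu$, using $|\sigma P_lf(x)|\ls\CM_{\thz,0}(f)(x)$ since $|P_lf(x)|\le\CM_{\thz,0}(f)(x)$) and the concluding application of the $L^{p/r}(X)$-boundedness of $\CM$ are then verbatim those of Proposition \ref{prop-rmax}.

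To prove the displayed estimate, set $Q_0:=P_0$ and $Q_k:=P_k-P_{k-1}$ for $k\in\nn$, so that $\{Q_k\}_{k=0}^\fz$ is an $\exp$-IATI, apply Theorem \ref{thm-idrf} to $f$, and write $|\langle f,\phi\rangle|\le\RY_1+\RY_2$ exactly as in \eqref{eq-max1}. The one structural fact used is that, for every $k\in\zz_+$ and $u\in X$, choosing $y=u$ in the supremum defining $\CM_{\thz,0}$ gives $|P_kf(u)|\le\CM_{\thz,0}(f)(u)$, and hence $|Q_kf(u)|\le2\CM_{\thz,0}(f)(u)$; this bound holds for \emph{every} aperture $\thz\in(0,\fz)$, which is precisely what compensates for the absence of any pointwise domination of $\CM^+_0(f)$ by $\CM_{\thz,0}(f)$.

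For $\RY_2$ (the sum over $k\ge N+1$): for each such $k$, $\az$, $m$ and each $\ez\in(0,\fz)$, pick $y_\az^{k,m}\in Q_\az^{k,m}$ with $|Q_kf(y_\az^{k,m})|<\inf_{z\in Q_\az^{k,m}}|Q_kf(z)|+\ez\le2\inf_{z\in Q_\az^{k,m}}\CM_{\thz,0}(f)(z)+\ez$; then Lemmas \ref{lem-est1} and \ref{lem-max} (with $k':=l$; cf.\ \eqref{eq-y1a}) give
\begin{equation*}
\RY_2\ls\sum_{k=N+1}^\fz\dz^{|k-l|\eta'}\dz^{[k-(k\wedge l)]\omega(1-1/r)}\lf\{\CM\lf(\lf[\CM_{\thz,0}(f)\r]^r\r)(x)+\ez^r\r\}^{1/r},
\end{equation*}
and the series converges once $\eta'\in(\omega(1/p-1),\bz\wedge\gz)$ and $r\in(\omega/(\omega+\eta'),p)$ (so that $\eta'>\omega(1/r-1)$); letting $\ez\to0^+$ yields the bound for $\RY_2$. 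For $\RY_1$ (the \emph{finite} sum over $0\le k\le N$), bound $|Q_{\az,1}^{k,m}(f)|\le\frac2{\mu(Q_\az^{k,m})}\int_{Q_\az^{k,m}}\CM_{\thz,0}(f)(u)\,d\mu(u)$, i.e.\ a cube-average of $\CM_{\thz,0}(f)$; since $\dz^{k\wedge l}\ge\dz^{k+j_0}\gtrsim\diam Q_\az^{k,m}$, the kernel $[V_{\dz^{k\wedge l}}(x)+V(x,\cdot)]^{-1}[\dz^{k\wedge l}/(\dz^{k\wedge l}+d(x,\cdot))]^\gz$ in the bound for $|\wz Q_k^*\phi(\cdot)|$ from Lemma \ref{lem-est1} is, up to a harmless constant, constant on each $Q_\az^{k,m}$, so, using the disjointness of $\{Q_\az^{k,m}\}_{\az,m}$ and Lemma \ref{lem-add}(v), the $k$-th term of $\RY_1$ is $\ls\dz^{|k-l|\eta'}\CM(\CM_{\thz,0}(f))(x)\le\{\CM([\CM_{\thz,0}(f)]^r)(x)\}^{1/r}$ (the last step by H\"older's inequality, $r\le1$); summing over the finitely many $k\in\{0,\dots,N\}$ costs only a constant. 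Combining the estimates for $\RY_1$ and $\RY_2$ gives the displayed estimate, hence the proposition.

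The main obstacle is exactly that, for small $\thz$, there is \emph{no} pointwise inequality $\CM^+_0(f)\le C\CM_{\thz,0}(f)$: the local radial maximal function $\CM^+_0$ takes a supremum over whole dyadic cubes at the coarse scales $0,\dots,N$, whereas $\CM_{\thz,0}$ only controls $f$ on the ball $B(x,\thz\dz^k)$. This is circumvented by routing everything through the discrete Calder\'on reproducing formula, whose coarse-scale building blocks are the cube-averages $Q_{\az,1}^{k,m}(f)$ rather than cube-suprema, and these averages are controlled by $\CM_{\thz,0}(f)$ for every aperture.
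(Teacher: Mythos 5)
Your reduction to the pointwise estimate, your treatment of $\RY_2$ (choosing $y_\az^{k,m}$ to nearly attain $\inf_{z\in Q_\az^{k,m}}|Q_kf(z)|$, which is $\le 2\inf_{z\in Q_\az^{k,m}}\CM_{\thz,0}(f)(z)$ because $|P_kf(z)|\le\CM_{\thz,0}(f)(z)$ at the same point $z$), and the concluding maximal-function argument all match the paper's scheme. The genuine gap is in $\RY_1$. There you bound $|Q_{\az,1}^{k,m}(f)|$ only by the cube average of $\CM_{\thz,0}(f)$, which leads (correctly, via the near-constancy of the kernel on $Q_\az^{k,m}$ and Lemma \ref{lem-add}(v)) to the bound $\dz^{|k-l|\eta'}\CM(\CM_{\thz,0}(f))(x)$ for the $k$-th term; but the next step, $\CM(h)(x)\le\{\CM(h^r)(x)\}^{1/r}$ for $r\le 1$, is false --- Jensen's inequality for the concave function $t\mapsto t^r$ gives exactly the opposite, $\{\CM(h^r)\}^{1/r}\le\CM(h)$. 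With the correct, weaker bound $\RY_1\ls\CM(\CM_{\thz,0}(f))(x)$ the argument collapses in the range at issue: after taking $L^p(X)$ quasi-norms you would need $\|\CM(\CM_{\thz,0}(f))\|_{L^p(X)}\ls\|\CM_{\thz,0}(f)\|_{L^p(X)}$, i.e.\ boundedness of $\CM$ on $L^p(X)$ with $p\le 1$, which fails. The whole point of the exponent $r<p$ is that the $r$-th power must sit inside $\CM$ before the maximal operator is applied, so the coefficients fed into Lemma \ref{lem-max} must be constants on the cubes dominated pointwise by $\CM_{\thz,0}(f)$, i.e.\ by $\inf_{z\in Q_\az^{k,m}}\CM_{\thz,0}(f)(z)$, not by a cube average.

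The paper repairs exactly this point by the device you tried to avoid: it chooses $j_0$ in Theorem \ref{thm-idrf} so large that $4A_0^2C^\natural\dz^{j_0}\le\thz$, whence $Q_\az^{k,m}\subset B(z,\thz\dz^{k})\subset B(z,\thz\dz^{k-1})$ for every $z\in Q_\az^{k,m}$ and every $k\in\zz_+$; this yields $|Q_{\az,1}^{k,m}(f)|+|Q_kf(y_\az^{k,m})|\ls\inf_{z\in Q_\az^{k,m}}\CM_{\thz,0}(f)(z)$, which is \eqref{eq-Qest}, and then the proof of \eqref{eq-H=} goes through verbatim with $\CM^+_0$ replaced by $\CM_{\thz,0}$, Lemma \ref{lem-max} being applicable at the coarse scales as well. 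Your $\RY_2$ argument happens to survive without adapting $j_0$ to $\thz$ because of the near-infimum choice of $y_\az^{k,m}$, but for the coarse-scale terms carrying the averages $Q_{\az,1}^{k,m}(f)$ some such adaptation of the reproducing formula (or another mechanism producing an infimum over the cube) is indispensable; as written, your proof of the inclusion $h^p_\thz(X)\subset h^{*,p}(X)$ is not correct.
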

\begin{proof}
For any fixed $\thz\in(0,\fz)$, in Theorem \ref{thm-idrf}, we may assume $j_0$ sufficiently large such that
$4A_0^2C^\natural\dz^{j_0}\le\thz$. Thus, for such a fixed $j_0$, any $k\in\zz_+$, $\az\in\CA_k$ and
$m\in\{1,\ldots,N(k,\az)\}$, by Lemma \ref{cube}(v), we have
$$
Q_\az^{k,m}\subset B\lf(z_\az^{k,m},2A_0C^\natural\dz^{k+j_0}\r)
\subset B\lf(z,4A_0^2C^\natural\dz^{k+j_0}\r)\subset B(z,\thz\dz^k)\subset B(z,\thz\dz^{k-1})
$$
holds true for any $z\in Q_\az^{k,m}$, which further implies that, for any $z\in Q_\az^{k,m}$,
$$
\lf|Q_{\az,1}^{k,m}(f)\r|\le\frac{1}{\mu(Q_\az^{k,m})}\int_{Q_\az^{k,m}}[|P_kf(u)|+|P_{k-1}f(u)|]\,d\mu(u)
\le 2\CM_{\thz,0}(f)(z)
$$
and, for any $y_\az^{k,m}$, $z\in Q_\az^{k,m}$, $|Q_kf(y_\az^{k,m})|\le\CM_{\thz,0}(f)(z)$. Thus, we have
\begin{equation}\label{eq-Qest}
\lf|Q_{\az,1}^{k,m}(f)\r|+\lf|Q_kf\lf(y_\az^{k,m}\r)\r|\ls\inf_{z\in Q_{\az}^{k,m}}\CM_{\thz,0}(f)(z).
\end{equation}
Using this and following the proof of \eqref{eq-H=}, we find that, for any $f\in(\go{\bz,\gz})'$ with
$\bz,\ \gz\in(\omega(1/p-1),\eta)$, $x\in X$ and $\phi\in\GOO{\bz,\gz}$ with
$\|\phi\|_{\CG(x,\dz^l,\bz,\gz)}\le 1$,
\begin{equation}\label{eq-max2}
|\langle f,\phi\rangle|\le\lf\{\CM\lf(\lf[\CM_{\thz,0}(f)\r]^r\r)(x)\r\}^{1/r}.
\end{equation}
Now, fix $x\in X$ and suppose that $\vz\in\go{\bz,\gz}$ with $\|\vz\|_{\CG(x,r_0,\bz,\gz)}\le 1$ for some
$r_0\in(0,1]$. Let $\sigma:=\int_X\vz(y)\,d\mu(y)$. Then $|\sigma|\ls 1$. Choose $l\in\zz_+$ such that
$\dz^{l+1}<r_0\le\dz^l$ and let $\phi(\cdot):=\vz(\cdot)-\sigma P_l(x,\cdot)$. Then, by
\cite[Lemma 3.6]{hhllyy18}, we have $\phi\in\GOO{\bz,\gz}$ and $\|\phi\|_{\CG(x,\dz^l,\bz,\gz)}\ls 1$.
Therefore, by \eqref{eq-max2}, we find that, for some $r\in(0,p)$ and any $x\in X$,
$$
|\langle f,\vz\rangle|\le|\langle f,\phi\rangle|+|P_l(f)(x)|\ls
\lf\{\CM\lf(\lf[\CM_{\thz,0}(f)\r]^r\r)(x)\r\}^{1/r}+\CM_{\thz,0}(f)(x),
$$
which, by the arbitrariness of $\vz$, further implies that
$$
f^*_0(x)\ls\lf\{\CM\lf(\lf[\CM_{\thz,0}(f)\r]^r\r)(x)\r\}^{1/r}+\CM_{\thz,0}(f)(x).
$$
Taking the $L^p(X)$ norm on the both sides of the above inequality and using the boundedness of $\CM$ on
$L^{p/r}(X)$ (see, for instance \cite[(3.6)]{cw77}), we obtain
$\|f^*_0\|_{L^p(X)}\ls\|\CM_{\thz,0}(f)\|_{L^p(X)}$.
This, combined with \eqref{eq-bmax}, then finishes the proof of Proposition \ref{prop-tmax}.
\end{proof}
Combining Propositions \ref{prop-rmax} and \ref{prop-tmax}, we immediately obtain the following theorem
and we omit the details.
\begin{theorem}\label{thm-max}
Let $p\in(\om,1]$, $\bz,\ \gz\in(\omega(1/p-1),\eta)$ and $\thz\in(0,\fz)$. As subspaces of
$(\go{\bz,\gz})'$, $h^{+,p}(X)=h^{*,p}(X)=h^p_\thz(X)$ with equivalent norms.
\end{theorem}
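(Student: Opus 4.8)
The plan is to deduce Theorem \ref{thm-max} directly from the two preceding propositions, so essentially no new work is needed—one only has to chain the equivalences. First I would recall that Proposition \ref{prop-rmax} gives, for the stated range of $p$, $\bz$, $\gz$, the identification $h^{*,p}(X)=h^{+,p}(X)$ as subspaces of $(\go{\bz,\gz})'$ with equivalent norms; note that the hypothesis $\bz,\ \gz\in(\omega(1/p-1),\eta)$ here coincides with the hypothesis of Proposition \ref{prop-rmax}, since $\omega(1/p-1)=\omega/p-\omega$ and $\om=\omega/(\omega+\eta)$, so there is no gap to fill. Then I would invoke Proposition \ref{prop-tmax}, which under the very same hypotheses on $p$, $\bz$, $\gz$ and for the given $\thz\in(0,\fz)$ gives $h^{*,p}(X)=h^p_\thz(X)$ with equivalent norms.

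Combining these two, one obtains $h^{+,p}(X)=h^{*,p}(X)=h^p_\thz(X)$ as subspaces of $(\go{\bz,\gz})'$, and the norm equivalences compose (the implicit constants multiply), so all three (quasi-)norms $\|\cdot\|_{h^{+,p}(X)}$, $\|\cdot\|_{h^{*,p}(X)}$ and $\|\cdot\|_{h^p_\thz(X)}$ are mutually comparable on $(\go{\bz,\gz})'$. I would also remark, for completeness, that the inequality \eqref{eq-bmax} already furnishes the "easy" directions $h^{*,p}(X)\subset h^{+,p}(X)$ and $h^{*,p}(X)\subset h^p_\thz(X)$, so the content of the two propositions is precisely the reverse containments, and nothing further is required.

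There is no real obstacle here; the only point worth a sentence of care is that the definitions of $h^{+,p}(X)$, $h^{*,p}(X)$, $h^p_\thz(X)$ a priori depend on the auxiliary choices (the $1$-$\exp$-IATI $\{P_k\}_{k=0}^\fz$ used in $\CM^+_0$ and $\CM_{\thz,0}$, and the parameters $\bz,\ \gz$ used in $f^*_0$), so the phrase "as subspaces of $(\go{\bz,\gz})'$" should be understood with $\bz,\ \gz$ fixed throughout, exactly as in the two propositions. Since the propositions are stated for arbitrary admissible $\bz,\ \gz$ and arbitrary $1$-$\exp$-IATI, the theorem holds with the same generality, and I would simply state that the conclusion follows by combining Propositions \ref{prop-rmax} and \ref{prop-tmax}, omitting the (routine) details—precisely as the sentence preceding the theorem in the excerpt already announces.
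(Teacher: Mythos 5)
Your proposal is correct and follows exactly the route the paper takes: the theorem is obtained by chaining Proposition \ref{prop-rmax} ($h^{*,p}(X)=h^{+,p}(X)$) with Proposition \ref{prop-tmax} ($h^{*,p}(X)=h^p_\thz(X)$), with \eqref{eq-bmax} giving the easy inclusions and the norm equivalences composing. Nothing further is needed.
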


The next proposition shows that the space $h^{*,p}(X)$ is independent of the choice of $\bz$ and $\gz$, whose
proof is similar to that of \cite[Theorem 3.8]{hhllyy18}; we omit the details here.

\begin{proposition}\label{prop-hpin}
Let $p\in(\om,1]$ and $\bz_1,\ \bz_2,\ \gz_1,\ \gz_2\in(\omega(1/p-1),\eta)$. If
$f\in(\go{\bz_1,\gz_1})'$ belongs to $h^{*,p}(X)$, then $f\in(\go{\bz_2,\gz_2})'$ and there exists a
positive constant $C$, independent of $f$, such that $\|f\|_{(\go{\bz_2,\gz_2})'}\le C\|f\|_{h^{*,p}(X)}$.
\end{proposition}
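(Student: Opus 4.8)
The plan is to bootstrap from the pointwise inequality already established inside the proof of Proposition \ref{prop-rmax}, now run with the parameters $(\bz_2,\gz_2)$ in place of $(\bz,\gz)$. I first record an elementary monotonicity: if $(\bz',\gz')$ and $(\bz,\gz)$ are both admissible with $\bz'\ge\bz$ and $\gz'\ge\gz$, then, for any $x\in X$, $r_0\in(0,1]$ and $\vz\in\go{\bz',\gz'}$ with $\|\vz\|_{\CG(x,r_0,\bz',\gz')}\le1$, one has $\vz\in\go{\bz,\gz}$ and $\|\vz\|_{\CG(x,r_0,\bz,\gz)}\le1$, because all the ratios appearing in Definition \ref{def-test} are bounded by $1$ and hence only decrease when their exponents are raised. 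Writing $f^*_{0,(\bz,\gz)}$ for the local grand maximal function of $f$ computed with the parameters $(\bz,\gz)$, this gives $f^*_{0,(\bz',\gz')}(x)\le f^*_{0,(\bz,\gz)}(x)$ for every $x$, and also $(\go{\bz,\gz})'\subset(\go{\bz',\gz'})'$. Chaining through the pair $(\bz_1\vee\bz_2,\gz_1\vee\gz_2)$, which is again admissible (both components lying in $(\omega(1/p-1),\eta)$), it therefore suffices to treat the case $\bz_2\le\bz_1$ and $\gz_2\le\gz_1$; that is, to show: if $f\in(\go{\bz_1,\gz_1})'$ with $f^*_{0,(\bz_1,\gz_1)}\in L^p(X)$, then $\langle f,\cdot\rangle$ extends to a continuous linear functional on $\go{\bz_2,\gz_2}$ with $\|f^*_{0,(\bz_2,\gz_2)}\|_{L^p(X)}\ls\|f^*_{0,(\bz_1,\gz_1)}\|_{L^p(X)}$.

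To this end, fix a $1$-$\exp$-{\rm IATI} $\{P_k\}_{k=0}^\fz$ (Definition \ref{def-1ieti}) and let $\CM^+_0$ be the associated local radial maximal function. Since every kernel $P_k(x,\cdot)$ lies in $\CG(\eta,\eta)\subset\go{\bz_1,\gz_1}$, the function $\CM^+_0(f)$ is well defined, and \eqref{eq-bmax} gives $\CM^+_0(f)\in L^p(X)$ with $\|\CM^+_0(f)\|_{L^p(X)}\le C\|f^*_{0,(\bz_1,\gz_1)}\|_{L^p(X)}$. I would then repeat, essentially word for word, the argument proving \eqref{eq-H=} in the proof of Proposition \ref{prop-rmax}, but with $(\bz_2,\gz_2)$ throughout: given $x\in X$ and $\phi\in\GOO{\bz_2,\gz_2}$ with $\|\phi\|_{\CG(x,\dz^l,\bz_2,\gz_2)}\le1$ for some $l\in\zz_+$, expand $\langle f,\phi\rangle$ by the inhomogeneous discrete Calder\'on reproducing formula \eqref{eq-idrf} (valid in $(\GOO{\bz_2,\gz_2})'$ since $\bz_2,\gz_2\in(0,\eta)$), estimate each $\wz Q_k^*\phi$ by Lemma \ref{lem-est1} (applicable because $\bz_2,\gz_2\in(0,\eta]$), bound the coefficients $Q_{\az,1}^{k,m}(f)$ and $Q_kf(y_\az^{k,m})$ pointwise by $\CM^+_0(f)$ as in \eqref{eq-y1a} and the estimate of $\RY_2$, apply Lemma \ref{lem-max}, and finally choose $\eta'\in(\omega(1/p-1),\bz_2\wedge\gz_2)$ and $r\in(\omega/(\omega+\eta'),p)$ — a choice available precisely because $\bz_2\wedge\gz_2>\omega(1/p-1)$ — so that both the factor $\dz^{|k-l|\eta'}$ and the factor $\dz^{[k-(k\wedge l)]\omega(1-1/r)}$ are summable over $k\in\zz_+$. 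This yields $|\langle f,\phi\rangle|\ls\lf\{\CM\lf(\lf[\CM^+_0(f)\r]^r\r)(x)\r\}^{1/r}$. Passing from such $\phi$ to an arbitrary $\vz\in\go{\bz_2,\gz_2}$ with $\|\vz\|_{\CG(x,r_0,\bz_2,\gz_2)}\le1$ by writing $\vz=[\vz-\sigma P_l(x,\cdot)]+\sigma P_l(x,\cdot)$, with $\sigma:=\int_X\vz\,d\mu$ and $\dz^{l+1}<r_0\le\dz^l$, exactly as in the proofs of Propositions \ref{prop-rmax} and \ref{prop-tmax} (using $|\sigma|\ls1$, \cite[Lemma 3.6]{hhllyy18} and $|\sigma||P_lf(x)|\le\CM^+_0(f)(x)$), we obtain, for every $x\in X$,
$$
f^*_{0,(\bz_2,\gz_2)}(x)\ls\lf\{\CM\lf(\lf[\CM^+_0(f)\r]^r\r)(x)\r\}^{1/r}+\CM^+_0(f)(x).
$$

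Taking the $L^p(X)$ quasi-norm and using the boundedness of $\CM$ on $L^{p/r}(X)$ (legitimate since $p/r>1$; see, e.g., \cite[(3.6)]{cw77}), we arrive at $\|f^*_{0,(\bz_2,\gz_2)}\|_{L^p(X)}\ls\|\CM^+_0(f)\|_{L^p(X)}\ls\|f^*_{0,(\bz_1,\gz_1)}\|_{L^p(X)}$, i.e., the $h^{*,p}$-quasi-norm does not depend on the admissible choice of $(\bz,\gz)$. The fact that $f$ determines a well-defined element of $(\go{\bz_2,\gz_2})'$ is then a soft consequence of density: $\CG(\eta,\eta)$ is, by construction, dense in $\go{\bz_2,\gz_2}$, and, specializing the estimates above (keeping the $\CG$-norm of the test function explicit) to $\phi\in\CG(\eta,\eta)$, the functional $\phi\mapsto\langle f,\phi\rangle$ is continuous there with respect to $\|\cdot\|_{\CG(\bz_2,\gz_2)}$, hence extends uniquely; since $\CG(\eta,\eta)$ is also dense in $\go{\bz_1,\gz_1}$, this extension is consistent with the original $f$. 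I expect the only real work — and it is bookkeeping rather than a new idea — to lie in re-running the scale-by-scale argument of Proposition \ref{prop-rmax} and checking that every constant imported from Lemmas \ref{lem-est1} and \ref{lem-max} remains independent of $\phi$ and of the scale $l$ under the weaker hypothesis $\bz_2,\gz_2>\omega(1/p-1)$, in particular that the ranges of the auxiliary exponents $\eta'$ and $r$ can be matched so that the two geometric series in $k$ converge simultaneously; this is exactly the mechanism behind \cite[Theorem 3.8]{hhllyy18}.
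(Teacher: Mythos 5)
Your overall strategy is the right one, and it is the route the paper itself intends (re-running the proof of Proposition \ref{prop-rmax} with the target exponents, using $\CM^+_0$ as the parameter-free intermediary; this is what the citation of \cite[Theorem 3.8]{hhllyy18} amounts to). The reduction to $\bz_2\le\bz_1$, $\gz_2\le\gz_1$ via $(\bz_1\vee\bz_2,\gz_1\vee\gz_2)$ is correct, and so is the exponent bookkeeping $\eta'\in(\omega(1/p-1),\bz_2\wedge\gz_2)$, $r\in(\omega/(\omega+\eta'),p)$, which makes Lemmas \ref{lem-est1} and \ref{lem-max} applicable and both geometric series in $k$ summable. However, two points need repair before this is a proof.

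First, as written the central estimate is circular: you take an arbitrary $\phi\in\GOO{\bz_2,\gz_2}$ and expand $\langle f,\phi\rangle$ by \eqref{eq-idrf} ``valid in $(\GOO{\bz_2,\gz_2})'$'', but at that stage $f$ is only known to lie in $(\go{\bz_1,\gz_1})'$, so neither the pairing $\langle f,\phi\rangle$ nor the convergence of \eqref{eq-idrf} tested against such a $\phi$ is available --- that is exactly what the proposition asserts. Your closing density remark is the correct fix, but it must come first, not as an afterthought: run the estimate only for mean-zero $\phi\in\CG(\eta,\eta)$ (hence $\phi\in\GOO{\bz_1,\gz_1}$, where the pairing is defined and \eqref{eq-idrf} converges in $(\GOO{\bz_1,\gz_1})'$), noting that Lemmas \ref{lem-est1} and \ref{lem-max} use only $\|\phi\|_{\CG(x,\dz^l,\bz_2,\gz_2)}$, and then extend by density, checking consistency on $\go{\bz_1,\gz_1}$ as you indicate. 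Second, the stated conclusion is the bound $\|f\|_{(\go{\bz_2,\gz_2})'}\le C\|f\|_{h^{*,p}(X)}$, and your final continuity argument does not deliver it: specializing your pointwise estimate at the fixed basepoint $x_0$ gives a constant of the form $\{\CM([\CM^+_0(f)]^r)(x_0)\}^{1/r}+\CM^+_0(f)(x_0)$, which is not controlled by $\|f\|_{h^{*,p}(X)}$ (and can even be infinite at an individual point, since $\CM$ of an $L^{p/r}$ function is only finite almost everywhere). The missing step is the standard localization: if $\|\vz\|_{\CG(x_0,1,\bz_2,\gz_2)}\le 1$, then $\|\vz\|_{\CG(x,1,\bz_2,\gz_2)}\ls 1$ uniformly for $x\in B(x_0,1)$, whence
$$
|\langle f,\vz\rangle|\ls\inf_{x\in B(x_0,1)}f^*_{0,(\bz_2,\gz_2)}(x)
\ls[\mu(B(x_0,1))]^{-1/p}\lf\|f^*_{0,(\bz_2,\gz_2)}\r\|_{L^p(X)},
$$
which, combined with the inequality $\|f^*_{0,(\bz_2,\gz_2)}\|_{L^p(X)}\ls\|f^*_{0,(\bz_1,\gz_1)}\|_{L^p(X)}$ you established, yields the claimed norm bound. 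With these two adjustments the argument is complete.
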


\section{Maximal function characterizations of atomic local Hardy spaces}\label{s-at}

In this section, we discuss the atomic characterizations of $h^{*,p}(X)$ with $p\in(\om,1]$. First we
introduce the ``basic'' elements of $h^{*,p}(X)$. To distinguish them from those of global Hardy spaces,
we call them \emph{local atoms}.
\begin{definition}\label{def-lat}
\begin{enumerate}
\item Let $p\in(0,1]$ and $q\in(p,\fz]\cap[1,\fz]$. A function $a\in L^q(X)$ is called a
\emph{local $(p,q)$-atom supported on a ball $B:=B(x_0,r_0)$} for some $x_0\in X$ and $r_0\in (0,\fz)$ if $a$
has the following properties:
\begin{enumerate}
\item[$\textup{(i)}_1$] $\supp a:=\{x\in X:\ a(x)\neq 0\}\subset B$;
\item[$\textup{(i)}_2$] $\|a\|_{L^q(X)}\le[\mu(B)]^{1/q-1/p}$;
\item[$\textup{(i)}_3$] $\int_X a(x)\,d\mu(x)=0$ if $r_0\in(0,1]$.
\end{enumerate}
\item Let $p\in(\om,1]$, $q\in(p,\fz]\cap[1,\fz]$ and $\bz,\ \gz\in(\omega(1/p-1),\eta)$, with $\omega$ and
$\eta$, respectively, as in \eqref{eq-doub} and Definition \ref{def-iati}. The \emph{atomic local
Hardy space $h^{p,q}_\at(X)$} is defined to be the set of all $f\in(\go{\bz,\gz})'$ such that
$f=\sum_{j=1}^\fz \lz_ja_j$ in $(\go{\bz,\gz})'$, where $\{a_j\}_{j=1}^\fz$ are local $(p,q)$-atoms and
$\{\lz_j\}_{j=1}^\fz\subset\cc$ satisfies $\sum_{j=1}^\fz|\lz_j|^p<\fz$. For any $f\in h^{p,q}_\at(X)$, let
$$
\|f\|_{h^{p,q}_\at(X)}:=\inf\lf\{\lf(\sum_{j=1}^\fz|\lz_j|^p\r)^{1/p}\r\},
$$
where the infimum is taken over all the decompositions of $f$ as above.
\end{enumerate}
\end{definition}

Recall that, when $\mu(X)=\fz$, letting $p$ and $q$ be as in Definition \ref{def-lat}(i), then a function
$a\in L^q(X)$ is called a \emph{$(p,q)$-atom supported on a ball $B:=B(x_0,r_0)$} for some $x_0\in X$ and
$r_0\in(0,\fz)$ if $a$ satisfies $\textup{(i)}_1$ and $\textup{(i)}_2$ of Definition \ref{def-lat}(i),
and $\int_X a(x)\,d\mu(x)=0$. The definition of $(p,q)$-atoms when $\mu(X)<\fz$ can be seen in Definition
\ref{def-atomb} below.

The next lemma gives the properties of $a_0^*$ if $a$ is a local atom.

\begin{lemma}\label{lem-a*}
Let $p\in(\om,1]$ and $q\in(p,\fz]\cap[1,\fz]$, where $\omega$ and $\eta$ are, respectively, as in
\eqref{eq-doub} and Definition \ref{def-iati}. Suppose that $a$ is a local $(p,q)$-atom supported on a ball
$B:=B(x_0,r_0)$ for some $x_0\in X$ and $r_0\in(0,\fz)$. Then there exists a positive constant
$C$, independent of $a$, such that, for any $x\in X$,
\begin{align}\label{eq-add2}
a^*_0(x)\le C\CM(a)(x)\mathbf{1}_{B(x_0,2A_0r_0)}(x)+C\lf[\frac{r_0}{d(x_0,x)}\r]^{\bz\wedge\gz}
\frac{[\mu(B)]^{1-1/p}}{V(x_0,x)}\mathbf{1}_{[B(x_0,2A_0r_0)]^\complement}(x)
\end{align}
and
\begin{align}\label{eq-add3}
\lf\|a^*_0\r\|_{L^p(X)}\le C,
\end{align}
where the local atom $a$ is viewed as a distribution on $\go{\bz,\gz}$ with
$\bz,\ \gz\in(\omega(1/p-1),\eta)$.
\end{lemma}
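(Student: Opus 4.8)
The plan is to estimate the local grand maximal function $a^*_0$ pointwise, splitting into the region near the supporting ball $B=B(x_0,r_0)$ and its complement, and then integrate. Fix $x\in X$ and a test function $\vz\in\go{\bz,\gz}$ with $\|\vz\|_{\CG(x,r,\bz,\gz)}\le 1$ for some $r\in(0,1]$; we must bound $|\langle a,\vz\rangle|$ uniformly. The first step is the easy region: when $x\in B(x_0,2A_0r_0)$, I would simply use the size condition (i) of Definition \ref{def-test} together with $r\le 1$ to see that $\|\vz\|_{L^\fz(X)}\ls 1/[V_1(x)+V(x,y)]$ for $y$ in the support, and then $|\langle a,\vz\rangle|\ls\frac1{\mu(B)}\int_B|a(y)|\,d\mu(y)\cdot(\text{something}\ls 1)$ after using Lemma \ref{lem-add}(i) to compare $V_1(x)+V(x,y)$ with $\mu(B(x,1+d(x,y)))\gtrsim\mu(B)$ on the relevant scales; more cleanly, one shows directly $|\langle a,\vz\rangle|\ls\CM(a)(x)$ via Lemma \ref{lem-add}(v), giving the first term on the right-hand side of \eqref{eq-add2}.

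The second step is the far region $x\notin B(x_0,2A_0r_0)$, which is where the cancellation of the atom must be exploited, and this is the main obstacle. Here I would distinguish two subcases according to the size of $r_0$. If $r_0\in(0,1]$, then $\int_X a\,d\mu=0$, so I can write $\langle a,\vz\rangle=\int_B a(y)[\vz(y)-\vz(x_0)]\,d\mu(y)$ and apply the regularity condition (ii) of Definition \ref{def-test}; since $x\notin B(x_0,2A_0r_0)$ one has $d(x_0,x)\gtrsim r_0+d(x,x_0)$, $d(x_0,y)\le r_0\le (2A_0)^{-1}[r+d(x,x_0)]$ (using $r_0\le d(x_0,x)/(2A_0)$), and so the Hölder estimate yields a factor $[r_0/(r+d(x,x_0))]^\bz\le[r_0/d(x_0,x)]^\bz$ times $1/[V_r(x)+V(x,x_0)]\sim 1/V(x_0,x)$ (again Lemma \ref{lem-add}(i)); combined with $\int_B|a|\,d\mu\le[\mu(B)]^{1-1/p}$ (from $\textup{(i)}_2$ and Hölder's inequality) this gives the second term of \eqref{eq-add2} with exponent $\bz$. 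If instead $r_0\in(1,\fz)$, the atom need not have cancellation, but now $r\le 1<r_0$, so $d(x,x_0)\ge 2A_0r_0\ge 2A_0 r$ forces us to use the \emph{size} decay of $\vz$ in its $\gz$-variable: $|\vz(y)|\ls\frac1{V_1(x)+V(x,y)}[1/(1+d(x,y))]^\gz\ls[r_0/d(x_0,x)]^\gz/V(x_0,x)$ for $y\in B$ (comparing $d(x,y)\sim d(x,x_0)$ and $1+d(x,x_0)\gtrsim r_0+d(x,x_0)$), again paired with $\int_B|a|\,d\mu\le[\mu(B)]^{1-1/p}$; this gives the same term with exponent $\gz$. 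Taking the worse of the two exponents produces the stated $\bz\wedge\gz$ in \eqref{eq-add2}.

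For the final claim \eqref{eq-add3}, I would integrate \eqref{eq-add2} in $L^p$. The near-region term is handled by the $L^p$-boundedness side of the maximal inequality: since $q>p$, the $L^q$-boundedness of $\CM$ (if $q>1$; the case $q=1$ uses weak-$(1,1)$ plus the localization to $B(x_0,2A_0r_0)$ and Hölder) gives $\|\CM(a)\mathbf{1}_{2A_0B}\|_{L^p(X)}\ls\|\CM(a)\|_{L^q(X)}[\mu(2A_0B)]^{1/p-1/q}\ls\|a\|_{L^q(X)}[\mu(B)]^{1/p-1/q}\ls 1$ by $\textup{(i)}_2$ and the doubling condition \eqref{eq-doub}. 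For the far-region term, I would compute
\[
\int_{[B(x_0,2A_0r_0)]^\complement}\lf(\lf[\frac{r_0}{d(x_0,x)}\r]^{\bz\wedge\gz}\frac{[\mu(B)]^{1-1/p}}{V(x_0,x)}\r)^p\,d\mu(x)
\]
by decomposing the complement into annuli $2^jr_0\le d(x_0,x)<2^{j+1}r_0$, $j\ge 1$, on each of which $V(x_0,x)\sim\mu(B(x_0,2^jr_0))\gtrsim 2^{j\omega'}\mu(B)$ for a suitable reverse power (here one only needs the trivial lower bound $\mu(B(x_0,2^jr_0))\ge\mu(B)$ together with \eqref{eq-doub} for the upper bound) so the $j$-th annulus contributes $\ls 2^{-jp(\bz\wedge\gz)}2^{-j\omega(p-1)}\cdot(\text{stuff})$; the geometric series converges precisely because $\bz,\gz>\omega(1/p-1)$, i.e.\ $p(\bz\wedge\gz)+\omega(p-1)>0$. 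Summing the two contributions yields $\|a^*_0\|_{L^p(X)}\le C$ with $C$ independent of $a$, completing the proof.
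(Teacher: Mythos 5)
Your proposal is correct and follows essentially the same route as the paper: near the ball you bound $a^*_0$ by $\CM(a)$ via the size condition, away from the ball you use cancellation plus the regularity of $\vz$ when $r_0\le 1$ and the size decay of $\vz$ together with $r\le 1<r_0$ when $r_0>1$, and then \eqref{eq-add3} follows by the $L^q$- (or weak $(1,1)$-) boundedness of $\CM$ on the near part and an annular decomposition using $\bz,\gz>\omega(1/p-1)$ on the far part. The only difference is cosmetic: for $r_0\in(0,1]$ the paper simply cites the corresponding estimate from the global Hardy space paper, whereas you spell out that cancellation argument explicitly.
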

\begin{proof}
Indeed, if \eqref{eq-add2} holds true, then, for any given $p\in(\om,1]$, by
$\bz,\ \gz\in(\omega(1/p-1),\eta)$ and \eqref{eq-doub}, we find that
\begin{align*}
\lf\|a_0^*\mathbf{1}_{[B(x_0,2A_0r_0)]^\complement}\r\|_{L^p(X)}^p&=
\int_{d(x,x_0)\ge 2A_0r_0}\lf[a_0^*(x)\r]^p\,d\mu(x)\\
&\ls\int_{d(x,x_0)\ge 2A_0r_0}\lf[\frac{r_0}{d(x_0,x)}\r]^{(\bz\wedge\gz)p}
\lf[\frac{1}{\mu(B)}\r]^{1-p}\lf[\frac 1{V(x_0,x)}\r]^p\,d\mu(x)\\
&\ls\sum_{k=1}^\fz 2^{-k(\bz\wedge\gz)p}2^{k\omega(1-p)}\int_{2^kA_0r_0\le d(x,x_0)<2^{k+1}A_0r_0}
\frac 1{V(x_0,x)}\,d\mu(x)\ls 1.
\end{align*}
To estimate $\|a_0^*\mathbf{1}_{B(x_0,2A_0r_0)}\|_{L^p(X)}$, we consider two cases.

{\it Case 1) $p\in(\om,1]$ and $q\in(1,\fz]$.} In this case, by \eqref{eq-add2}, the H\"{o}lder inequality,
\eqref{eq-doub} and the boundedness of $\CM$ on $L^q(X)$ (see, for instance, \cite[(3.6)]{cw77}), we conclude
that
\begin{equation*}
\lf\|a_0^*\mathbf{1}_{B(x_0,2A_0r_0)}\r\|_{L^p(X)}^p
\ls\lf\|\CM(a)\mathbf{1}_{B(x_0,2A_0r_0)}\r\|_{L^p(X)}^p\ls\lf\|\CM(a)\r\|_{L^q(X)}^p
[\mu(B)]^{1-q/p}\ls 1.
\end{equation*}

{\it Case 2) $p\in(\om,1)$ and $q=1$.} In this case, by the boundedness of $\CM$ from $L^1(X)$ to
$L^{1,\fz}(X)$ (see, for instance, \cite[pp.\ 71--72, Theorem 2.1]{cw71}), we have
\begin{align*}
\lf\|a_0^*\mathbf{1}_{B(x_0,2A_0r_0)}\r\|_{L^p(X)}^p
&\ls\int_{B(x_0,2A_0r_0)}[\CM(a)(x)]^p\,d\mu(x)\\
&\ls\int_0^\fz\mu(\{x\in B(x_0,2A_0r_0):\ \CM(a)(x)>\lz\})\,d\lz^p\\
&\ls\int_0^\fz\min\lf\{\mu(B),\frac{\|a\|_{L^1(X)}}{\lz}\r\}\,d\lz^p\\
&\ls\int_0^{\|a\|_{L^1(X)}/\mu(B)}\mu(B)\,d\lz^p+\int_{\|a\|_{L^1(X)}/\mu(B)}^\fz
\|a\|_{L^1(X)}\lz^{-1}\,d\lz^p\\
&\ls\|a\|_{L^1(X)}^p[\mu(B)]^{1-p}\ls 1.
\end{align*}
Therefore, \eqref{eq-add3} holds true and, to complete the proof of Lemma \ref{lem-a*}, it suffices to show
\eqref{eq-add2}.

If $r_0\in (0,1]$, then \eqref{eq-add2} holds true by an argument similar to that used in the
estimation of \cite[(4.1)]{hhllyy18}.
It remains to consider the case $r_0\in(1,\fz)$ of \eqref{eq-add2}. In this case, suppose that $a$ is a local
$(p,q)$-atom supported on the ball $B:=B(x_0,r_0)$ for some $x_0\in X$ and $r_0\in(1,\fz)$. Fix $x\in X$ and
let $\vz\in\go{\bz,\gz}$ satisfy $\|\vz\|_{\CG(x,r,\bz,\gz)}\le 1$ for some
$r\in(0,1]$. If $x\in B(x_0,2A_0r_0)$, then
$$
|\langle a,\vz\rangle|=\lf|\int_X a(y)\vz(y)\,d\mu(y)\r|
\le\int_X |a(y)|\frac{1}{V_r(x)+V(x,y)}\lf[\frac{r}{r+d(x,y)}\r]^\gz\,d\mu(y)\ls\CM(a)(x),
$$
which, together with the arbitrariness of $\vz$, further implies that $a^*_0(x)\ls\CM(a)(x)$.

If $x\notin B(x_0,2A_0r_0)$, then, for any $y\in B(x_0,r_0)$, $d(y,x_0)\le(2A_0)^{-1}d(x,x_0)$ and hence
$$
d(x,y)\ge\frac{1}{A_0}d(x,x_0)-d(y,x_0)\ge\frac{1}{2A_0}d(x_0,x).
$$
By this, the H\"{o}lder inequality and $r\le 1<r_0$, we conclude that
\begin{align*}
|\langle a,\vz\rangle|&\le\int_B |a(y)|\frac{1}{V_r(x)+V(x,y)}\lf[\frac{r}{r+d(x,y)}\r]^\gz\,d\mu(y)
\ls\frac 1{V(x_0,x)}\lf[\frac{r}{d(x_0,x)}\r]^\gz\int_B|a(y)|\,d\mu(y)\\
&\ls\lf[\frac{r}{d(x_0,x)}\r]^\gz \frac{[\mu(B)]^{1-1/p}}{V(x_0,x)}.
\end{align*}
This, combined with the arbitrariness of $\vz$, implies the desired estimate. Thus, \eqref{eq-add2} also holds
true when $a$ is any local $(p,q)$-atom. This finishes the proof of Lemma \ref{lem-a*}.
\end{proof}
By Lemma \ref{lem-a*} and the definition of the atomic local Hardy space, we immediately obtain the following
proposition and we omit the details.
\begin{proposition}\label{prop-atd}
Let $p\in(\om,1]$, $q\in(p,\fz]\cap[1.\fz]$ and $\bz,\ \gz\in(\omega(1/p-1),\eta)$ with $\omega$ and
$\eta$, respectively, as in \eqref{eq-doub} and Definition \ref{def-iati}. If
$f\in(\go{\bz,\gz})'$ belongs to $h^{p,q}_\at(X)$, then $f\in h^{*,p}(X)$ and there exists a positive
constant $C$, independent of $f$, such that $\|f^*_0\|_{L^p(X)}\le C\|f\|_{h^{p,q}_\at(X)}$.
\end{proposition}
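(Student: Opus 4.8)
The plan is to reduce everything to Lemma \ref{lem-a*}, exactly as the statement of Proposition \ref{prop-atd} suggests. Suppose $f\in(\go{\bz,\gz})'$ belongs to $h^{p,q}_\at(X)$, so that $f=\sum_{j=1}^\fz\lz_j a_j$ in $(\go{\bz,\gz})'$ for some local $(p,q)$-atoms $\{a_j\}_{j=1}^\fz$, supported on balls $\{B_j\}_{j=1}^\fz$, and coefficients $\{\lz_j\}_{j=1}^\fz\subset\cc$ with $\sum_{j=1}^\fz|\lz_j|^p\le 2\|f\|_{h^{p,q}_\at(X)}^p$. First I would observe that the grand maximal function $f\mapsto f_0^*$ is sublinear, so that, for any $x\in X$,
$$
f_0^*(x)\le\sum_{j=1}^\fz|\lz_j|\,(a_j)_0^*(x).
$$
This is immediate from the definition of $f_0^*$ as a supremum of $|\langle f,\vz\rangle|$ over a fixed family of test functions $\vz$, together with the linearity of the pairing and the fact that the series converges in $(\go{\bz,\gz})'$.

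Next I would apply the quasi-norm inequality for $L^p(X)$ with $p\in(\om,1]\subset(0,1]$: since $p\le 1$, for non-negative measurable functions $g_j$ one has $\|\sum_j g_j\|_{L^p(X)}^p\le\sum_j\|g_j\|_{L^p(X)}^p$. Taking $g_j:=|\lz_j|(a_j)_0^*$ yields
$$
\|f_0^*\|_{L^p(X)}^p\le\sum_{j=1}^\fz|\lz_j|^p\,\|(a_j)_0^*\|_{L^p(X)}^p.
$$
By \eqref{eq-add3} in Lemma \ref{lem-a*}, each $\|(a_j)_0^*\|_{L^p(X)}\le C$ with $C$ independent of $j$ (and of the atom), so the right-hand side is bounded by $C^p\sum_{j=1}^\fz|\lz_j|^p\le 2C^p\|f\|_{h^{p,q}_\at(X)}^p$. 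Taking $p$-th roots gives $\|f_0^*\|_{L^p(X)}\le C\|f\|_{h^{p,q}_\at(X)}$, and in particular $f\in h^{*,p}(X)$; taking the infimum over all admissible atomic decompositions of $f$ absorbs the harmless factor $2^{1/p}$ into the constant.

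The only point that needs a little care is justifying the pointwise subadditivity $f_0^*\le\sum_j|\lz_j|(a_j)_0^*$ rigorously, since it involves an infinite series converging in the distribution space. Concretely, for a fixed admissible $\vz\in\go{\bz,\gz}$ with $\|\vz\|_{\CG(x,r_0,\bz,\gz)}\le 1$, convergence of $\sum_j\lz_j a_j$ to $f$ in $(\go{\bz,\gz})'$ gives $\langle f,\vz\rangle=\sum_j\lz_j\langle a_j,\vz\rangle$, whence $|\langle f,\vz\rangle|\le\sum_j|\lz_j|\,|\langle a_j,\vz\rangle|\le\sum_j|\lz_j|(a_j)_0^*(x)$, and taking the supremum over all such $\vz$ (and $r_0\in(0,1]$) yields the claim. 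I do not expect any genuine obstacle here: the substance of the proposition is entirely contained in Lemma \ref{lem-a*}, and this argument is the standard ``atoms are uniformly bounded, then sum up in $\ell^p$'' scheme. I would therefore present it briefly and, as the paper indicates, omit the routine details.
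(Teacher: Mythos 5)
Your argument is correct and is exactly the one the paper intends: it deduces the proposition from Lemma \ref{lem-a*} via the pointwise subadditivity $f_0^*\le\sum_j|\lz_j|(a_j)_0^*$ (justified by convergence of the atomic series in $(\go{\bz,\gz})'$) and the $p$-power triangle inequality for $L^p(X)$ with $p\le 1$, which is precisely why the paper states the result follows immediately from Lemma \ref{lem-a*} and the definition of $h^{p,q}_\at(X)$ and omits the details.
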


Next we show $h^{*,p}(X)\subset h^{p,q}_\at(X)$. To this end, we need the following Calder\'{o}n--Zygmund
decomposition established in \cite[Proposition 4.4]{hhllyy18}.
\begin{proposition}\label{prop-ozdec}
Suppose $\Omega\subsetneqq X$ is a proper open subset such that $\mu(\Omega)\in(0,\fz)$ and $A\in[1,\fz)$.
For any $x\in\Omega$, let
$$
r(x):=\frac{d(x,\Omega^{\complement})}{2AA_0}\in(0,\fz).
$$
Then there exist $L_0\in\nn$ and a sequence $\{x_k\}_{k\in I}\subset \Omega$, where $I$ is a countable index
set, such that
\begin{enumerate}
\item $\{B(x_k,r_k/(5A_0^3))\}_{k\in I}$ is disjoint. Here and hereafter, $r_k:=r(x_k)$ for any $k\in I$;
\item $\bigcup_{k\in I} B(x_k,r_k)=\Omega$ and $B(x_k,Ar_k)\subset\Omega$;
\item for any $x\in\Omega$, $Ar_k\le d(x,\Omega^\complement)\le3AA_0^2r_k$ whenever $x\in B(x_k,r_k)$ and
$k\in I$;
\item for any $k\in I$, there exists $y_k\notin \Omega$ such that $d(x_k,y_k)<3AA_0r_k$;
\item for any given $k\in I$, the number of balls $B(x_j,Ar_j)$ that intersect $B(x_k,Ar_k)$ is at most
$L_0$; moreover, if $B(x_j,Ar_j)\cap B(x_k,Ar_k)\neq\emptyset$, then $(8A_0^2)^{-1}r_k\le r_j\le 8A_0^2r_k$;
\item if, in addition, $\Omega$ is bounded, then, for any $\sigma\in(0,\fz)$, the set
$\{k\in I:\ r_k>\sigma\}$ is finite.
\end{enumerate}
\end{proposition}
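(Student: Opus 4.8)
The plan is to run a Whitney-type covering argument adapted to the quasi-metric space $(X,d)$, in the spirit of \cite[Proposition 4.4]{hhllyy18}; nothing is conceptually new, and the only genuinely technical point will be the constant-chasing behind the bounded-overlap property (v). The starting remark is that $x\mapsto d(x,\Omega^\complement)$ is \emph{quasi-Lipschitz}: for any $x,\ y\in\Omega$ the quasi-triangle inequality gives $d(x,\Omega^\complement)\le A_0\,d(x,y)+A_0\,d(y,\Omega^\complement)$ together with its symmetric counterpart, so $r(\cdot)$ is comparable at nearby points. Since $d(x,\Omega^\complement)=2AA_0\,r(x)$, for $y\in B(x,r(x))$ one gets $d(y,\Omega^\complement)\ge 2A\,r(x)-r(x)\ge r(x)>0$, hence $B(x,r(x))\subset\Omega$ for every $x\in\Omega$. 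I would then apply Zorn's lemma to the family $\{B(x,r(x)/(5A_0^3))\}_{x\in\Omega}$ to obtain a maximal pairwise disjoint subfamily $\{B(x_k,r_k/(5A_0^3))\}_{k\in I}$ with $r_k:=r(x_k)$; these balls lie in $\Omega$ and have positive measure, and since $\mu(\Omega)<\fz$ at most finitely many can have measure exceeding $1/n$ for each $n\in\nn$, so $I$ is at most countable. This is (i).

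For the covering in (ii), maximality is used: given $x\in\Omega$, the ball $B(x,r(x)/(5A_0^3))$ meets some $B(x_k,r_k/(5A_0^3))$; on this overlap the quasi-Lipschitz property forces $r(x)\sim r_k$, and then one quasi-triangle estimate yields $d(x,x_k)<r_k$, so $\bigcup_{k\in I}B(x_k,r_k)=\Omega$. The inclusion $B(x_k,Ar_k)\subset\Omega$ follows because, for $y\in B(x_k,Ar_k)$,
$$
d(y,\Omega^\complement)\ge\frac1{A_0}\,d(x_k,\Omega^\complement)-d(x_k,y)\ge 2A\,r_k-A\,r_k=A\,r_k>0;
$$
the same computation with $r_k$ in place of $Ar_k$, paired with the upper bound $d(x,\Omega^\complement)\le A_0\,d(x,x_k)+A_0\,d(x_k,\Omega^\complement)$, gives the two-sided estimate $A\,r_k\le d(x,\Omega^\complement)\le 3AA_0^2\,r_k$ for $x\in B(x_k,r_k)$, which is (iii); here the constants $2AA_0$ and $3AA_0^2$ have been tuned exactly so these inequalities close. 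Property (iv) is immediate, since $d(x_k,\Omega^\complement)=2AA_0\,r_k$ is an infimum, so there is $y_k\in\Omega^\complement$ with $d(x_k,y_k)<3AA_0\,r_k$.

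The hard part will be (v). If $B(x_j,Ar_j)\cap B(x_k,Ar_k)\ne\emptyset$, I would pick a common point $w$, chain the quasi-triangle inequality through $w$, and insert $d(x_i,\Omega^\complement)=2AA_0\,r_i$ for $i\in\{j,k\}$ to obtain the explicit comparability $(8A_0^2)^{-1}r_k\le r_j\le 8A_0^2\,r_k$. Once the radii are comparable, every center $x_j$ of a ball $B(x_j,Ar_j)$ meeting $B(x_k,Ar_k)$ lies in a fixed dilate of $B(x_k,r_k)$, while the balls $\{B(x_j,r_j/(5A_0^3))\}$ are pairwise disjoint with radii $\sim r_k$; the doubling inequality \eqref{eq-doub} then converts this into a volume-packing bound, producing a constant $L_0$ depending only on $A_0$, $A$ and $C_{(\mu)}$ that bounds the number of such $j$.

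Finally, for (vi), when $\Omega$ is bounded it is contained in some ball $B$, and for those $k$ with $r_k>\sigma$ the disjoint balls $B(x_k,r_k/(5A_0^3))$ each have $\mu$-measure bounded below by a positive constant depending on $\sigma$ and on $B$ (again via \eqref{eq-doub}, comparing $B$ with $B(x_k,\sigma/(5A_0^3))$), so only finitely many of them can be packed inside $\Omega$, whose measure is finite. Assembling (i)--(vi) completes the argument; essentially all the work is the constant bookkeeping in (v).
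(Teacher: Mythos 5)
You should first note that this article does not actually prove Proposition \ref{prop-ozdec}: it is imported verbatim from \cite[Proposition 4.4]{hhllyy18}, so the only meaningful comparison is with the standard Whitney-type argument given there, which is exactly the route you follow. Your outline is correct and the constants do close: the quasi-Lipschitz estimate $d(x,\Omega^\complement)\le A_0d(x,y)+A_0d(y,\Omega^\complement)$, the maximal disjoint subfamily of the shrunken balls $B(x,r(x)/(5A_0^3))$, countability from $\mu(\Omega)<\fz$ and the positivity of ball measures, the covering in (ii), the two-sided bound (iii), and (iv) from the definition of the infimum all go through as you indicate. In the covering step the self-referential comparison works precisely because the auxiliary balls have the tiny radius $r/(5A_0^3)$: from a common point $w$ one gets $d(x,x_k)<\frac{1}{5A_0^2}[r(x)+r_k]$, hence $r(x)\le\frac{11}{9}r_k$ and then $d(x,x_k)<\frac{4}{9A_0^2}r_k<r_k$, so the factor $A_0$ lost in the quasi-Lipschitz step is absorbed.

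The one step where your sketch, read literally, is at risk is the radius comparability in (v). If you bound $d(x_j,x_k)\le A_0[d(x_j,w)+d(w,x_k)]<AA_0(r_j+r_k)$ and then apply the quasi-Lipschitz inequality between the two \emph{centers}, you obtain $2AA_0r_j\le A_0d(x_j,x_k)+2AA_0^2r_k$, i.e. $2r_j<A_0r_j+3A_0r_k$, which is vacuous as soon as $A_0\ge2$: here the balls have radius $Ar_i$, comparable to $d(x_i,\Omega^\complement)/(2A_0)$, so the factor $A_0$ lost in the quasi-Lipschitz step is no longer absorbed. The comparison must instead be routed through the distance from the common point $w$ to $\Omega^\complement$: repeating the computation behind (iii) on the larger balls gives, for $i\in\{j,k\}$, $Ar_i<d(w,\Omega^\complement)\le 3AA_0^2r_i$, and combining the lower bound for $i=j$ with the upper bound for $i=k$ yields $r_j\le 3A_0^2r_k\le 8A_0^2r_k$, and symmetrically. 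With that comparability secured, your volume-packing argument for $L_0$ (centers $x_j$ in a fixed dilate of $B(x_k,r_k)$, disjoint balls $B(x_j,r_j/(5A_0^3))$ of comparable radius, doubling) and your proof of (vi) are fine as stated, with $L_0$ depending only on $A_0$, $A$ and $C_{(\mu)}$.
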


We also need the following decomposition of unity from \cite[Proposition 4.5]{hhllyy18}

\begin{proposition}\label{prop-chidec}
Let $\Omega\subsetneqq X$ be a proper open subset such that $\mu(\Omega)\in(0,\fz)$. Suppose that sequences
$\{x_k\}_{k\in I}$ and $\{r_k\}_{k\in I}$ are as in Proposition \ref{prop-ozdec} with $A:=16A_0^4$. Then
there exist non-negative functions $\{\phi_k\}_{k\in I}$ such that
\begin{enumerate}
\item for any $k\in I$, $0\le\phi_k\le 1$ and $\supp\phi_k\subset B(x_k,2A_0r_k)$;
\item $\sum_{k\in I} \phi_k=\mathbf{1}_\Omega$;
\item for any $k\in I$, $\phi_k\ge L_0^{-1}$ in $B(x_k,r_k)$, where $L_0$ is as in Proposition
\ref{prop-ozdec};
\item there exists a positive constant $C$ such that, for any $k\in I$,
$\|\phi_k\|_{\CG(x_k,r_k,\eta,\eta)}\le CV_{r_k}(x_k)$.
\end{enumerate}
\end{proposition}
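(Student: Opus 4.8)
The plan is to obtain $\{\phi_k\}_{k\in I}$ by normalizing a family of bump functions subordinate to the Whitney-type covering $\{B(x_k,r_k)\}_{k\in I}$ of Proposition \ref{prop-ozdec}. First I would fix, for each $k\in I$, a non-negative $\psi_k$ on $X$ with $\mathbf 1_{B(x_k,r_k)}\le\psi_k\le\mathbf 1_{B(x_k,2A_0r_k)}$ and with the correctly normalized estimate $\|\psi_k\|_{\CG(x_k,r_k,\eta,\eta)}\le CV_{r_k}(x_k)$, the constant $C$ being independent of $k$. Such a $\psi_k$ comes from a standard bump-function construction on spaces of homogeneous type: by the Mac\'ias--Segovia metrization theorem there are $\theta\in(0,1]$ and a metric $\rho$ on $X$ with $\rho\sim d^\theta$, and $\eta$ may be assumed to be $\le\theta$ (the regularity index $\eta$ is a small fixed parameter, compatible with the wavelet construction of \cite{ah13}); taking a suitably truncated and rescaled Lipschitz-in-$\rho$ function of $\rho(x_k,\cdot)$, with reference scale comparable to $r_k^\theta$, produces a function that is globally $\theta$-H\"older, hence $\eta$-H\"older in $d$ on the scales entering the regularity condition (where the relevant ratios are $\le(2A_0)^{-1}<1$), and \eqref{eq-doub} turns its normalization into the factor $V_{r_k}(x_k)$. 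The only features of $\psi_k$ that will be used are $\psi_k\equiv1$ on $B(x_k,r_k)$, $\supp\psi_k\subset B(x_k,2A_0r_k)$, and $\psi_k\in\CG(x_k,r_k,\eta,\eta)$ with norm $\ls V_{r_k}(x_k)$ uniformly in $k$.

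Next I would set $\Phi:=\sum_{k\in I}\psi_k$. Using parts (ii) and (v) of Proposition \ref{prop-ozdec} and $2A_0\le A=16A_0^4$, one checks that this sum is locally finite with at most $L_0$ non-zero terms at each point (if $\psi_j(x)\neq0$ and $\psi_k(x)\neq0$ then $B(x_j,Ar_j)\cap B(x_k,Ar_k)\neq\emptyset$), that $\supp\Phi\subset\Omega$ since $B(x_k,2A_0r_k)\subset B(x_k,Ar_k)\subset\Omega$, and that $\Phi\ge1$ on $\Omega$ since every $x\in\Omega$ lies in some $B(x_k,r_k)$ where $\psi_k\equiv1$; hence $1\le\Phi\le L_0$ on $\Omega$ and $\Phi=0$ off $\Omega$. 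I then define $\phi_k:=\psi_k/\Phi$ on $\Omega$ and $\phi_k:=0$ on $X\setminus\Omega$; note $\psi_k$ and $\phi_k$ have the same zero set since $\supp\psi_k\subset\Omega$. Items (i)--(iii) are immediate: $0\le\phi_k\le\psi_k\le1$ and $\supp\phi_k\subset B(x_k,2A_0r_k)$; on $\Omega$, $\sum_k\phi_k=\Phi/\Phi=1$ and off $\Omega$ the sum vanishes, so $\sum_k\phi_k=\mathbf 1_\Omega$; and on $B(x_k,r_k)$ one has $\psi_k=1$, $\Phi\le L_0$, whence $\phi_k\ge L_0^{-1}$.

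The substance is the uniform bound (iv): $\phi_k\in\CG(x_k,r_k,\eta,\eta)$ with norm $\ls V_{r_k}(x_k)$. Since $\phi_k$ is supported in $B(x_k,2A_0r_k)$, where $[r_k/(r_k+d(x_k,x))]^\eta\sim1$ and, by \eqref{eq-doub}, $V_{r_k}(x_k)+V(x_k,x)\sim V_{r_k}(x_k)$, the size condition reduces to $\phi_k\le1\ls V_{r_k}(x_k)[V_{r_k}(x_k)+V(x_k,x)]^{-1}[r_k/(r_k+d(x_k,x))]^\eta$, which is clear. For the regularity condition fix $x,y$ with $d(x,y)\le(2A_0)^{-1}[r_k+d(x_k,x)]$. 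If $\psi_k(x)=0$ (or $\psi_k(y)=0$) then $\phi_k(x)=0$ (resp.\ $\phi_k(y)=0$) and $|\phi_k(x)-\phi_k(y)|\le|\psi_k(x)-\psi_k(y)|$, which already satisfies the desired bound because $\psi_k\in\CG(x_k,r_k,\eta,\eta)$ with norm $\ls V_{r_k}(x_k)$. Otherwise $\psi_k(x),\psi_k(y)\neq0$, so $x\in B(x_k,2A_0r_k)$ and the quasi-triangle inequality together with $A=16A_0^4$ forces $d(x_k,x)\ls r_k$ and $y\in B(x_k,Ar_k)\subset\Omega$; thus $\Phi(x),\Phi(y)\ge1$ and
$$
|\phi_k(x)-\phi_k(y)|\le\frac{|\psi_k(x)-\psi_k(y)|}{\Phi(x)}+|\psi_k(y)|\,\frac{|\Phi(x)-\Phi(y)|}{\Phi(x)\Phi(y)}\le|\psi_k(x)-\psi_k(y)|+|\Phi(x)-\Phi(y)|.
$$
The first summand is $\ls[d(x,y)/r_k]^\eta$ by the H\"older estimate for $\psi_k$; for the second, $|\Phi(x)-\Phi(y)|\le\sum_j|\psi_j(x)-\psi_j(y)|$ where, by Proposition \ref{prop-ozdec}(v), at most $C$ indices $j$ contribute and each has $r_j\sim r_k$ (its ball $B(x_j,Ar_j)$ meets $B(x_k,Ar_k)$), so the sum is $\ls[d(x,y)/r_k]^\eta$. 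Collecting, $|\phi_k(x)-\phi_k(y)|\ls[d(x,y)/r_k]^\eta$, which, using once more $[r_k/(r_k+d(x_k,x))]^\eta\sim1$ and $V_{r_k}(x_k)+V(x_k,x)\sim V_{r_k}(x_k)$ on $\supp\phi_k$, is exactly $V_{r_k}(x_k)$ times the quantity required by the regularity condition.

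The main obstacle is twofold. First, one must secure a bump $\psi_k$ that is genuinely $\eta$-H\"older \emph{across} $\partial B(x_k,2A_0r_k)$ and correctly normalized at scale $r_k$, rather than merely dominated by $\mathbf 1_{B(x_k,2A_0r_k)}$; this is exactly what dictates the use of a global Lipschitz-in-$\rho$ truncation and of the doubling bound to produce the factor $V_{r_k}(x_k)$. Second, the regularity estimate for $\phi_k$ requires careful bookkeeping so that every reference scale stays uniformly comparable to $r_k$ and every overlap/covering constant stays uniform in $k$; this is entirely controlled by parts (ii), (iii) and (v) of Proposition \ref{prop-ozdec} and by \eqref{eq-doub}, but it is the step where a hidden $k$-dependence can creep in if the Whitney structure is not used carefully.
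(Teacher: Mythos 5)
Your construction is correct, and it is essentially the standard argument: the paper does not prove this proposition at all but quotes it from \cite[Proposition 4.5]{hhllyy18}, where the proof is exactly this normalization $\phi_k=\psi_k/\sum_j\psi_j$ of H\"older bumps subordinate to the Whitney balls of Proposition \ref{prop-ozdec}, with (i)--(iii) read off immediately and (iv) obtained from the quotient estimate plus the bounded overlap and the comparability $r_j\sim r_k$ of intersecting Whitney balls, just as you do. The only soft spot is your justification of the existence of the bumps $\psi_k$ with $\mathbf 1_{B(x_k,r_k)}\le\psi_k\le\mathbf 1_{B(x_k,2A_0r_k)}$ and H\"older constant $\lesssim r_k^{-\eta}$: the Mac\'ias--Segovia metrization alone does not obviously yield the ratio $2A_0$ between the inner and outer radii, since $\rho\sim d^\theta$ only up to multiplicative constants which could exceed $(2A_0)^\theta$; the clean way, consistent with what the paper does elsewhere (proof of Proposition \ref{prop-fin}), is to invoke \cite[Corollary 4.2]{ah13}, which provides precisely such $\eta$-H\"older bumps adapted to the pair $B(x_k,r_k)\subset B(x_k,2A_0r_k)$. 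With that citation in place of the metrization sketch, your proof is complete and coincides with the approach of the cited source.
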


Similarly to the global case, it is still \emph{unknown} whether or not the level set
$\{x\in X:\ f_0^*(x)>\lz\}$ with $\lz\in(0,\fz)$ is open. To overcome this difficulty, we use the same method
as that used in \cite{hhllyy18}. By the proof of \cite[Theorem 2]{ms79}, we know that there exist
$\thz\in(0,1)$ and a metric $d'$ such that $d'\sim d^\thz$. For any $x\in X$ and $r\in(0,\fz)$, define the
\emph{$d'$-ball} $B'(x,r):=\{y\in X:\ d'(x,y)<r\}$.
Moreover, for any $x\in X,\ \rho\in(0,\fz)$ and $\bz',\ \gz'\in(0,\fz)$,
define the \emph{space $G(x,\rho,\bz',\gz')$} of new test functions to be the set of all measurable functions
$f$ satisfying that there exists a positive constant $C$ such that
\begin{enumerate}
\item (the \emph{size condition}) for any $y\in X$,
$$
|f(y)|\le C\frac 1{\mu(B'(y,\rho+d'(x,y)))}\lf[\frac{\rho}{\rho+d'(x,y)}\r]^{\gz'};
$$
\item (the \emph{regularity condition}) for any $y,\ y'\in X$ satisfying $d(y,y')\le[\rho+d'(x,y)]/2$, then
$$
|f(y)-f(y')|\le C\lf[\frac{d'(y,y')}{\rho+d'(y,y')}\r]^{\bz'}\frac 1{\mu(B'(y,\rho+d'(x,y)))}
\lf[\frac{\rho}{\rho+d'(x,y)}\r]^{\gz'}.
$$
\end{enumerate}
Also, define
$$
\|f\|_{G(x,\rho,\bz',\gz')}:=\inf\{C\in(0,\fz):\ \textup{(i) and (ii) hold true}\}.
$$
Using the same argument as that used right just after \cite[Definition 4.6]{hhllyy18}, we find that
$\CG(x,r,\bz,\gz)=G(x,r^\thz,\bz/\thz,\gz/\thz)$ with equivalent norms, where the positive equivalence
constants are independent of $x$ and $r$. For any $\bz,\ \gz\in(0,\eta)$ and $f\in(\go{\bz,\gz})'$, define
the \emph{modified local grand maximal function $f_0^\star$} of $f$ by setting, for any $x\in X$,
\begin{equation}\label{4.2x}
f^\star_0(x):=\sup\lf\{|\langle f,\vz\rangle|:\ \vz\in\go{\bz,\gz}\textup{ with }
\|\vz\|_{G(x,r^\thz,\bz/\thz,\gz/\thz)}\le 1\textup{ for some } r\in(0,1]\r\}.
\end{equation}
Then $f^\star_0\sim f^*_0$ pointwisely on $X$. For any $\lz\in(0,\fz)$ and $j\in\zz$,
define
\begin{equation}\label{4.3x}
\Omega_\lz:=\{x\in X:\ f^\star_0(x)>\lz\}\qquad \textup{and}\qquad \Omega^j:=\Omega_{2^j}.
\end{equation}
For any $j\in\zz$ with $\Omega^j\subsetneqq X$, let $I_j$ (resp., $\{r^j_k\}_{k\in I_j}$) be defined as $I$
(resp., $\{r_k\}_{k\in I_j}$) in Proposition \ref{prop-ozdec} with $\Omega:=\Omega^j$ and $A:=16A_0^4$
therein. Correspondingly, let $\{\phi^j_k\}_{k\in I_j}$ be defined as $\{\phi_k\}_{k\in I}$ in Proposition
\ref{prop-chidec} with $\Omega:=\Omega^j$. Define
\begin{align}\label{eq-ij1}
I_{j,1}&:=\lf\{k\in I_j:\ r^j_k\le\lf(48A_0^5\r)^{-1}\r\}\\
I_{j,2}&:=I_j\setminus I_{j,1}=\lf\{k\in I_j:\ r^j_k>\lf(48A_0^5\r)^{-1}\r\}\noz\\
I_j^*&:=\lf\{k\in I_j:\ r^j_k\le(2A_0)^{-4}\r\}\noz
\end{align}
and
$$
I_{j,2}^*:=I_{j,2}\cap I_j^*=\lf\{k\in I_j:\ \lf(48A_0^5\r)^{-1}<r^j_k\le(2A_0)^{-4}\r\}.
$$
For any $j\in\zz$ and $k\in I_j$, define the \emph{distribution $b^j_k$} by setting, for any
$\vz\in\go{\bz,\gz}$,
\begin{equation}\label{eq-defbjk}
\lf\langle b^j_k,\vz\r\rangle:=\begin{cases}
\lf\langle f,\Phi^j_k(\vz)\r\rangle & \textup{if } k\in I_j^*,\\
\lf<f,\phi^j_k\vz\r> & \textup{if } k\in I_j\setminus I_j^*,
\end{cases}
\end{equation}
where, for any $k\in I_j^*$, $\vz\in\go{\bz,\gz}$ and $x\in X$,
$$
\Phi^j_k(\vz)(x):=\phi^j_k(x)\lf[\int_X\phi^j_k(z)\,d\mu(z)\r]^{-1}\int_X[\vz(x)-\vz(z)]\phi^j_k(z)\,d\mu(z).
$$
Then we have the following estimates.
\begin{proposition}\label{prop-bjk}
For any $j\in\zz$ such that $\Omega^j\subsetneqq X$ and $k\in I_j$, let $b^j_k$ be as in \eqref{eq-defbjk}.
Then there exists a positive constant $C$ such that, for any $j$ as above, $k\in I_j$ and $x\in X$,
\begin{align}\label{eq-estbjk}
\lf(b^j_k\r)^*_0(x)&\le C2^j\frac{\mu(B(x^j_k,r^j_k))}{\mu(B(x^j_k,r^j_k))+V(x^j_k,x)}
\lf[\frac{r^j_k}{r^j_k+d(x^j_k,x)}\r]^{\bz\wedge\gz}\mathbf{1}_{[B(x^j_k,16A_0^4r^j_k)]^\complement}(x)\\
&\quad+Cf^*_0(x)\mathbf{1}_{B(x^j_k,16A_0^4r^j_k)}(x).\noz
\end{align}
\end{proposition}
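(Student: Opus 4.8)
Fix $j\in\zz$ with $\Omega^j\subsetneq X$ and $k\in I_j$; for brevity write $x_k:=x^j_k$, $r_k:=r^j_k$, $B_k:=B(x_k,r_k)$ and $\phi_k:=\phi^j_k$. By the definition of the local grand maximal function it suffices to bound $|\langle b^j_k,\vz\rangle|$ uniformly over all $\vz\in\go{\bz,\gz}$ with $\|\vz\|_{\CG(x,r_0,\bz,\gz)}\le1$ for some $r_0\in(0,1]$. The first step is to rewrite the pairing \eqref{eq-defbjk}: when $k\in I_j\setminus I^*_j$ one has $\langle b^j_k,\vz\rangle=\langle f,\phi_k\vz\rangle$, and when $k\in I^*_j$ one has $\langle b^j_k,\vz\rangle=\langle f,\Phi^j_k(\vz)\rangle$ with $\Phi^j_k(\vz)=\phi_k(\vz-c_k)$, where $c_k:=[\int_X\phi_k\,d\mu]^{-1}\int_X\vz\phi_k\,d\mu$, so that $\int_X\Phi^j_k(\vz)\,d\mu=0$. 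In either case the function $\psi$ paired with $f$ is supported in $B(x_k,2A_0r_k)$, and the plan is to express $\psi$ as an explicit scalar multiple of a normalized test function adapted to a well-chosen point and then invoke $f^*_0$ (or $f^\star_0\sim f^*_0$) there.

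\emph{Near estimate} ($x\in B(x_k,16A_0^4r_k)$). Here I would show that $\psi$ is, up to a constant, a test function adapted to $x$ at some scale $\rho\in(0,1]$: for $k\in I_j\setminus I^*_j$ take $\rho=r_0$ and combine $|\psi|\le|\vz|$ with $|\psi(y)-\psi(y')|\le|\phi_k(y)||\vz(y)-\vz(y')|+|\vz(y')||\phi_k(y)-\phi_k(y')|$, using the size and regularity of $\vz$, the bounds $0\le\phi_k\le1$ and $\|\phi_k\|_{\CG(x_k,r_k,\eta,\eta)}\lesssim V_{r_k}(x_k)$ from Proposition \ref{prop-chidec}, and the fact that $r_k\gtrsim1\ge r_0$ (so the $\phi_k$-increment, which enjoys the larger exponent $\eta\ge\bz$ at the coarser scale $r_k$, is dominated by $[d(y,y')/(r_0+d(x,y))]^\bz$ times the size factor); for $k\in I^*_j$ one argues the same way with $\rho\sim r_k\le1$, also using $|c_k|\lesssim\|\vz\|_{L^\infty(\supp\phi_k)}$. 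Then $|\langle b^j_k,\vz\rangle|=|\langle f,\psi\rangle|\le\|\psi\|_{\CG(x,\rho,\bz,\gz)}f^*_0(x)\lesssim f^*_0(x)$, which is the second term of \eqref{eq-estbjk}.

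\emph{Far estimate} ($x\notin B(x_k,16A_0^4r_k)$). Since then $d(x,y)\sim d(x,x_k)$ and $V(x,y)\sim V(x,x_k)$ for every $y\in B(x_k,2A_0r_k)$, the size condition of $\vz$ gives $|\vz(y)|\lesssim\Lambda:=[V_{r_0}(x)+V(x,x_k)]^{-1}[r_0/(r_0+d(x,x_k))]^\gz$ on $\supp\phi_k$, and when $k\in I^*_j$ the regularity of $\vz$ upgrades this to $|\vz(y)-c_k|\lesssim[r_k/(r_k+d(x,x_k))]^\bz\,\Lambda$ there. Thus $\psi$ is a bump on $B(x_k,2A_0r_k)$ of height $\lesssim\Lambda$ (of height $\lesssim[r_k/(r_k+d(x,x_k))]^\bz\,\Lambda$ when $k\in I^*_j$) with Hölder modulus controlled at scale $r_k$; by Proposition \ref{prop-ozdec} there is $y_k\notin\Omega^j$ with $d(x_k,y_k)<48A_0^5r_k$, so, by the doubling condition and Lemma \ref{lem-add}(i), $\|\psi\|_{\CG(y_k,r_k,\bz,\gz)}$ is bounded by that height times $\mu(B_k)$ (the support lying in $B(y_k,C_{A_0}r_k)$ inflating the norm only by an $A_0$-constant). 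When $r_k\le1$ — which covers every $k\in I^*_j$ and every $k\in I_j\setminus I^*_j$ with $r_k\le1$ — this scale is admissible in $f^*_0$, and $y_k\notin\Omega^j=\{f^\star_0>2^j\}$ gives $f^*_0(y_k)\lesssim f^\star_0(y_k)\le2^j$, whence $|\langle b^j_k,\vz\rangle|\lesssim2^j\Lambda\mu(B_k)$ (with the extra factor $[r_k/(r_k+d(x,x_k))]^\bz$ for $k\in I^*_j$). One then converts, via Lemma \ref{lem-add}(i), doubling, and the elementary monotonicity of $t\mapsto t/(t+s)$ (using $r_0\le1$, and $r_0\le r_k$ or $r_0\sim r_k$ when $k\notin I^*_j$), the quantity $\Lambda\mu(B_k)$ times the extra factor into the first term of \eqref{eq-estbjk}, absorbing the surplus $\gz$-power of $r_0/(r_0+d(x,x_k))$ into $[r_k/(r_k+d(x_k,x))]^{\bz\wedge\gz}$. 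The remaining case $k\in I_j\setminus I^*_j$ with $r_k>1$ — in which $b^j_k$ carries no cancellation — is treated exactly as the large-support local atoms in Lemma \ref{lem-a*}: $|\langle f,\phi_k\vz\rangle|$ is estimated directly from the $L^1$-type control of $b^j_k$ on $B(x_k,2A_0r_k)$ and the smallness $|\vz|\lesssim\Lambda$ there.

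\emph{Main obstacle.} The technical heart is verifying, in both regimes, that $\psi=\phi_k\vz$ (respectively $\phi_k(\vz-c_k)$) really is the asserted bounded multiple of a normalized test function: controlling its Hölder modulus across the two scales $r_0$ and $r_k$, and, in the far case with $k\in I^*_j$, extracting the decay $[r_k/(r_k+d(x,x_k))]^\bz$ from the oscillation of $\vz$ — this last point is precisely what makes the profile in \eqref{eq-estbjk} lie in $L^p$ after raising to the power $p$ and summing in $k$, and is where the hypothesis $\bz\wedge\gz>\omega(1/p-1)$ enters. The scale bookkeeping needed to legitimately invoke $f^*_0(y_k)\le C2^j$ — immediate when $r_k\le1$, and reduced to the atom estimate of Lemma \ref{lem-a*} when $r_k>1$ — is the other delicate point.
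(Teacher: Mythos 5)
Your overall plan is the paper's: split according to whether $x\in B(x^j_k,16A_0^4r^j_k)$, realize the function paired with $f$ as a controlled multiple of a normalized test function centered at $x$ (near case) or at the exterior point $y^j_k$ (far case), and use $y^j_k\notin\Omega^j$ to bring in the factor $2^j$. Two specific steps, however, fail as written.

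The more serious one is your far estimate for $k\in I_j\setminus I_j^*$ with $r^j_k>1$. Your renormalization at scale $r^j_k$ is admissible in the local grand maximal function only when $r^j_k\le 1$, and for $r^j_k>1$ you fall back on ``the $L^1$-type control of $b^j_k$'' as in Lemma \ref{lem-a*}. But in Proposition \ref{prop-bjk} the element $f$ is an arbitrary member of $(\go{\bz,\gz})'$ and $b^j_k$ is defined only as a distribution by \eqref{eq-defbjk}; no $L^1$ information exists at this stage (the identification of $b^j_k$ with the function $f\phi^j_k$ happens only later, in Lemma \ref{lem-funbgj}, and only for $f\in L^2(X)\cap h^{*,p}(X)$), so this step has no content. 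The paper avoids the issue by never changing the scale: it shows $\|\phi^j_k\vz\|_{\CG(y^j_k,r,\bz,\gz)}\ls \frac{\mu(B(x^j_k,r^j_k))}{\mu(B(x^j_k,r^j_k))+V(x^j_k,x)}[\frac{r^j_k}{r^j_k+d(x^j_k,x)}]^{\gz}$, where $r\in(0,1]$ is the original scale of $\vz$, which is admissible in $f^\star_0(y^j_k)$ uniformly in $r^j_k$. This works because every $z\in\supp\phi^j_k$ satisfies $d(y^j_k,z)\sim r^j_k$, so the scale-$r$ profile centered at $y^j_k$ is essentially constant, of size $[r/r^j_k]^\gz/\mu(B(x^j_k,r^j_k))$, on the support, and it absorbs exactly the factor $[r/r^j_k]^\gz$ by which the size of $\vz$ there exceeds the target bound; with this choice the distinction $r^j_k\le1$ versus $r^j_k>1$ disappears altogether.

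The second problem is your near estimate for $k\in I_j^*$: the claim that $\Phi^j_k(\vz)=\phi^j_k(\vz-c_k)$ has $\CG(x,\rho,\bz,\gz)$-norm $\ls 1$ with $\rho\sim r^j_k$ is false when the scale $r_0$ of $\vz$ is much smaller than $r^j_k$, since at $x$ itself the function retains height about $1/V_{r_0}(x)$, far above the permitted $1/V_{r^j_k}(x)$. Your prescription is fine only when $r_0\gtrsim r^j_k$ (then the oscillation of $\vz$ on $\supp\phi^j_k$ indeed tames the height). When $r_0\ls r^j_k$ one must argue differently, for instance by splitting $\langle b^j_k,\vz\rangle=\langle f,\phi^j_k\vz\rangle-c_k\langle f,\phi^j_k\rangle$: the first term is $\ls f^*_0(x)$ because $\|\phi^j_k\vz\|_{\CG(x,r_0,\bz,\gz)}\ls1$ in this regime, while $|c_k|\ls[\mu(B(x^j_k,r^j_k))]^{-1}$ and $\|\phi^j_k\|_{\CG(y^j_k,r^j_k,\bz,\gz)}\ls V_{r^j_k}(x^j_k)$ give $|c_k\langle f,\phi^j_k\rangle|\ls f^\star_0(y^j_k)\ls 2^j\ls f^\star_0(x)$, using $r^j_k\le 1$, $y^j_k\notin\Omega^j$ and $x\in B(x^j_k,16A_0^4r^j_k)\subset\Omega^j$. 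This is essentially the argument the paper imports from the global setting for $k\in I_j^*$; without such a case split or splitting of the pairing, your near case does not go through.
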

\begin{proof}
Let $j\in\zz$ be such that $\Omega^j\subsetneqq X$. When $k\in I_j^*$, we have $r^j_k\le(2A_0)^{-4}<1$.
Using this and a similar argument to that used in the proof of \cite[Proposition 4.7]{hhllyy18},
we directly obtain \eqref{eq-estbjk}, with the details omitted. For the remainder of the proof,
it suffices to consider the case $k\in I_j\setminus I_j^*$. In this case, $r^j_k>(2A_0)^{-4}$. Let
$x\in X$ and $\vz\in\go{\bz,\gz}$ be such that $\|\vz\|_{\CG(x,r,\bz,\gz)}\le 1$ for some $r\in(0,1]$. We
consider two cases.

{\it Case 1) $d(x^j_k,x)<16A_0^4r^j_k$.} In this case, we claim that
\begin{equation}\label{eq-cl1}
\lf\|\phi^j_k\vz\r\|_{\CG(x,r,\bz,\gz)}\ls 1.
\end{equation}
To show this, we first establish the size estimate. By the size conditions of $\phi^j_k$ and $\vz$, we
conclude that, for any $z\in X$,
$$
\lf|\phi^j_k(z)\vz(z)\r|\le\frac{1}{V_r(x)+V(x,z)}\lf[\frac{r}{r+d(x,z)}\r]^\gz,
$$
as desired.

Next we show the regularity condition. Let $z,\ z'\in X$. Because of the size condition of $\phi^j_k\vz$, we
may assume $d(z,z')\le(2A_0)^{-10}[r+d(x,z)]$; the other cases are obviously true. Moreover,
$\phi^j_k(z)\vz(z)-\phi^j_k(z')\vz(z')\neq 0$ implies
$d(z,x^j_k)<(2A_0)^2r^j_k$. If this is not the case, then $d(z,x^j_k)\ge (2A_0)^2r^j_k$.
Since $r\le 1<(2A_0)^4r^j_k$, it follows that
\begin{align*}
d(z,z')&<(2A_0)^{-10}\lf[(2A_0)^4r^j_k+d(x,z)\r]
\le (2A_0)^{-10}\lf[(2A_0)^4r^j_k+A_0d\lf(x,x^j_k\r)+A_0 d\lf(z,x^j_k\r)\r]\\
&<(2A_0)^{-5}d\lf(z,x^j_k\r)
\end{align*}
and hence $d(z',x^j_k)\ge\frac{1}{A_0}d(z,x^j_k)-d(z,z')>\frac{1}{2A_0}d(z,x^j_k)\ge 2A_0r^j_k$. Thus,
$\phi^j_k(z')\vz(z')=0$, which contradicts to $\phi^j_k(z)\vz(z)-\phi^j_k(z')\vz(z')\neq 0$. Therefore,
$d(z,x^j_k)<(2A_0)^2r^j_k$ and hence
$$
d(z,z')\le(2A_0)^{-10}\lf[r+A_0d\lf(x,x^j_k\r)+A_0d\lf(x^j_k,z\r)\r]<(2A_0)^{-1}r^j_k
\le(2A_0)^{-1}\lf[r^j_k+d\lf(x^j_k,z\r)\r].
$$
Moreover, $r+d(x,z)\ls r+d(x,x^j_k)+d(x^j_k,z)\ls r^j_k$. By these inequalities and both the size and the
regularity conditions of $\phi^j_k$ and $\vz$, we obtain
\begin{align*}
&\lf|\phi^j_k(z)\vz(z)-\phi^j_k(z')\vz(z')\r|\\
&\quad\le\phi^j_k(z')|\vz(z)-\vz(z')|+|\vz(z)|\lf|\phi^j_k(z)-\phi^j_k(z')\r|\\
&\quad\ls\lf[\frac{d(z,z')}{r+d(x,z)}\r]^\bz\frac{1}{V_r(x)+V(x,z)}\lf[\frac{r}{r+d(x,z)}\r]^\gz
+\lf[\frac{d(z,z')}{r_k^j}\r]^\bz\frac{1}{V_r(x)+V(x,z)}\lf[\frac{r}{r+d(x,z)}\r]^\gz\\
&\quad\sim\lf[\frac{d(z,z')}{r+d(x,z)}\r]^\bz\frac{1}{V_r(x)+V(x,z)}\lf[\frac{r}{r+d(x,z)}\r]^\gz.
\end{align*}
This implies the regularity condition of $\phi^j_k\vz$. Consequently, we obtain \eqref{eq-cl1} and hence
\eqref{eq-estbjk} when $x\in B(x^j_k,16A_0^4r^j_k)$ by the arbitrariness of $\vz$.

{\it Case 2) $d(x^j_k,x)\ge 16A_0^4r^j_k$.} In this case, it suffices to show that
\begin{equation}\label{eq-cl2}
\lf\|\phi^j_k\vz\r\|_{\CG(y^j_k,r,\bz,\gz)}\ls\frac{\mu(B(x^j_k,r^j_k))}{\mu(B(x^j_k,r^j_k))+V(x^j_k,x)}
\lf[\frac{r^j_k}{r^j_k+d(x^j_k,x)}\r]^\gz,
\end{equation}
where $y^j_k\notin \Omega^j$ satisfies $d(y^j_k,x^j_k)<48A_0^5r^j_k$, which implies that
$d(y^j_k,x^j_k)\sim r^j_k$ [for the existence of such a $y_k^j$, see Proposition \ref{prop-ozdec}(iv) with
$A:=(2A_0)^4$].

Indeed, if \eqref{eq-cl2} holds true, then
\begin{align*}
\lf|\lf<b^j_k,\vz\r>\r|&=\lf|\lf\langle f,\phi^j_k\vz\r\rangle\r|
\ls\frac{\mu(B(x^j_k,r^j_k))}{\mu(B(x^j_k,r^j_k))+V(x^j_k,x)}
\lf[\frac{r^j_k}{r^j_k+d(x^j_k,x)}\r]^\gz f^\star\lf(y^j_k\r)\\
&\ls 2^j\frac{\mu(B(x^j_k,r^j_k))}{\mu(B(x^j_k,r^j_k))+V(x^j_k,x)}
\lf[\frac{r^j_k}{r^j_k+d(x^j_k,x)}\r]^\gz.
\end{align*}
This, together with the arbitrariness of $\vz$, implies \eqref{eq-estbjk} directly.

We now prove \eqref{eq-cl2}. We first consider the size condition. Let $z\in X$. By the support of
$\phi^j_k$, we may suppose $d(z,x^j_k)\le 2A_0r^j_k$, which implies that $d(y^j_k,z)\ls r^j_k$. Moreover,
since $d(y^j_k,x^j_k)>16A_0^4r^j_k>2A_0d(z,x^j_k)$, it then follows that
$$
d\lf(y^j_k,z\r)\ge\frac 1{A_0}d\lf(y^j_k,x^j_k\r)-d\lf(z,x^j_k\r)>\frac{1}{2A_0}d\lf(y^j_k,x^j_k\r)
\sim r^j_k.
$$
Therefore, we have $r^j_k\sim r^j_k+d(y^j_k,z)$. On the other hand, similarly, since
$d(x,x^j_k)>2A_0d(z,x^j_k)$, we obtain $d(x,z)\sim d(x^j_k,x)\gtrsim r^j_k$. By these above inequalities,
$d(x^j_k,x)>r^j_k\gtrsim r$ and the size conditions of $\phi^j_k$ and $\vz$, we conclude that
\begin{align*}
\lf|\phi^j_k(z)\vz(z)\r|&\le\frac{1}{V_r(x)+V(x,z)}\lf[\frac{r}{r+d(x,z)}\r]^\gz
\ls\frac{1}{V_r(x^j_k)+V(x^j_k,x)}\lf[\frac{r}{r+d(x^j_k,x)}\r]^\gz\\
&\sim\frac{1}{\mu(B(x^j_k,r^j_k))+V(x^j_k,x)}\lf[\frac{r}{r^j_k+d(x^j_k,x)}\r]^\gz
\frac{\mu(B(x^j_k,r^j_k))}{\mu(B(y^j_k,r^j_k))+V(y^j_k,z)}\lf[\frac{r^j_k}{r^j_k+d(y^j_k,z)}\r]^\gz\\
&\ls\frac{\mu(B(x^j_k,r^j_k))}{\mu(B(x^j_k,r^j_k))+V(x^j_k,x)}\lf[\frac{r^j_k}{r^j_k+d(x^j_k,x)}\r]^\gz
\frac{1}{V_r(y^j_k)+V(y^j_k,z)}\lf[\frac{r}{r+d(y^j_k,z)}\r]^\gz,
\end{align*}
which implies the size condition of $\phi^j_k\vz$.

Next we consider the regularity condition of $\phi^j_k\vz$. Suppose $z,\ z'\in X$ with
$d(z,z')\le(2A_0)^{-1}[r+d(y^j_k,z)]$. Because of the size condition of $\phi^j_k\vz$, we may further assume
that $d(z,z')\le(2A_0)^{-10}[r+d(y^j_k,z)]$; the other cases are obviously true.
By the support of $\phi^j_k$, repeating the proof of Case 1), we find
that $\phi^j_k(z)\vz(z)-\phi^j_k(z')\vz(z')\neq 0$ further implies $d(z,x^j_k)<(2A_0)^2r^j_k$, and hence
$d(z,z')<(2A_0)^{-1}r^j_k\le(2A_0)^{-1}[r+d(r^j_k,z)]$ and $d(z,y^j_k)\ls r^j_k$. Moreover, since
$d(x^j_k,x)\ge(2A_0)^4r^j_k$, it then follows that $d(z,x^j_k)<(2A_0)^{-2}d(x,x^j_k)$ and hence
$d(z,x)\sim d(x^j_k,x)$. By these, both the size and the regularity conditions of $\phi^j_k$ and $\vz$, and
$r^j_k>r$, similarly to the proof of the size condition, we conclude that
\begin{align*}
&\lf|\phi^j_k(z)\vz(z)-\phi^j_k(z')\vz(z')\r|\\
&\quad\le\phi^j_k(z')|\vz(z)-\vz(z')|+|\vz(z)|\lf|\phi^j_k(z)-\phi^j_k(z')\r|\\
&\quad\ls\lf[\frac{d(z,z')}{r+d(x,z)}\r]^\bz\frac{1}{V_r(x)+V(x,z)}\lf[\frac{r}{r+d(x,z)}\r]^\gz
+\lf[\frac{d(z,z')}{r_k^j}\r]^\bz\frac{1}{V_r(x)+V(x,z)}\lf[\frac{r}{r+d(x,z)}\r]^\gz\\
&\quad\ls\frac{\mu(B(x^j_k,r^j_k))}{\mu(B(x^j_k,r^j_k))+V(x^j_k,x)}\lf[\frac{r^j_k}{r^j_k+d(x^j_k,x)}\r]^\gz
\lf[\frac{d(z,z')}{r+d(y^j_k,z)}\r]^\bz\frac{1}{V_r(y^j_k)+V(y^j_k,z)}\lf[\frac{r}{r+d(y^j_k,z)}\r]^\gz.
\end{align*}
Thus, we obtain the regularity condition of $\phi^j_k\vz$ and hence \eqref{eq-cl2} holds true. This finishes
the proof of Case 2).

Summarizing the above two cases, we obtain \eqref{eq-estbjk} and hence complete the proof of Proposition
\ref{prop-bjk}.
\end{proof}

\begin{proposition}\label{prop-bgj}
Let $p\in(\om,1]$ with $\omega$ and $\eta$, respectively, as in \eqref{eq-doub} and Definition \ref{def-iati}.
For any $j\in\zz$ such that $\Omega^j\subsetneqq X$ and $k\in I_j$, let $b^j_k$ be as in
\eqref{eq-defbjk} with $\bz,\ \gz\in(\omega[1/p-1],\eta)$. Then there exists a positive constant $C$ such
that, for any $j$ as above,
\begin{equation}\label{eq-bjp}
\int_X\sum_{k\in I_j}\lf[\lf(b^j_k\r)^*_0(x)\r]^p\,d\mu(x)\le C\lf\|f^*_0\mathbf{1}_{\Omega^j}\r\|_{L^p(X)}^p;
\end{equation}
moreover, there exists $b^j\in h^{*,p}(X)$ such that $b^j=\sum_{k\in I_j} b^j_k$ in $h^{*,p}(X)$ and, for any
$x\in X$,
\begin{equation}\label{eq-bj*}
\lf(b^j\r)^*_0(x)\le C2^j\sum_{k\in I_j}\frac{\mu(B(x^j_k,r^j_k))}{\mu(B(x^j_k,r^j_k))+V(x^j_k,x)}
\lf[\frac{r^j_k}{r^j_k+d(x^j_k,x)}\r]^{\bz\wedge\gz}+Cf^*_0(x)\mathbf{1}_{\Omega^j}(x);
\end{equation}
if $g^j:=f-b^j$ for any $j\in\zz$ such that $\Omega^j\subsetneqq X$, then, for any $x\in X$,
\begin{equation}\label{eq-gj*}
\lf(g^j\r)^*_0(x)\le C2^j\sum_{k\in I_j}\frac{\mu(B(x^j_k,r^j_k))}{\mu(B(x^j_k,r^j_k))+V(x^j_k,x)}
\lf[\frac{r^j_k}{r^j_k+d(x^j_k,x)}\r]^{\bz\wedge\gz}+Cf^*_0(x)\mathbf{1}_{(\Omega^j)^\complement}(x).
\end{equation}
\end{proposition}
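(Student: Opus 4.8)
The plan is to prove the three assertions of Proposition \ref{prop-bgj} in turn for $f\in h^{*,p}(X)$, using the pointwise bound \eqref{eq-estbjk} of Proposition \ref{prop-bjk} as the only information about the individual $b^j_k$. For \eqref{eq-bjp}, fix $j$ with $\Omega^j\subsetneqq X$. Since $0<p\le1$, I would raise \eqref{eq-estbjk} to the $p$-th power and sum over $k\in I_j$, which produces pointwise a ``tail'' term $2^{jp}\sum_{k\in I_j}\lf[\frac{\mu(B(x^j_k,r^j_k))}{\mu(B(x^j_k,r^j_k))+V(x^j_k,x)}\r]^p\lf[\frac{r^j_k}{r^j_k+d(x^j_k,x)}\r]^{(\bz\wedge\gz)p}\mathbf{1}_{[B(x^j_k,16A_0^4r^j_k)]^\complement}(x)$ plus a ``local'' term $[f^*_0(x)]^p\sum_{k\in I_j}\mathbf{1}_{B(x^j_k,16A_0^4r^j_k)}(x)$, up to a constant. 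The local term integrates to $\ls\|f^*_0\mathbf{1}_{\Omega^j}\|_{L^p(X)}^p$, since by Proposition \ref{prop-ozdec}(v) (with $A:=16A_0^4$) the balls $\{B(x^j_k,16A_0^4r^j_k)\}_{k\in I_j}$ have overlap $\le L_0$ and by part (ii) each lies in $\Omega^j$. For the tail term I would fix $k$, break $[B(x^j_k,16A_0^4r^j_k)]^\complement$ into dyadic annuli $\{2^\ell r^j_k\le d(x^j_k,\cdot)<2^{\ell+1}r^j_k\}$, $\ell\in\zz_+$ large, and use Lemma \ref{lem-add}(i) and \eqref{eq-doub} to see that on such an annulus $\mu(B(x^j_k,r^j_k))+V(x^j_k,x)\sim\mu(B(x^j_k,2^\ell r^j_k))\le C_{(\mu)}2^{\ell\omega}\mu(B(x^j_k,r^j_k))$ and the annulus has measure $\ls 2^{\ell\omega}\mu(B(x^j_k,r^j_k))$; since $1-p\ge0$, the $k$-th term's integral over the $\ell$-th annulus is $\ls\mu(B(x^j_k,r^j_k))\,2^{\ell[\omega(1-p)-(\bz\wedge\gz)p]}$, and $\bz,\gz>\omega(1/p-1)$---equivalently $(\bz\wedge\gz)p>\omega(1-p)$---makes the geometric series in $\ell$ converge, so the $k$-th contribution is $\ls\mu(B(x^j_k,r^j_k))$. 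Summing over $k$, using that $\{B(x^j_k,r^j_k/(5A_0^3))\}_{k\in I_j}$ is disjoint and contained in $\Omega^j$ together with \eqref{eq-doub}, gives $\ls 2^{jp}\mu(\Omega^j)$, which is $\ls\int_{\Omega^j}[f^*_0]^p\,d\mu$ because $f^\star_0\sim f^*_0$ with $f^\star_0>2^j$ on $\Omega^j$. This is \eqref{eq-bjp}.

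For the existence of $b^j$ and \eqref{eq-bj*}: \eqref{eq-bjp} gives $\sum_{k\in I_j}\|b^j_k\|_{h^{*,p}(X)}^p<\fz$, so, $\|\cdot\|_{h^{*,p}(X)}^p$ being subadditive ($p\le1$), the partial sums of $\sum_k b^j_k$ are Cauchy in $h^{*,p}(X)$; they are also Cauchy, hence convergent, in $(\go{\bz,\gz})'$, since for $g\in h^{*,p}(X)$ and admissible $\vz$ (scale $1$ at $x_0$) one has $|\langle g,\vz\rangle|\ls\inf_{z\in B(x_0,1)}g^*_0(z)\ls\|g^*_0\|_{L^p(X)}$, i.e.\ $\|g\|_{(\go{\bz,\gz})'}\ls\|g\|_{h^{*,p}(X)}$. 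I would define $b^j$ to be this common limit; then $\langle b^j,\vz\rangle=\sum_{k\in I_j}\langle b^j_k,\vz\rangle$ for all $\vz$, hence $(b^j)^*_0\le\sum_{k\in I_j}(b^j_k)^*_0$, and feeding in \eqref{eq-estbjk} (dropping the indicator in the tail term and using $\sum_{k\in I_j}\mathbf{1}_{B(x^j_k,16A_0^4r^j_k)}\ls\mathbf{1}_{\Omega^j}$) gives \eqref{eq-bj*}; with \eqref{eq-bjp} this also shows $b^j\in h^{*,p}(X)$ and $\sum_k b^j_k\to b^j$ in $h^{*,p}(X)$.

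Finally, for \eqref{eq-gj*} with $g^j:=f-b^j$, I would first use $\sum_{k\in I_j}\phi^j_k=\mathbf{1}_{\Omega^j}$ (Proposition \ref{prop-chidec}(ii)) and \eqref{eq-defbjk} to record the distributional identity $g^j=\mathbf{1}_{(\Omega^j)^\complement}f+\sum_{k\in I_j^*}c^j_k\phi^j_k$, where $c^j_k:=\langle f,\phi^j_k\rangle/\int_X\phi^j_k\,d\mu$ and $\mathbf{1}_{(\Omega^j)^\complement}f$ is shorthand for the functional $\vz\mapsto\langle f,\vz\rangle-\sum_{k\in I_j}\langle f,\phi^j_k\vz\rangle$. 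For $k\in I_j^*$ one has $r^j_k\le(2A_0)^{-4}<1$, and Proposition \ref{prop-ozdec}(iv) furnishes $y^j_k\notin\Omega^j$ with $d(y^j_k,x^j_k)\ls r^j_k$; together with Proposition \ref{prop-chidec}(iii),(iv) this shows $\phi^j_k/[C\mu(B(x^j_k,r^j_k))]$ is admissible in $f^*_0(y^j_k)$, so $|c^j_k|\ls f^*_0(y^j_k)\ls 2^j$. To estimate $(g^j)^*_0(x)$, I would fix a test $\vz$ with $\|\vz\|_{\CG(x,r,\bz,\gz)}\le1$ for some $r\in(0,1]$, expand $\vz=\sum_{k\in I_j}\phi^j_k\vz+\mathbf{1}_{(\Omega^j)^\complement}\vz$, and obtain from \eqref{eq-defbjk} that $\langle g^j,\vz\rangle=\lf\langle f,\ \mathbf{1}_{(\Omega^j)^\complement}\vz+\sum_{k\in I_j^*}m^j_k(\vz)\phi^j_k\r\rangle$ with $m^j_k(\vz):=\int_X\vz\phi^j_k\,d\mu/\int_X\phi^j_k\,d\mu$. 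When $x\notin\Omega^j$ (so $d(x^j_k,x)\ge16A_0^4r^j_k$ for all $k$), the pairing of $f$ with $\sum_{k\in I_j^*}m^j_k(\vz)\phi^j_k$ is, via $|\langle f,\phi^j_k\rangle|\ls2^j\mu(B(x^j_k,r^j_k))$ and the size of $\vz$ near $x^j_k$, bounded by $2^j\sum_{k\in I_j}\frac{\mu(B(x^j_k,r^j_k))}{\mu(B(x^j_k,r^j_k))+V(x^j_k,x)}\lf[\frac{r^j_k}{r^j_k+d(x^j_k,x)}\r]^{\bz\wedge\gz}$, while $\langle f,\mathbf{1}_{(\Omega^j)^\complement}\vz\rangle$ should be $\ls f^*_0(x)$; here I would expand $f$ by the inhomogeneous Calder\'on reproducing formula of Theorem \ref{thm-idrf} and use the pointwise bound $|P_lf|\ls2^j$ on $(\Omega^j)^\complement$ to replace the non-smooth truncation $\mathbf{1}_{(\Omega^j)^\complement}\vz=\vz-\sum_{k\in I_j}\phi^j_k\vz$ by a genuine normalized test function at $x$, modulo errors of the same tail type. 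When $x\in\Omega^j$, the term $f^*_0(x)\mathbf{1}_{(\Omega^j)^\complement}(x)$ drops out, the tail sum is already $\gtrsim1$ (as $x\in B(x^j_{k_0},r^j_{k_0})$ for some $k_0$), and it suffices to prove $(g^j)^*_0(x)\ls2^j$, which I would deduce as in \cite{hhllyy18} from the fact that the coefficients of $g^j$ against an $\exp$-IATI are $\ls2^j$, so that the argument of Proposition \ref{prop-rmax} applies. Combining the two cases yields \eqref{eq-gj*}. I expect the genuinely delicate point to be the estimate $|\langle f,\mathbf{1}_{(\Omega^j)^\complement}\vz\rangle|\ls f^*_0(x)$ for $x\notin\Omega^j$: the truncation $\mathbf{1}_{(\Omega^j)^\complement}\vz$ jumps across $\partial\Omega^j$ (each $\phi^j_k$ is supported inside $\Omega^j$) and so is not a test function, so $f^*_0$ cannot be applied to it directly; the workaround is precisely to pass to the reproducing formula and exploit that the $P_l$-components of $f$ are uniformly $\ls2^j$ off $\Omega^j$, while everything else amounts to bookkeeping with the geometric properties of the Whitney balls from Propositions \ref{prop-ozdec} and \ref{prop-chidec}.
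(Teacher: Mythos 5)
Your treatment of \eqref{eq-bjp} and of the existence of $b^j$ together with \eqref{eq-bj*} is correct and essentially the paper's argument (the paper merely quotes a lemma of \cite{hhllyy18} for the integral of the single-ball tail term that you compute by hand over dyadic annuli). The gap is in \eqref{eq-gj*}, in both cases. For $x\notin\Omega^j$ the estimate is in fact immediate: $(g^j)^*_0(x)\le f^*_0(x)+(b^j)^*_0(x)$ together with \eqref{eq-bj*}, whose term $f^*_0\mathbf{1}_{\Omega^j}$ vanishes at such $x$, already gives \eqref{eq-gj*} there; the direct route you propose instead does not work as stated. Splitting $\langle g^j,\vz\rangle$ into $\langle f,\mathbf{1}_{(\Omega^j)^\complement}\vz\rangle$ plus $\sum_{k\in I_j^*}m^j_k(\vz)\langle f,\phi^j_k\rangle$ undoes the mean-subtraction that produces the decay in \eqref{eq-estbjk}: from $|\langle f,\phi^j_k\rangle|\ls 2^j\mu(B(x^j_k,r^j_k))$ and the size condition of $\vz$ alone you only get $|m^j_k(\vz)\langle f,\phi^j_k\rangle|\ls 2^j\frac{\mu(B(x^j_k,r^j_k))}{\mu(B(x^j_k,r^j_k))+V(x^j_k,x)}[\frac{r}{r+d(x^j_k,x)}]^{\gz}$, with the scale $r$ of $\vz$ in place of $r^j_k$; when $r\gg r^j_k$ (say $r\sim 1\sim d(x^j_k,x)$ and $r^j_k$ tiny) this exceeds the claimed tail term by roughly $(r/r^j_k)^{\bz\wedge\gz}$, so the asserted bound by the tail sum does not follow. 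The factor $[\frac{r^j_k}{r^j_k+d(x^j_k,x)}]^{\bz\wedge\gz}$ is produced precisely by testing $f$ against $\Phi^j_k(\vz)$, i.e.\ by \eqref{eq-estbjk}, which your splitting discards; likewise the companion claim $|\langle f,\mathbf{1}_{(\Omega^j)^\complement}\vz\rangle|\ls f^*_0(x)$, which you yourself flag as the delicate point, is only sketched and hides the same missing decay inside the terms $\langle f,\phi^j_k\vz\rangle$.

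For $x\in\Omega^j$ the reduction to proving $(g^j)^*_0(x)\ls 2^j$ is legitimate (the $k_0$-term of the sum is $\gtrsim 1$), but the proof you offer is circular at this stage. Proposition \ref{prop-bgj} concerns an arbitrary $f\in h^{*,p}(X)$ and is the input to the density Lemma \ref{lem-dense}, so you may not reduce to $f\in L^2(X)$ nor invoke Lemma \ref{lem-funbgj}(iv) or the corresponding $L^\fz$-bound of $g^j$ from \cite{hhllyy18}. On $\Omega^j$ one does not have $|P_lf|\ls 2^j$, so the assertion that ``the coefficients of $g^j$ against an $\exp$-IATI are $\ls 2^j$'' is exactly as hard as the inequality to be proved: establishing $|P_lg^j(y)|\ls 2^j$ for general $f$ requires the same near/far analysis the paper performs, namely isolating the boundedly many $n\in\CJ$ with $B(x^j_n,16A_0^4r^j_n)$ meeting $B(x^j_{k_0},16A_0^4r^j_{k_0})$, bounding the far part by \eqref{eq-estbjk}, and rewriting the near part as $\langle f,\wz\vz\rangle+\sum_{n}\langle f,\wz\vz_n\rangle$, where $\wz\vz:=\vz(1-\sum_{n\in\CJ}\phi^j_n)$ and the averaged bumps $\wz\vz_n$ must be verified to be (multiples of) admissible test functions anchored at points $y^j_n\notin\Omega^j$ at scales $r^j_n\le 1$, so that $f^\star_0(y^j_n)\le 2^j$ applies; this verification, which moreover must distinguish the case $r^j_{k_0}$ small (handled as in \cite{hhllyy18}) from the new case $r^j_{k_0}\gtrsim 1$, is the actual content of the proof of \eqref{eq-gj*} and is absent from your plan.
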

\begin{proof}
Suppose $j\in\zz$ such that $\Omega^j\subsetneqq X$. We first prove \eqref{eq-bjp}. Indeed, by Propositions
\ref{prop-ozdec} and \ref{prop-bjk}, $\bz,\ \gz>\omega(1/p-1)$ and \cite[Lemma 4.8]{hhllyy18},
we conclude that
\begin{align*}
\int_X\sum_{k\in I_j}\lf[\lf(b^j_k\r)_0^*(x)\r]^p\,d\mu(x)&\ls 2^{jp}\int_X\sum_{k\in I_j}
\lf\{\frac{\mu(B(x^j_k,r^j_k))}{\mu(B(x^j_k,r^j_k))+V(x^j_k,x)}\lf[\frac{r^j_k}{r^j_k+d(x^j_k,x)}\r]
^{\bz\wedge\gz}\r\}^p\,d\mu(x)\\
&\quad+\int_{\bigcup_{k\in I_j} B(x^j_k,16A_0^4r^j_k)} [f^*_0(x)]^p\,d\mu(x)\\
&\ls 2^{jp}\mu(\Omega^j)+\lf\|f^*_0\mathbf 1_{\Omega^j}\r\|_{L^p(X)}^p
\ls\lf\|f^*_0\mathbf 1_{\Omega^j}\r\|_{L^p(X)}^p.
\end{align*}
This finishes the proof of \eqref{eq-bjp}.

Now we prove \eqref{eq-bj*}. Indeed, by \eqref{eq-bjp}, the dominated convergence theorem and the
completeness of $h^{*,p}(X)$, we find that there exists $b^j\in h^{*,p}(X)$ such that
$b^j=\sum_{k\in I_j}b^j_k$ in $h^{*,p}(X)$. By this, Proposition \ref{prop-bjk}, and (ii) and (v) of
Proposition \ref{prop-ozdec}, we conclude that, for any $x\in X$,
\begin{align*}
\lf(b^j\r)^*_0(x)&\le\sum_{k\in I_j}\lf(b^j_k\r)^*_0(x)\\
&\ls 2^j\sum_{k\in I_j}\frac{\mu(B(x^j_k,r^j_k))}{\mu(B(x^j_k,r^j_k))+V(x^j_k,x)}
\lf[\frac{r^j_k}{r^j_k+d(x^j_k,x)}\r]^{\bz\wedge\gz}+f^*_0(x)
\sum_{k\in I_j}\mathbf{1}_{B(x^j_k,16A_0^4r^j_k)}(x)\\
&\sim 2^j\sum_{k\in I_j}\frac{\mu(B(x^j_k,r^j_k))}{\mu(B(x^j_k,r^j_k))+V(x^j_k,x)}
\lf[\frac{r^j_k}{r^j_k+d(x^j_k,x)}\r]^{\bz\wedge\gz}+f^*_0(x)
\mathbf{1}_{\Omega^j}(x),
\end{align*}
which implies \eqref{eq-bj*}.

Finally, we prove \eqref{eq-gj*}. When $x\in(\Omega^j)^\complement$, then, by \eqref{eq-bj*}, we know that
$$
\lf(g^j\r)^*_0(x)\le f^*_0(x)+\lf(b^j\r)^*_0(x)\ls 2^j\sum_{k\in I_j}
\frac{\mu(B(x^j_k,r^j_k))}{\mu(B(x^j_k,r^j_k))+V(x^j_k,x)}
\lf[\frac{r^j_k}{r^j_k+d(x^j_k,x)}\r]^{\bz\wedge\gz}+f^*_0(x).
$$
Thus, \eqref{eq-gj*} holds true in this case.

Now suppose that $x\in\Omega^j$. Then, by Proposition \ref{prop-ozdec}(ii), we may find $k_0\in I_j$ such
that $x\in B(x^j_{k_0},r^j_{k_0})$ and then fix such a $k_0$. Let
$$
\CJ:=\lf\{n\in I_j:\ B\lf(x^j_n,16A_0^4r^j_n\r)\cap B\lf(x^j_{k_0},16A_0^4r^j_{k_0}\r)\neq\emptyset\r\}.
$$
By Proposition \ref{prop-ozdec}(v), we have $\#\CJ\ls 1$ and $(8A_0^2)^{-1}r^j_n\le r^j_{k_0}\le 8A_0^2r^j_n$
whenever $n\in\CJ$.

For any $x\in X$ and $\vz\in\go{\bz,\gz}$ with $\|\vz\|_{\CG(x,r,\bz,\gz)}\le 1$ for some $r\in(0,1]$, we
write
$$
\lf<g^j,\vz\r>=\langle f,\vz\rangle-\sum_{n\in I_j}\lf< b^j_n,\vz\r>
=\lf(\langle f,\vz\rangle-\sum_{n\in\CJ}\lf< b^j_n,\vz\r>\r)+\sum_{n\notin\CJ}\lf<b^j_n,\vz\r>
=:\RZ_1+\RZ_2.
$$
For the term $\RZ_2$, since $n\notin\CJ$, it then follows that $x\notin B(x^j_n,16A_0^4r^j_n)$. Therefore, by
\eqref{eq-estbjk}, we find that
\begin{align*}
|\RZ_2|&\ls 2^j\sum_{n\notin\CJ}\frac{\mu(B(x^j_n,r^j_n))}{\mu(B(x^j_n,r^j_n))+V(x^j_n,x)}
\lf[\frac{r^j_n}{r^j_n+d(x^j_n,x)}\r]^{\bz\wedge\gz}\\
&\ls 2^j\sum_{k\in I_j}\frac{\mu(B(x^j_k,r^j_k))}{\mu(B(x^j_k,r^j_k))+V(x^j_k,x)}
\lf[\frac{r^j_k}{r^j_k+d(x^j_k,x)}\r]^{\bz\wedge\gz},
\end{align*}
as desired.

It remains to estimate $\RZ_1$. To this end, we consider two cases.

{\it Case 1) $r^j_{k_0}\le(2A_0)^{-4}(8A_0)^{-2}$.} In this case, for any $n\in\CJ$, $r^j_n\le (2A_0)^{-4}$.
Thus, the definition of $b^j_k$ is the same as that in \cite[(4.3)]{hhllyy18}. Then, using a similar argument
to that used in the estimation of \cite[(4.6)]{hhllyy18}, we obtain, when $n\in\CJ$,
\begin{equation*}
|\RZ_1|\ls 2^j\sum_{k\in I_j}\frac{\mu(B(x^j_k,r^j_k))}{\mu(B(x^j_k,r^j_k))+V(x^j_k,x)}
\lf[\frac{r^j_k}{r^j_k+d(x^j_k,x)}\r]^{\bz\wedge\gz},
\end{equation*}
which is the desired estimate.

{\it Case 2) $r^j_{k_0}>(2A_0)^{-4}(8A_0)^{-2}$.} In this case, we let $\CJ_1:=\CJ\cap I_j^*$ and
$\CJ_2:=\CJ\setminus I_j^*$. We may further assume that $r\le r^j_{k_0}$ because, when $r\in(r^j_{k_0},1]$,
there exists $\wz r\in(0,r^j_{k_0}]$ such that $\|\vz\|_{\CG(x,\wz r,\bz,\gz)}\ls 1$, which can be reduced
to the case $r\le r^j_{k_0}$. Thus, we write
\begin{align*}
\RZ_1&=\langle f,\vz\rangle-\sum_{n\in\CJ_1}\lf< b^j_n,\vz\r>-\sum_{n\in\CJ_2}\lf<b^j_n,\vz\r>\\
&=\langle f,\vz\rangle-\sum_{n\in\CJ_1}\lf<f,\phi^j_n\vz\r>+\sum_{n\in\CJ_1}\lf<f,\wz\vz_n\r>
-\sum_{n\in\CJ_2}\lf<f,\phi^j_n\vz\r>
=\lf<f,\wz\vz\r>+\sum_{n\in\CJ_1}\lf<f,\wz\vz_n\r>,
\end{align*}
where $\wz\vz:=\vz(1-\sum_{n\in\CJ}\phi^j_n)$ and, for any $n\in\CJ_1$,
$$
\wz{\vz}_n:=\phi^j_n\lf[\int_X\phi^j_n(z)\,d\mu(z)\r]^{-1}\int_X\vz(z)\phi^j_n(z)\,d\mu(z).
$$
By the definition of $\wz\vz_n$, Proposition \ref{prop-chidec}(i) and \eqref{eq-doub}, we find that, for any
$n\in\CJ_1$,
$$
\int_X\phi^j_n(z)\,d\mu(z)\sim \mu\lf(B\lf(x_n^j,r^j_n\r)\r)\quad\textup{and}\quad
\int_X\lf|\vz(z)\phi^j_n(z)\r|\,d\mu(z)\ls 1,
$$
which implies $\|\wz\vz_n\|_{\CG(y^j_n,r^j_n,\bz,\gz)}\sim\|\wz\vz_n\|_{\CG(x^j_n,r^j_n,\bz,\gz)}\ls 1$.
Here and hereafter, for any $j\in\zz$ and $k\in I_j$, $y^j_k$ denotes the point such that
$y^j_k\notin\Omega^j$ satisfies $d(y^j_k,x^j_k)\le 48A_0^5r^j_k$ [see Proposition \ref{prop-ozdec}(iv)].
Moreover, by \cite[(4.7)]{hhllyy18}, we have
$\|\wz\vz\|_{\CG(y^j_{k_0},r^j_{k_0},\bz,\gz)}\ls 1$. Therefore, by this,
$r^j_n\le(2A_0)^{-4}$ whenever $n\in\CJ_1$, Proposition \ref{prop-bjk}, $\#\CJ_1\le\#\CJ\ls 1$ and the fact
$x\in B(x^j_{k_0},r^j_{k_0})$, we conclude that
$$
|\RZ_1|\ls f^*_0\lf(y^j_{k_0}\r)+\sum_{n\in\CJ_1} f^*_0\lf(y^j_n\r)\ls 2^j
\ls 2^j\sum_{k\in I_j}\frac{\mu(B(x^j_k,r^j_k))}{\mu(B(x^j_k,r^j_k))+V(x^j_k,x)}
\lf[\frac{r^j_k}{r^j_k+d(x^j_k,x)}\r]^{\bz\wedge\gz}.
$$
This, together with the estimate of $\RZ_2$ and the arbitrariness of $\vz$, further implies \eqref{eq-gj*}
in the case when $x\in\Omega^j$, which completes the proof of \eqref{eq-gj*} and hence of Proposition
\ref{prop-bgj}.
\end{proof}

By Proposition \ref{prop-bgj}, we have the following lemma.

\begin{lemma}\label{lem-dense}
Let $p\in(\om,1]$, $\bz,\ \gz\in(\omega(1/p-1),\eta)$ with $\omega$ and $\eta$, respectively, as in
\eqref{eq-doub} and Definition \ref{def-iati}, and $q\in[1,\fz)$. If regarding $h^{*,p}(X)$ as a
subspace of $(\go{\bz,\gz})'$, then $L^q(X)\cap h^{*,p}(X)$ is dense in $h^{*,p}(X)$.
\end{lemma}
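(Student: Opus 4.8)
The plan is to approximate an arbitrary $f\in h^{*,p}(X)$ by the ``good parts'' $g^j$ produced by the Calder\'on--Zygmund decomposition of Proposition \ref{prop-bgj}, showing that each $g^j$ lies in $L^q(X)\cap h^{*,p}(X)$ and that $g^j\to f$ in $h^{*,p}(X)$ as $j\to\fz$. First I would fix $f\in h^{*,p}(X)$, which we may assume is not the zero distribution. Since $f^\star_0\sim f^*_0\in L^p(X)$, for every sufficiently large $j\in\zz$ we have $\mu(\Omega^j)\le 2^{-jp}\|f^\star_0\|_{L^p(X)}^p<\fz$ and, in particular, $\Omega^j\subsetneqq X$; hence Propositions \ref{prop-ozdec}, \ref{prop-bjk} and \ref{prop-bgj} apply with $\Omega:=\Omega^j$, and we write $f=g^j+b^j$ with $b^j\in h^{*,p}(X)$ as there. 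For any $s\in[p,\fz)$ set
$$
F_j(\cdot):=2^j\sum_{k\in I_j}\frac{\mu(B(x^j_k,r^j_k))}{\mu(B(x^j_k,r^j_k))+V(x^j_k,\cdot)}
\lf[\frac{r^j_k}{r^j_k+d(x^j_k,\cdot)}\r]^{\bz\wedge\gz};
$$
since $\bz\wedge\gz>\omega(1/p-1)\ge\omega(1/s-1)$ (as $s\ge p$), \cite[Lemma 4.8]{hhllyy18} yields $\|F_j\|_{L^s(X)}^s\ls 2^{js}\mu(\Omega^j)$.

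Next I would show that $b^j\to 0$ in $h^{*,p}(X)$. Taking $s=p$ above, using \eqref{eq-bj*}, $f^*_0\sim f^\star_0$ and $2^{jp}\mu(\Omega^j)=2^{jp}\mu(\{f^\star_0>2^j\})\le\int_{\Omega^j}[f^\star_0]^p\,d\mu$, we obtain
$$
\|b^j\|_{h^{*,p}(X)}^p=\lf\|\lf(b^j\r)^*_0\r\|_{L^p(X)}^p
\ls 2^{jp}\mu(\Omega^j)+\lf\|f^*_0\mathbf 1_{\Omega^j}\r\|_{L^p(X)}^p
\ls\int_{\Omega^j}\lf[f^\star_0(x)\r]^p\,d\mu(x).
$$
Because $\bigcap_{j\in\zz}\Omega^j=\{x\in X:\ f^\star_0(x)=\fz\}$ is $\mu$-null (as $f^\star_0\in L^p(X)$), the dominated convergence theorem forces the right-hand side to tend to $0$ as $j\to\fz$. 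Therefore $g^j=f-b^j\to f$ in $h^{*,p}(X)$.

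Then I would verify that $g^j\in L^q(X)\cap h^{*,p}(X)$ for all large $j$. By \eqref{eq-gj*}, $(g^j)^*_0\le F_j+Cf^*_0\mathbf 1_{(\Omega^j)^\complement}$ pointwise on $X$. For every $s\in[p,\fz)$ the first term lies in $L^s(X)$ by the estimate on $F_j$, while on $(\Omega^j)^\complement$ one has $f^*_0\ls f^\star_0\le 2^j$, so
$$
\int_{(\Omega^j)^\complement}\lf[f^*_0(x)\r]^s\,d\mu(x)\ls 2^{j(s-p)}\int_X\lf[f^*_0(x)\r]^p\,d\mu(x)<\fz.
$$
Hence $(g^j)^*_0\in L^s(X)$ for every $s\in[p,\fz)$; in particular $(g^j)^*_0\in L^2(X)\cap L^q(X)$. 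By Theorem \ref{thm-p>1}, $g^j\in h^{*,2}(X)=L^2(X)$, so $g^j$ is identified with an $L^2$-function; writing $Q_0:=P_0$ and $Q_k:=P_k-P_{k-1}$ for a $1$-$\exp$-IATI $\{P_k\}_{k=0}^\fz$, the identity $\sum_{k=0}^\fz Q_k=I$ on $L^2(X)$ gives $P_kg^j\to g^j$ in $L^2(X)$, hence $\mu$-a.e.\ along a subsequence, and then the definition of $\CM^+_0$ together with \eqref{eq-bmax} yields $|g^j|\le\CM^+_0(g^j)\le C(g^j)^*_0$ $\mu$-a.e. Thus $g^j\in L^q(X)$, so $g^j\in L^q(X)\cap h^{*,p}(X)$.

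Combining the last two steps, $\{g^j\}_j$ eventually lies in $L^q(X)\cap h^{*,p}(X)$ and converges to $f$ in $h^{*,p}(X)$, which proves the density. I expect the main obstacle to be the uniform $L^s$-control of the tail sum $F_j$ for exponents $s$ ranging over all of $[p,\fz)$ rather than just $s=p$ --- this is precisely where \cite[Lemma 4.8]{hhllyy18} and the hypothesis $\bz\wedge\gz>\omega(1/p-1)$ (which forces $s>\omega/(\omega+\bz\wedge\gz)$ for every $s\ge p$) are needed --- together with the identification of the distribution $g^j$ with a genuine $L^q$-function, carried out through $h^{*,2}(X)=L^2(X)$ and the pointwise domination $|g^j|\le C(g^j)^*_0$.
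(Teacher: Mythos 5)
Your proposal is correct and follows essentially the same route as the paper: the Calder\'on--Zygmund decomposition $f=g^j+b^j$ from Proposition \ref{prop-bgj}, the convergence $b^j\to 0$ in $h^{*,p}(X)$ via the estimates \eqref{eq-bjp}/\eqref{eq-bj*} together with the dominated convergence theorem, and the bound \eqref{eq-gj*} to conclude $(g^j)^*_0\in L^q(X)$; your extra step identifying the distribution $g^j$ with a genuine $L^q$-function through $h^{*,2}(X)=L^2(X)$ and $|g^j|\ls(g^j)^*_0$ a.e.\ is a careful elaboration of a point the paper leaves implicit. One small caveat: for $s>1$ the bound $\|F_j\|_{L^s(X)}^s\ls 2^{js}\mu(\Omega^j)$ does not follow from the $p$-subadditivity argument behind \cite[Lemma 4.8]{hhllyy18}, which is invoked for exponents at most $1$ (the inequality $(\sum_k u_k)^s\le\sum_k u_k^s$ reverses when $s>1$); instead one should use the pointwise bounded-overlap estimate $\sum_{k\in I_j}\frac{\mu(B(x^j_k,r^j_k))}{\mu(B(x^j_k,r^j_k))+V(x^j_k,x)}\lf[\frac{r^j_k}{r^j_k+d(x^j_k,x)}\r]^{\bz\wedge\gz}\ls 1$ (this is exactly the role of \cite[Lemma 4.6]{hhllyy18} in the paper's proof), which combined with Lemma \ref{lem-add}(ii) and the bounded overlap of the Whitney balls gives $\int_X F_j^s\,d\mu\ls 2^{js}\sum_{k\in I_j}\mu(B(x^j_k,r^j_k))\ls 2^{js}\mu(\Omega^j)$ --- a one-line fix that leaves the rest of your argument unchanged.
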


\begin{proof}
Suppose $f\in h^{*,p}(X)$ and, for any $j\in\zz$, $\Omega^j$ is as in \eqref{4.3x}. Since $f^*_0\in L^p(X)$, it
then follows that there exists $j_0\in\zz$ such that, for any $j\in\{j_0,j_0+1,\ldots\}$,
$\Omega^j\subsetneqq X$. Therefore, for any $j\in\{j_0,j_0+1,\ldots\}$, let $b^j$ and $g^j$ be as in
Proposition \ref{prop-bgj}. By $p\in(\om,1]$, \eqref{eq-bjp} and the dominated convergence theorem, we
find that, for any $j\in\{j_0,j_0+1,\ldots\}$,
\begin{align*}
\lf\|\lf(b^j\r)^*_0\r\|_{L^p(X)}^p&=\int_X\lf[\lf(\sum_{k\in I_j}b^j_k\r)^*_0(x)\r]^p\,d\mu(x)
\le\int_X\lf[\sum_{k\in I_j}\lf(b^j_k\r)^*_0(x)\r]^p\,d\mu(x)\\
&\le\int_X\sum_{k\in I_j}\lf[\lf(b^j_k\r)^*_0(x)\r]^p\,d\mu(x)
\ls\lf\|f^*_0\mathbf 1_{\Omega^j}\r\|_{L^p(X)}^p\to 0
\end{align*}
as $j\to\fz$. This shows that $g^j\in h^{*,p}(X)$ and $f-g^j\to 0$ in $h^{*,p}(X)$ as $j\to\fz$.

We still need to prove that $g^j\in L^q(X)$ for any given $q\in[1,\fz]$. Indeed, by \eqref{eq-gj*}, the
Minkowski inequality, (ii) and (v) of Proposition \ref{prop-ozdec} and \cite[Lemma 4.6]{hhllyy18}, we conclude
that
\begin{align*}
\lf\|\lf(g^j\r)^*_0\r\|_{L^q(X)}&\ls 2^j\lf[\int_X\lf\{\sum_{k\in I_j}
\frac{\mu(B(x^j_k,r^j_k))}{\mu(B(x^j_k,r^j_k))+V(x^j_k,x)}
\lf[\frac{r^j_k}{r^j_k+d(x^j_k,x)}\r]^{\bz\wedge\gz}\r\}^q\,d\mu(x)\r]^{1/q}\\
&\qquad+\lf\{\int_{(\Omega^j)^\complement}\lf[f^*_0(x)\r]^q\,d\mu(x)\r\}^{1/q}\\
&\ls 2^j\mu\lf(\bigcup_{k\in I_j}B\lf(x^j_k,r^j_k\r)\r)+2^{j(q-p)/q}
\lf\{\int_{X}\lf[f^*_0(x)\r]^p\,d\mu(x)\r\}^{1/q}\\
&\ls 2^j\mu\lf(\Omega^j\r)+2^{j(q-p)/q}\|f\|_{h^{*,p}(X)}^{p/q}<\fz.
\end{align*}
Thus, $(g^j)^*_0\in L^q(X)$, which implies that $g^j\in L^q(X)$. Combining these above conclusions, we then
complete the proof of Lemma \ref{lem-dense}.
\end{proof}

In what follows, we suppose $f\in L^2(X)\cap h^{*,p}(X)$. For any $j\in\zz$ with $\Omega^j\subsetneqq X$ and
$k\in I_j$, let
\begin{equation}\label{eq-defmb}
m^j_k:=\frac{1}{\|\phi^j_k\|_{L^1(X)}}\int_X f(x)\phi^j_k(x)\,d\mu(x) \quad \textup{and} \quad
b^j_k:=\begin{cases}
\lf(f-m^j_k\r)\phi^j_k &\textup{if } k\in I_j^*,\\
f\phi^j_k & \textup{if } k\in I_j\setminus I_j^*.
\end{cases}
\end{equation}
Then we have the following properties.
\begin{lemma}\label{lem-funbgj}
For any $j\in\zz$ such that $\Omega^j\subsetneqq X$ and $k\in I_j$, let $m^j_k$ and $b^j_k$ be as in
\eqref{eq-defmb}. Then
\begin{enumerate}
\item there exists a positive constant $C$, independent of $j$ and $k\in I_j^*$, such that $|m^j_k|\le C2^j$;
\item $b^j_k$ induces the same distribution as the one defined in \eqref{eq-defbjk};
\item $\sum_{k\in I_j} b^j_k$ converges to some function $b^j$ in $L^2(X)$, which induces a distribution that
coincides with $b^j$ as in Proposition \ref{prop-bgj};
\item let $g^j:=f-b^j$. Then $g^j=f\mathbf{1}_{(\Omega^j)^\complement}+\sum_{k\in I_j^*}m^j_k\phi^j_k$.
Moreover, there exists a positive constant $C$, independent of $j$, such that, for almost every $x\in X$,
$|g^j(x)|\le C2^j$.
\end{enumerate}
\end{lemma}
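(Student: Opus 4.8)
The plan is to prove the four assertions in turn, exploiting that $f\in L^2(X)$ to replace every distributional pairing by an honest integral.

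\emph{Assertion (i).} I would regard $\phi^j_k$, renormalized so that its $L^1(X)$-norm is one, as an admissible test function for $f^*_0$ at the point $y^j_k\notin\Omega^j$. By Proposition \ref{prop-chidec}(i) and (iii) together with \eqref{eq-doub} one has $\|\phi^j_k\|_{L^1(X)}\sim\mu(B(x^j_k,r^j_k))=V_{r^j_k}(x^j_k)$, while Proposition \ref{prop-chidec}(iv) gives $\|\phi^j_k\|_{\CG(x^j_k,r^j_k,\eta,\eta)}\ls V_{r^j_k}(x^j_k)$; since $d(x^j_k,y^j_k)\ls r^j_k$ by Proposition \ref{prop-ozdec}(iv), recentering and using $\bz,\ \gz\le\eta$ (so the $\CG(\cdot,\cdot,\eta,\eta)$-norm controls the $\CG(\cdot,\cdot,\bz,\gz)$-norm) yield $\|\phi^j_k\|_{\CG(y^j_k,r^j_k,\bz,\gz)}\ls\|\phi^j_k\|_{L^1(X)}$. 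Because $k\in I_j^*$ forces $r^j_k\le(2A_0)^{-4}<1$ and $\phi^j_k\in\CG(\eta,\eta)\subset\go{\bz,\gz}$, it follows that $|m^j_k|=\|\phi^j_k\|_{L^1(X)}^{-1}|\langle f,\phi^j_k\rangle|\ls f^*_0(y^j_k)\sim f^\star_0(y^j_k)\le 2^j$, the last step since $\Omega^j=\{f^\star_0>2^j\}$.

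\emph{Assertions (ii) and (iii).} For (ii) with $k\in I_j^*$, Fubini's theorem (legitimate because $f\in L^2(X)$, $\phi^j_k$ is bounded with compact support, and every $\vz\in\go{\bz,\gz}$ lies in $L^1(X)\cap L^2(X)$) gives, for such $\vz$,
\begin{align*}
\langle f,\Phi^j_k(\vz)\rangle
&=\int_X f\phi^j_k\vz\,d\mu-\lf[\int_X\phi^j_k\,d\mu\r]^{-1}\lf(\int_X f\phi^j_k\,d\mu\r)\lf(\int_X\vz\phi^j_k\,d\mu\r)\\
&=\int_X\lf(f-m^j_k\r)\phi^j_k\vz\,d\mu,
\end{align*}
which is precisely the distribution induced by the function $(f-m^j_k)\phi^j_k$; for $k\in I_j\setminus I_j^*$ the identity $\langle f,\phi^j_k\vz\rangle=\langle f\phi^j_k,\vz\rangle$ is trivial. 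For (iii), the supports $\{B(x^j_k,2A_0r^j_k)\}_{k\in I_j}$ have bounded overlap by Proposition \ref{prop-ozdec}(v) (with $A=16A_0^4$), so $\|\sum_{k\in F}b^j_k\|_{L^2(X)}^2\ls\sum_{k\in F}\|b^j_k\|_{L^2(X)}^2$ for any finite $F\subset I_j$; moreover bounded overlap yields $\sum_{k\in I_j}\|f\phi^j_k\|_{L^2(X)}^2\ls\|f\mathbf 1_{\Omega^j}\|_{L^2(X)}^2<\fz$, while (i), the disjointness of $\{B(x^j_k,r^j_k/(5A_0^3))\}$ and \eqref{eq-doub} give $\sum_{k\in I_j^*}|m^j_k|^2\|\phi^j_k\|_{L^2(X)}^2\ls 2^{2j}\mu(\Omega^j)<\fz$. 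Hence the partial sums of $\sum_{k\in I_j}b^j_k$ are Cauchy in $L^2(X)$, with limit some $b^j\in L^2(X)$. To identify this $b^j$ with the one in Proposition \ref{prop-bgj}, I would note that $L^2(X)$-convergence implies weak-$*$ convergence in $(\go{\bz,\gz})'$ (test functions lie in $L^2(X)$), and that $h^{*,p}(X)$-convergence does too: from $|\langle h,\vz\rangle|\le\|\vz\|_{\CG(x_0,1,\bz,\gz)}\,h^*_0(x_0)$, valid for every $x_0\in X$, integrating in $x_0$ over a fixed ball gives $|\langle h,\vz\rangle|\ls_\vz\|h\|_{h^{*,p}(X)}$. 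By (ii) the partial sums of the two series coincide, so the two limits agree.

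\emph{Assertion (iv).} Since $\sum_{k\in I_j}f\phi^j_k$ and $\sum_{k\in I_j^*}m^j_k\phi^j_k$ each converge in $L^2(X)$, I can subtract term by term and invoke $\sum_{k\in I_j}\phi^j_k=\mathbf 1_{\Omega^j}$ (Proposition \ref{prop-chidec}(ii)) to obtain $b^j=f\mathbf 1_{\Omega^j}-\sum_{k\in I_j^*}m^j_k\phi^j_k$, whence $g^j=f-b^j=f\mathbf 1_{(\Omega^j)^\complement}+\sum_{k\in I_j^*}m^j_k\phi^j_k$. For the pointwise bound: on $\Omega^j$ the first term vanishes and, by (i) together with Proposition \ref{prop-chidec}(i)--(ii), $|g^j|\le\sum_{k\in I_j^*}|m^j_k|\phi^j_k\le C2^j\sum_{k\in I_j}\phi^j_k=C2^j$; on $(\Omega^j)^\complement$ every $\phi^j_k$ vanishes since $\supp\phi^j_k\subset B(x^j_k,2A_0r^j_k)\subset\Omega^j$, so $g^j=f$ there, and a.e. on $(\Omega^j)^\complement$ one has $|f|\ls f^*_0\sim f^\star_0\le 2^j$. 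The two places I expect to be delicate are, first, this last a.e. domination $|f(x)|\ls f^*_0(x)$, which I would obtain from the Lebesgue differentiation theorem by testing $f$ against the normalized kernels $P_k(x,\cdot)$ of a $1$-$\exp$-IATI and letting $k\to\fz$; and, second, the identification in (iii) of the concrete $L^2(X)$-limit with the abstractly defined $h^{*,p}(X)$-limit of Proposition \ref{prop-bgj}, which rests on the fact that convergence in $h^{*,p}(X)$ forces weak-$*$ convergence in $(\go{\bz,\gz})'$.
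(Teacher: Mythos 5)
Your proof is correct and follows essentially the same route as the paper: the same test-function estimate at the point $y^j_k\notin\Omega^j$ for (i), the same Fubini computation for (ii), $L^2$-convergence plus identification of the distributional limit for (iii), and the partition-of-unity identity together with $|f|\ls f^\star_0$ almost everywhere for (iv). The only cosmetic differences are that you get the $L^2$-convergence in (iii) from bounded overlap and square-summability instead of the paper's pointwise domination by $[2^j+|f|]\mathbf 1_{\Omega^j}\in L^2(X)$ plus dominated convergence, and that you spell out the (correct) fact that convergence in $h^{*,p}(X)$ forces convergence in $(\go{\bz,\gz})'$, which the paper leaves implicit.
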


\begin{proof}
Let all the notions be as in Lemma \ref{lem-funbgj}. We first show (i). Indeed, for any $k\in I_j^*$, by
Proposition \ref{prop-ozdec}(iii), we find that there exists $y^j_k\notin\Omega^j$ such that
$d(y^j_k,x^j_k)\ls r^j_k$. By this and Proposition \ref{prop-chidec}, we know that
$\|\phi^j_k\|_{\CG(y^j_k,r^j_k,\bz,\eta)}\sim\|\phi^j_k\|_{\CG(x^j_k,r^j_k,\bz,\eta)}\ls V_{\dz^k}(x^j_k)$.
From this, Proposition \ref{prop-chidec}(i) and $r^j_k\le 1$, we further deduce that
$$
\lf|m^j_k\r|\sim\frac 1{V_{\dz^k}(x^j_k)}\lf|\lf<f,\phi^j_k\r>\r|\ls f_0^\star\lf(y^j_k\r)\ls 2^j.
$$
This finishes the proof of (i).

Now we prove (ii). When $k\in I_j$, by the Fubini theorem, we conclude that, for any $\vz\in\go{\bz,\gz}$,
\begin{align*}
&\int_X b^j_k(x)\vz(x)\,d\mu(x)\\
&\quad=\int_X \lf[f(x)-\frac{1}{\|\phi^j_k\|_{L^1(X)}}\int_X
f(y)\phi^j_k(y)\,d\mu(y)\r]\phi^j_k(x)\vz(x)\,d\mu(x)\\
&\quad=\int_X f(y)\phi^j_k(y)\vz(y)\,d\mu(y)-\int_X f(y)\phi^j_k(y)\lf[\int_X\phi^j_k(z)\,d\mu(z)\r]^{-1}
\int_X\phi_k^j(x)\vz(x)\,d\mu(x)\,d\mu(y)\\
&\quad=\int_X f(y)\lf\{\phi^j_k(y)\lf[\int_X\phi^j_k(z)\,d\mu(z)\r]^{-1}\int_X[\vz(y)-\vz(x)]
\phi^j_k(x)\,d\mu(x)\r\}\,d\mu(y)\\
&\quad=\int_X b^j_k(x)\Phi^j_k(x)\,d\mu(x),
\end{align*}
which coincides with \eqref{eq-defbjk}. When $k\in I_j\setminus I_j^*$, the proof is even more direct and we
omit the details here. This finishes the proof of (ii).

Now we show (iii). For any $x\in X$, by Proposition \ref{prop-chidec}(ii), and (i) of this lemma, we find that
\begin{equation}\label{eq-sumbj}
\sum_{k\in I_j^*}\lf|f(x)-m^j_k\r|\phi^j_k(x)+\sum_{k\in I_j\setminus I_j^*}|f(x)|\phi^j_k(x)
\ls [2^j+|f(x)|]\sum_{k\in I_j}\phi^j_k(x)\sim [2^j+|f(x)|]\mathbf 1_{\Omega^j}(x).
\end{equation}
From this, it follows that
\begin{equation}\label{eq-bj}
b^j:=\sum_{k\in I_j} b^j_k,
\end{equation}
where the summation converges almost everywhere. Moreover, since $f\in L^2(X)$, it then follows that
$(2^j+|f|)\mathbf 1_{\Omega^j}\in L^2(X)$ with $\Omega^j$ as in \eqref{4.3x}. By \eqref{eq-sumbj} again,
together with the dominated convergence theorem and the completeness of $L^2(X)$, we further find that
\eqref{eq-bj} converges in $L^2(X)$, and hence in $(\go{\bz,\gz})'$. This finishes the proof of (iii).

Finally we prove (iv). For any $x\in X$, by Proposition \ref{prop-chidec}(ii), we have
$$
g^j(x)=f(x)-\sum_{k\in I_j} f(x)\phi^j_k(x)+\sum_{k\in I_j^*} m^j_k\phi^j_k(x)
=f(x)\mathbf 1_{(\Omega^j)^\complement}(x)+\sum_{k\in I_j^*} m^j_k\phi^j_k(x).
$$
Moreover, by the Lebesgue differential theorem, we find that $|f(x)|\ls f^\star_0(x)$ for almost every
$x\in X$. From this, (i), Proposition \ref{prop-chidec}(ii) and the above identity, we deduce that, for any
$x\in X$,
$$
\lf|g^j(x)\r|\ls f^\star_0(x)\mathbf 1_{(\Omega^j)^\complement}(x)+2^j\mathbf 1_{\Omega^j}(x)\ls 2^j,
$$
which completes the proof of (iv) and hence of Lemma \ref{lem-funbgj}.
\end{proof}

For any $j\in\zz$, $k\in I_j$ and $l\in I_{j+1}^*$, let
\begin{equation}\label{eq-defl}
L^{j+1}_{k,l}:=\frac{1}{\|\phi^{j+1}_l\|_{L^1(X)}}\int_X\lf[f(\xi)-m^{j+1}_l\r]\phi^j_k(\xi)\phi^{j+1}_l(\xi)
\,d\mu(\xi).
\end{equation}
Then we have the following lemma.

\begin{lemma}\label{lem-psi}
For any $j\in\zz$ such that $\Omega^j\subsetneqq X$, $k\in I_{j}$ and $l\in I_{j+1}^*$, let $L^{j+1}_{k,l}$
be as in \eqref{eq-defl}. Then
\begin{enumerate}
\item there exists a positive constant $C$, independent of $j$, $k$ and $l$, such that
$$
\sup_{x\in X}\lf| L^{j+1}_{k,l}\phi^{j+1}_l(x)\r|\le C2^j;
$$
\item $\sum_{k\in I_j}\sum_{l\in I_{j+1}^*} L^{j+1}_{k,l}\phi^{j+1}_l=0$ both in $(\go{\bz,\gz})'$ and
almost everywhere.
\end{enumerate}
\end{lemma}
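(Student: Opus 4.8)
The plan is to reduce both parts to a single identity. For $l\in I_{j+1}^*$ and $k\in I_j$, using that (by \eqref{eq-defmb}) the function $b^{j+1}_l$ from Lemma~\ref{lem-funbgj} equals $(f-m^{j+1}_l)\phi^{j+1}_l$, and that $f\in L^2(X)$, a Fubini computation identical to the one in the proof of Lemma~\ref{lem-funbgj}(ii) (with $\phi^j_k$ in place of a general $\vz$) gives
$$
L^{j+1}_{k,l}\,\|\phi^{j+1}_l\|_{L^1(X)}=\int_X\lf[f(\xi)-m^{j+1}_l\r]\phi^j_k(\xi)\phi^{j+1}_l(\xi)\,d\mu(\xi)
=\lf\langle b^{j+1}_l,\phi^j_k\r\rangle=\lf\langle f,\Phi^{j+1}_l(\phi^j_k)\r\rangle,
$$
where $\Phi^{j+1}_l$ is the operator from \eqref{eq-defbjk}; note that $\Phi^{j+1}_l(\phi^j_k)$ automatically has vanishing integral.

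For (i) I may assume $\supp\phi^j_k\cap\supp\phi^{j+1}_l\neq\emptyset$, since otherwise $L^{j+1}_{k,l}=0$. Because $l\in I_{j+1}^*$, we have $r^{j+1}_l\le(2A_0)^{-4}<1$, and Proposition~\ref{prop-ozdec}(iv) applied to $\Omega^{j+1}$ furnishes $y^{j+1}_l\notin\Omega^{j+1}$ with $d(y^{j+1}_l,x^{j+1}_l)\ls r^{j+1}_l$, whence $f^*_0(y^{j+1}_l)\sim f^\star_0(y^{j+1}_l)\le 2^{j+1}$. The core estimate is
$$
\lf\|\Phi^{j+1}_l(\phi^j_k)\r\|_{\CG(y^{j+1}_l,r^{j+1}_l,\bz,\gz)}\ls\|\phi^{j+1}_l\|_{L^1(X)},
$$
which I would prove by checking the size and regularity conditions of $\Phi^{j+1}_l(\phi^j_k)$ on its support ball $B(x^{j+1}_l,2A_0r^{j+1}_l)$, using $\|\phi^{j+1}_l\|_{\CG(x^{j+1}_l,r^{j+1}_l,\eta,\eta)}\ls V_{r^{j+1}_l}(x^{j+1}_l)\sim\|\phi^{j+1}_l\|_{L^1(X)}$ (Proposition~\ref{prop-chidec}), the boundedness and $\eta$-regularity of $\phi^j_k$ at scale $r^j_k$, and the scale comparison $r^j_k\gtrsim r^{j+1}_l$ for intersecting supports (for $\xi$ in the intersection, $r^j_k\sim d(\xi,(\Omega^j)^\complement)\ge d(\xi,(\Omega^{j+1})^\complement)\sim r^{j+1}_l$, since $\Omega^{j+1}\subset\Omega^j$); this is parallel to the treatment of $(b^j_k)^*_0$ for $k\in I_j^*$ in the proof of Proposition~\ref{prop-bjk}, i.e.\ \cite[Proposition~4.7]{hhllyy18}, which I will cite for the routine bookkeeping. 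Since $\Phi^{j+1}_l(\phi^j_k)$ is a mean-zero finite-regularity test function it lies in $\go{\bz,\gz}$; hence, by the definition of $f^*_0$ and $r^{j+1}_l\le 1$,
$$
\lf|L^{j+1}_{k,l}\r|\,\|\phi^{j+1}_l\|_{L^1(X)}=\lf|\lf\langle f,\Phi^{j+1}_l(\phi^j_k)\r\rangle\r|
\le\lf\|\Phi^{j+1}_l(\phi^j_k)\r\|_{\CG(y^{j+1}_l,r^{j+1}_l,\bz,\gz)}\,f^*_0(y^{j+1}_l)\ls\|\phi^{j+1}_l\|_{L^1(X)}\,2^{j+1},
$$
so $|L^{j+1}_{k,l}|\ls 2^j$, and (i) follows from $|\phi^{j+1}_l|\le 1$.

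For (ii) I sum over $k$ first. By Proposition~\ref{prop-chidec}, $\sum_{k\in I_j}\phi^j_k=\mathbf 1_{\Omega^j}$, and since $\supp\phi^{j+1}_l\subset B(x^{j+1}_l,16A_0^4r^{j+1}_l)\subset\Omega^{j+1}\subset\Omega^j$ this sum equals $1$ on $\supp\phi^{j+1}_l$; moreover, for fixed $l$, only a uniformly bounded number of $k$ satisfy $\supp\phi^j_k\cap\supp\phi^{j+1}_l\neq\emptyset$ (all such $k$ have $r^j_k$ comparable to one another and to $d(x^{j+1}_l,(\Omega^j)^\complement)$, and their cores are disjoint, so the bounded overlap in Proposition~\ref{prop-ozdec} together with doubling limits their number). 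Hence, by dominated convergence,
$$
\sum_{k\in I_j}L^{j+1}_{k,l}=\frac{1}{\|\phi^{j+1}_l\|_{L^1(X)}}\int_X\lf[f(\xi)-m^{j+1}_l\r]\phi^{j+1}_l(\xi)\,d\mu(\xi)=0
$$
by the definition of $m^{j+1}_l$ in \eqref{eq-defmb}. Therefore $\sum_{k\in I_j}\sum_{l\in I_{j+1}^*}L^{j+1}_{k,l}\phi^{j+1}_l(x)=\sum_{l\in I_{j+1}^*}\lf(\sum_{k\in I_j}L^{j+1}_{k,l}\r)\phi^{j+1}_l(x)=0$ pointwise (at any fixed $x$ the $l$-sum is finite by the bounded overlap of $\{\supp\phi^{j+1}_l\}_l$, and each $k$-sum vanishes), which is the almost-everywhere statement; the identity in $(\go{\bz,\gz})'$ follows by pairing with any $\psi\in\go{\bz,\gz}$ and interchanging the summations, legitimate since by (i) and $\sum_l\int_X\phi^{j+1}_l|\psi|\,d\mu\le\int_X|\psi|\,d\mu<\fz$ the series $\sum_k\sum_l|L^{j+1}_{k,l}|\,|\langle\phi^{j+1}_l,\psi\rangle|$ converges.

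The main obstacle is the core estimate in (i): verifying the size and regularity bounds for $\Phi^{j+1}_l(\phi^j_k)$ recentered at the exterior point $y^{j+1}_l$ at scale $r^{j+1}_l$, whose crucial geometric ingredient is the scale comparison $r^j_k\gtrsim r^{j+1}_l$ resting on the nestedness $\Omega^{j+1}\subset\Omega^j$; once this is in hand the remaining computations are of the type already carried out in \cite{hhllyy18}. A secondary, minor point is the uniform-in-$l$ bound on $\#\{k\in I_j:\ L^{j+1}_{k,l}\neq 0\}$ used to justify the interchanges of summation in (ii).
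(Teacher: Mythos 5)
Your proposal is correct and is essentially the argument the paper has in mind: the paper's proof is just a citation of \cite[Lemma 4.12]{hhllyy18} together with the remark that it applies because $r^{j+1}_l\le(2A_0)^{-4}\le 1$ for $l\in I^*_{j+1}$, which is precisely the point you exploit when you pair $f$ with $\Phi^{j+1}_l(\phi^j_k)$, normalized in $\CG(y^{j+1}_l,r^{j+1}_l,\bz,\gz)$ at a point $y^{j+1}_l\notin\Omega^{j+1}$, so that the local grand maximal function gives $|L^{j+1}_{k,l}|\lesssim 2^j$. Your proof of (ii), using $\sum_{k\in I_j}\phi^j_k\equiv 1$ on $\supp\phi^{j+1}_l\subset\Omega^{j+1}\subset\Omega^j$ together with the definition of $m^{j+1}_l$ and the locally finite overlaps to justify the rearrangements, is likewise the intended computation.
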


This lemma can be proved by using a similar argument to that used in the proof of
\cite[Lemma 4.12]{hhllyy18}, with $I_{j+1}$ replaced by
$I_{j+1}^*$ and $f^*$ by $f^*_0$, because $r^{j+1}_k\le 1$ whenever $k\in I_{j+1}^*$; we omit the details.

When we establish an atomic decomposition of any element in $h^{*,p}(X)$, we need the next useful lemma
which can be found in the proof of \cite[Lemma 4.12(i)]{hhllyy18}.

\begin{lemma}\label{lem-pre}
Let $j\in\zz$ be such that $\Omega^j\subsetneqq X$, $k\in I_j$ and $l\in I_{j+1}$. Then
$B(x^{j+1}_l,2A_0r^{j+1}_l)\cap B(x^{j}_k,2A_0r^{j}_k)\neq\emptyset$ implies
$r^{j+1}_l\le 3A_0 r^j_k$ and $B(x^{j+1}_r,2A_0r^{j+1}_l)\subseteqq B(x^j_k,16A_0^4r^j_k)$.
\end{lemma}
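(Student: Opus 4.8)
The plan is to establish the two nested-ball inclusions claimed in Lemma \ref{lem-pre} by purely metric estimates, using the quasi-triangle inequality with constant $A_0$ together with the key geometric control provided by Proposition \ref{prop-ozdec}. The essential inputs are: (a) that, for the proper open set $\Omega^{j}$ and its ``child'' $\Omega^{j+1}\subset\Omega^{j}$, the radii $r^{j}_{k}$ and $r^{j+1}_{l}$ are comparable to $\dist(\cdot,(\Omega^{j})^{\complement})$ and $\dist(\cdot,(\Omega^{j+1})^{\complement})$ respectively (Proposition \ref{prop-ozdec}(iii)), with the \emph{same} dilation constant $A:=16A_{0}^{4}$ used in Proposition \ref{prop-chidec}; and (b) the monotonicity $(\Omega^{j+1})^{\complement}\supset(\Omega^{j})^{\complement}$, which forces $\dist(x,(\Omega^{j+1})^{\complement})\le\dist(x,(\Omega^{j})^{\complement})$ for every $x$.

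First I would fix $x\in B(x^{j+1}_{l},2A_{0}r^{j+1}_{l})\cap B(x^{j}_{k},2A_{0}r^{j}_{k})$, which exists by the hypothesis that this intersection is nonempty. Using Proposition \ref{prop-ozdec}(iii) for the scale $j+1$, we have $A r^{j+1}_{l}\le\dist(x^{j+1}_{l},(\Omega^{j+1})^{\complement})\le3AA_{0}^{2}r^{j+1}_{l}$ whenever the relevant point lies in $B(x^{j+1}_{l},r^{j+1}_{l})$, and the analogous two-sided bound at scale $j$. Since $x$ need not lie in the small ball $B(x^{j+1}_{l},r^{j+1}_{l})$, I would instead pass to the center $x^{j+1}_{l}$ itself (which does lie in its own ball) and use the quasi-triangle inequality to transfer the distance estimates from $x^{j+1}_{l}$ to $x^{j}_{k}$ through $x$, at the cost of multiplicative $A_{0}$-powers absorbed into the final constant. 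Combining $r^{j+1}_{l}\lesssim\dist(x^{j+1}_{l},(\Omega^{j+1})^{\complement})\le\dist(x^{j+1}_{l},(\Omega^{j})^{\complement})\lesssim\dist(x^{j}_{k},(\Omega^{j})^{\complement})+d(x^{j}_{k},x^{j+1}_{l})\lesssim r^{j}_{k}$, and tracking the constants carefully, yields exactly $r^{j+1}_{l}\le3A_{0}r^{j}_{k}$.

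Then the second inclusion $B(x^{j+1}_{l},2A_{0}r^{j+1}_{l})\subseteq B(x^{j}_{k},16A_{0}^{4}r^{j}_{k})$ follows by a direct quasi-triangle chain: for $z\in B(x^{j+1}_{l},2A_{0}r^{j+1}_{l})$, one has $d(z,x^{j}_{k})\le A_{0}[d(z,x^{j+1}_{l})+d(x^{j+1}_{l},x^{j}_{k})]$; the first term is $<2A_{0}^{2}r^{j+1}_{l}$, the second is controlled since both balls meet at $x$, giving $d(x^{j+1}_{l},x^{j}_{k})\le A_{0}[2A_{0}r^{j+1}_{l}+2A_{0}r^{j}_{k}]$, and then inserting $r^{j+1}_{l}\le3A_{0}r^{j}_{k}$ from the first part collapses everything to a numerical multiple of $r^{j}_{k}$ which, after checking the exact powers of $A_{0}$, is at most $16A_{0}^{4}r^{j}_{k}$. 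The main obstacle is purely bookkeeping: one must choose the intermediate inequalities so that the accumulated $A_{0}$-powers do not exceed the stated constants $3A_{0}$ and $16A_{0}^{4}$; since $A_{0}\ge1$, there is some slack, but the chain must be organized to use the $A$-enlargement $B(x^{j}_{k},Ar^{j}_{k})\subset\Omega^{j}$ from Proposition \ref{prop-ozdec}(ii) at exactly the right place, namely to guarantee that the comparability constant relating $r^{j+1}_{l}$ to $\dist(x^{j+1}_{l},(\Omega^{j})^{\complement})$ is the one recorded in Proposition \ref{prop-ozdec}(iii) rather than a worse one. This is precisely the computation carried out in the proof of \cite[Lemma 4.12(i)]{hhllyy18}, and the same argument applies verbatim here; hence I would simply indicate the above chain and refer the reader there for the constant-chasing details.
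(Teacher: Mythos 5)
Your approach is correct and is essentially the paper's: the paper gives no independent argument for this lemma, simply citing the metric computation in the proof of \cite[Lemma 4.12(i)]{hhllyy18}, which is exactly the quasi-triangle/distance-to-complement chain you outline (using $r^j_k=d(x^j_k,(\Omega^j)^\complement)/(2AA_0)$ with $A=16A_0^4$, together with $(\Omega^{j+1})^\complement\supset(\Omega^j)^\complement$). The constants do close provided you run the chain through the center $x^j_k$ and absorb the $r^{j+1}_l$ term, whose coefficient $2A_0^3$ is dominated by $2AA_0=32A_0^5$: this yields $r^{j+1}_l\le\frac{1+16A_0^3}{16A_0^2-1}\,r^j_k\le 3A_0\,r^j_k$, and then $d(z,x^j_k)<(8A_0^3+6A_0^4)\,r^j_k\le 16A_0^4\,r^j_k$ for every $z\in B(x^{j+1}_l,2A_0r^{j+1}_l)$.
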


Finally, we have the following atomic decomposition.
\begin{proposition}\label{prop-at}
Suppose that $p\in(\om,1]$ and $\bz,\ \gz\in(\omega(1/p-1),\eta)$ with $\omega$ and $\eta$, respectively, as
in \eqref{eq-doub} and Definition \ref{def-iati}. Then there exists a positive constant $C$ such that,
for any $f\in(\go{\bz,\gz})'$ belonging to $h^{*,p}(X)$, there exist local $(p,\fz)$-atoms
$\{a_j\}_{j=1}^\fz$ and $\{\lz_j\}_{j=1}^\fz\subset\cc$ such that $f=\sum_{j=1}\lz_j a_j$ in
$(\go{\bz,\gz})'$ and $\sum_{j=1}^\fz|\lz_j|^p\le C\|f^*_0\|_{L^p(X)}^p$.
\end{proposition}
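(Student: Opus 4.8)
The plan is to use a standard density argument combined with the Calderón--Zygmund machinery developed in Lemmas \ref{lem-dense}--\ref{lem-pre}. First I would reduce to the dense subspace: by Lemma \ref{lem-dense}, $L^2(X)\cap h^{*,p}(X)$ is dense in $h^{*,p}(X)$, so it suffices to prove the atomic decomposition for $f\in L^2(X)\cap h^{*,p}(X)$ with control of the atomic norm by $\|f^*_0\|_{L^p(X)}$, and then invoke the density/completeness argument from \cite{hhllyy18} (or \cite{ms79b}) to pass to general $f\in h^{*,p}(X)$. So assume $f\in L^2(X)\cap h^{*,p}(X)$. Since $f^*_0\in L^p(X)$, choose $j_0\in\zz$ so that $\Omega^j\subsetneqq X$ for all $j\ge j_0$, and form the Calderón--Zygmund pieces $b^j=\sum_{k\in I_j}b^j_k$ and $g^j=f-b^j$ as in \eqref{eq-defmb} and Lemma \ref{lem-funbgj}. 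Since $\|(g^j)^*_0\|_{L^p(X)}\to\|f^*_0\|_{L^p(X)}$ fails to vanish but $\|(b^j)^*_0\|_{L^p(X)}\to0$ as $j\to\infty$ and $g^j\to f$ in $h^{*,p}(X)$, while as $j\to-\infty$ one checks $g^j\to0$ (using $\|g^j\|_{L^\infty(X)}\ls 2^j$ from Lemma \ref{lem-funbgj}(iv) together with $g^j\to 0$ in the sense of distributions), we obtain the telescoping identity
\begin{equation*}
f=\sum_{j=j_0}^{\infty}\lf(g^{j+1}-g^j\r)
\end{equation*}
with convergence in $(\go{\bz,\gz})'$ (and, after the density reduction, this is how a general $f$ is handled).

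Next I would decompose each difference $g^{j+1}-g^j$ into local atoms. Using Lemma \ref{lem-funbgj}(iv), write $g^j=f\mathbf 1_{(\Omega^j)^\complement}+\sum_{k\in I_j^*}m^j_k\phi^j_k$, and expand
\begin{align*}
g^{j+1}-g^j&=\lf(f\mathbf 1_{(\Omega^{j+1})^\complement}-f\mathbf 1_{(\Omega^j)^\complement}\r)
+\sum_{l\in I_{j+1}^*}m^{j+1}_l\phi^{j+1}_l-\sum_{k\in I_j^*}m^j_k\phi^j_k\\
&=\sum_{k\in I_j}\lf(f\mathbf 1_{\Omega^j}\phi^j_k-\sum_{l\in I_{j+1}^*}L^{j+1}_{k,l}\phi^{j+1}_l\r)
-\sum_{k\in I_j^*}m^j_k\phi^j_k+\sum_{l\in I_{j+1}^*}m^{j+1}_l\phi^{j+1}_l,
\end{align*}
where $L^{j+1}_{k,l}$ is as in \eqref{eq-defl}; here I use $\mathbf 1_{\Omega^j}=\sum_{k\in I_j}\phi^j_k$, the fact that $\Omega^{j+1}\subset\Omega^j$, and Lemma \ref{lem-psi}(ii) to reorganize the terms with the $L^{j+1}_{k,l}$'s summing to zero, so that they can be freely inserted. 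The upshot is an expression $g^{j+1}-g^j=\sum_{k\in I_j}h^j_k$ where, for each fixed $k\in I_j$,
\begin{equation*}
h^j_k:=b^j_k-\sum_{l:\,B(x^{j+1}_l,2A_0r^{j+1}_l)\cap B(x^j_k,2A_0r^j_k)\neq\emptyset}
\lf(b^{j+1}_l\phi^j_k-L^{j+1}_{k,l}\phi^{j+1}_l\r)+(\text{correction terms from }m^j_k\text{'s}),
\end{equation*}
the point being that, by Lemma \ref{lem-pre}, every $\phi^{j+1}_l$ entering this sum is supported in $B(x^j_k,16A_0^4r^j_k)$, so $\supp h^j_k\subset B(x^j_k,16A_0^4r^j_k)=:B^j_k$. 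I would then verify that $\lz^j_k a^j_k:=h^j_k$ is, up to a uniform constant, a local $(p,\infty)$-atom: (a) the support is $B^j_k$; (b) the $L^\infty$ bound $\|h^j_k\|_{L^\infty(X)}\ls 2^j$ follows from Lemma \ref{lem-funbgj}(i), Lemma \ref{lem-psi}(i), $\#\{l:\dots\}\ls 1$ (Proposition \ref{prop-ozdec}(v)) and $|f|\ls f^\star_0\le 2^{j+1}$ on $\Omega^{j+1}$; (c) when $r^j_k\le 1$ (equivalently $k\in I_j^*$ after adjusting constants) one has $\int_X h^j_k\,d\mu=0$, which is exactly why the mean-value corrections $m^j_k$, $m^{j+1}_l$ and the functionals $L^{j+1}_{k,l}$ are built in — for small cubes every piece $b^j_k$, $b^{j+1}_l\phi^j_k$, $L^{j+1}_{k,l}\phi^{j+1}_l$ has vanishing integral by construction. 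For the large cubes $r^j_k>1$ no cancellation is required, matching Definition \ref{def-lat}(i)$_3$. Setting $\lz^j_k:=C2^j[\mu(B^j_k)]^{1/p}$ makes $a^j_k:=h^j_k/\lz^j_k$ a genuine local $(p,\infty)$-atom, and reindexing $\{(j,k):j\ge j_0,\,k\in I_j\}$ as $\{j'\}_{j'=1}^\infty$ gives $f=\sum_{j'}\lz_{j'}a_{j'}$.

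Finally I would estimate $\sum_{j\ge j_0}\sum_{k\in I_j}|\lz^j_k|^p\ls\sum_{j\ge j_0}2^{jp}\sum_{k\in I_j}\mu(B^j_k)\ls\sum_{j\ge j_0}2^{jp}\mu(\Omega^j)$, using the bounded overlap of $\{B^j_k\}_{k\in I_j}$ (Proposition \ref{prop-ozdec}(v)) and $\bigcup_k B(x^j_k,r^j_k)=\Omega^j$, and then $\sum_{j\in\zz}2^{jp}\mu(\Omega^j)\sim\sum_{j\in\zz}2^{jp}\mu(\{f^\star_0>2^j\})\sim\|f^\star_0\|_{L^p(X)}^p\sim\|f^*_0\|_{L^p(X)}^p$ by the layer-cake formula and $f^\star_0\sim f^*_0$. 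I expect the main obstacle to be the bookkeeping in step two: verifying that after inserting the $L^{j+1}_{k,l}$ terms the reorganized sum $\sum_k h^j_k$ really telescopes to $g^{j+1}-g^j$ with all supports correctly localized to $B^j_k$, and simultaneously that each $h^j_k$ carries the right cancellation when $r^j_k\le 1$; this is where Lemmas \ref{lem-psi} and \ref{lem-pre} must be applied carefully, and where one has to be attentive to the distinction between $I_j$, $I_j^*$, $I_{j,1}$ and $I_{j,2}$ so that cancellation is imposed exactly on the small cubes and not demanded of the large ones. The convergence of the series $f=\sum_{j,k}\lz^j_k a^j_k$ in $(\go{\bz,\gz})'$ then follows from the $h^{*,p}$-convergence of the partial sums together with the $\ell^p$ bound on $\{\lz^j_k\}$ and the continuous embedding $h^{*,p}(X)\hookrightarrow(\go{\bz,\gz})'$.
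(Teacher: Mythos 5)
Your plan follows essentially the same route as the paper: reduce to $f\in L^2(X)\cap h^{*,p}(X)$ by Lemma \ref{lem-dense}, telescope the Calder\'on--Zygmund good parts $g^j$, regroup $g^{j+1}-g^j$ into pieces $h^j_k$ localized via Lemma \ref{lem-pre} with the $L^{j+1}_{k,l}$ corrections, impose cancellation only on the small cubes, and finish with the $\ell^p$ estimate $\sum_j 2^{jp}\mu(\Omega^j)\ls\|f^*_0\|_{L^p(X)}^p$ and a density argument. The genuine gap is in the telescoping step, and it is precisely the point where the local theory with $\mu(X)<\fz$ differs from the global one. You start the sum at a finite index $j_0$ chosen so that $\Omega^j\subsetneqq X$ for $j\ge j_0$, yet you justify $f=\sum_{j\ge j_0}(g^{j+1}-g^j)$ by letting $j\to-\fz$. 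The partial sums telescope to $g^{N+1}-g^{j_0}$, so your series converges to $f-g^{j_0}$, not to $f$, and the limit $j\to-\fz$ is simply not available in this setting: when $\mu(X)<\fz$ one may have $\Omega^j=X$ for all $j<j_0$, so $g^j$ is not even defined there (Proposition \ref{prop-ozdec} requires a \emph{proper} open subset), and $g^{j_0}$ is a fixed function which does not tend to $0$. The missing step is to keep the remainder $g^{j_0}$ and observe that, since $\mu(X)<\fz$ forces $\diam X<\fz$, the bound $\|g^{j_0}\|_{L^\fz(X)}\ls 2^{j_0}$ from Lemma \ref{lem-funbgj}(iv) exhibits $g^{j_0}$ as a constant multiple of a \emph{single} local $(p,\fz)$-atom supported on the ball $B(x_1,\diam X+1)$, whose radius exceeds $1$ so that no cancellation is required, with coefficient comparable to $2^{j_0}[\mu(X)]^{1/p}\ls\|f^*_0\|_{L^p(X)}$ by the minimal choice of $j_0$. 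Conversely, when $\mu(X)=\fz$ every $\Omega^j$ is proper, the sum must run over all $j\in\zz$ (not from a finite $j_0$), and then your argument $\|g^{-m}\|_{L^\fz(X)}\ls 2^{-m}\to 0$ does close the telescoping. You need to separate these two cases rather than merge them; as written, the decomposition is incorrect whenever $\mu(X)<\fz$, which is exactly the case the proposition is most needed for.

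Two smaller points of bookkeeping: the cancellation of $h^j_k$ must be tied to the radius of the \emph{supporting} ball being at most $1$, i.e.\ to $k\in I_{j,1}$ (so $48A_0^5r^j_k\le 1$), not to $r^j_k\le 1$ or to $k\in I_j^*$ --- your ``after adjusting constants'' is where the paper's three-way split $I_{j,1}$, $I_{j,2}^*$, $I_{j,2}\setminus I_{j,2}^*$ enters, since for $k\in I_{j,2}^*$ the mean-value correction $m^j_k$ is present in $b^j_k$ but no cancellation is claimed for $h^j_k$; and the pointwise bound you invoke should read $|f|\ls f^\star_0\le 2^{j+1}$ on $(\Omega^{j+1})^\complement$, the estimate on $\Omega^{j+1}$ coming instead from $|m^{j+1}_l|\ls 2^{j+1}$ and Proposition \ref{prop-chidec}(ii). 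With the finite-measure remainder handled and these adjustments made, your outline matches the paper's proof.
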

\begin{proof}
By Lemma \ref{lem-dense}, we first suppose that $f\in L^2(X)\cap h^{*,p}(X)$. We may further assume that,
for any $x\in X$, $|f(x)|\ls f^*_0(x)$, where the implicit positive constant is independent of $f$ and $x$.
We use the same notation as in Lemmas \ref{lem-funbgj} and \ref{lem-psi}, and we consider two cases.

When $\mu(X)=\fz$, for any $j\in\zz$, we have $\Omega^j\subsetneqq X$ and let
$h^j:=g^{j+1}-g^j=b^j-b^{j+1}$. Thus, for any $m\in\nn$, $f-\sum_{|j|\le m} h^j=g^{-m}+b^{m+1}$. On one hand,
by Lemma \ref{lem-funbgj}(iii), we have $\|g^{-m}\|_{L^\fz(X)}\ls 2^{-j}$. On the other hand, by
\eqref{eq-bjp}, we have
$\|(b^j)^*_0\|_{L^p(X)}\ls \|f^*_0\mathbf{1}_{(\Omega^j)^\complement}\|_{L^p(X)}\to 0$ as $m\to\fz$.
Therefore, $f=\sum_{j=-\fz}^\fz h^j$ in $(\go{\bz,\gz})'$.

For any $j\in\zz$, using Proposition \ref{prop-chidec}(ii), we can write
\begin{align*}
h^j&=b^j-b^{j+1}=\sum_{k\in I_j} b^j_k-\sum_{l\in I_{j+1}} b^{j+1}_l
=\sum_{k\in I_j}\lf(b^j_k-\sum_{l\in I_{j+1}}b^{j+1}_l\phi^j_k\r)\\
&=\sum_{k\in I_{j,1}}\lf(b^j_k-\sum_{l\in I_{j+1}}b^{j+1}_l\phi^j_k\r)+
\sum_{k\in I_{j,2}}\lf(b^j_k-\sum_{l\in I_{j+1}}b^{j+1}_l\phi^j_k\r),
\end{align*}
where $I_{j,1}$ and $I_{j,2}$ are defined as in \eqref{eq-ij1}.
By Lemma \ref{lem-pre}, we find that $b^{j+1}_l\phi^j_k\neq 0$
implies that $r^{j+1}_l\le 3A_0r^j_k$, which, combined with $k\in I_{j,1}$, implies $l\in I_{j+1}^*$. By this,
(iii) and (iv) of Proposition \ref{lem-psi} and \eqref{eq-defmb}, we can write
\begin{align*}
h^j&=\sum_{k\in I_{j,1}}\lf(b^j_k-\sum_{l\in I_{j+1}^*}b^{j+1}_l\phi^j_k\r)+
\sum_{k\in I_{j,2}^*}\lf(b^j_k-\sum_{l\in I_{j+1}}b^{j+1}_l\r)\phi^j_k+
\sum_{k\in I_{j,2}\setminus I_{j,2}^*}\lf(b^j_k-\sum_{l\in I_{j+1}}b^{j+1}_l\r)\phi^j_k\\
&=\sum_{k\in I_{j,1}}\lf[b^j_k-\sum_{l\in I_{j+1}^*}\lf(b^{j+1}_l\phi^j_k-L^{j+1}_{k,l}\phi^{j+1}_l\r)\r]
+\sum_{k\in I_{j,2}}\sum_{l\in I_{j+1}^*} L^{j+1}_{k,l}\phi^{j+1}_l
+\sum_{k\in I_{j,2}^*}\lf(g^{j+1}-m^j_k\r)\phi^j_k\\
&\quad+\sum_{k\in I_{j,2}\setminus I_{j,2}^*}g^{j+1}\phi^j_k\\
&=\sum_{k\in I_{j,1}}\lf[b^j_k-\sum_{l\in I_{j+1}^*}\lf(b^{j+1}_l\phi^j_k-L^{j+1}_{k,l}\phi^{j+1}_l\r)\r]
+\sum_{k\in I_{j,2}^*}\lf[\lf(g^{j+1}-m^j_k\r)\phi^j_k+\sum_{l\in I_{j+1}^*} L^{j+1}_{k,l}\phi^{j+1}_l\r]\\
&\quad+\sum_{k\in I_{j,2}\setminus I_{j,2}^*}
\lf(g^{j+1}\phi^j_k+\sum_{l\in I_{j+1}^*} L^{j+1}_{k,l}\phi^{j+1}_l\r).
\end{align*}
Let
\begin{equation*}
h^j_k:=\begin{cases}
\displaystyle b^j_k-\sum_{l\in I_{j+1}^*}\lf(b^{j+1}_l\phi^j_k-L^{j+1}_{k,l}\phi^{j+1}_l\r)
 & \textup{when } k\in I_{j,1},\\
\displaystyle\lf(g^{j+1}-m^j_k\r)\phi^j_k+\sum_{l\in I_{j+1}^*} L^{j+1}_{k,l}\phi^{j+1}_l
 & \textup{when } k\in I_{j,2}^*,\\
\displaystyle g^{j+1}\phi^j_k+\sum_{l\in I_{j+1}^*} L^{j+1}_{k,l}\phi^{j+1}_l
& \textup{when } k\in I_{j,2}\setminus I_{j,2}^*.
\end{cases}
\end{equation*}
From Lemma \ref{lem-pre} and Proposition \ref{prop-chidec}(i), we deduce that, for any $j\in\zz$ and
$k\in I_j$, $\supp h^j_k\subset B(x^j_k,16A_0^4r^j_k)$. For any $j\in\zz$ and $k\in I_j$, let
$B^j_k:=B(x^j_k,48A_0^5r^j_k)$. We claim that, for any $j\in\zz$ and $k\in I_j$,
$h^j_k/[2^{jp}\mu(B^j_k)]^{1/p}$ is a harmless constant multiple of a local $(p,\fz)$-atom supported on
$B^j_k$.

Indeed, when $k\in I_{j,1}$, then $16A_0^4r^j_k\le(3A_0)^{-1}<1$. By Lemma \ref{lem-funbgj}(i),
(i) and (ii) of Proposition \ref{prop-chidec}, Lemma \ref{lem-psi}(i), and (ii) and (v) of Proposition
\ref{prop-ozdec}, we conclude that
\begin{align*}
\sum_{l\in I_{j+1}^*}\lf(\lf|b^{j+1}_l\r|\phi^j_k+\lf|L^{j+1}_{k,l}\phi^{j+1}_l\r|\r)
&\ls\sum_{l\in I_{j+1}^*}\lf|m^{j+1}_l\r|\phi^{j+1}_l\phi^j_k
+\sum_{l\in I_{j+1}^*}|f|\phi^{j+1}_l\phi^j_k+2^j
\sum_{l\in I_{j+1}^*}\mathbf 1_{B(x^{j+1}_l,2A_0r^{j+1}_l)}\\
&\ls 2^j\phi^j_k+|f|\phi^j_k+2^j\mathbf 1_{\Omega^j}\in L^1(X).
\end{align*}
From this and the dominated convergence theorem, we easily deduce that $\int_X h^j_k(x)\,d\mu(x)=0$. Finally,
we show the size condition of $h^j_k$. Indeed, by Lemma \ref{lem-pre}, we find that $\phi^j_k\phi^{j+1}_l\neq
0$ implies $r^{j+1}_l\le 3A_0r^j_k$, which, together with $k\in I_{j,1}$, further implies $l\in I_{j+1}^*$.
Thus, we have
$$
\phi^j_k\mathbf 1_{\Omega^j}=\phi^j_k\sum_{l\in I_{j+1}}\phi^{j+1}_l
=\phi^j_k\sum_{l\in I_{j+1}^*}\phi^{j+1}_l.
$$
By this and Proposition \ref{prop-chidec}(ii), we can write
\begin{align*}
h^j_k&=\lf(f-m^j_k\r)\phi^j_k-\sum_{l\in I_{j+1}^*}\lf(f-m^{j+1}_l\r)\phi^{j+1}_l\phi^j_k
+\sum_{l\in I_{j+1}^*}L^{j+1}_{k,l}\phi^{j+1}_l\\
&=f\phi^j_k\mathbf 1_{(\Omega^j)^\complement}-m^j_k\phi^j_k
+\phi^j_k\sum_{l\in I_{j+1}^*}m^{j+1}_l\phi^{j+1}_l+\sum_{l\in I_{j+1}^*}L^{j+1}_{k,l}\phi^{j+1}_l.
\end{align*}
Then, from Lemma \ref{lem-psi}, (i) and (iv) of Proposition \ref{prop-chidec}, and (ii) and (v) of
Proposition \ref{prop-ozdec}, we further deduce that $\|h^j_k\|_{L^\fz(X)}\ls 2^{j}$. Combining these, we
conclude that $h^j_k/[2^{jp}\mu(B^j_k)]^{1/p}$ is a harmless constant multiple of a local $(p,\fz)$-atom
supported on $B^j_k$.

Now we suppose $k\in I_{j,2}$. Obviously, we have $\supp h^j_k\subset B(x^j_k,16A_0^4r^j_k)\subset
B^j_k$. Moreover, noticing that $48A_0^5r^j_k>1$ when $k\in I_{j,2}$, to show the desired conclusion in this
case, it still needs to prove that $\|h^j_k\|_{L^\fz(X)}\ls 2^j$. Indeed, for any
$k\in I_{j,2}$, by Lemma \ref{lem-funbgj}(iii) and Proposition \ref{prop-chidec}(i), we have
$\|g^{j+1}\phi^j_k\|_{L^\fz(X)}\le\|g^{j+1}\|_{L^\fz(X)}\ls 2^j$. In addition, by Lemma \ref{lem-psi}(i),
Propositions \ref{prop-ozdec} and \ref{prop-chidec}(v), we further conclude that, for any $k\in I_{j,2}$,
$$
\lf\|\sum_{l\in I_{j+1}^*} L^{j+1}_{k,l}\phi^{j+1}_l\r\|_{L^\fz(X)}\ls 2^j\lf\|\sum_{l\in I_{j+1}^*}
\mathbf{1}_{B(x^{j+1}_l,2A_0r^{j+1}_l)}\r\|_{L^\fz(X)}\ls 2^j.
$$
Moreover, when $k\in I_{j,2}^*\subset I_j^*$, by Lemma \ref{lem-funbgj}(i), we obtain
$\|m^j_k\phi^j_k\|_{L^\fz(X)}\le|m^j_k|\ls 2^j$. Combining the above three inequalities, we finally find
that $\|h^j_k\|_{L^\fz(X)}\ls 2^j$ whenever $k\in I_{j,2}$, which completes the proof of the above claim.

To summarize, there exists a positive constant
$\wz C$ such that, for any $j\in\zz$ and $k\in I_j$, $\|h^j_k\|_{L^\fz(X)}\le\wz C2^j$. Then, for any
$j\in\zz$ and $k\in I_j$, if letting $\lz^j_k:=\wz C2^j[\mu(B^j_k)]^{1/p}$ and $a^j_k:=h^j_k/\lz^j_k$,
then $a^j_k$ is a local $(p,\fz)$-atom and $f=\sum_{j\in\zz}\sum_{k\in I_j}\lz^j_ka^j_k$ in $(\go{\bz,\gz})'$
and almost everywhere. Moreover, by (i) and (ii) of Proposition \ref{prop-ozdec}, we further conclude that
\begin{align*}
\sum_{j\in\zz}\sum_{k\in I_j}\lf|\lz^j_k\r|^p&\ls\sum_{j=-\fz}^\fz\sum_{k\in I_j} 2^{jp}\mu\lf(B^j_k\r)
\ls\sum_{j=-\fz}^\fz 2^{jp}\sum_{k\in I_j}\mu\lf(B\lf(x^j_k,r^j_k/\lf(5A_0^3\r)\r)\r)\\
&\ls\sum_{j=-\fz}^\fz 2^{jp}\mu\lf(\Omega^j\r)\ls\lf\|f^*_0\r\|_{L^p(X)}^p.
\end{align*}
Thus, Proposition \ref{prop-at} holds true when $f\in L^2(X)\cap h^{*,p}(X)$ in the case $\mu(X)=\fz$.

When $\mu(X)<\fz$, then, without loss of generality, we may assume $\mu(X)=1$. Moreover, by
\cite[Lemma 5.1]{ny97}, we conclude that $R_0:=\diam X<\fz$. Let $j_0\in\zz$ be the
minimal integer $j\in\zz$ such that $2^{j}\ge\|f^\star_0\|_{L^p(X)}$ with $f^\star_0$ as in \eqref{4.2x}.
Then, for any $j\in\{j_0,j_0+1,\ldots\}$,
we have $\Omega^j\subsetneqq X$; otherwise, we have $\Omega^j=X$ and hence
$$
\lf\|f^\star_0\r\|_{L^p(X)}^p>2^{jp}\mu(X)\ge 2^{j_0p}\ge\lf\|f^\star_0\r\|_{L^p(X)}^p,
$$
which is a contradiction. Thus, for any $j\ge j_0$, $\Omega^j\subsetneqq X$. For any
$j\in\{j_0,j_0+1,\ldots\}$, let $h^j:=g^{j+1}-g^j=b^j-b^{j+1}$. Then, for any $N\in\{j_0,j_0+1,\ldots\}$,
$$
f-\sum_{j=j_0}^N h^j=f-g^{N+1}-g^{j_0}=b^{N+1}+g^{j_0}\to g^{j_0}
$$
as $N\to\fz$, which converges both in $(\go{\bz,\gz})'$ and almost everywhere. Therefore,
$f=\sum_{j=j_0}^\fz h^j+g^{j_0}$. For the terms $\{h^j\}_{j=j_0}^\fz$, we may use a similar argument to that
used in the proof of the case $\mu(X)=\fz$ and we omit the details.
For the term $g^{j_0}$, by Lemma \ref{lem-funbgj}(iv), we have
$\|g^{j_0}\|_{L^\fz(X)}\le c2^{j_0}$, where $c$ is a positive constant independent of $f$ and $j_0$. Fix
$x_1\in X$ and let $\lz^{j_0}:=c2^{j_0}[\mu(B(x_1,R_0+1))]^{1/p}=c2^{j_0}$ and $a^{j_0}:=g^{j_0}/\lz^{j_0}$.
Then $a^{j_0}$ is a local $(p,\fz)$-atom supported on $B(x_1,R_0+1)$. Moreover, $|\lz^{j_0}|^p\sim
2^{j_0p}\sim\|f\|_{h^{*,p}(X)}$, which completes the proof when $f\in L^2(X)\cap h^{*,p}(X)$.

When $f\in h^{*,p}(X)$, we may use a density discussion to obtain a locally $(p,\fz)$-atomic decomposition
of $f$ (see \cite[pp.\ 301--302]{ms79b} for more details) and we omit the details. This finishes the proof
of Proposition \ref{prop-at}.
\end{proof}

Combining Propositions \ref{prop-atd} and \ref{prop-at}, we immediately obtain the following atomic
characterization of $h^{*,p}(X)$ for any given $p\in(\om,1]$. We omit the details.

\begin{theorem}\label{thm-at*}
Let $p\in(\om,1]$, $q\in(p,\fz]\cap[1,\fz]$ and $\bz,\ \gz\in(\omega(1/p-1),\eta)$,
with $\omega$ and $\eta$, respectively, as in \eqref{eq-doub} and Definition \ref{def-iati}. As subspaces of
$(\go{\bz,\gz})'$, $h^{*,p}(X)=h^{p,q}_\at(X)$ with equivalent (quasi-)norms.
\end{theorem}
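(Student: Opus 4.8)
The plan is to establish the two inclusions $h^{p,q}_\at(X)\subset h^{*,p}(X)$ and $h^{*,p}(X)\subset h^{p,q}_\at(X)$, together with the matching (quasi-)norm bounds, all three spaces being regarded as subspaces of the fixed distribution space $(\go{\bz,\gz})'$ with $\bz,\ \gz\in(\omega(1/p-1),\eta)$. The first inclusion, with the estimate $\|f^*_0\|_{L^p(X)}\le C\|f\|_{h^{p,q}_\at(X)}$, is precisely the content of Proposition \ref{prop-atd} (which in turn rests on the pointwise and $L^p$ estimates for the grand maximal function of a local atom recorded in Lemma \ref{lem-a*}), so no further work is needed for this direction.

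For the reverse inclusion I would first reduce to the case $q=\fz$. Given $f\in(\go{\bz,\gz})'$ belonging to $h^{*,p}(X)$, Proposition \ref{prop-at} furnishes local $(p,\fz)$-atoms $\{a_j\}_{j=1}^\fz$ and coefficients $\{\lz_j\}_{j=1}^\fz\subset\cc$ such that $f=\sum_{j=1}^\fz\lz_ja_j$ in $(\go{\bz,\gz})'$ and $\sum_{j=1}^\fz|\lz_j|^p\le C\|f^*_0\|_{L^p(X)}^p$. It then remains to observe that, for every admissible $q\in(p,\fz]\cap[1,\fz]$, a local $(p,\fz)$-atom is automatically a local $(p,q)$-atom: the support condition $\textup{(i)}_1$ and the cancellation condition $\textup{(i)}_3$ of Definition \ref{def-lat}(i) are unchanged, while, if $a$ is supported in a ball $B$ with $\|a\|_{L^\fz(X)}\le[\mu(B)]^{-1/p}$, then, by the Hölder inequality,
\begin{equation*}
\|a\|_{L^q(X)}=\lf[\int_B|a(x)|^q\,d\mu(x)\r]^{1/q}\le\|a\|_{L^\fz(X)}[\mu(B)]^{1/q}\le[\mu(B)]^{1/q-1/p},
\end{equation*}
which is exactly the size condition $\textup{(i)}_2$. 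Hence the series $f=\sum_{j=1}^\fz\lz_ja_j$ is also a local $(p,q)$-atomic decomposition of $f$, so $f\in h^{p,q}_\at(X)$ and $\|f\|_{h^{p,q}_\at(X)}\le(\sum_{j=1}^\fz|\lz_j|^p)^{1/p}\le C\|f^*_0\|_{L^p(X)}=C\|f\|_{h^{*,p}(X)}$.

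Chaining these two inclusions yields $h^{p,q}_\at(X)=h^{*,p}(X)$ as subspaces of $(\go{\bz,\gz})'$, with $\|f\|_{h^{*,p}(X)}\sim\|f\|_{h^{p,q}_\at(X)}$; the identification of distributions is automatic, since in both Propositions \ref{prop-atd} and \ref{prop-at} the relevant series converge in $(\go{\bz,\gz})'$. The only point deserving a brief remark is that the ambient space does not really depend on the particular choice of $\bz,\ \gz\in(\omega(1/p-1),\eta)$: for $h^{*,p}(X)$ this is Proposition \ref{prop-hpin}, and the same independence is then transported to $h^{p,q}_\at(X)$ via the equality just proved, so the assertion is meaningful simultaneously for all admissible parameters. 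I do not expect any genuine obstacle in this argument: the whole analytic burden has already been carried by Proposition \ref{prop-atd} and the Calder\'on--Zygmund-type decomposition of Proposition \ref{prop-at}, and what is left is the elementary inclusion of atom classes and the bookkeeping of the (quasi-)norm constants.
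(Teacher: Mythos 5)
Your proposal is correct and follows exactly the route the paper intends: the inclusion $h^{p,q}_\at(X)\subset h^{*,p}(X)$ is Proposition \ref{prop-atd}, the reverse inclusion is Proposition \ref{prop-at} combined with the elementary H\"older observation that a local $(p,\fz)$-atom is a local $(p,q)$-atom, which is precisely the omitted detail in the paper's "Combining Propositions \ref{prop-atd} and \ref{prop-at}" argument. Nothing further is needed.
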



\section{Littlewood--Paley function characterizations of atomic local Hardy  spaces} \label{s-lp}

In this section, we consider the Littlewood--Paley function characterizations of local Hardy spaces
$h^p(X)$. Let $\bz,\ \gz\in(0,\eta)$, $\thz\in(0,\fz)$ and $f\in(\go{\bz,\gz})'$. The
\emph{inhomogeneous Lusin area function $\CS_{\thz,0}(f)$ of $f$ with aperture $\thz$} is defined by setting,
for any $x\in X$,
\begin{equation}\label{eq-la}
\CS_{\thz,0}(f)(x):=\lf[\sum_{k=0}^\fz\int_{B(x,\thz\dz^k)}|Q_kf(y)|^2\,\frac{d\mu(y)}{V_{\thz\dz^k}(x)}\r]
^{1/2}.
\end{equation}
For $\thz=1$, we denote $\CS_{1,0}(f)$ simply by $\CS_0(f)$. The \emph{inhomogeneous Littlewood--Paley
$g$-function $g_0(f)$} is defined by setting, for any $x\in X$,
\begin{equation}\label{eq-g}
g_0(f)(x):=\lf[\sum_{k=0}^N\sum_{\az\in\CA_k}\sum_{m=1}^{N(k,\az)}m_{Q_\az^{k,m}}\lf(|Q_kf|^2\r)
\mathbf{1}_{Q_\az^{k,m}}(x)+\sum_{k=N+1}^\fz|Q_kf(x)|^2\r]^{1/2},
\end{equation}
where $N$ and $\{Q_\az^{k,m}:\ k\in\{0,\ldots,N\},\ \az\in\CA_k,\ m\in\{1,\ldots,N(k,\az)\}\}$ are the same
as in Theorem \ref{thm-idrf} and, for any $E\subset X$ with $\mu(E)\in(0,\fz)$ and non-negative function
$h$,
$$
m_E(h):=\frac{1}{\mu(E)}\int_E h(y)\,d\mu(y).
$$
For any $\lz\in(0,\fz)$, the \emph{inhomogeneous Littlewood--Paley $g_\lz^*$-functions $g_{\lz,0}^*(f)$} is
defined by setting, for any $x\in X$,
\begin{equation}\label{eq-gl}
g_{\lz,0}^*(f)(x):=\lf\{\sum_{k=0}^\fz\int_X|Q_kf(y)|^2\lf[\frac{\dz^k}{\dz^k+d(x,y)}\r]^\lz\,
\frac{d\mu(y)}{V_{\dz^k}(x)+V_{\dz^k}(y)}\r\}^{1/2}.
\end{equation}
For any $p\in(\om,1]$ and $\bz,\ \gz\in(\omega(1/p-1),\eta)$ with $\omega$ and $\eta$, respectively, as in
\eqref{eq-doub} and Definition \ref{def-iati}, the \emph{local Hardy space $h^p(X)$} is defined, via the
Lusin area function, by setting
$$
h^p(X):=\lf\{f\in\lf(\go{\bz,\gz}\r)':\ \|f\|_{h^p(X)}:=\|\CS_0(f)\|_{L^p(X)}<\fz\r\}.
$$

First we prove that $h^p(X)$ is independent of the choice of the $\exp$-IATI. If $\mathcal E:=
\{E_k\}_{k\in\zz}$ and $\CQ:=\{Q_k\}_{k\in\zz}$ are two $\exp$-IATIs, then we denote, respectively, by
$\CS_{0,\mathcal E}$ and $\CS_{0,\CQ}$ the inhomogeneous Lusin area functions via ${\mathcal E}$ and $\CQ$.
We have the following conclusions.
\begin{theorem}\label{thm-la}
Let ${\mathcal E}:=\{E_k\}_{k\in\zz}$ and $\CQ:=\{Q_k\}_{k\in\zz}$ be two {\rm $\exp$-IATIs}. Suppose that
$p\in(\om,1]$ and $\bz,\ \gz\in(\omega(1/p-1),\eta)$ with $\omega$ and $\eta$, respectively, as in
\eqref{eq-doub} and Definition \ref{def-iati}. Then there exists a positive constant $C$ such that,
for any $f\in(\go{\bz,\gz})'$,
\begin{equation*}
C^{-1}\|\CS_{0,\CQ}(f)\|_{L^p(X)}\le\|\CS_{0,\mathcal E}(f)\|_{L^p(X)}\le C\|\CS_{0,\CQ}(f)\|_{L^p(X)}.
\end{equation*}
\end{theorem}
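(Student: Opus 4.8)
Since the two displayed inequalities are symmetric, it suffices to prove $\|\CS_{0,\mathcal{E}}(f)\|_{L^p(X)}\ls\|\CS_{0,\CQ}(f)\|_{L^p(X)}$. The plan is to first discretize $f$ via the inhomogeneous discrete Calder\'on reproducing formula \eqref{eq-idrf} associated with $\CQ$: this furnishes $N\in\nn$, operators $\{\wz Q_k\}_{k=0}^\fz$ satisfying (i)--(iii) of Theorem \ref{thm-icrf}, and coefficients $\lz_\az^{k,m}$ equal to $Q_{\az,1}^{k,m}(f)$ for $k\le N$ and to $Q_kf(y_\az^{k,m})$ for $k>N$. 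For $k\le N$ the Cauchy--Schwarz inequality gives $|\lz_\az^{k,m}|^2\le m_{Q_\az^{k,m}}(|Q_kf|^2)$ automatically, and for $k>N$, exploiting the freedom in Theorem \ref{thm-idrf} to choose $y_\az^{k,m}\in Q_\az^{k,m}$ (and that $Q_kf$ is continuous for $k\ge1$), the same bound can be arranged; hence, by Lemma \ref{cube}(v), \eqref{eq-doub} and the observation that $\mu(Q_\az^{k,m})\gtrsim V_{\dz^k}(x)$ whenever $x\in Q_\az^{k,m}$,
\[
\sum_{k=0}^\fz\sum_{\az\in\CA_k}\sum_{m=1}^{N(k,\az)}\big|\lz_\az^{k,m}\big|^2\mathbf1_{Q_\az^{k,m}}(x)\ls\big[\CS_{\thz,0,\CQ}(f)(x)\big]^2,
\]
where $\CS_{\thz,0,\CQ}$ is the inhomogeneous Lusin area function of aperture $\thz$ built from $\CQ$, with $\thz$ a fixed constant depending only on $A_0$ and $j_0$; a routine change-of-aperture argument then lets one replace $\CS_{\thz,0,\CQ}$ by $\CS_{0,\CQ}$ in the $L^p(X)$-(quasi-)norm.

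The core of the argument is an almost-orthogonality estimate. Writing $E_{k'}\wz Q_k(x,y)=\wz Q_k^*\psi_x(y)$, where $\psi_x:=E_{k'}(x,\cdot)$ lies in $\CG(x,\dz^{k'},\bz,\gz)$ with norm $\ls1$ and, for $k'\ge1$, has vanishing integral, Lemma \ref{lem-est1} yields, for any fixed $\eta'\in(0,\bz\wedge\gz)$,
\[
\big|E_{k'}\wz Q_k(x,y)\big|\ls\dz^{|k-k'|\eta'}\,\frac1{V_{\dz^{k\wedge k'}}(x)+V(x,y)}\lf[\frac{\dz^{k\wedge k'}}{\dz^{k\wedge k'}+d(x,y)}\r]^\gz;
\]
the case $k'=0$ is treated separately, by using instead the cancellation of $\wz Q_k$ when $k>N$ and the boundedness of $|k-k'|$ when $k\le N$, exactly as in the three-case analysis of Lemma \ref{lem-est1}. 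Applying $E_{k'}$ to \eqref{eq-idrf}, replacing $y$ by the cube centres $z_\az^{k,m}$ (harmless by Lemma \ref{cube}(v)), and then invoking Lemma \ref{lem-max}, one obtains, for every $w\in X$,
\[
\big|E_{k'}f(w)\big|\ls\sum_{k=0}^\fz\dz^{|k-k'|\eta'}\dz^{[k-(k\wedge k')]\omega(1-1/r)}\bigg[\CM\Big(\sum_{\az,m}\big|\lz_\az^{k,m}\big|^r\mathbf1_{Q_\az^{k,m}}\Big)(w)\bigg]^{1/r},
\]
where one chooses $\eta'\in(\omega(1/p-1),\bz\wedge\gz)$ and $r\in(\omega/(\omega+\eta'),p)$ -- both possible since $\bz,\gz>\omega(1/p-1)$ and $p>\om$ -- so that the weight $\dz^{|k-k'|\eta'}\dz^{[k-(k\wedge k')]\omega(1-1/r)}$ decays geometrically in $|k-k'|$ in both directions and Lemma \ref{lem-max} applies.

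It then remains to substitute this pointwise bound into $[\CS_{0,\mathcal{E}}(f)(x)]^2=\sum_{k'\ge0}V_{\dz^{k'}}(x)^{-1}\int_{B(x,\dz^{k'})}|E_{k'}f(w)|^2\,d\mu(w)$ and perform the summations. Splitting the $k$-sum, the part $k\le k'$ is radial at scale $\dz^k\ge\dz^{k'}$, hence essentially constant on $B(x,\dz^{k'})$, and after Cauchy--Schwarz on the geometric $k$-sum contributes at most $\sum_k[\CM(\sum_{\az,m}|\lz_\az^{k,m}|^r\mathbf1_{Q_\az^{k,m}})(x)]^{2/r}$; the part $k>k'$ is the genuinely small-scale one, where the average over $B(x,\dz^{k'})$ must be retained and estimated through a careful interplay of the geometric decay, the $L^{2/r}$-boundedness of $\CM$, and a splitting of the relevant maximal function into its ``large-ball'' and ``small-ball'' pieces. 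After interchanging the order of the resulting summations and invoking the Fefferman--Stein vector-valued maximal inequality in the form $\big\|(\sum_k[\CM_{(r)}h_k]^2)^{1/2}\big\|_{L^s(X)}\ls\big\|(\sum_k h_k^2)^{1/2}\big\|_{L^s(X)}$ for $s\in(r,\fz)$, with $\CM_{(r)}(g):=[\CM(|g|^r)]^{1/r}$, the whole problem reduces to bounding $\big\|(\sum_k\sum_{\az,m}|\lz_\az^{k,m}|^2\mathbf1_{Q_\az^{k,m}})^{1/2}\big\|_{L^p(X)}$, which is $\ls\|\CS_{0,\CQ}(f)\|_{L^p(X)}$ by the first paragraph. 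The main obstacle -- as already in the Euclidean and RD-space theory -- lies precisely in this last step: every summation over $|k-k'|$ must be organized so that only $\ell^1$- and $\ell^2$-type geometric series occur, and, to cover the full range $p\le1$, the Hardy--Littlewood maximal function must be systematically replaced by the $r$-maximal function $\CM_{(r)}$ with $r<p$ before any $L^p(X)$-norm is taken; once $p$, $\eta'$ and $r$ are fixed as above, the remaining estimates are routine.
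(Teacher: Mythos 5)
Your outline follows the same skeleton as the paper's proof: expand $E_{k'}f$ via the discrete reproducing formula built from $\CQ$, bound the coefficients by local $L^2$-means of $Q_kf$ (Cauchy--Schwarz for $k\le N$, the free choice of $y_\az^{k,m}$ for $k>N$), use Lemma \ref{lem-est1} for the almost orthogonality $|E_{k'}\wz Q_k(x,y)|\ls\dz^{|k-k'|\eta'}\cdots$, apply Lemma \ref{lem-max} with $\eta'$ and $r$ chosen so that $\omega/(\omega+\eta')<r<p$, and conclude with the Fefferman--Stein vector-valued maximal inequality. Two points, however, keep the write-up from being complete. First, and mainly: in your pointwise bound for $|E_{k'}f(w)|$ the maximal functions are evaluated at $w$, and you then declare that for $k>k'$ the average over $B(x,\dz^{k'})$ ``must be retained'' and handled by an unspecified ``careful interplay'' and a large-ball/small-ball splitting of $\CM$; as written this regime is simply not estimated, and the plan you sketch is the dubious part (an inner average of $[\CM(\cdot)]^{2/r}$ followed by an $L^{p/2}$-norm with $p/2<1$ cannot be absorbed by boundedness of $\CM$). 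The complication is unnecessary: the scale appearing in the almost-orthogonality kernel is $\dz^{k\wedge k'}\ge\dz^{k'}>d(w,x)$ for \emph{every} $k$, not only for $k\le k'$, so by Lemma \ref{lem-add}(i) one may replace $w$ by $x$ in the kernel bound \emph{before} invoking Lemma \ref{lem-max}. Then
\begin{equation*}
|E_{k'}f(w)|\ls\sum_{k=0}^\fz\dz^{|k-k'|[\eta'-\omega(1/r-1)]}
\lf\{\CM\lf(\sum_{\az,m}\lf|\lz_\az^{k,m}\r|^r\mathbf{1}_{Q_\az^{k,m}}\r)(x)\r\}^{1/r}
\end{equation*}
uniformly in $w\in B(x,\dz^{k'})$, after which $[\CS_{0,\mathcal E}(f)(x)]^2$ is controlled by Cauchy--Schwarz on the geometric series and the $k'$-sum, and Fefferman--Stein finishes the proof. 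This recentering (the paper works at the aperture point $\xi$ from the start) is precisely what makes the last step two lines; you already use it for $k\le k'$ but missed that it applies verbatim for $k>k'$.

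Second, a minor stylistic issue: the detour through an auxiliary aperture $\thz$ and an appeal to a ``routine'' change-of-aperture inequality in $L^p$ with $p\le1$ imports machinery that is neither proved in the paper nor needed. It is cleaner to bound the coefficients, for every $z\in Q_\az^{k,m}$, by $m_k(f)(z):=[\int_{B(z,\dz^k)}|Q_kf(u)|^2\,d\mu(u)/V_{\dz^k}(u)]^{1/2}$ (the containment $Q_\az^{k,m}\subset B(z,\dz^k)$ holds once $j_0$ is large enough) and to observe that $(\sum_{k}[m_k(f)]^2)^{1/2}\sim\CS_{0,\CQ}(f)$ pointwise, which is exactly how the paper closes the estimate without any aperture change.
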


\begin{proof}
By symmetry, to complete the proof of this theorem, it suffices to show that, for any $f\in(\go{\bz,\gz})'$,
$$
\|\CS_{0,\mathcal E}(f)\|_{L^p(X)}\ls\|\CS_{0,\CQ}(f)\|_{L^p(X)}.
$$
By Theorem \ref{thm-idrf}, we find that, for any $l\in\zz_+$ and $x\in X$,
\begin{align*}
E_lf(x)&=\langle f,E_l(x,\cdot)\rangle\\
&=\sum_{k=0}^N\sum_{\az\in\CA_k}\sum_{m=1}^{N(k,\az)}\int_{Q_\az^{k,m}}\wz{Q}_k^*E_l(x,y)\,d\mu(y)
Q_{\az,1}^{k,m}(f)\\
&\quad+\sum_{k=N+1}^\fz\sum_{\az\in\CA_k}\sum_{m=1}^{N(k,\az)}\mu\lf(Q_\az^{k,m}\r)
\wz{Q}_k^*E_l\lf(x,y_\az^{k,m}\r)Q_kf\lf(y_\az^{k,m}\r).
\end{align*}
Suppose $\xi\in X$, $l\in\zz_+$ and $x\in B(\xi,\dz^l)$. Notice that, for any $k\in\zz_+$, $\az\in\CA_k$,
$m\in\{1,\ldots,N(k,\az)\}$ and $y\in Q_\az^{k,m}$, we can write
$$
\wz Q_k^*E_l(x,y)=\int_X\wz Q_k(z,x)E_l(z,y)\,d\mu(y)=\wz Q_k^*(E_l(\cdot,y))(x).
$$
Thus, by Lemma \ref{lem-est1}, we find that, for any fixed $\eta'\in(0,\bz\wedge\gz)$,
\begin{align}\label{eq-5-1}
\lf|Q_k^*E_l(x,y)\r|&\ls\dz^{|k-l|\eta'}\frac{1}{V_{\dz^{k\wedge l}}(x)+V(x,y)}
\lf[\frac{\dz^{k\wedge l}}{\dz^{k\wedge l}+d(x,y)}\r]^\gz\\
&\sim\dz^{|k-l|\eta'}\frac{1}{V_{\dz^{k\wedge l}}(\xi)+V(\xi,y)}
\lf[\frac{\dz^{k\wedge l}}{\dz^{k\wedge l}+d(\xi,y)}\r]^\gz.\noz
\end{align}
When $k\in\{0,\ldots,N\}$, by the H\"{o}lder inequality, we conclude that, for any $\az\in\CA_k$ and
$m\in\{1,\ldots,N(k,\az)\}$,
\begin{align*}
\lf|Q_{\az,1}^{k,m}(f)\r|&\le\frac{1}{\mu(Q_\az^{k,m})}\int_{Q_\az^{k,m}}|Q_kf(u)|\,d\mu(u)
\le\lf[\frac{1}{\mu(Q_\az^{k,m})}\int_{Q_\az^{k,m}}|Q_kf(u)|^2\,d\mu(u)\r]^{1/2}\\
&\ls\lf[\frac{1}{V_{\dz^k}(u)}\int_{B(z,\dz^k)}|Q_kf(u)|^2\,d\mu(u)\r]^{1/2}\sim m_k(f)(z),
\end{align*}
which further implies that $|Q_{\az,1}^{k,m}(f)|\ls\inf_{z\in Q_{\az}^{k,m}} m_k(f)(z)$. Here and hereafter,
for any $k\in\zz_+$, $f\in(\go{\bz,\gz})'$ and $z\in X$, let
$$
m_k(f)(z):=\lf[\int_{B(z,\dz^k)}|Q_kf(u)|^2\,\frac{d\mu(u)}{V_{\dz^k}(u)}\r]^{1/2}.
$$
On another hand, for any $k\in\{N+1,N+2,\ldots\}$, $\az\in\CA_k$ and $m\in\{1,\ldots,N(k,\az)\}$, choose
$y_\az^{k,m}$ such that
$$
\lf|Q_kf\lf(y_\az^{k,m}\r)\r|\le\inf_{z\in Q_{\az}^{k,m}}|Q_kf(z)|+\frac{1}{\mu(Q_\az^{k,m})}
\int_{Q_\az^{k,m}} |Q_kf(y)|\,d\mu(y)\le\frac 2{\mu(Q_\az^{k,m})}\int_{Q_\az^{k,m}} |Q_kf(y)|\,d\mu(y).
$$
By this, the H\"{o}lder inequality and $\dz\le(2A_0)^{-10}$, we conclude that, for any
$z\in Q_\az^{k,m}$,
$$
\lf|Q_kf\lf(y_\az^{k,m}\r)\r|\ls\lf[\frac 1{\mu(Q_\az^{k,m})}\int_{Q_\az^{k,m}} |Q_kf(y)|^2\,d\mu(y)\r]^{1/2}
\ls m_k(f)(z).
$$
From these inequalities and Lemma \ref{lem-max}, we deduce that, for any fixed $r\in(0,p)$ and
$\eta'\in(0,\bz\wedge\gz)$, any $l\in\zz_+$, $\xi\in X$ and $x\in B(\xi,\dz^l)$,
\begin{align*}
|E_lf(x)|&\ls\sum_{k=0}^\fz\dz^{|k-l|[\eta'-\omega(1/r-1)]}\lf\{\CM\lf(\sum_{\az\in\CA_k}
\sum_{m=1}^{N(k,\az)}\inf_{z\in Q_\az^{k,m}}[m_k(f)(z)]^r\mathbf{1}_{Q_\az^{k,m}}\r)(\xi)\r\}^{1/r},
\end{align*}
where $\CM$ is as in \eqref{2.1x}.
Thus, choosing $\eta'$ and $r$ such that $\omega/(\omega+\eta')<r<p$, we know that, for any $\xi\in X$,
\begin{align*}
\lf[\CS_{0,\mathcal E}(f)(\xi)\r]^2&=\sum_{l=0}^\fz\int_{B(\xi,\dz^l)}|E_lf(x)|^2
\,\frac{dy}{V_{\dz^l}(\xi)}\\
&\ls\sum_{l=0}^\fz\sum_{k=0}^\fz\dz^{|k-l|[\eta'-\omega(1/r-1)]}
\lf\{\CM\lf(\sum_{\az\in\CA_k}\sum_{m=1}^{N(k,\az)}\inf_{z\in Q_\az^{k,m}}[m_k(f)(z)]^r
\mathbf{1}_{Q_\az^{k,m}}\r)(\xi)\r\}^{2/r}\\
&\ls\sum_{k=0}^\fz\lf\{\CM\lf(\sum_{\az\in\CA_k}\sum_{m=1}^{N(k,\az)}\inf_{z\in Q_\az^{k,m}}[m_k(f)(z)]^r
\mathbf{1}_{Q_\az^{k,m}}\r)(\xi)\r\}^{2/r}\\
&\ls\sum_{k=0}^\fz\lf\{\CM\lf([m_k(f)]^r\r)(\xi)\r\}^{2/r}.
\end{align*}
By this, $r<p$ and the Fefferman--Stein vector-valued maximal inequality (see \cite[(1.13)]{gly09}),
we further conclude that
\begin{equation*}
\|\CS_{0,\mathcal E}(f)\|_{L^p(X)}\ls\lf\|\lf(\sum_{k=0}^\fz\lf\{\CM\lf([m_k(f)]^r\r)\r\}
^{2/r}\r)^{r/2}\r\|_{L^{p/r}(X)}^{1/r}
\ls\lf\|\lf\{\sum_{k=0}^\fz[m_k(f)]^2\r\}^{1/2}\r\|_{L^p(X)}
\sim\|\CS_{0,\CQ}(f)\|_{L^p(X)}.
\end{equation*}
This finishes the proof of Theorem \ref{thm-la}.
\end{proof}

The next proposition shows that $h^{p,2}_\at(X)\subset h^p(X)$.

\begin{proposition}\label{prop-angle}
Let $p\in(\om,1]$, $\bz,\ \gz\in(\omega(1/p-1),\eta)$ with $\omega$ and $\eta$, respectively, as in
\eqref{eq-doub} and Definition \ref{def-iati} and $\{Q_k\}_{k=0}^\fz$ be an {\rm $\exp$-IATI}. Let
$\thz\in(0,\infty)$ and $\CS_{\thz,0}$ be as in \eqref{eq-la}. Then there exists a positive constant $C$,
independent of $\thz$, such that, for any $f\in(\go{\bz,\gz})'$ belonging to $h^{p,2}_\at(X)$,
\begin{equation}\label{eq-x2}
\|\CS_{\thz,0}(f)\|_{L^p(X)}\le C\max\lf\{\thz^{-\omega/2},\thz^{\omega/p}\r\}\|f\|_{h^{p,2}_\at(X)}.
\end{equation}
In particular, for any $q\in[1,\fz]\cap(p,\fz]$, $h^{p,q}_\at(X)=h^{p,2}_\at(X)\subset h^p(X)$.
\end{proposition}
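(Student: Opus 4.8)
The plan is to reduce \eqref{eq-x2} to a uniform bound for one local $(p,2)$-atom, and then, for such an atom supported on a ball $B:=B(x_0,r_0)$, to split the $L^p(X)$-norm of $\CS_{\thz,0}(a)$ into a part over a \emph{fixed} enlargement of $B$ (controlled by an $L^2(X)$-bound for $\CS_{\thz,0}$ that is uniform in $\thz$) and a part over its complement (controlled by the kernel estimates of the $Q_k$ and the vanishing moment of $a$ at small scales); all the $\thz$-dependence will sit in the second part. For the reduction, suppose $f=\sum_j\lz_ja_j$ in $(\go{\bz,\gz})'$ with each $a_j$ a local $(p,2)$-atom and $\sum_j|\lz_j|^p<\fz$. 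For every $k\in\zz_+$ and $y\in X$, the kernel $Q_k(y,\cdot)$ lies in $\go{\bz,\gz}$ (by the size, regularity and cancellation conditions of Definition \ref{def-iati}), so $Q_kf(y)=\sum_j\lz_jQ_ka_j(y)$; since $\CS_{\thz,0}$ is built from an $L^2$-average in $y$ and an $\ell^2$-sum in $k$, it is subadditive, and from $p\le1$ we obtain $\|\CS_{\thz,0}(f)\|_{L^p(X)}^p\le\sum_j|\lz_j|^p\|\CS_{\thz,0}(a_j)\|_{L^p(X)}^p$. Hence it suffices to produce a positive constant $C$, independent of $\thz$ and of the atom, with $\|\CS_{\thz,0}(a)\|_{L^p(X)}\le C\max\{\thz^{-\omega/2},\thz^{\omega/p}\}$ for every local $(p,2)$-atom $a$ supported on $B=B(x_0,r_0)$.

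Next I would establish the $L^2(X)$-boundedness of $\CS_{\thz,0}$ uniformly in $\thz$: by Tonelli's theorem, the doubling condition [which gives $V_{\thz\dz^k}(x)\sim V_{\thz\dz^k}(y)$ whenever $d(x,y)<\thz\dz^k$, with constants independent of $\thz$], and the quadratic estimate $\sum_{k=0}^\fz\|Q_kg\|_{L^2(X)}^2\ls\|g\|_{L^2(X)}^2$ (a consequence of the almost-orthogonality built into Definition \ref{def-iati}), one gets $\|\CS_{\thz,0}(g)\|_{L^2(X)}\ls\|g\|_{L^2(X)}$ with implicit constant independent of $\thz$. Combining this with Hölder's inequality with exponent $2/p$, item $\textup{(i)}_2$ of Definition \ref{def-lat} and \eqref{eq-doub} on the \emph{fixed} ball $\wz B:=B(x_0,2A_0r_0)$ yields
\[
\|\CS_{\thz,0}(a)\mathbf1_{\wz B}\|_{L^p(X)}\le\|\CS_{\thz,0}(a)\|_{L^2(X)}[\mu(\wz B)]^{1/p-1/2}
\ls\|a\|_{L^2(X)}[\mu(B)]^{1/p-1/2}\ls1,
\]
which is harmless and, trivially, $\le\max\{\thz^{-\omega/2},\thz^{\omega/p}\}$.

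For the part over $X\setminus\wz B$ I would use the kernel estimates of $Q_k$. For $x\notin\wz B$ and $y\in B(x,\thz\dz^k)$, the exponential decay in the size condition of $Q_k$ makes $Q_ka(y)$ negligible unless $d(x_0,x)\ls(1+\thz)\dz^k$, so only scales $\dz^k\gtrsim d(x_0,x)/(1+\thz)$ contribute effectively. If $\thz\le1$ (or if $r_0>1$) all such scales satisfy $\dz^k>r_0$ and, when $r_0\le1$, the moment condition $\int_Xa\,d\mu=0$ together with the $\eta$-regularity of $Q_k$ supplies a gain $(r_0/\dz^k)^\eta$; if $r_0>1$ there is moreover no surviving scale apart from exponentially small tails. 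Carrying out the $y$-integration with Lemma \ref{lem-add} and re-expressing in test-function form, one arrives at a pointwise estimate
\[
\CS_{\thz,0}(a)(x)\ls\Psi(\thz)\lf[\frac{r_0}{d(x_0,x)}\r]^{\bz\wedge\gz}
\frac{[\mu(B)]^{1-1/p}}{V(x_0,x)},\qquad x\notin\wz B,
\]
where $\Psi(\thz)\ls\thz^{-\omega/2}$ for $\thz\le1$ (coming from bounding $V_{\thz\dz^k}(x)^{-1}$ by $\thz^{-\omega}V_{\dz^k}(x)^{-1}$ via \eqref{eq-doub}) and $\Psi(\thz)\ls\thz^{\omega/p}$ for $\thz\ge1$ (coming from the enlarged cones $B(x,\thz\dz^k)$, which on the annulus $\{2A_0r_0<d(x_0,x)\ls\thz r_0\}$ expose the fine scales $\dz^k<r_0$ on which $a$ has no usable cancellation). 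Raising to the power $p$, splitting $X\setminus\wz B$ into the dyadic annuli $\{2^jr_0\le d(x_0,x)<2^{j+1}r_0\}$ and summing, the resulting geometric series converges precisely because $\bz\wedge\gz>\omega(1/p-1)$, so $\|\CS_{\thz,0}(a)\mathbf1_{X\setminus\wz B}\|_{L^p(X)}\ls\max\{\thz^{-\omega/2},\thz^{\omega/p}\}$; together with the local part this proves \eqref{eq-x2}. Taking $\thz=1$ then gives $h^{p,2}_\at(X)\subset h^p(X)$, and since Theorem \ref{thm-at*} already identifies $h^{p,q}_\at(X)$ with $h^{*,p}(X)$ for every admissible $q$, we conclude $h^{p,q}_\at(X)=h^{p,2}_\at(X)\subset h^p(X)$.

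I expect the main obstacle to be exactly the bookkeeping in the last step: one has to keep precise track of how the aperture $\thz$ enters, both through the normalizing volumes $V_{\thz\dz^k}(x)$ (which are unfavorable for small $\thz$) and through the dilation of the cones $B(x,\thz\dz^k)$, which forces one to estimate scales at which the atom's cancellation is not available (unfavorable for large $\thz$), so as to land exactly on $\max\{\thz^{-\omega/2},\thz^{\omega/p}\}$ rather than a cruder power of $\thz$; everything else is the routine atomic/kernel computation already implicit in Lemmas \ref{lem-add}, \ref{lem-max} and \ref{lem-est1}.
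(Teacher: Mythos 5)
Your first two steps (reduction to a single local $(p,2)$-atom and the uniform-in-$\thz$ $L^2(X)$-boundedness of $\CS_{\thz,0}$ via the quadratic estimate) coincide with the paper's, and your treatment of $\thz\in(0,1]$ via $V_{\thz\dz^k}(x)^{-1}\ls\thz^{-\omega}V_{\dz^k}(x)^{-1}$ is fine. The gap is in the far-field step for large $\thz$: you split at the \emph{fixed} ball $\wz B:=B(x_0,2A_0r_0)$ and claim the pointwise bound $\CS_{\thz,0}(a)(x)\ls\thz^{\omega/p}\,[r_0/d(x_0,x)]^{\bz\wedge\gz}\,[\mu(B)]^{1-1/p}/V(x_0,x)$ for all $x\notin\wz B$. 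This cannot hold in general on the intermediate annulus $\{2A_0r_0\le d(x_0,x)\ls\thz r_0\}$. Take $r_0\in(1,\fz)$, so $a$ has no cancellation and is only $L^2$-normalized; by the quadratic estimate there is a scale $k_0$ with $\|Q_{k_0}a\|_{L^2(X)}\gtrsim_a\|a\|_{L^2(X)}\sim[\mu(B)]^{1/2-1/p}$. For $\thz$ large and $x$ with $d(x_0,x)\ls\thz\dz^{k_0}$, the ball $B(x,\thz\dz^{k_0})$ essentially contains the set where $Q_{k_0}a$ lives, so the single term $k=k_0$ in \eqref{eq-la} already gives $\CS_{\thz,0}(a)(x)\gtrsim_a[\mu(B)]^{1/2-1/p}[V_{\thz\dz^{k_0}}(x)]^{-1/2}$, i.e.\ decay like $[V(x_0,x)]^{-1/2}$ only. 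Near the outer edge $d(x_0,x)\sim\thz\dz^{k_0}$ this exceeds your claimed bound as $\thz\to\fz$ whenever $\bz\wedge\gz>\omega(1/p-1/2)$, which is an admissible choice of parameters (e.g.\ when $\omega(1/p-1/2)<\eta$ and $\bz,\gz$ are close to $\eta$). Relatedly, your assertions that for $r_0>1$ ``all such scales satisfy $\dz^k>r_0$'' and that ``there is no surviving scale apart from exponentially small tails'' are incorrect: every scale has $\dz^k\le1<r_0$, and for large $\thz$ the balls $B(x,\thz\dz^k)$ at fine scales reach all the way into $B$, where the kernel size estimates give no smallness whatsoever.

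The problem is structural: with a $\thz$-independent enlargement of $B$, pointwise kernel estimates cannot confine the $\thz$-dependence to the complement, because on the intermediate annulus the only usable control is $L^2$-information about $a$. This is exactly how the paper proceeds for $r_0\in(1,\fz)$: it splits at the $\thz$-dilated ball $B(x_0,4A_0^2\thz r_0)$, bounds the inner part by the uniform $L^2$-estimate plus H\"older's inequality [cost $[\mu(B(x_0,4A_0^2\thz r_0))]^{1/p-1/2}\ls\thz^{\omega(1/p-1/2)}\le\thz^{\omega/p}$ relative to $\|a\|_{L^2(X)}$, see \eqref{eq-Sa1}], and only \emph{outside} this dilated ball, where every $y\in B(x,\thz\dz^k)$ is uniformly far from $B$, does it use the size condition of $Q_k$ (and, in the case $r_0\in(0,1]$, the cancellation of $a$, as in the global argument it cites) to get a pointwise bound like your display, which then integrates to $\thz^\omega$ after raising to the power $p$. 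To repair your proof you must either adopt this $\thz$-dependent splitting, or replace your claimed pointwise estimate on the intermediate annulus by an $L^2$-flavoured one (e.g.\ via $|Q_ka(y)|\le\|Q_k(y,\cdot)\|_{L^2(X)}\|a\|_{L^2(X)}$) and redo the integration — which amounts to the paper's argument in disguise. The final deduction $h^{p,q}_\at(X)=h^{p,2}_\at(X)\subset h^p(X)$ from Theorem \ref{thm-at*} is fine once \eqref{eq-angle} is secured.
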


\begin{proof}
Note that, for any $\thz\in(0,1]$ and $f\in(\go{\bz,\gz})'$, we have
$\CS_{\thz,0}(f)\ls\thz^{-\omega/2}\CS_0(f)$, which further implies that \eqref{eq-x5} holds true when
$\thz\in(0,1]$. Thus, to complete the proof of this proposition, we still need to prove that \eqref{eq-x5}
holds true when $\thz\in(1,\fz)$.
To this end , it suffices to prove that, for any $\thz\in(0,\fz)$ and any local $(p,2)$-atom $a$,
\begin{equation}\label{eq-angle}
\|\CS_{\thz,0}(a)\|_{L^p(X)}\ls\max\lf\{\thz^{-\omega/2},\thz^{\omega/p}\r\}.
\end{equation}
Indeed, suppose $f\in h^{p,2}_\at(X)$. Then there exist $\{\lz_j\}_{j=1}^\fz\subset\cc$ and local
$(p,2)$-atoms $\{a_j\}_{j=1}^\fz$ such that $\sum_{j=1}^\fz|\lz_j|^p<\fz$ and $f=\sum_{j=1}^\fz\lz_ja_j$ in
$(\go{\bz,\gz})'$. By the Minkowski integral inequality, for any $x\in X$, we have
$$
\CS_{\thz,0}(f)(x)=\CS_{\thz,0}\lf(\sum_{j=1}^\fz\lz_ja_j\r)(x)\le\sum_{j=1}^\fz|\lz_j|\CS_{\thz,0}(a_j)(x).
$$
From this, $p\in(\om,1]$ and \eqref{eq-angle}, we further deduce that
$$
\lf\|\CS_{\thz,0}(f)\r\|_{L^p(X)}^p\le\sum_{j=1}^\fz|\lz_j|^p\lf\|\CS_{\thz,0}(a_j)\r\|_{L^p(X)}^p
\ls\max\lf\{\thz^{-p\omega/2},\thz^{\omega}\r\}\sum_{j=1}^\fz|\lz_j|^p.
$$
Taking the infimum over all atomic decomposition of $f$, we obtain \eqref{eq-x2}. This finishes the proof
of Proposition \ref{prop-angle} under the assumption \eqref{eq-angle}.

Now we prove \eqref{eq-angle}. To this end, we first show that $\CS_{\thz,0}$ is bounded on $L^2(X)$ with its
operator norm independent of $\thz$. Indeed, for any $\vz\in L^2(X)$, by the Fubini theorem, we obtain
\begin{align*}
\lf\|\CS_{\thz,0}(\vz)\r\|_{L^2(X)}&=\int_X\sum_{k=0}^\fz\int_{B(x,\thz\dz^k)}|Q_k\vz(y)|^2
\,\frac{d\mu(y)}{V_{\thz\dz^k}(x)}\,d\mu(x)=\int_X\sum_{k=0}^\fz|Q_k\vz(y)|^2\,d\mu(y)\\
&=\lf\|g_0'(\vz)\r\|_{L^2(X)}^2,
\end{align*}
here and hereafter, for any $\vz\in L^2(X)$, $g_0'(\vz):=(\sum_{k=0}^\fz|Q_k\vz|^2)^{1/2}$.

Now we prove that, for any $\vz\in L^2(X)$,
\begin{equation}\label{eq-x6}
\lf\|g_0'(\vz)\r\|_{L^2(X)}\ls\|\vz\|_{L^2(X)}.
\end{equation}
Without loss
of generality, we may assume that $\{Q_k\}_{k=0}^\fz$ and $\vz$ are real. Otherwise we may split them
into real and imagine parts and estimate them separately. We write
$$
\lf\|g_0'(\vz)\r\|_{L^2(X)}^2=\sum_{k=0}^\fz\int_X \lf[Q_k\vz(x)\r]^2\,d\mu(x)
=\sum_{k=0}^\fz\lf<Q_k^*Q_k\vz,\vz\r>.
$$
Observe that $\{Q_k^*\}_{k=0}^\fz$ satisfies (i), (ii) and (iii) of Theorem \ref{thm-icrf} with $N=0$ and
$\bz=\eta=\gz$. Thus, by Lemma \ref{lem-est1}, we find that, for any fixed $\eta'\in(0,\eta)$, any
$k_1,\ k_2\in\zz_+$ and $x,\ y\in X$,
\begin{align*}
\lf|Q_{k_1}Q_{k_2}^*(x,y)\r|&=\lf|\int_X Q_{k_1}(x,z)Q_{k_2}(y,z)\,d\mu(y)\r|\\
&\ls\dz^{|k_1-k_2|\eta'}\frac{1}{V_{\dz^{k_1\wedge k_2}}(x)+V(x,y)}
\lf[\frac{\dz^{k_1\wedge k_2}}{\dz^{k_1\wedge k_2}+d(x,y)}\r]^\eta.
\end{align*}
From this and the size condition of $\{Q_k\}_{k=0}^\fz$, we deduce that, for any
$k_1,\ k_2\in\zz_+$,
$$
\lf\|\lf(Q_{k_1}^*Q_{k_1}\r)\lf(Q_{k_2}^*Q_{k_2}\r)\r\|_{L^2(X)\to L^2(X)}
\ls\lf\|Q_{k_1}Q_{k_2}^*\r\|_{L^2(X)\to L^2(X)}\dz^{|k_1-k_2|\eta'}.
$$
By this, the fact that $Q_{k}^*Q_{k}$ is self-adjoint and the Cotlar--Stein lemma
(see \cite[pp.\ 279--280]{stein93} or \cite[Lemma 4.4]{hlyy18}), we conclude that
$$
\lf\|g_0'(\vz)\r\|_{L^2(X)}^2=\lf<\sum_{k=0}^\fz Q_k^*Q_k\vz,\vz\r>\ls\|\vz\|_{L^2(X)}^2,
$$
which implies \eqref{eq-x6} and further the boundedness of $\CS_{\thz,0}$ on $L^2(X)$.

Now we continue to prove \eqref{eq-angle}. Suppose that $a$ is a local $(p,2)$-atom supported on the ball
$B:=B(x_0,r_0)$ for some $x_0\in X$ and $r_0\in(0,\fz)$. When $r_0\in(0,1]$, using the boundedness of
$\CS_{\thz,0}$ on $L^2(X)$ and a similar argument to that used in the estimation of \cite[(5.10)]{hhllyy18},
we conclude that \eqref{eq-angle} holds true.

It suffices to show \eqref{eq-angle} in the case $r_0\in(1,\fz)$. By the Fubini theorem and \eqref{eq-x6}, we
find that
$$
\|\CS_{\thz,0}(a)\|_{L^2(X)}= \lf\| \lf(\sum_{k=0}^\fz |Q_k a|^2\r)^{1/2}\r\|_{L^2(X)}
=\lf\|g_0'(a)\r\|_{L^2(X)}\ls \|a\|_{L^2(X)}\ls [\mu(B)]^{1/2-1/p},
$$
which, together with the H\"older inequality, further implies that
\begin{equation}\label{eq-Sa1}
\int_{B(x_0,4A_0^2\thz r_0)}[\CS_{\thz,0}(a)(x)]^p\,d\mu(x)\le\|\CS_{\thz,0}(a)\|_{L^2(X)}^p
\lf[\mu\lf(B\lf(x_0,4A_0^2\thz r_0\r)\r)\r]^{1-p/2}\ls\thz^{\omega(1-p/2)}.
\end{equation}

Now we consider the case $x\notin B(x_0,4A_0^2\thz r_0)$. Fix $k\in\zz_+$ and $y\in B(x,\thz\dz^k)$. When
$u\in B(x_0,r_0)$, we have $d(u,x_0)\le (2A_0)^{-1}d(x_0,y)$, which implies that $d(u,y)\sim d(x_0,y)
\sim d(x_0,x)$. By this, the size condition of $Q_k$ and the H\"older inequality, we have
\begin{align}\label{eq-qka}
|Q_ka(y)|&\le\int_B |Q_k(y,u)a(u)|\,d\mu(u)\ls\int_X\frac{1}{V_{\dz^k}(y)+V(y,u)}
\lf[\frac{\dz^k}{\dz^k+d(y,u)}\r]^\gz|a(u)|\,d\mu(u)\\
&\ls[\mu(B)]^{1-1/p}\frac{1}{V_{\dz^k}(x_0)+V(x_0,y)}\lf[\frac{\dz^k}{\dz^k+d(x_0,y)}\r]^\gz.\noz
\end{align}
We then separate $k\in\zz_+$ into two parts. If $\dz^k<(4A_0\thz)^{-1}d(x_0,x)$, then $d(x_0,y)\sim d(x_0,x)$.
In this case, we choose $\gz'\in(\omega(1/p-1),\gz)$. Therefore, by \eqref{eq-qka} and the fact
$r_0>1\ge\dz^k$, we conclude that
\begin{align*}
&\sum_{\dz^k<(4A_0\thz)^{-1}d(x_0,x)}\int_{B(x,\thz\dz^k)}|Q_ka(y)|^2\,\frac{d\mu(y)}{V_{\thz\dz^k}(x)}\\
&\quad\ls[\mu(B)]^{2(1-1/p)}\lf[\frac 1{V(x_0,x)}\r]^2
\sum_{\dz^k<(4A_0\thz)^{-1}d(x_0,x)}\lf[\frac{\dz^k}{d(x_0,x)}\r]^{2\gz}\\
&\quad\ls[\mu(B)]^{2(1-1/p)}\lf[\frac 1{V(x_0,x)}\r]^2\lf[\frac{r_0}{d(x_0,x)}\r]^{2\gz'}
\sum_{\dz^k<(4A_0^2\thz)^{-1}d(x_0,x)}\lf[\frac{\dz^k}{d(x_0,x)}\r]^{2(\gz-\gz')}\\
&\quad\ls[\mu(B)]^{2(1-1/p)}\lf[\frac 1{V(x_0,x)}\r]^2\lf[\frac{r_0}{d(x_0,x)}\r]^{2\gz'}.
\end{align*}
When $\dz^k\ge(4A_0^2\thz)^{-1}d(x_0,x)$, by \eqref{eq-doub}, we have
$V(x_0,x)\ls\mu(B(x_0,\thz\dz^k))\ls\thz^\omega V_{\dz^k}(x_0)$. Therefore,
\begin{align*}
\sum_{\dz^k\ge(4A_0\thz)^{-1}d(x_0,x)}\int_{B(x,\thz\dz^k)}|Q_ka(y)|^2\,\frac{d\mu(y)}{V_{\thz\dz^k}(x)}
&\ls[\mu(B)]^{2(1-1/p)}\sum_{\dz^k<(4A_0\thz)^{-1}d(x_0,x)}
\lf[\frac 1{V_{\dz^k}(x_0)}\r]^2\lf(\frac{r_0}{\dz^k}\r)^\gz\\
&\ls\thz^{2\omega}[\mu(B)]^{2(1-1/p)}\lf[\frac 1{V(x_0,x)}\r]^2
\sum_{\dz^k<(4A_0\thz)^{-1}d(x_0,x)}\lf(\frac{r_0}{\dz^k}\r)^\gz\\
&\ls\thz^{2(\omega+\gz)}[\mu(B)]^{2(1-1/p)}\lf[\frac 1{V(x_0,x)}\r]^2\lf[\frac{r_0}{d(x_0,x)}\r]^{2\gz}.
\end{align*}
Combining the above two estimates, we conclude that
\begin{equation}\label{eq-angle2}
\CS_{\thz,0}(a)(x)\ls\thz^{\omega+\eta}[\mu(B)]^{1-1/p}\lf[\frac{r_0}{d(x_0,x)}\r]^{\gz'}\frac{1}{V(x_0,x)}.
\end{equation}
Thus, using $\bz,\ \gz\in(\omega(1/p-1),\eta)$ and choosing $\gz'\in(\omega(1/p-1),\bz\wedge\gz)$, we obtain
\begin{align}\label{5.x2}
&\int_{[B(x_0,4A_0^2\thz r_0)]^\complement}[\CS_{\thz,0}(a)(x)]^p\,d\mu(x)\\
&\quad\ls\thz^{(\omega+\eta)p}[\mu(B)]^{p-1}\int_{[B(x_0,4A_0^2\thz r_0)]^\complement}
\lf[\frac{r_0}{d(x_0,x)}\r]^{p\gz'}\lf[\frac 1{V(x_0,x)}\r]^p\,d\mu(x)\noz\\
&\quad\ls\thz^{p\omega}[\mu(B)]^{p-1}\sum_{j=2}^\fz 2^{-jp\gz'}\int_{(2A_0)^j\thz r_0\le
d(x_0,x)<(2A_0)^{j+1}\thz r_0}
\lf[\frac 1{\mu(B(x_0,(2A_0)^j\thz r_0))}\r]^p\,d\mu(x)\noz\\
&\quad\ls\thz^\omega\sum_{j=2}^\fz 2^{-j[p\gz'-(1-p)\omega]}\ls \thz^\omega.\noz
\end{align}
This, combined with \eqref{eq-Sa1}, implies \eqref{eq-angle} in this case. Summarizing the above two cases,
we then complete the proof of \eqref{eq-angle} and hence of Proposition \ref{prop-angle}.
\end{proof}

Next we prove $h^p(X)\subset h^{p,2}_\at(X)$. To this end, we introduce the notion of \emph{local
molecules}.
\begin{definition}\label{def-mol}
Suppose that $p\in(0,1]$, $q\in(p,\fz]\cap[1,\fz]$ and $\vec{\ez}:=\{\ez_m\}_{m=1}^\fz\subset [0,\fz)$
satisfies
\begin{equation}\label{eq-epcon}
\sum_{m=1}^\fz m(\ez_m)^p<\fz.
\end{equation}
Let $B:=B(x_0,r_0)$ for some $x_0\in X$ and $r_0\in(0,\fz)$. A function $M\in L^q(X)$ is called a
\emph{local $(p,q,\vec{\ez})$-molecule centered at $B$} if $M$ satisfies the following conditions:
\begin{enumerate}
\item $\|M\mathbf{1}_B\|_{L^q(X)}\le [\mu(B)]^{1/q-1/p}$;
\item for any $m\in\nn$, $\|M\mathbf{1}_{B(x_0,\dz^{-m}r_0)\setminus B(x_0,\dz^{-m+1}r_0)}\|_{L^q(X)}\le
\ez_m[\mu(B(x_0,\dz^{-m}r_0))]^{1/q-1/p}$;
\item $\int_X M(x)\,d\mu(x)=0$ if $r_0\in(0,1]$.
\end{enumerate}
\end{definition}

Observe that, when $\mu(X)=\fz$, in Definition \ref{def-mol}, if we remove the restriction $r_0\in(0,1]$ in
(iii) of Definition \ref{def-mol}, then $M$ is called a \emph{$(p,q,\vec{\ez})$-molecule centered at $B$}
(see \cite[Definition 5.4]{hhllyy18}).

\begin{theorem}\label{thm-m=a}
Suppose that $p\in(\om,1]$, $q\in(p,\fz]\cap[1,\fz]$, $\bz,\ \gz\in(\omega(1/p-1),\eta)$ with $\omega$ and
$\eta$, respectively, as in \eqref{eq-doub} and Definition \ref{def-iati}, and
$\ez:=\{\ez_m\}_{m=1}^\fz$ satisfies \eqref{eq-epcon}. Then $f\in h^{p,q}_\at(X)$ if and only if there exist
local $(p,q,\vec{\ez})$-molecules $\{M_j\}_{j=1}^\fz$ and $\{\lz_j\}_{j=1}^\fz\subset\cc$ such that
$\sum_{j=1}^\fz|\lz_j|^p<\fz$ and
\begin{equation}\label{eq-mdec}
f=\sum_{j=1}^\fz\lz_jM_j \quad \textit{in}\quad \lf(\go{\bz,\gz}\r)'.
\end{equation}
Moreover, there exists a constant $C\in[1,\fz)$, independent of $f$, such that
$$
C^{-1}\|f\|_{h^{p,q}_\at(X)}\le\inf\lf\{\lf(\sum_{j=1}^\fz|\lz_j|^p\r)^{1/p}\r\}\le C\|f\|_{h^{p,q}_\at(X)},
$$
where the infimum is taken over all the molecular decompositions of $f$ as in \eqref{eq-mdec}.
\end{theorem}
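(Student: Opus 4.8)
The plan is to prove the two implications separately, the easy direction being that every local molecular decomposition is also a local atomic decomposition (after regrouping), and the hard direction being that every local atom can be written as a (quasi-)norm-controlled sum of local molecules, from which Theorem \ref{thm-m=a} follows by the usual $\ell^p$-summation argument.

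\textbf{The easy implication.} Since a local $(p,q)$-atom supported on $B$ is trivially a local $(p,q,\vec\ez)$-molecule centered at $B$ (all the annular estimates in Definition \ref{def-mol}(ii) being vacuously satisfied because $M$ is supported in $B$, and the cancellation condition matching $\textup{(i)}_3$), the inclusion $h^{p,q}_\at(X)\subset\{$molecular decompositions$\}$ is immediate, with the infimum over molecular decompositions bounded by $\|f\|_{h^{p,q}_\at(X)}$. This gives the second inequality in the display with $C=1$.

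\textbf{The hard implication: molecules into atoms.} Fix a local $(p,q,\vec\ez)$-molecule $M$ centered at $B:=B(x_0,r_0)$. For each $m\in\zz_+$ set $B_m:=B(x_0,\dz^{-m}r_0)$ (with $B_0:=B$) and write $M=\sum_{m=0}^\fz M\mathbf 1_{B_m\setminus B_{m-1}}=:\sum_{m=0}^\fz M_{(m)}$, with the convention $B_{-1}:=\emptyset$. When $r_0>1$ every local atom and local molecule needs no cancellation, so each $M_{(m)}$, suitably normalized by $[\mu(B_m)]^{1/p-1/q}\ez_m$ (with $\ez_0:=1$), is a constant multiple of a local $(p,q)$-atom supported in $B_m$; then $\sum_m\ez_m^p<\fz$ by \eqref{eq-epcon} (which in fact gives more than summability of $\ez_m^p$), and one checks the $\ell^p$ bound $\sum_m(\ez_m[\mu(B_m)]^{1/p-1/q}\cdot[\mu(B_m)]^{1/q-1/p})^p=\sum_m\ez_m^p\ls1$. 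The genuinely delicate case is $r_0\in(0,1]$, where the molecule has $\int_X M\,d\mu=0$ but the pieces $M_{(m)}$ individually do not. Here I would follow the standard telescoping correction: introduce functions supported on $B_m$ with the averages needed to restore cancellation. Concretely, set $\sigma_m:=\int_X M\mathbf 1_{B_m^\complement}\,d\mu$ (so $\sigma_{-1}=0$ and $\sigma_m\to0$), and let
\begin{equation*}
N_{(m)}:=M_{(m)}-\sigma_{m-1}\frac{\mathbf 1_{B_{m-1}}}{\mu(B_{m-1})}+\sigma_m\frac{\mathbf 1_{B_m}}{\mu(B_m)},
\end{equation*}
so that $\sum_m N_{(m)}=M$, each $N_{(m)}$ is supported in $B_m$, and $\int_X N_{(m)}\,d\mu=0$. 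The size of $\sigma_m$ is controlled by the annular $L^q$-bounds of $M$ via Hölder's inequality: $|\sigma_m|\le\sum_{i>m}\|M_{(i)}\|_{L^q}[\mu(B_i)]^{1-1/q}\ls\sum_{i>m}\ez_i[\mu(B_i)]^{1-1/p}$. A routine computation with \eqref{eq-doub} then yields $\|N_{(m)}\|_{L^q}\ls\widetilde\ez_m[\mu(B_m)]^{1/q-1/p}$ for a new sequence $\widetilde\ez_m$ still satisfying $\sum_m\widetilde\ez_m^p<\fz$ (the weight $\sqrt m$ in \eqref{eq-epcon} is exactly what makes the double sum $\sum_m(\sum_{i>m}\ez_i\dz^{\omega(i-m)(1-1/p)})^p$ finite). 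Normalizing, $N_{(m)}/(\widetilde\ez_m[\mu(B_m)]^{1/p-1/q})$ is a constant multiple of a local $(p,q)$-atom supported on $B_m$, and $\sum_m\widetilde\ez_m^p\ls1$.

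\textbf{Assembling the result.} Given a molecular decomposition $f=\sum_{j=1}^\fz\lz_jM_j$ in $(\go{\bz,\gz})'$ with $\sum_j|\lz_j|^p<\fz$, apply the above to each $M_j$ to get $M_j=\sum_{m}\mu_{j,m}a_{j,m}$ with local $(p,q)$-atoms $a_{j,m}$ and $\sum_m|\mu_{j,m}|^p\le C_0$ uniformly in $j$; the double series $f=\sum_{j,m}\lz_j\mu_{j,m}a_{j,m}$ converges in $(\go{\bz,\gz})'$ (this uses that the tail sums are small in $h^{*,p}(X)$, hence in the distribution topology, which is where I would invoke Proposition \ref{prop-atd} together with the boundedness of the grand maximal function on atoms from Lemma \ref{lem-a*} to justify rearrangement), and $\sum_{j,m}|\lz_j\mu_{j,m}|^p\le C_0\sum_j|\lz_j|^p$. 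Taking infima gives $\|f\|_{h^{p,q}_\at(X)}\le C\inf\{(\sum_j|\lz_j|^p)^{1/p}\}$, which is the first inequality. The main obstacle is the careful bookkeeping of the correction terms and the verification that the weight $m\mapsto\sqrt m$ in \eqref{eq-epcon} is strong enough to keep all the rearranged $\ell^p$-sums finite; the convergence-of-series and rearrangement issues in the distribution topology are a secondary technical point, handled exactly as in \cite[Section 5]{hhllyy18}.
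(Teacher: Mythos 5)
Your route coincides in substance with the paper's: the easy direction is the same observation (a local atom is trivially a local molecule), and for the converse the paper likewise reduces, via Theorem \ref{thm-at*}, to decomposing a single local molecule; it treats $r_0\in(1,\fz)$ by exactly your annular splitting (no cancellation needed on balls of radius larger than $1$), and it disposes of $r_0\in(0,1]$ by noting that $M$ is then a global molecule and citing \cite[Theorem 3.3]{lcfy18}, whose proof is the telescoping correction you write out explicitly. So your proposal is correct in outline and essentially the same argument, with the small-radius case made self-contained instead of quoted.

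One justification is wrong as stated, although the step it is meant to support is fine. In bounding $|\sigma_m|[\mu(B_m)]^{1/q-1}$ you bring in a factor $\dz^{\omega(i-m)(1-1/p)}$ ``with \eqref{eq-doub}''; since $1-1/p\le 0$ and $\dz\in(0,1)$, this factor is $\ge 1$ and grows geometrically in $i-m$ (doubling only bounds $\mu(B_i)$ from above, and no reverse doubling is assumed in this setting), so the double sum $\sum_m\bigl(\sum_{i>m}\ez_i\dz^{\omega(i-m)(1-1/p)}\bigr)^p$ can diverge even under \eqref{eq-epcon}; also the weight in \eqref{eq-epcon} is $m$, not $\sqrt m$. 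The correct, and simpler, bookkeeping needs no decay in $i-m$ at all: since $\mu(B_i)\ge\mu(B_m)$ and $1-1/p\le 0$, one has $[\mu(B_i)/\mu(B_m)]^{1-1/p}\le 1$, hence $\wz\ez_m\ls\ez_m+\sum_{i>m}\ez_i$; then $(\sum_{i>m}\ez_i)^p\le\sum_{i>m}\ez_i^p$ and $\sum_{m}\sum_{i>m}\ez_i^p=\sum_i i\,\ez_i^p<\fz$ by \eqref{eq-epcon}, which gives $\sum_m\wz\ez_m^{\,p}\ls 1$. With that one-line repair, your proof is complete.
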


\begin{proof}
It is obvious that a local $(p,q)$-atom is also a local $(p,q,\vec{\ez})$-molecule. By Theorem \ref{thm-at*},
to show this theorem, it still needs to prove that, for any local $(p,q,\vec{\ez})$-molecule $M$ centered at
a ball $B:=B(x_0,r_0)$ for some $x_0\in X$ and $r\in(0,\fz)$, there exist
$\{\lz_j\}_{j=0}^\fz\subset\cc$ and local $(p,q)$-atoms $\{a_j\}_{j=0}^\fz$ such that
$\sum_{j=0}^\fz|\lz_j|^p\ls 1$ and
\begin{equation}\label{eq-mat}
M=\sum_{j=0}^\fz\lz_ja_j
\end{equation}
in $(\go{\bz,\gz})'$. Indeed, if $r\in(0,1]$, then
$M$ is also a $(p,q,\vec{\ez})$-molecule. Using a similar argument to that used in the proof of
\cite[Theorem 3.3]{lcfy18}, we find that \eqref{eq-mat} holds true for some $\{\lz_j\}_{j=0}^\fz\subset\cc$
satisfying $\sum_{j=0}^{\fz}|\lz_j|^p\ls 1$ and $(p,q)$-atoms [hence also local $(p,q)$-atoms]
$\{a_j\}_{j=0}^\fz$. This implies that \eqref{eq-mat} holds true in this case.

We still need to prove \eqref{eq-mat} in the case $r\in(1,\fz)$. Indeed, let $a_0:=M\mathbf{1}_{B(x_0,r_0)}$
and $a_j:=M\mathbf{1}_{B(x_0,\dz^{-j}r_0)\setminus B(x_0,\dz^{-j+1}r_0)}/\ez_j$ for any $j\in\nn$. Then, by
the Lebesgue dominated theorem, we find that $M=M_0+\sum_{j=1}^\fz\ez_j M_j$ in $L^q(X)$ and hence, due to
the fact $q\in[1,\fz]$ and the H\"older inequality, also in $(\go{\bz,\gz})'$. Moreover, by (i) and (ii) of
Definition \ref{def-mol} with $r_0\in(1,\fz)$, we know that, for any $j\in\zz_+$, $M_j$ is a local
$(p,q)$-atom. Further, by \eqref{eq-epcon}, we have $1+\sum_{j=1}^\fz\ez_j^p\ls 1$. Thus, \eqref{eq-mat}
also holds true in this case. Combining the above two cases, we then completes the proof of Theorem
\ref{thm-m=a}.
\end{proof}

Using Theorem \ref{thm-m=a}, we obtain the following atomic decomposition of any element in $h^p(X)$.

\begin{proposition}\label{prop-ldec}
Suppose that $p\in (\om,1]$ and $\bz,\ \gz\in(\omega(1/p-1),\eta)$ with $\omega$ and $\eta$, respectively,
as in \eqref{eq-doub} and Definition \ref{def-iati}. Then there exists a positive constant
$C$ such that, for any $f\in(\go{\bz,\gz})'$ belonging to $h^p(X)$, $f\in h^{p,2}_\at(X)$ and
$\|f\|_{h^{p,2}_\at(X)}\le C\|f\|_{h^p(X)}$.
\end{proposition}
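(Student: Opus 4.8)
The plan is to deduce a \emph{local molecular} decomposition of $f$ and then appeal to Theorem~\ref{thm-m=a}, Theorem~\ref{thm-at*} and Proposition~\ref{prop-angle}. By Theorem~\ref{thm-la} we may work with one fixed $\exp$-IATI $\{Q_k\}_{k=0}^\fz$ and write $m_k(f)$ for the square-function ingredients as in the proof of Theorem~\ref{thm-la}, so that $(\sum_{k=0}^\fz[m_k(f)]^2)^{1/2}=\CS_0(f)\in L^p(X)$. Applying the inhomogeneous discrete Calder\'on reproducing formula~\eqref{eq-idrf}, I would split $f=Lf+Hf$, where $Lf$ collects the first (``bottom'', $k\in\{0,\ldots,N\}$) sum and $Hf$ the second ($k\ge N+1$) sum. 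These two pieces must be handled by genuinely different arguments, and this dichotomy is the heart of the local (versus global) situation.

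For $Lf$, because only the finitely many scales $k\le N$ occur, its grand maximal function can be estimated \emph{pointwise}. Given $x\in X$ and $\vz\in\go{\bz,\gz}$ with $\|\vz\|_{\CG(x,r,\bz,\gz)}\le1$ for some $r\in(0,1]$, pick $l\in\zz_+$ with $\dz^{l+1}<r\le\dz^l$: if $l\le N$ one estimates $\wz Q_k^*\vz$ directly by Lemma~\ref{lem-est1}; if $l>N$ one writes $\vz=\vz'+(\int_X\vz)\,P_l(x,\cdot)$, applies Lemma~\ref{lem-est1} to the cancellative piece $\vz'$, and estimates $\wz Q_k^*P_l(x,\cdot)$ for $k\le N<l$ by a direct kernel convolution. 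Summing over the cubes $Q_\az^{k,m}$ via Lemma~\ref{lem-max}, together with $|Q_{\az,1}^{k,m}(f)|\ls\inf_{z\in Q_\az^{k,m}}m_k(f)(z)$, yields, for a fixed $r\in(\om,p)$,
$$
|\langle Lf,\vz\rangle|\ls\sum_{k=0}^N\lf\{\CM\lf([m_k(f)]^r\r)(x)\r\}^{1/r}.
$$
Taking the supremum over $\vz$, then the $L^p(X)$-norm, and using $r<p$, the boundedness of $\CM$ on $L^{p/r}(X)$ and $m_k(f)\le\CS_0(f)$, one obtains $\|(Lf)_0^*\|_{L^p(X)}\ls\|\CS_0(f)\|_{L^p(X)}$; hence $Lf\in h^{*,p}(X)$, and by Theorem~\ref{thm-at*} and Proposition~\ref{prop-angle}, $Lf\in h^{p,2}_\at(X)$ with $\|Lf\|_{h^{p,2}_\at(X)}\ls\|\CS_0(f)\|_{L^p(X)}$.

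For $Hf$, the auxiliary kernels $\wz Q_k$ with $k>N$ satisfy $\int_X\wz Q_k(\cdot,y)\,d\mu=0$ (Theorem~\ref{thm-icrf}(iii)), so the standard area-function machinery applies. For $j\in\zz$ put $\Omega_j:=\{x\in X:\ \CS_0(f)(x)>2^j\}$ (replacing $d$ by an equivalent metric $d'\sim d^\thz$ and enlarging to a lower-semicontinuous majorant, exactly as in Section~\ref{s-at}, so these become proper open sets of finite measure), and decompose each $\Omega_j$ via Propositions~\ref{prop-ozdec} and~\ref{prop-chidec} into Whitney balls $\{B^j_i\}_i$. To each term $\mu(Q_\az^{k,m})\wz Q_k(\cdot,y_\az^{k,m})Q_kf(y_\az^{k,m})$ of $Hf$ with a nonzero coefficient I would assign the largest $j$ for which a fixed dilate of the scale-$\dz^k$ ball about $y_\az^{k,m}$ lies in $\Omega_j$ (this $j$ is finite because $m_{Q_\az^{k,m}}(|Q_kf|^2)$ is dominated by $[\inf\CS_0(f)]^2$ over that dilate, so a vanishing infimum forces a vanishing coefficient), and then a Whitney ball $B^j_i$ meeting it, noting $r^j_i\gtrsim\dz^k$. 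Setting $M_{j,i}$ equal to the sum of the pieces assigned to $B^j_i$ and $\lz_{j,i}:=\wz C\,2^j[\mu(B^j_i)]^{1/p}$, the kernel bounds of Theorem~\ref{thm-icrf}(i)--(ii), Lemmas~\ref{lem-est1} and~\ref{lem-max}, the Fefferman--Stein inequality (as in the proof of Theorem~\ref{thm-la}), and $|Q_kf(y_\az^{k,m})|\ls\inf_{z\in Q_\az^{k,m}}m_k(f)(z)$ show that $M_{j,i}/\lz_{j,i}$ is a harmless multiple of a local $(p,2,\vec\ez)$-molecule centered at $B^j_i$ (with $\ez_m\sim\dz^{m\gz'}$, so $\sum_m m\ez_m^p<\fz$), and $\int_XM_{j,i}\,d\mu=0$ since each summand has vanishing integral. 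As, by Proposition~\ref{prop-ozdec}(i),(ii),
$$
\sum_{j,i}|\lz_{j,i}|^p\ls\sum_{j\in\zz}2^{jp}\sum_i\mu\lf(B\lf(x^j_i,r^j_i/(5A_0^3)\r)\r)\ls\sum_{j\in\zz}2^{jp}\mu(\Omega_j)\ls\|\CS_0(f)\|_{L^p(X)}^p,
$$
and $Hf=\sum_{j,i}\lz_{j,i}M_{j,i}$ (with convergence guaranteed by the $\ell^p$ bound on the coefficients and the molecular estimates), Theorem~\ref{thm-m=a} gives $Hf\in h^{p,2}_\at(X)$ with $\|Hf\|_{h^{p,2}_\at(X)}\ls\|\CS_0(f)\|_{L^p(X)}$. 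Combining the two contributions finishes the proof.

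I expect the main obstacle to be exactly this $L/H$ dichotomy: the finitely many bottom scales $k\le N$, whose auxiliary kernels $\wz Q_k$ carry no cancellation, cannot be organized into cancellative local molecules supported on small balls, so one is forced to dispose of $Lf$ separately through the pointwise grand-maximal estimate above --- which is affordable only because there are finitely many such scales, and which is the genuinely new ingredient compared with the global argument of~\cite{hhllyy18}. The remaining difficulties are routine: the failure of $\CS_0(f)$ to be lower semicontinuous, handled via the equivalent metric $d'$ as in Section~\ref{s-at}; and the need for the final series to converge in $(\go{\bz,\gz})'$ rather than merely in $(\GOO{\bz,\gz})'$, where~\eqref{eq-idrf} is stated, which is absorbed by the molecular decomposition of Theorem~\ref{thm-m=a} together with the space-independence Proposition~\ref{prop-hpin}.
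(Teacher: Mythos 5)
Your plan has a genuine gap at the final, and in fact decisive, step: the identification of the reconstructed object with $f$ as a distribution on the \emph{full} test space $\go{\bz,\gz}$. The discrete reproducing formula \eqref{eq-idrf}, which you use to define $Lf$ and $Hf$, is stated and converges only in $(\GOO{\bz,\gz})'$. Consequently your argument produces an element $g:=Lf+Hf\in h^{p,2}_\at(X)$ that agrees with $f$ only when tested against cancellative test functions. This does not give $g=f$ in $(\go{\bz,\gz})'$: there exist nonzero elements of $(\go{\bz,\gz})'$ annihilating $\GOO{\bz,\gz}$ (for instance $\vz\mapsto\int_X\vz\,d\mu$, which is continuous on $\go{\bz,\gz}$ by Lemma \ref{lem-add}(ii) and vanishes on the completion $\GOO{\bz,\gz}$), so agreement on $\GOO{\bz,\gz}$ leaves an undetermined non-cancellative component, while the definition of $h^{p,2}_\at(X)$ requires the atomic/molecular series to converge to $f$ itself in $(\go{\bz,\gz})'$. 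The remedy you invoke does not do this job: Proposition \ref{prop-hpin} only asserts independence of the choice of the parameters $(\bz,\gz)$, and Theorem \ref{thm-m=a} presupposes a decomposition already converging to $f$ in $(\go{\bz,\gz})'$. This is precisely where the paper spends its effort: it abandons the generic $\exp$-IATI and works with the wavelet-based one $\{D_k\}_{k=0}^\fz$ of \cite{ah13}, whose exact orthogonality \eqref{eq-orth} yields the reproducing formula \eqref{eq-icrf2} convergent in $(\go{\bz,\gz})'$ (no auxiliary operators $\wz Q_k$ appear), and then proves \eqref{eq-tf=f} by applying $D_{k'}D_{k'}$ to the molecular series, collapsing it via \eqref{eq-orth} after an absolute-convergence check, recovering $D_{k'}D_{k'}f$, and resumming. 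Your proposal contains no substitute for this mechanism, so as written it does not prove the proposition.

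Apart from this, your organization differs genuinely from the paper's and is plausible in outline: the paper runs a single Chang--Fefferman-type stopping time over dyadic cubes at all scales, placing the finitely many bottom scales $l\le N+1$ into molecules centered at balls of radius $\dz^{-1}>1$ (for which Definition \ref{def-mol}(iii) is vacuous), whereas you dispose of the bottom scales separately through a pointwise grand-maximal estimate and the already-proved Theorem \ref{thm-at*}, and treat the high scales by a tent-space-style grouping; the $Lf$ shortcut is legitimate, and the $Hf$ construction could likely be completed, though the $L^2$ size and annular decay bounds for the grouped pieces $M_{j,i}$ require an almost-orthogonality/duality argument of the kind used for \eqref{eq-x6} and \eqref{eq-Y11}, and the regrouping of \eqref{eq-idrf} into the molecular series itself needs justification; none of this, however, repairs the $(\GOO{\bz,\gz})'$-versus-$(\go{\bz,\gz})'$ identification described above.
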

\begin{proof}
Suppose that $f\in(\go{\bz,\gz})'$ belongs to $h^p(X)$. Let $D_0:=P_0$ and, for any $k\in\nn$, $D_k:=Q_k$,
with $\{P_0,\{Q_k\}_{k=1}^\fz\}$ as in \cite{ah13}. Then, for any $k\in\zz_+$, $D_k^*=D_k$. Moreover, by the
orthonormality of $\{D_k\}_{k=0}^\fz$, we conclude that, for any $k,\ k'\in\zz_+$,
\begin{equation}\label{eq-orth}
D_kD_{k'}=\begin{cases}
D_k & \textup{if } k=k',\\
0 & \textup{if }k\neq k'.
\end{cases}
\end{equation}
Besides, by \cite{ah13}, we find that $\{D_k\}_{k=0}^\fz$ is an $\exp$-IATI. Then, using \eqref{eq-orth} and
a similar argument to that used in the proof of Theorem \ref{thm-icrf}, we conclude that
\begin{equation}\label{eq-icrf2}
f=\sum_{k=0}^\fz D_k^2f=\sum_{k=0}^\fz D_kD_k f
\end{equation}
in $(\go{\bz,\gz})'$. Moreover, by this and Theorem \ref{thm-la}, we may assume $\{Q_k\}_{k=0}^\fz
:=\{D_k\}_{k=0}^\fz$ appearing in \eqref{eq-la}

Let $\mathcal D:=\{Q_\az^l:\ l\in\nn,\
\az\in\CA_l\}$ with $\CA_l$ as in Lemma \ref{cube}. For any $k\in\zz$, we define
$\Omega_k:=\{x\in X:\ \CS_0(f)(x)>2^k\}$ and
$$
{\mathcal D}_k:=\lf\{Q\in{\mathcal D}:\ \mu(Q\cap\Omega_k)>\frac 12\mu(Q)\;\textup{and}\;
\mu(Q\cap\Omega_{k+1})\le\frac 12\mu(Q)\r\}.
$$
It is easy to see that, for any $Q\in{\mathcal D}$, there exists a unique $k\in\zz$ such that
$Q\in{\mathcal D}_k$. A dyadic cube $Q\in{\mathcal D}_k$ is called a \emph{maximal cube in ${\mathcal D}_k$}
if $Q'\in{\mathcal D}$ and $Q'\supsetneqq Q$, then
$Q'\notin{\mathcal D}_k$. Denote the set of all maximal cubes in ${\mathcal D}_k$ at level $j\in\nn$ by
$\{Q_{\tau,k}^j\}_{\tau\in I_{j,k}}$, where $I_{j,k}\subset \CA_j$ may be empty.
The center of $Q_{\tau,k}^j$ is denoted by $z_{\tau,k}^j$. Then
${\mathcal D}=\bigcup_{k\in\zz}\bigcup_{j\in\nn}\bigcup_{\tau\in I_{j,k}}
\{Q\in{\mathcal D}_k:\ Q\subset Q_{\tau,k}^j \}$.

From now on, we write $D_Q:=D_{l+1}$ for some $l\in\zz_+$ and $\az\in\CA_l$. Now we show that
\begin{equation}\label{eq-fmdec}
f=\sum_{k=-\fz}^\fz\sum_{j=1}^{N+1}\sum_{\tau\in I_{j,k}}\sum_{Q\in S_{\tau,k}^{j,1}}
\lz_{\tau,k}^{j,1} b_{\tau,k}^{j,1}
+\sum_{k=-\fz}^\fz\sum_{j=1}^{\fz}\sum_{\tau\in I_{j,k}}\sum_{Q\in S_{\tau,k}^{j,2}}
\lz_{\tau,k}^{j,2} b_{\tau,k}^{j,2}
\end{equation}
in $(\go{\bz,\gz})'$. Here and hereafter, for any $k\in\zz$, $j\in\nn$ and $\tau\in I_{j,k}$, let
$$
S_{\tau,k}^{j,1}:=\lf\{Q\in\CD_k:\ Q=Q_\az^l,\ l\in\{1,\ldots,N+1\},\ \az\in\CA_l,\ Q_\az^l\subset
Q_{\tau,k}^j\r\}
$$
(observe that, when $j\in\{N+2,N+3,\ldots\}$, $S_{\tau,k}^{j,1}=\emptyset$ because $ Q_\az^l\subset Q_{\tau,k}^j$
implies that $j\le l$), and
$$
S_{\tau,k}^{j,2}:=\lf\{Q\in\CD_k:\ Q=Q_\az^l,\ l\in\{N+2,N+3,\ldots\},\ \az\in\CA_l,\ Q_\az^l\subset
Q_{\tau,k}^j\r\};
$$
moreover, for any $k\in\zz$, $j\in\nn$, $\tau\in I_{j,k}$ and $i\in\{1,2\}$, let
$$
\lz_{\tau,k}^{j,i}:=\lf[\mu\lf(Q_{\tau,k}^j\r)\r]^{1/p-1/2}\lf[\sum_{Q\in S_{\tau,k}^{j,i}}
\int_Q|D_Qf(y)|^2\,d\mu(y)\r]^{1/2}
$$
and
\begin{equation}\label{eq-defbtji}
b_{\tau,k}^{j,i}(\cdot):=\frac 1{\lz_{\tau,k}^{j,i}}\sum_{Q\in S_{\tau,k}^{j,i}}
\int_{Q}D_Q(\cdot,y)D_Qf(y)\,d\mu(y).
\end{equation}

We first show that
\begin{equation}\label{eq-suml}
\sum_{k=-\fz}^\fz\sum_{j=1}^\fz\sum_{\tau\in I_{j,k}}\lf[\lf(\lz_{\tau,k}^{j,1}\r)^p
+\lf(\lz_{\tau,k}^{j,2}\r)^p\r]\ls\|\CS_0(f)\|_{L^p(X)}^p,
\end{equation}
which is equivalent to
\begin{equation*}
\sum_{k=-\fz}^\fz\sum_{j=1}^\fz\sum_{\tau\in I_{j,k}}\lf(\lz_{\tau,k}^{j,1}
+\lz_{\tau,k}^{j,2}\r)^p\ls\|\CS_0(f)\|_{L^p(X)}^p.
\end{equation*}
Indeed, for any $k\in\zz$, $j\in\nn$ and $\tau\in I_{j,k}$, we have
$$
\lz_{\tau,k}^{j,1}+\lz_{\tau,k}^{j,2}\sim\lf[\lf(\lz_{\tau,k}^{j,1}\r)^2+\lf(\lz_{\tau,k}^{j,2}\r)^2\r]^{1/2}
\sim\lf[\mu\lf(Q_{\tau,k}^j\r)\r]^{1/p-1/2}\lf[\sum_{Q\in\CD_k,\ Q\subset Q_{\tau,k}^j}
\int_Q|D_Qf(y)|^2\,d\mu(y)\r]^{1/2}=:\lz_{\tau,k}^j.
$$

Suppose $k\in\zz$, $j\in\nn$, $\tau\in I_{j,k}$ and $Q\in{\mathcal D}_k$ such that $Q\subset Q_{\tau,k}^j$.
We may further assume that $Q:=Q_\az^{l+1}$ for some $l\in\zz_+$ and $\az\in\CA_{l+1}$.
Since $\dz<(2A_0)^{-10}$, it then follows that $2A_0C^\natural \dz<1$ so that
$Q=Q_\az^{l+1}\subset B(y,\dz^l)$ for any $y\in Q$. By this and the fact
$\mu(Q\cap\Omega_{k+1})\le\frac 12\mu(Q)$, we conclude that
$$
\mu\lf(B\lf(y,\dz^l\r)\cap\lf[Q_{\tau,k}^j\setminus\Omega_{k+1}\r]\r)
\ge \mu\lf(B\lf(y,\dz^l\r)\cap[Q\setminus\Omega_{k+1}]\r) =
\mu(Q\setminus\Omega_{k+1}) \ge \frac 12\mu(Q)\sim  V_{\dz^l}(y).
$$
From this, we further deduce that
\begin{align*}
&\sum_{Q\in{\mathcal D}_k,\ Q\subset Q_{\tau,k}^j}\int_{Q}|D_Qf(y)|^2\,d\mu(y)\\
&\quad\ls \sum_{l=j-1}^\fz\;\sum_{\az\in\CA_{l+1},\;{\mathcal D}_k\ni Q_\az^{l+1}\subset Q_{\tau,k}^j}\;
\int_{Q_\az^{l+1}}\frac{\mu(B(y,\dz^l)\cap(Q_{\tau,k}^j\setminus\Omega_{k+1}))}{V_{\dz^l}(y)}
|E_lf(y)|^2\,d\mu(y)\\
&\quad\ls \sum_{l=j-1}^\fz\int_{Q_{\tau,k}^j}\frac{\mu(B(y,\dz^l)\cap(Q_{\tau,k}^j\setminus\Omega_{k+1}))}{V_{\dz^l}(y)}
|D_lf(y)|^2\,d\mu(y)\\
&\quad\sim\int_X\sum_{l=j-1}^\fz\int_{B(y,\dz^l)\cap(Q_{\tau,k}^j\setminus \Omega_{k+1})}
|D_lf(y)|^2\,\frac{d\mu(x)}{V_{\dz^l}(y)}\,d\mu(y)\\
&\quad\ls\int_{Q_{\tau,k}^j\setminus\Omega_{k+1}}[\CS_0(f)(x)]^2\,d\mu(x)\ls 2^{2k}\mu\lf(Q_{\tau,k}^{j}\r).
\end{align*}
Thus, by this and the fact $\mu(Q_{\tau,k}^j)<2 \mu(Q_{\tau,k}^j\cap\Omega_k)$, we conclude that
\begin{align*}
\sum_{k=-\fz}^{\fz}\sum_{j=1}^\fz\sum_{\tau\in I_{j,k}}\lf(\lz_{\tau,k}^j\r)^p
&\ls\sum_{k=-\fz}^\fz 2^{kp}\sum_{j=1}^\fz\sum_{\tau\in I_{j,k}}\mu\lf(Q_{\tau,k}^j\r)\\
&\ls\sum_{k=-\fz}^\fz 2^{kp}\sum_{j=1}^\fz\sum_{\tau\in I_{j,k}}\mu\lf(Q_{\tau,k}^j\cap\Omega_k\r)
\ls\sum_{k=-\fz}^\fz 2^{kp}\mu\lf(\Omega_k\r)\sim\|\CS_0(f)\|_{L^p(X)}^p.
\end{align*}
This shows \eqref{eq-suml}.

Next, we prove that, for any $k\in\zz$, $j\in\nn$, $\tau\in I_{j,k}$ and $i\in\{1,2\}$, $b_{\tau,k}^{j,i}$ is
a harmless constant multiple of a local $(p,2,\vec{\ez})$-molecule, where $\vec{\ez}$ satisfying
\eqref{eq-epcon} will be determined later. We first consider the case $i=1$. Let
$B_{\tau,k}^{j,1}:=B(z_{\tau,k}^j,\dz^{-1})$.
Since $\dz\in(0,1)$ is sufficiently small, it then follows that we only need to consider (i) and (ii) of
Definition \ref{def-mol}. Indeed, for (i), choose $h\in L^2(X)$. Then, by the H\"{o}lder inequality, we have
\begin{align}\label{eq-Y11}
\lf|\lf<b_{\tau,k}^{j,1},h\r>\r|&=\frac 1{\lz_{\tau,k}^{j,1}}\lf|\int_X\sum_{Q\in S_{\tau,k}^{j,1}}
\int_Q D_Q(x,y)D_Qf(y)\,d\mu(y)h(x)\,d\mu(x)\r|\\
&\le\frac 1{\lz_{\tau,k}^{j,1}}\lf[\sum_{Q\in S_{\tau,k}^{j,1}}\int_Q|D_Qf(y)|^2\,d\mu(y)\r]^{1/2}\noz\\
&\quad\times\lf[\sum_{Q\in S_{\tau,k}^{j,1}}
\int_Q\lf|\int_X D_Q(x,y)h(x)\,d\mu(x)\r|^2\,d\mu(y)\r]^{1/2}\noz\\
&\le\lf[\mu\lf(Q_{\tau,k}^j\r)\r]^{1/2-1/p}\lf[\sum_{l=1}^{N+1}
\sum_{\az\in\CA_l,\ \CD_k\ni Q_\az^l\subset Q_{\tau,k}^j}\int_{Q_\az^l}\lf|D_{l-1}^*h(y)\r|^2
\,d\mu(y)\r]^{1/2}\noz\\
&\le\lf[\mu\lf(Q_{\tau,k}^j\r)\r]^{1/2-1/p}\lf\|\wz{g_0}(h)\r\|_{L^2(X)},\noz
\end{align}
here and hereafter, for any $\vz\in L^2(X)$ and $x\in X$,
$$
\wz{g_0}(\vz)(x):=\lf[\sum_{k=0}^\fz\lf|D_k^*\vz(x)\r|^2\r]^{1/2}.
$$
Using a similar argument to that used in the estimation of \eqref{eq-x6}, we find that $\wz{g_0}$ is bounded
on $L^2(X)$. By this and \eqref{eq-Y11}, we conclude that
\begin{equation}\label{eq-b1}
\lf|\lf\langle b_{\tau,k}^{j,1},h\r\rangle\r|\ls\lf[\mu\lf(Q_{\tau,k}^j\r)\r]^{1/2-1/p}
\lf\|\wz g_0(h)\r\|_{L^2(X)}\ls\lf[\mu\lf(Q_{\tau,k}^j\r)\r]^{1/2-1/p}\|h\|_{L^2(X)}.
\end{equation}
Thus, from the arbitrariness of $h$, we deduce that
$$
\lf\|b_{\tau,k}^{j,1}\r\|_{L^2(X)}\ls\lf[\mu\lf(Q_{\tau,j}^{k}\r)\r]^{1/2-1/p}
\sim\lf[\mu\lf(B_{\tau,k}^{j,1}\r)\r]^{1/2-1/p}
$$
because of Lemma \ref{cube}(v) and the fact $\dz^k\sim 1$ for any $k\in\{1,\ldots,N\}$. Then
$b_{\tau,k}^{j,1}$ satisfies Definition \ref{def-mol}(i) as desired.

Now we show that $b_{\tau,k}^{j,1}$ satisfies Definition \ref{def-mol}(ii).
Choose $\gz'\in(\omega(1/p-1),\gz)$ and
let $\vec{\ez}:=\{\ez_m\}_{m=1}^\fz:=\{\dz^{[\gz'-\omega(1-1/p)]m}\}_{m=1}^\fz$. It then follows that
$\vec{\ez}$ satisfies \eqref{eq-epcon}. Moreover, using a similar argument to that used in the proof of
\cite[Lemma 5.8]{hhllyy18}, we find that, for any $m\in\nn$,
$$
\lf\|b_{\tau,k}^{j,1}\mathbf{1}_{(\dz^{-m}B_{\tau,k}^{j,1})\setminus(\dz^{-m+1}B_{\tau,k}^{j,1})}\r\|_{L^2(X)}
\ls\dz^{m[\gz'-\omega(1/p-1)]}\lf[\mu\lf(\dz^{-m}B_{\tau,k}^{j,1}\r)\r]^{1/2-1/p},
$$
as desired. Therefore, we conclude that $b_{\tau,k}^{j,1}$ is a harmless constant multiple of a local
$(p,2,\vec{\ez})$-molecule centered at $B_{\tau,k}^{j,1}$.

Now we consider the case $i=2$. In this case, for any $k\in\zz$, $j\in\nn$ and
$\tau\in I_{j,k}$, let $B_{\tau,k}^{j,2}:=B(z_{\tau,k}^j,\dz^{j-1})$.
When $Q\in S_{\tau,j}^{k,2}$, for any $y\in X$, we have $\int_X D_Q(x,y)\,d\mu(x)=0$. This proves that
$b_{\tau,j}^{k,2}$ satisfies Definition \ref{def-mol}(iii) because $\dz^{j-1}\in(0,1]$ when $j\in\nn$.
To show that $b_{\tau,j}^{k,2}$ satisfies (i) and (ii) of Definition \ref{def-mol}, we may use a similar
argument to that used in the estimation of $b_{\tau,k}^{j,1}$, with $b_{\tau,k}^{j,1}$ and
$\sum_{Q\in S_{\tau,k}^{j,1}}$ replaced, respectively, by $b_{\tau,k}^{j,2}$ and
$\sum_{Q\in S_{\tau,k}^{j,2}}$. Therefore, we conclude that $b_{\tau,k}^{j,2}$ is a harmless constant
multiple of a local $(p,2,\vec{\ez})$-molecule centered at $B_{\tau,k}^{j,2}$.

To summarize, we conclude that, for any $k\in\zz$, $j\in\nn$, $\tau\in I_{j,k}$ and $i\in\{1,2\}$,
$b_{\tau,k}^{j,i}$ is a harmless constant multiple of a local $(p,2,\vec{\ez})$-molecule for some
$\vec{\ez}$ satisfying \eqref{eq-epcon}. By this, \eqref{eq-suml} and Theorems \ref{thm-m=a} and
\ref{thm-at*}, we conclude that there exists $\wz f\in h^{*,p}(X)\subset(\go{\bz,\gz})'$ such that
\begin{equation}\label{eq-tfmdec}
\wz f=\sum_{k=-\fz}^\fz\sum_{j=1}^{N+1}\sum_{\tau\in I_{j,k}}\sum_{Q\in S_{\tau,k}^{j,1}}
\lz_{\tau,k}^{j,1} b_{\tau,k}^{j,1}
+\sum_{k=-\fz}^\fz\sum_{j=1}^{\fz}\sum_{\tau\in I_{j,k}}\sum_{Q\in S_{\tau,k}^{j,2}}
\lz_{\tau,k}^{j,2} b_{\tau,k}^{j,2}
\end{equation}
in $(\go{\bz,\gz})'$.

It remains to show $\wz f=f$ in $(\go{\bz,\gz})'$. To this end, we claim that, for any $k'\in\zz_+$ and
$x\in X$,
\begin{equation}\label{eq-tf=f}
D_{k'}D_{k'}\wz f(x)=D_{k'}D_{k'}f(x).
\end{equation}
Assuming this for the moment, by \eqref{eq-icrf2}, we have
$$
\wz f=\sum_{k'=0}^\fz D_{k'}D_{k'}\wz f=\sum_{k'=0}^\fz D_{k'}D_{k'}f=f.
$$
Thus, we find that \eqref{eq-fmdec} holds true. This, together with \eqref{eq-suml} and Theorem \ref{thm-m=a},
further implies that $f\in h^{p,2}_\at(X)$ and $\|f\|_{h^{p,2}_\at(X)}\ls\|f\|_{h^p(X)}$, which completes the
proof of Proposition \ref{prop-ldec}.

It remains to show \eqref{eq-tf=f}. Indeed, fix $k'\in\zz_+$ and $z\in X$. Noting that \eqref{eq-tfmdec}
converges in $(\go{\bz,\gz})'$, by the fact that $D_{k'}(x,\cdot)\in\CG(\eta,\eta)\subset\go{\bz,\gz}$, we
have
\begin{align*}
D_{k'}\wz f(z)&=\lf<\wz f,D_{k'}(z,\cdot)\r>\\
&=\sum_{k=-\fz}^\fz\sum_{j=1}^{N+1}\sum_{\tau\in I_{j,k}}\sum_{Q\in S_{\tau,k}^{j,1}}
\lf<\lz_{\tau,k}^{j,1}b_{\tau,k}^{j,1},D_{k'}(z,\cdot)\r>
+\sum_{k=-\fz}^\fz\sum_{j=1}^{\fz}\sum_{\tau\in I_{j,k}}\sum_{Q\in S_{\tau,k}^{j,2}}
\lf<\lz_{\tau,k}^{j,2}b_{\tau,k}^{j,2},D_{k'}(z,\cdot)\r>\\
&=:\sum_{k=-\fz}^\fz\sum_{j=1}^{N+1}\sum_{\tau\in I_{j,k}}\sum_{Q\in S_{\tau,k}^{j,1}}\RJ_{\tau,k}^{j,1}
+\sum_{k=-\fz}^\fz\sum_{j=1}^{\fz}\sum_{\tau\in I_{j,k}}\sum_{Q\in S_{\tau,k}^{j,2}}\RJ_{\tau,k}^{j,2}.
\end{align*}
We first deal with $\RJ_{\tau,k}^{j,1}$. Indeed, using a similar argument to that used in the estimation of
\eqref{eq-Y11}, we conclude that \eqref{eq-defbtji} converges in $L^2(X)$. By this, the definition of
$\{D_{k'}\}_{k'=0}^\fz$, the Fubini theorem and \eqref{eq-orth}, we have
\begin{align*}
\RJ_{\tau,k}^{j,1}&=\sum_{Q\in S_{\tau,k}^{j,1}}\lf<\int_{Q}D_Q(\cdot,y)D_Qf(y)\,d\mu(y),D_{k'}(z,\cdot)\r>\\
&=\sum_{l=1}^{N+1}\sum_{\{\az\in\CA_l:\ Q_\az^l\subset Q_{\tau,k}^{j}\}}
\lf<\int_{Q_\az^l}D_{l-1}(\cdot,y)D_{l-1}f(y)\,d\mu(y),D_{k'}(x,\cdot)\r>\\
&=\sum_{l=1}^{N+1}\sum_{\{\az\in\CA_l:\ Q_\az^l\subset Q_{\tau,k}^{j}\}}\int_{Q_\az^l}
\lf<D_{l-1}(\cdot,y),D_{k'}(x,\cdot)\r>D_{l-1}f(y)\,d\mu(y)\\
&=\begin{cases}
0 &\text{if } k'>N,\\
\displaystyle \sum_{\{\az\in\CA_{k'+1}:\ Q_\az^{k'+1}\subset Q_{\tau,{k}}^{j}\}}\int_{Q_\az^{k'+1}}
D_{k'}(z,y)D_{k'}f(y)\,d\mu(y) & \textup{if } k'\le N.
\end{cases}
\end{align*}
Similarly,
$$
\RJ_{\tau,k}^{j,2}=\begin{cases}
0 &\text{if } k'\le N,\\
\displaystyle \sum_{\{\az\in\CA_{k'+1}:\ Q_\az^{k'+1}\subset Q_{\tau,{k}}^{j}\}}\int_{Q_\az^{k'+1}}
D_{k'}(z,y)D_{k'}f(y)\,d\mu(y) & \textup{if } k'\ge N+1.
\end{cases}
$$
By the three identity above, we conclude that
\begin{equation}\label{eq-dk'}
D_{k'}\wz f(z)=\sum_{k=-\fz}^\fz\sum_{j=1}^\fz
\sum_{\tau\in I_{j,k}}\sum_{\{\az\in\CA_{k'+1}:\ Q_\az^{k'+1}\subset Q_{\tau,{k}}^{j}\}}\int_{Q_\az^{k'+1}}
D_{k'}(z,y)D_{k'}f(y)\,d\mu(y).
\end{equation}
Observe that, for any $\az\in\CA_{k'+1}$, there exists uniquely $k\in\zz$, $j\in\nn$ and $\tau\in I_{j,k}$
such that $Q_{\az}^{k'+1}\subset Q_{\tau,k}^j$. By this, Lemma \ref{cube}(iii) and the size condition of
$D_{k'}$, we find that
\begin{align*}
&\sum_{k=-\fz}^\fz\sum_{j=1}^\fz
\sum_{\tau\in I_{j,k}}\sum_{\{\az\in\CA_{k'+1}:\ Q_\az^{k'+1}\subset Q_{\tau,{k}}^{j}\}}\int_{Q_\az^{k'+1}}
|D_{k'}(z,y)D_{k'}f(y)|\,d\mu(y)\\
&\quad=\sum_{\az'\in\CA_{k'+1}}\int_{Q_\az^{k'+1}}|D_{k'}(z,y)D_{k'}f(y)|\,d\mu(y)
=\int_X |D_{k'}(z,y)D_{k'}f(y)|\,d\mu(y)<\fz,
\end{align*}
which, together with \eqref{eq-dk'}, further implies that
$$
D_{k'}\wz f(z)=\int_X D_{k'}(z,y)D_{k'}f(y)\,d\mu(y).
$$
From this, the size condition of $D_{k'}$, the Fubini theorem and \eqref{eq-orth}, it then
follows that, for any $x\in X$,
\begin{align*}
D_{k'}D_{k'}\wz f(x)&=\int_X D_{k'}(x,z)D_{k'}f(z)\,d\mu(z)=
\int_X\int_X D_{k'}(x,z)D_{k'}(z,y)D_{k'}f(y)\,d\mu(y)\,d\mu(z)\\
&=\int_X\int_X D_{k'}(x,z)D_{k'}(z,y)\,d\mu(z) D_{k'}f(y)\,d\mu(y)
=\int_X D_{k'}(x,y)D_{k'}f(y)\,d\mu(y)\\
&=D_{k'}D_{k'}f(x).
\end{align*}
This finishes the proof \eqref{eq-tf=f} and hence of Proposition \ref{prop-ldec}.
\end{proof}

Combining this with Theorem \ref{thm-m=a} and Proposition \ref{prop-angle}, we have the following
characterization of the atomic Hardy spaces $h^{p,q}_\at(X)$ and we omit the details.

\begin{theorem}\label{thm-h=}
Let $p\in(\om,1]$, $\bz,\ \gz\in(\omega(1/p-1),\eta)$ with $\omega$ and $\eta$, respectively, as in
\eqref{eq-doub} and Definition \ref{def-iati}, and $q\in(p,\fz]\cap[1,\fz]$. Then,
as subspaces of $(\go{\bz,\gz})'$, $h^{p,q}_\at(X)=h^p(X)$ with equivalent (quasi-)norms.
\end{theorem}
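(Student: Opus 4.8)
The plan is to derive Theorem~\ref{thm-h=} by assembling the three results already proved in the excerpt: the molecular characterization of the atomic spaces (Theorem~\ref{thm-m=a}), the area-function estimate for local atoms (Proposition~\ref{prop-angle}), and the area-function atomic decomposition (Proposition~\ref{prop-ldec}). No new analytic estimate will be needed; the work consists only in chaining these statements together while keeping all identifications inside the single ambient distribution space $(\go{\bz,\gz})'$.

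First I would prove the inclusion $h^{p,q}_\at(X)\subseteq h^p(X)$ together with the bound $\|f\|_{h^p(X)}\ls\|f\|_{h^{p,q}_\at(X)}$. By Theorem~\ref{thm-m=a}, for every admissible $q$ one has $h^{p,q}_\at(X)=h^{p,2}_\at(X)$ with equivalent quasi-norms, because a local $(p,q)$-atom is in particular a local $(p,q,\vec{\ez})$-molecule and, conversely, a local $(p,q,\vec{\ez})$-molecule splits into local $(p,q)$-atoms (with the number-of-atoms control provided by \eqref{eq-epcon}). Then Proposition~\ref{prop-angle}, which in fact already records $h^{p,q}_\at(X)=h^{p,2}_\at(X)\subset h^p(X)$, applied with aperture $\thz=1$ so that $\max\{\thz^{-\omega/2},\thz^{\omega/p}\}=1$, gives $\|f\|_{h^p(X)}=\|\CS_0(f)\|_{L^p(X)}\ls\|f\|_{h^{p,2}_\at(X)}\sim\|f\|_{h^{p,q}_\at(X)}$ for every $f\in h^{p,q}_\at(X)$; in particular $f\in h^p(X)$.

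Next I would prove the reverse inclusion $h^p(X)\subseteq h^{p,q}_\at(X)$. Given $f\in(\go{\bz,\gz})'$ with $\CS_0(f)\in L^p(X)$, Proposition~\ref{prop-ldec} yields $f\in h^{p,2}_\at(X)$ with $\|f\|_{h^{p,2}_\at(X)}\ls\|f\|_{h^p(X)}$; here the independence of $h^p(X)$ of the chosen $\exp$-IATI, used implicitly to match the $\exp$-IATI of Proposition~\ref{prop-ldec} with the one defining $h^p(X)$, is exactly Theorem~\ref{thm-la}. Applying Theorem~\ref{thm-m=a} once more to pass from the exponent $2$ back to $q$ gives $f\in h^{p,q}_\at(X)$ with $\|f\|_{h^{p,q}_\at(X)}\ls\|f\|_{h^{p,2}_\at(X)}\ls\|f\|_{h^p(X)}$. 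Since $h^{p,q}_\at(X)$, $h^{p,2}_\at(X)$ and $h^p(X)$ are, by definition, subspaces of the same space $(\go{\bz,\gz})'$ and all the comparisons above are the identity on it, the two inclusions yield $h^{p,q}_\at(X)=h^p(X)$ with equivalent quasi-norms, the identification being in the sense of representing the same distributions.

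The step I expect to require the most care is the bookkeeping of these identifications rather than any inequality: one must check that the molecular (hence local-atomic) decomposition extracted from Proposition~\ref{prop-ldec} represents precisely the distribution $f$ and not merely some $\wz f$ agreeing with it after testing against nice functions, so that the successive quasi-norm comparisons genuinely compose; this, however, is already the content of the identity $\wz f=f$ established at the end of the proof of Proposition~\ref{prop-ldec}. One must also make sure no latent dependence on the aperture $\thz$ or on the choice of $\exp$-IATI is absorbed into the constants, which is guaranteed by the explicit factor in \eqref{eq-x2} and by Theorem~\ref{thm-la}. Once these points are in place, the proof reduces to citing Theorems~\ref{thm-m=a} and~\ref{thm-la} and Propositions~\ref{prop-angle} and~\ref{prop-ldec} in the order above, so the details may indeed be omitted.
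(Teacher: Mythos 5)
Your proposal is correct and follows essentially the same route as the paper, which obtains Theorem \ref{thm-h=} precisely by combining Proposition \ref{prop-angle} (giving $h^{p,q}_\at(X)=h^{p,2}_\at(X)\subset h^p(X)$ with the aperture factor equal to $1$ for $\thz=1$), Proposition \ref{prop-ldec} (giving $h^p(X)\subset h^{p,2}_\at(X)$, with the identification $\wz f=f$ handled there), and Theorem \ref{thm-m=a}, with Theorem \ref{thm-la} ensuring independence of the chosen $\exp$-IATI. Your bookkeeping remarks match what the paper leaves implicit, so nothing further is needed.
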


Finally, we characterize $h^p(X)$ via two other inhomogeneous Littlewood--Paley functions.

\begin{theorem}\label{thm-g}
Let $p\in(\om,1]$ and $\bz,\ \gz\in(\omega(1/p-1),\eta)$ with $\omega$ and $\eta$, respectively, as in
\eqref{eq-doub} and Definition \ref{def-iati}. Assume that $\thz\in(0,\infty)$ and $\lz\in(2\omega/p,\infty)$.
Then, for any $f\in(\go{\bz,\gz})'$, it holds true that
\begin{align}\label{eq-x5}
\|g_{\lz,0}^*(f)\|_{L^p(X)}\sim \|f\|_{h^p(X)} \sim \|g_0(f)\|_{L^p(X)},
\end{align}
provided that either one in \eqref{eq-x5} is finite, where the implicit positive equivalence constants in
\eqref{eq-x5} are independent of $f$.
\end{theorem}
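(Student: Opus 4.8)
The plan is to show that each of the three quantities in \eqref{eq-x5} is comparable, with constants independent of $f$, to $\|f\|_{h^{p,2}_\at(X)}$. Since $h^{p,2}_\at(X)=h^p(X)$ with equivalent quasi-norms by Theorem \ref{thm-h=} and $\|f\|_{h^p(X)}=\|\CS_0(f)\|_{L^p(X)}$ by definition, this yields \eqref{eq-x5} and, in particular, that finiteness of any one of the three quantities forces finiteness of the others. Concretely I would establish the four estimates
\begin{align*}
\|\CS_0(f)\|_{L^p(X)}&\ls\|g_{\lz,0}^*(f)\|_{L^p(X)},\qquad \|g_{\lz,0}^*(f)\|_{L^p(X)}\ls\|f\|_{h^{p,2}_\at(X)},\\
\|g_0(f)\|_{L^p(X)}&\ls\|f\|_{h^{p,2}_\at(X)},\qquad\qquad \|f\|_{h^{p,2}_\at(X)}\ls\|g_0(f)\|_{L^p(X)}.
\end{align*}
The first is the only elementary one: on $B(x,\dz^k)$ one has $[\dz^k/(\dz^k+d(x,y))]^\lz\ge2^{-\lz}$ and, by Lemma \ref{lem-add}(i), $V_{\dz^k}(x)+V_{\dz^k}(y)\sim V_{\dz^k}(x)$, so the $k$-th summand of $\CS_0(f)(x)^2$ is at most $2^\lz$ times the part of the $k$-th summand of $g_{\lz,0}^*(f)(x)^2$ supported in $B(x,\dz^k)$; summing in $k$ gives $\CS_0(f)\ls g_{\lz,0}^*(f)$ pointwise.

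For the two ``upper'' estimates I would first record that $g_0$ and $g_{\lz,0}^*$ are bounded on $L^2(X)$: for $g_0$ this is Fubini's theorem, $\|g_0(f)\|_{L^2(X)}^2=\sum_{k=0}^\fz\int_X|Q_kf|^2\,d\mu=\|g_0'(f)\|_{L^2(X)}^2\ls\|f\|_{L^2(X)}^2$, exactly as in the proof of Proposition \ref{prop-angle}; for $g_{\lz,0}^*$ one combines the pointwise bound $g_{\lz,0}^*(f)^2\ls\sum_{j=0}^\fz2^{-j(\lz-\omega)}\CS_{2^j,0}(f)^2$ (split the defining integral over the dyadic annuli $\{2^{j-1}\dz^k\le d(x,y)<2^j\dz^k\}$ and use \eqref{eq-doub}) with the aperture-independent identity $\|\CS_{\thz,0}(f)\|_{L^2(X)}=\|g_0'(f)\|_{L^2(X)}$ from the proof of Proposition \ref{prop-angle}, the series $\sum_j2^{-j(\lz-\omega)}$ converging because $\lz>2\omega/p\ge2\omega>\omega$. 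Given this, for a local $(p,2)$-atom $a$ on $B=B(x_0,r_0)$ I would split $X$ into a fixed dilate $\wz CB$ and its complement. On $\wz CB$, Hölder's inequality and $\|a\|_{L^2(X)}\le[\mu(B)]^{1/2-1/p}$ give, for $T\in\{g_0,g_{\lz,0}^*\}$, $\int_{\wz CB}[T(a)]^p\,d\mu\le\|T(a)\|_{L^2(X)}^p[\mu(\wz CB)]^{1-p/2}\ls1$ uniformly in $a$. Off $\wz CB$ one estimates $|Q_ka(y)|$ by the size and regularity of the $\exp$-IATI kernels and, when $r_0\le1$, by the cancellation of $a$ --- as in \eqref{eq-qka} and the molecular estimates of \cite{hhllyy18} --- and feeds these into the definitions of $g_0(a)(x)$ and $g_{\lz,0}^*(a)(x)$, then integrates in $x\notin\wz CB$: for $g_0$ the exponential tails of the kernels make the resulting sums converge, while for $g_{\lz,0}^*$ the only decay available far from $B$ is the polynomial factor $[\dz^k/(\dz^k+d(x,y))]^\lz$, and the far-part integral reduces to $\int_{\{d(x_0,x)\gtrsim r_0\}}[d(x_0,x)]^{-\lz p/2}\,d\mu(x)$, finite precisely when $\lz p/2>\omega$, i.e.\ $\lz>2\omega/p$. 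Finally, by the Minkowski-type subadditivity of $g_0$ and $g_{\lz,0}^*$ and $p\le1$, $\|T(f)\|_{L^p(X)}^p\le\sum_j|\lz_j|^p\|T(a_j)\|_{L^p(X)}^p\ls\sum_j|\lz_j|^p$ for any decomposition $f=\sum_j\lz_ja_j$ into local $(p,2)$-atoms, whence $\|T(f)\|_{L^p(X)}\ls\|f\|_{h^{p,2}_\at(X)}$.

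The remaining estimate $\|f\|_{h^{p,2}_\at(X)}\ls\|g_0(f)\|_{L^p(X)}$ is the heart of the matter, and I would obtain it by a Calder\'on--Zygmund stopping-time argument modeled on the proof of Proposition \ref{prop-ldec}, now with stopping sets $\Omega_k:=\{x\in X:\ g_0(f)(x)>2^k\}$ and with $f$ expanded by the \emph{discrete} reproducing formula \eqref{eq-idrf} of Theorem \ref{thm-idrf} rather than by an orthonormal wavelet expansion. The advantage of \eqref{eq-idrf} is that it is assembled precisely from the cube-averages $Q_{\az,1}^{k,m}(f)$ (for $k\le N$) and the point values $Q_kf(y_\az^{k,m})$ (for $k>N$) whose squared $\ell^2$-energies are what $g_0$ measures: grouping its terms over the Whitney-type cubes inside a maximal stopping cube produces local $(p,2,\vec{\ez})$-molecules whose total energy is controlled --- using the stopping geometry, $|Q_{\az,1}^{k,m}(f)|^2\le m_{Q_\az^{k,m}}(|Q_kf|^2)$, and the good choice of the $y_\az^{k,m}$ --- by $2^{2k}$ times the measure of the stopping cube, after which the summability of the coefficients follows as in Proposition \ref{prop-ldec} and Theorem \ref{thm-m=a} converts the molecular decomposition into an atomic one. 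Putting the four estimates together with Theorem \ref{thm-h=} closes every comparison in \eqref{eq-x5}. I expect the main obstacle to be, first, establishing the sharp threshold $\lz>2\omega/p$ in the atomic estimate for $g_{\lz,0}^*$ (a crude appeal to the change-of-aperture bound of Proposition \ref{prop-angle} would only give $\lz>\omega+2\omega/p$, so the far-part estimate must be done optimally, carefully balancing the atom's $L^2$-size, the polynomial decay of the weight, and --- for small atoms --- the cancellation) and, second, checking that the stopping-time machinery of Proposition \ref{prop-ldec} does transfer to $g_0$, which is exactly why the discrete reproducing formula \eqref{eq-idrf} must replace the area-function-based expansion used there.
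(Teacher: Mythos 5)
Your architecture is sound and, for the two ``upper'' bounds, essentially coincides with the paper: the paper also deduces $\|g_0(f)\|_{L^p(X)}\ls\|f\|_{h^{p,2}_\at(X)}$ from the uniform estimate $\|g_0(a)\|_{L^p(X)}\ls1$ for local $(p,2)$-atoms (an inside-the-ball $L^2$/H\"older argument plus a far-field pointwise bound as in \eqref{eq-angle2}), and it disposes of the $g^*_{\lz,0}$ equivalence by invoking the analogous global argument of \cite[Theorem 5.12]{hhllyy18}; your per-atom far-field estimate, in which the weight contributes decay $[d(x_0,x)]^{-\lz/2}$ so that integrability needs exactly $\lz p/2>\omega$, is indeed the right mechanism for the sharp threshold, and you correctly observe that the crude change-of-aperture route through Proposition \ref{prop-angle} only reaches $\lz>\omega+2\omega/p$. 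Where you genuinely diverge is the remaining direction. You propose to re-run the Calder\'on--Zygmund stopping-time/molecular machinery of Proposition \ref{prop-ldec} with stopping sets $\{g_0(f)>2^k\}$ and the discrete formula \eqref{eq-idrf}; this can be made to work (the key points you flag are the right ones: $|Q^{k,m}_{\az,1}(f)|^2\le m_{Q_\az^{k,m}}(|Q_kf|^2)$, a near-infimum choice of $y_\az^{k,m}$ converted into an integral over $Q\setminus\Omega_{k+1}$ via $\mu(Q\cap\Omega_{k+1})\le\frac12\mu(Q)$, and a split into large-scale molecules without cancellation and small-scale ones with cancellation), but it is much heavier than what the paper does and partly duplicates Proposition \ref{prop-ldec}. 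The paper instead proves only $\|\CS_0(f)\|_{L^p(X)}\ls\|g_0(f)\|_{L^p(X)}$: it expands $Q_lf(z)$ for $z\in B(x,\dz^l)$ by \eqref{eq-idrf}, bounds the coefficients by $[m_{Q_\az^{k,m}}(|Q_kf|^2)]^{1/2}$ and $\inf_{v\in Q_\az^{k,m}}|Q_kf(v)|$, applies Lemma \ref{lem-max} to dominate everything by Hardy--Littlewood maximal functions of the $g_0$-building blocks, and concludes with the Fefferman--Stein vector-valued maximal inequality; since $\|f\|_{h^p(X)}=\|\CS_0(f)\|_{L^p(X)}$ and $h^p(X)\subset h^{p,2}_\at(X)$ is already known (Proposition \ref{prop-ldec}, Theorem \ref{thm-h=}), this closes the loop without any new decomposition. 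So your route buys a self-contained atomic decomposition directly from $g_0$, at the price of redoing delicate convergence and regrouping arguments that the paper's shortcut avoids; if you adopt the paper's reduction, your fourth estimate becomes unnecessary.
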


\begin{proof}
To show the first equivalence of \eqref{eq-x5}, we may use a similar argument to that used in the proof of
\cite[Theorem 5.12]{hhllyy18}; we omit the details here.

It remains to prove the second equivalence of \eqref{eq-x5}. Indeed, we first suppose that $a$ is a local
$(p,2)$-atom supported on a ball $B:=B(x_0,r_0)$ for some $x_0\in X$ and $r_0\in(0,\fz)$. Observe that, by
the Fubini theorem and \eqref{eq-x6}, we have
\begin{align*}
\|g_0(a)\|_{L^2(X)}^2&=\int_X\lf[\sum_{k=0}^N\sum_{\az\in\CA_k}\sum_{m=1}^{N(k,\az)}
\int_{Q_\az^{k,m}}|Q_ka(y)|^2\,\frac{d\mu(y)}{\mu(Q_\az^{k,m})}
\mathbf{1}_{Q_\az^{k,m}}(x)+\sum_{k=N+1}^\fz|Q_ka(x)|^2\r]\,d\mu(x)\\
&=\int_X\sum_{k=0}^\fz |Q_ka(x)|^2\,d\mu(x)=\lf\|g_0'(f)\r\|_{L^2(X)}\ls\|a\|_{L^2(X)}^2
\ls[\mu(B)]^{1/2-1/p},
\end{align*}
where $g_0'$ is as in the proof of Proposition \ref{prop-angle}.
From this and the H\"{o}lder inequality, we deduce that
\begin{equation}\label{eq-g01}
\|g_0(a)\mathbf 1_{B(x_0,8A_0^3C^\natural r_0)}\|_{L^p(X)}^p\ls\|g_0(a)\|_{L^2(X)}^p[\mu(B)]^{1-p/2}\ls 1,
\end{equation}
here and hereafter, $C^\natural$ is as in Lemma \ref{cube}.

Now we estimate $\|g_0(a)\mathbf 1_{[B(x_0,8A_0^3C^\natural r_0)]^\complement}\|_{L^p(X)}$.
Suppose $k\in\zz$ and $z\in Q_\az^k$ for some $\az\in\CA_k$ with $\CA_k$ as in Lemma \ref{cube}.
By Lemma \ref{cube}, we find that
$Q_\az^k\subset B(z_\az^k,C^\natural\dz^k)\subset B(z,2A_0C^\natural\dz^k)$. Using this and a similar argument
to that used in the estimation of \eqref{eq-angle2} with $\thz:=2A_0C^\natural$
thereby, we conclude that, for any fixed $\gz'\in(0,\bz\wedge\gz)$ and any
$x\notin B(x_0,8A_0^3C^\natural\dz^k)$,
$$
g_0(a)(x)\ls[\mu(B)]^{1-1/p}\lf[\frac{r_0}{d(x_0,x)}\r]^{\gz'}\frac{1}{V(x_0,x)}.
$$
Therefore, we have
\begin{align*}
&\int_{[B(x_0,8A_0^3C^\natural r_0)]^\complement}[g_0(a)(x)]^p\,d\mu(x)\\
&\quad\ls[\mu(B)]^{p-1}\int_{[B(x_0,8A_0^3C^\natural r_0)]^\complement}
\lf[\frac{r_0}{d(x_0,x)}\r]^{p\gz'}\lf[\frac 1{V(x_0,x)}\r]^p\,d\mu(x)\noz\\
&\quad\ls[\mu(B)]^{p-1}\sum_{j=2}^\fz 2^{-jp\gz'}
\int_{(2A_0)^{j+1}C^\natural r_0\le d(x_0,x)<(2A_0)^{j+2}C^\natural r_0}
\lf[\frac 1{\mu(B(x_0,r_0))}\r]^p\,d\mu(x)\noz\\
&\quad\ls\sum_{j=2}^\fz 2^{-j[p\gz'-(1-p)\omega]}\ls 1.\noz
\end{align*}
From this and \eqref{eq-g01}, we deduce that
$\|g_0(a)\|_{L^p(X)}\ls 1$. This, combined with the Minkowski inequality and Theorem \ref{thm-h=}, further
implies that, for any $f\in(\go{\bz,\gz})'$,
$\|g_0(f)\|_{L^p(X)}\ls\|f\|_{h^{p,2}_\at(X)}\sim\|f\|_{h^p(X)}$.

We still need to prove the inverse inequality of this inequality. By Theorem \ref{thm-idrf}, we find that,
for any $f\in(\go{\bz,\gz})'$, $l\in\zz_+$, $x\in X$ and $z\in B(x,\dz^l)$.
\begin{align*}
Q_lf(z)&=\langle f,Q_l(z,\cdot)\rangle\\
&=\sum_{k=0}^N\sum_{\az\in\CA_k}\sum_{m=1}^{N(k,\az)}\int_{Q_\az^{k,m}}\wz{Q}_k^*Q_l(x,y)\,d\mu(y)
Q_{\az,1}^{k,m}(f)\\
&\quad+\sum_{k=N+1}^\fz\sum_{\az\in\CA_k}\sum_{m=1}^{N(k,\az)}\mu\lf(Q_\az^{k,m}\r)
\wz{Q}_k^*Q_l\lf(x,y_\az^{k,m}\r)Q_kf\lf(y_\az^{k,m}\r),
\end{align*}
where $y_\az^{k,m}$ is an arbitrary point in $Q_\az^{k,m}$.
When $k\in\{0,\ldots,N\}$, by the H\"{o}lder inequality, we conclude that, for any $\az\in\CA_k$ and
$m\in\{1,\ldots,N(k,\az)\}$,
\begin{align*}
\lf|Q_{\az,1}^{k,m}(f)\r|&\le\frac 1{\mu(Q_\az^{k,m})}\int_{Q_\az^{k,m}}|Q_kf(u)|\,d\mu(u)\\
&\le\lf[\frac 1{\mu(Q_\az^{k,m})}\int_{Q_\az^{k,m}}|Q_kf(u)|^2\,d\mu(u)\r]^{1/2}
=\lf[m_{Q_\az^{k,m}}\lf(|Q_kf|^2\r)\r]^{1/2}.
\end{align*}
Moreover, by \eqref{eq-5-1} with $E_l:=Q_l$ thereby, we find that, for any fixed $\bz'\in(0,\bz\wedge\gz)$,
any $k,\ l\in\zz_+$, $\az\in\CA_k$, $m\in\{1,\ldots,N(k,\az)\}$, $x\in X$, $y\in Q_\az^{k,m}$ and
$z\in B(x,\dz^l)$,
\begin{equation*}
\lf|\wz{Q}_k^*Q_l\lf(z,y\r)\r|\ls\dz^{|k-l|\bz'}\frac 1{V_{\dz^{k\wedge l}}(x)+V(x,z_\az^{k,m})}
\lf[\frac{\dz^{k\wedge l}}{\dz^{k\wedge l}+d(x,z_\az^{k,m})}\r]^\gz.
\end{equation*}
Thus, we conclude that, for any $l\in\zz_+$ and $z\in X$,
\begin{align*}
|Q_lf(z)|&\ls\sum_{k=0}^N\dz^{|k-l|\bz'}\sum_{\az\in\CA_k}\sum_{m=1}^{N(k,\az)}\mu\lf(Q_\az^{k,m}\r)
\frac 1{V_{\dz^{k\wedge l}}(x)+V(x,z_\az^{k,m})}
\lf[\frac{\dz^{k\wedge l}}{\dz^{k\wedge l}+d(x,z_\az^{k,m})}\r]^\gz
\lf[m_{Q_\az^{k,m}}\lf(|Q_kf|^2\r)\r]^{\frac 12}\\
&\quad+\sum_{k=N+1}^\fz\dz^{|k-l|\bz'}\sum_{\az\in\CA_k}\sum_{m=1}^{N(k,\az)}\mu\lf(Q_\az^{k,m}\r)
\frac 1{V_{\dz^{k\wedge l}}(x)+V(x,z_\az^{k,m})}
\lf[\frac{\dz^{k\wedge l}}{\dz^{k\wedge l}+d(x,z_\az^{k,m})}\r]^\gz\lf|Q_kf\lf(y_\az^{k,m}\r)\r|,
\end{align*}
Due to the arbitrariness of $y_\az^{k,m}\in Q_\az^{k,m}$, the above inequality also holds true with
$|Q_kf(y_\az^{k,m})|$ replaced by $\inf_{v\in Q_{\az}^{k,m}}|Q_kf(v)|$. By this and Lemma \ref{lem-max}, we
find that, for any fixed $r\in(\omega/(\omega+\bz'),p)$ and any $l\in\zz_+$, $x\in X$ and $z\in B(x,\dz^l)$,
\begin{align*}
|Q_lf(z)|&\ls\sum_{k=0}^N\dz^{|k-l|[\bz'-\omega(1/r-1)]}\lf\{\CM\lf(\sum_{\az\in\CA_k}\sum_{m=1}^{N(k,\az)}
\lf[m_{Q_\az^{k,m}}\lf(|Q_kf|^2\r)\r]^{r/2}\mathbf{1}_{Q_\az^{k,m}}\r)(x)\r\}^{1/r}\\
&\quad+\sum_{k=N+1}^\fz\dz^{|k-l|[\bz'-\omega(1/r-1)]}\lf\{\CM\lf(\sum_{\az\in\CA_k}\sum_{m=1}^{N(k,\az)}
\inf_{v\in Q_\az^{k,m}}|Q_kf(v)|^r\mathbf{1}_{Q_\az^{k,m}}\r)(x)\r\}^{1/r}.
\end{align*}
This, together with the H\"{o}lder inequality, further implies that for any $l\in\zz_+$ and $x\in X$,
\begin{align*}
&\int_{B(x,\dz^l)}|Q_lf(z)|^2\,\frac{d\mu(z)}{V_{\dz^l}(x)}\\
&\quad\ls\sum_{k=0}^N\dz^{|k-l|[\bz'-\omega(1/r-1)]}\lf\{\CM\lf(\sum_{\az\in\CA_k}\sum_{m=1}^{N(k,\az)}
\lf[m_{Q_\az^{k,m}}\lf(|Q_kf|^2\r)\r]^{r/2}\mathbf{1}_{Q_\az^{k,m}}\r)(x)\r\}^{2/r}\\
&\qquad+\sum_{k=N+1}^\fz\dz^{|k-l|[\bz'-\omega(1/r-1)]}\lf\{\CM\lf(\sum_{\az\in\CA_k}\sum_{m=1}^{N(k,\az)}
\inf_{v\in Q_\az^{k,m}}|Q_kf(v)|^r\mathbf{1}_{Q_\az^{k,m}}\r)(x)\r\}^{2/r}.
\end{align*}
By this and the Fefferman--Stein vector-valued maximal inequality (see \cite[(1.13)]{gly09}), we conclude
that, for any $f\in(\go{\bz,\gz})'$,
\begin{align*}
\|\CS_0(f)\|_{L^p(X)}&=\lf\|\lf[\sum_{l=0}^\fz\int_{B(\cdot,\dz^l)}|Q_lf(z)|^2
\,\frac{d\mu(z)}{V_{\dz^l}(\cdot)}\r]^{1/2}\r\|_{L^p(X)}\\
&\ls\lf\|\lf[\sum_{k=0}^N\lf\{\CM\lf(\sum_{\az\in\CA_k}\sum_{m=1}^{N(k,\az)}
\lf[m_{Q_\az^{k,m}}\lf(|Q_kf|^2\r)\r]^{r/2}\mathbf{1}_{Q_\az^{k,m}}\r)\r\}^{2/r}\r]^{r/2}\r\|_{L^{p/r}(X)}^{1/r}\\
&\quad+\lf\|\lf\{\sum_{k=N+1}^\fz\lf[\CM\lf(\sum_{\az\in\CA_k}\sum_{m=1}^{N(k,\az)}
\inf_{v\in Q_\az^{k,m}}|Q_kf(v)|^r\mathbf{1}_{Q_\az^{k,m}}\r)(x)\r]^{2/r}\r\}^{r/2}\r\|_{L^{p/r}(X)}^{1/r}\\
&\ls\lf\|\lf[\sum_{k=0}^N\sum_{\az\in\CA_k}\sum_{m=1}^{N(k,\az)}
m_{Q_\az^{k,m}}\lf(|Q_kf|^2\r)\mathbf{1}_{Q_\az^{k,m}}\r]^{1/2}\r\|_{L^{p}(X)}+
\lf\|\lf(\sum_{k=N+1}^\fz|Q_kf|^2\r)^{1/2}\r\|_{L^{p}(X)}\\
&\ls\|g_0(f)\|_{L^p(X)}.
\end{align*}
This finishes the proof of $\|f\|_{h^p(X)}\ls\|g_0(f)\|_{L^p(X)}$ and hence of Theorem \ref{thm-g}.
\end{proof}

\section{Relationship between $H^p_\cw(X)$ and $h^p(X)$}\label{s-hh}

By the arguments used in Sections \ref{s-max} through \ref{s-lp}, we conclude that, when $p\in(\om,1]$,
the local Hardy spaces $h^{+,p}(X)$, $h^p_\thz(X)$ with $\thz\in(0,\fz)$, $h^{*,p}(X)$, $h^{p,q}_\at(X)$,
$h^p(X)$ are the same space in the sense of equivalent (quasi-)norms. From now on, we simply use $h^p(X)$ to
denote either one of them if no confusion.

In this section, we discuss the relationship between $H^p_\cw(X)$, introduced by Coifman and Weiss in
\cite{cw77}, and $h^p(X)$. In Section \ref{ss-infz}, we find that $H^p_\cw(X)$ and $h^p(X)$ are essentially
different spaces when $\mu(X)=\fz$; while in Section \ref{ss-fz}, we show that $H^p_\cw(X)=h^p(X)$ when
$p\in(\om,1]$ and $\mu(X)<\fz$.

\subsection{The case $\mu(X)=\fz$}\label{ss-infz}

In this section, we always assume that $\mu(X)=\fz$. First we recall the definition of global
atomic Hardy spaces $H^{p,q}_\cw(X)$.

\begin{definition}\label{def-atom}
Let $p\in(\om,1]$, $\bz,\ \gz\in(\omega(1/p-1),\eta)$ with $\omega$ and $\eta$, respectively, as in
\eqref{eq-doub} and Definition \ref{def-iati} and $q\in(p,\fz]\cap[1,\fz]$. The space
$H^{p,q}_\cw(X)$ is defined to be the set of all $f\in(\go{\bz,\gz})'$ satisfying that there
exist $(p,q)$-atoms $\{a_j\}_{j=1}^\fz$ and $\{\lz_j\}_{j=1}^\fz\subset\cc$ such that
$\sum_{j=1}^\fz|\lz_j|^p<\fz$ and
$$
f=\sum_{j=1}^\fz\lz_ja_j \quad \textup{in} \quad (\go{\bz,\gz})'.
$$
For any $f\in H^{p,q}_\cw(X)$, define
$$
\|f\|_{H^{p,q}_\cw(X)}:=\inf\lf\{\lf(\sum_{j=1}^\fz|\lz_j|^p\r)^{1/p}\r\},
$$
where the infimum is taken over all atomic decompositions of $f$ as above.
\end{definition}

In \cite{cw77}, Coifman and Weiss proved that the space $H^{p,q}_\cw(X)$ is independent of $q$. Thus, we
can denote $H^{p,q}_\cw(X)$ simply by $H^p_\cw(X)$. We then have the following conclusion.

\begin{proposition}\label{prop-hh}
Let $p\in(\om,1]$ and $\bz,\ \gz\in(\omega(1/p-1),\eta)$ with $\omega$ and $\eta$, respectively, as in
\eqref{eq-doub} and Definition \ref{def-iati}. Then the following statements hold true:
\begin{enumerate}
\item $\int_X f(x)\,d\mu(x)=0$ if $f\in H^p_\cw(X)\cap L^2(X)$;
\item for any fixed $x_0\in X$, $\CG(x_0,1,\bz,\gz)\subset h^p(X)$;
\item $H^p_\cw(X)\subsetneqq h^p(X)$.
\end{enumerate}
\end{proposition}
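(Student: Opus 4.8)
The plan is to reduce (iii) to the two auxiliary assertions (i) and (ii) and to prove those first. The inclusion $H^p_\cw(X)\subseteq h^p(X)$ is the easy half: by Definition \ref{def-atom} together with Definition \ref{def-lat}(i), every $(p,q)$-atom is in particular a local $(p,q)$-atom, since the local cancellation requirement ($\int_X a\,d\mu=0$ only when $r_0\in(0,1]$) is weaker than the global one; hence any atomic decomposition $f=\sum_j\lambda_j a_j$ in $(\go{\bz,\gz})'$ realizing $f\in H^{p,q}_\cw(X)$ also realizes $f\in h^{p,q}_\at(X)$, and by Theorem \ref{thm-h=} (and \cite{cw77}) one obtains $f\in h^p(X)$ with $\|f\|_{h^p(X)}\ls\|f\|_{H^p_\cw(X)}$. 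The strictness will then be produced by a single test function, using (i) and (ii).

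For (ii) I would write down an explicit local $(p,\fz)$-atomic decomposition of any $\phi\in\CG(x_0,1,\bz,\gz)$. Split $\phi=\phi\mathbf{1}_{B(x_0,2)}+\sum_{k=1}^\fz\phi\mathbf{1}_{B(x_0,2^{k+1})\setminus B(x_0,2^k)}$. Each summand is supported in a ball of radius $>1$, so no cancellation is demanded of a local atom on it; and, by the size condition in Definition \ref{def-test}(i), Lemma \ref{lem-add}(i) and \eqref{eq-doub}, the $k$-th piece has $L^\fz$-norm $\ls 2^{-k\gz}[\mu(B(x_0,2^k))]^{-1}$, so it equals $\lambda_k a_k$ for a local $(p,\fz)$-atom $a_k$ supported in $B(x_0,2^{k+1})$ with $|\lambda_k|\ls 2^{-k\gz}[\mu(B(x_0,2^k))]^{1/p-1}\ls_{x_0}2^{-k[\gz-\omega(1/p-1)]}$, and similarly for the central term with $k=0$. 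Since $\gz>\omega(1/p-1)$ by hypothesis, $\sum_k|\lambda_k|^p<\fz$; moreover $\phi\in L^1(X)\cap L^\fz(X)$ by Lemma \ref{lem-add}(ii), so the partial sums $\phi\mathbf{1}_{B(x_0,2^{K+1})}$ converge to $\phi$ in $L^1(X)$ and hence in $(\go{\bz,\gz})'$. Thus $\phi\in h^{p,\fz}_\at(X)=h^p(X)$ by Theorem \ref{thm-h=}. Equivalently, one may bound $\phi^*_0\ls\CM(\phi)$ via Lemma \ref{lem-add}(v) and then estimate $\CM(\phi)$ pointwise by a test-function tail with exponent $\gz'\in(\omega(1/p-1),\gz)$, which is $p$-integrable.

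For (i), the starting point is that, since $\mu(X)=\fz$, every $(p,q)$-atom $a$ satisfies $\int_X a\,d\mu=0$. Given $f\in H^p_\cw(X)\cap L^2(X)$, I would take an atomic decomposition $f=\sum_j\lambda_j a_j$ that in addition converges in $L^2(X)$ (available from the Calder\'on--Zygmund machinery underlying the atomic theory in \cite{hhllyy18}), pair $f$ against bounded compactly supported cut-offs $\psi_R$ with $\psi_R\equiv 1$ on $B(x_0,R)$ and $\psi_R\equiv 0$ off $B(x_0,2R)$, use $\int_X a_j\,d\mu=0$ to rewrite each $\int_X a_j\psi_R\,d\mu$ as an integral over $\{d(x_0,\cdot)>R\}$, and let $R\to\fz$ to conclude that $\int_{B(x_0,R)}f\,d\mu\to 0$; this is the content of $\int_X f\,d\mu=0$. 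I expect this limiting step to be the main obstacle: for $p<1$ one has $H^p_\cw(X)\cap L^2(X)\not\subset L^1(X)$, so $\int_X f\,d\mu$ must be read as $\lim_{R\to\fz}\int_{B(x_0,R)}f\,d\mu$, and reconciling the merely weak-$*$ convergence of $\sum_j\lambda_j a_j$ with this improper integral is exactly where the $L^2$-convergent decomposition is needed. For the application in (iii) only the case $f\in L^1(X)$ is used, where the argument is routine.

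Finally, assembling (iii): choose $\phi\in\CG(x_0,1,\bz,\gz)$ with $\int_X\phi\,d\mu\ne 0$, which exists because $\CG(x_0,1,\bz,\gz)\ne\GO{x_0,1,\bz,\gz}$. By the size condition and Lemma \ref{lem-add}(ii), $\phi\in L^1(X)\cap L^\fz(X)\subset L^2(X)$, and $\phi\in h^p(X)$ by (ii). If $\phi$ belonged to $H^p_\cw(X)$, then $\phi\in H^p_\cw(X)\cap L^2(X)$, so (i) would force $\int_X\phi\,d\mu=0$, a contradiction. Hence $\phi\in h^p(X)\setminus H^p_\cw(X)$, which together with the inclusion $H^p_\cw(X)\subseteq h^p(X)$ established above yields $H^p_\cw(X)\subsetneqq h^p(X)$.
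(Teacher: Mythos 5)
Your parts (ii) and (iii) are fine and essentially follow the paper's route: the paper verifies that any $f\in\CG(x_0,1,\bz,\gz)$ is (a harmless constant multiple of) a local $(p,\fz,\vec\ez)$-molecule and invokes Theorem \ref{thm-m=a}, which for a ball of radius larger than $1$ is exactly your annular atomic decomposition, so the two arguments coincide up to terminology; likewise the inclusion $H^p_\cw(X)\subset h^p(X)$ and the contradiction scheme for strictness are the paper's. One small point: the existence of $\phi\in\CG(x_0,1,\bz,\gz)$ with $\int_X\phi\,d\mu\neq 0$ is not ``because $\CG(x_0,1,\bz,\gz)\ne\GO{x_0,1,\bz,\gz}$'' --- that is just a restatement of what you need; on a general space of homogeneous type one should cite the construction of H\"older bump functions, e.g.\ \cite[Corollary 4.2]{ah13}, as the paper does.

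The genuine gap is in (i), and you have located it yourself but not closed it. With only an $L^2$-convergent decomposition $f=\sum_j\lz_ja_j$ and $\sum_j|\lz_j|^p<\fz$ for some $p<1$, the identity $\int_X f\psi_R\,d\mu=\sum_j\lz_j\int_X a_j\psi_R\,d\mu$ does hold for each fixed $R$, and each term tends to $0$ as $R\to\fz$ by the cancellation of $a_j$; but passing to the limit in the infinite sum requires a dominating bound such as $\sum_j|\lz_j|\,\|a_j\|_{L^1(X)}<\fz$, which neither the $L^2$-convergence nor the $\ell^p$-summability of the coefficients provides (one only has $\|a_j\|_{L^1(X)}\le[\mu(B_j)]^{1-1/p}$, with no control on $\sum_j|\lz_j|[\mu(B_j)]^{1-1/p}$). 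This is not repaired by assuming in addition $f\in L^1(X)$, so the case you call ``routine'' for the application in (iii) still contains the same interchange problem. The paper's way out is different and is exactly the missing ingredient: for $f\in H^p_\cw(X)\cap L^2(X)$ one has $f^*\ls\CM(f)\in L^2(X)$, hence $f^*\in L^p(X)\cap L^2(X)\subset L^1(X)$, so $f\in H^1_\cw(X)$ (this is the content of \cite[Theorem 4.16]{hhllyy18} combined with the H\"older inequality); an $H^1_\cw(X)$ atomic decomposition converges in $L^1(X)$, and then term-by-term integration against the constant function $1$ is legitimate and the cancellation of the atoms gives $\int_X f\,d\mu=0$ (in particular $f\in L^1(X)$ automatically, so the integral needs no improper interpretation). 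You should either insert this reduction to $H^1_\cw(X)$ or, if you keep the cutoff argument, prove for your specific Calder\'on--Zygmund decomposition the bound $\sum_j|\lz_j|\,\|a_j\|_{L^1(X)}\ls\|f^*\|_{L^1(X)}<\fz$, which again rests on the same fact that $f^*\in L^1(X)$.
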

\begin{proof}
We first prove (i). Indeed, suppose $f\in L^2(X)\cap H^p_\cw(X)$. Then, from \cite[Theorem 4.16]{hhllyy18} and
the H\"{o}lder inequality, we deduce that $f\in H^1_\cw(X)$. Thus, by the definition of $H^1_\cw(X)$, we know
that there exist $(p,\fz)$-atoms $\{a_j\}_{j=1}^\fz$ and $\{\lz_j\}_{j=1}^\fz\subset\cc$ satisfying
$\sum_{j=1}^\fz|\lz_j|\ls\|f\|_{H^1_\cw(X)}$ such that $f=\sum_{j=1}^\fz \lz_ja_j$ in $L^1(X)$. Therefore,
from the cancellation of $\{a_j\}_{j=1}^\fz$, we deduce that $\int_X f(x)\,d\mu(x)=0$ directly. This finishes
the proof of (i).

For (ii), suppose $f\in\CG(x_0,1,\bz,\gz)$ for some fixed $x_0\in X$. We show that $f$ is a harmless
constant multiple of a local $(p,\fz,\vec{\ez})$-molecule centered at $B(x_0,2)$ for some
$\vec{\ez}:=\{\ez_m\}_{m=1}^\fz\subset [0,\fz)$ satisfying \eqref{eq-epcon}. Without loss of generality, we
may further assume $\|f\|_{\CG(x_0,1,\bz,\gz)}\le 1$. Indeed, by the size condition of $f$, we have,
for any $x\in X$,
$$
|f(x)|\le\frac{1}{V_1(x_0)+V(x_0,x)}\lf[\frac{1}{1+d(x_0,x)}\r]^\gz\ls \lf[V_2(x_0)\r]^{-1/p},
$$
which further implies that $\|f\|_{L^\fz(X)}\ls[V_2(x_0)]^{-1/p}$, This shows that $f$, modulo a harmless
constant multiple, satisfies Definition \ref{def-mol}(i).

Now we show that $f$, modulo a harmless constant multiple, satisfies Definition \ref{def-mol}(ii). Fix
$m\in\nn$. Again, by the size condition of
$f$, we conclude that, for any $x\in B(x_0,2\dz^{-m})\setminus B(x_0,2\dz^{-m+1})$,
\begin{align*}
|f(x)|&\le\frac{1}{V_1(x_0)+V(x_0,x)}\lf[\frac{1}{1+d(x_0,x)}\r]^\gz\ls\frac{1}{V_{2\dz^{-m}}(x_0)}
\dz^{m\gz}\sim\lf[\frac{1}{V_{2\dz^{-m}}(x_0)}\r]^{1/p}\lf[\frac{V_{\dz^{-m}}(x_0)}{V_1(x_0)}\r]^{1/p-1}
\dz^{m\gz}\\
&\ls[\mu(B(x_0,2\dz^{-m}))]^{-1/p}\dz^{m[\gz-\omega(1/p-1)]}.
\end{align*}
Therefore, choosing $\ez_m:=\dz^{m[\gz-\omega(1/p-1)]}$ for any $m\in\nn$, we find that
$\vec{\ez}:=\{\ez_m\}_{m=1}^\fz$ satisfies \eqref{eq-epcon} and
$$
\lf\|f\mathbf{1}_{B(x_0,2\dz^{-m})\setminus B(x_0,2\dz^{-m+1})}\r\|_{L^\fz(X)}
\ls\ez_m[\mu(B(x_0,2\dz^{-m}))]^{-1/p}.
$$
Thus, we show that $f$, modulo a harmless constant multiple, satisfies Definition \ref{def-mol}(ii).
Consequently, $f$ is a harmless constant multiple of a local $(p,\fz,\vec{\ez})$-molecule, which, combined
with Theorem \ref{thm-m=a}, further implies $f\in h^p(X)$.

Finally, we prove (iii). By Theorem \ref{thm-at*} and the definition of $H^p_\cw(X)$, we may easily
obtain $H^p_\cw(X)\subset h^p(X)$. To show $H^p_\cw(X)\subsetneqq h^p(X)$, we choose a function
$f\in\CG(\bz,\gz)$ such that $\int_X f(x)\,d\mu(x)\neq 0$ (for the existence of such an $f$, see, for instance,
\cite[Corollary 4.2]{ah13}). On one hand, by (ii), we find that $f\in h^p(X)$. On the other hand, if
$f\in H^p_\cw(X)$, then, by the fact $\CG(\bz,\gz)\subset L^2(X)$ and (i), we find that
$\int_X f(x)\,d\mu(x)=0$, which contradicts to $\int_X f(x)\,d\mu(x)\neq 0$. Thus, $f\notin H^p_\cw(X)$.
This shows $H^p_\cw(X)\subsetneqq h^p(X)$, which then completes the proof of (iii) and hence of
Proposition \ref{prop-hh}.
\end{proof}

We also have the following relationship between $h^p(X)$ and $H^p_\cw(X)$.

\begin{proposition}\label{prop-pa}
Let $p\in(\om,1]$ and $\bz,\ \gz\in(\omega(1/p-1),\eta)$ with $\omega$ and $\eta$, respectively, as in
\eqref{eq-doub} and Definition \ref{def-iati}. Suppose that $P:\ X\times X\to\cc$ has the following property:
\begin{enumerate}
\item there exists a positive constant $C_0$ such that, for any $x\in X$, $P(x,\cdot)\in\go{\bz,\gz}$ and
$$
\|P(x,\cdot)\|_{\CG(x,1,\bz,\gz)}\le C_0;
$$
\item for any $x\in X$, $\int_X P(x,y)\,d\mu(y)=1=\int_X P(y,x)\,d\mu(y)$.
\end{enumerate}
Then there exists a positive constant $C$ such that, for any $f\in h^p(X)$, $f-Pf\in H^p_\cw(X)$ and
$\|f-Pf\|_{H^p_\cw(X)}\le C\|f\|_{h^p(X)}$.
\end{proposition}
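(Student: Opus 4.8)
The plan is to reduce the statement to a uniform estimate on local atoms, to apply the atomic decomposition of $h^p(X)$ from Theorem~\ref{thm-at*} (which furnishes local $(p,\fz)$-atoms), and to recognize each $a-Pa$ as a bounded superposition of global molecules in the sense of \cite{hhllyy18}, so that the molecular characterization of $H^p_\cw(X)$ obtained in \cite{hhllyy18} (the global analogue of Theorem~\ref{thm-m=a}) yields the conclusion. First, a routine test-function estimate based on (1) shows that the transpose $P^{\mathrm T}\colon\phi\mapsto\lf[y\mapsto\int_X P(x,y)\phi(x)\,d\mu(x)\r]$ is bounded on $\go{\bz,\gz}$, so $P$ acts continuously on $(\go{\bz,\gz})'$; hence if $f=\sum_{j=1}^\fz\lambda_j a_j$ in $(\go{\bz,\gz})'$ with local $(p,\fz)$-atoms $a_j$ on balls $B_j:=B(x_j,r_j)$ and $\sum_j|\lambda_j|^p\lesssim\|f\|_{h^p(X)}^p$, then $f-Pf=\sum_j\lambda_j(a_j-Pa_j)$ in $(\go{\bz,\gz})'$. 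Consequently it suffices to find $C$, independent of the atom, with $\|a-Pa\|_{H^p_\cw(X)}\le C$ for every local $(p,\fz)$-atom $a$ on a ball $B=B(x_0,r_0)$; granting this, superposing the molecular decompositions of the $a_j-Pa_j$ and using $\sum_j|\lambda_j|^p\lesssim\|f\|_{h^p(X)}^p$ gives both the convergence of $f-Pf$ in $H^p_\cw(X)$ and the asserted bound.

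For the atom-level estimate I would record three facts. By Fubini and (2), $\int_X Pa\,d\mu=\int_X a\,d\mu$, so $\int_X(a-Pa)\,d\mu=0$ \emph{for every} $r_0$; this is what restores the Coifman--Weiss cancellation. By the size condition of $P(x,\cdot)$ in (1) and Lemma~\ref{lem-add}(i),(ii), $|Pa(x)|\le\|a\|_{L^\fz(X)}\int_X|P(x,y)|\,d\mu(y)\lesssim[\mu(B)]^{-1/p}$ uniformly in $x$. And, for $d(x_0,x)\ge2A_0r_0$ and $y\in B$ one has $d(x,y)\sim d(x_0,x)$ and $V_1(x)+V(x,y)\sim V_1(x_0)+V(x_0,x)$ by Lemma~\ref{lem-add}(i), so the size condition of $P(x,\cdot)$ gives
\[
|Pa(x)|\lesssim[\mu(B)]^{1-1/p}\,\frac{1}{V_1(x_0)+V(x_0,x)}\lf[\frac{1}{1+d(x_0,x)}\r]^\gz ,
\]
and, when $r_0\le(2A_0)^{-1}$, subtracting $P(x,x_0)$ and using $\int_X a\,d\mu=0$ with the regularity condition of $P(x,\cdot)$ improves this by the factor $[r_0/(1+d(x_0,x))]^\bz$.

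Now I would split on $r_0$. If $r_0>1$: then $a$ vanishes off $B$, so $|a-Pa|=|Pa|$ there, and on the annulus $B(x_0,\dz^{-m}r_0)\setminus B(x_0,\dz^{-m+1}r_0)$ the displayed decay, together with $[\mu(B)]^{1-1/p}[\mu(B(x_0,\dz^{-m}r_0))]^{1/p-1}\le C\dz^{-m\omega(1/p-1)}$ from \eqref{eq-doub} and $r_0>1$, gives $|(a-Pa)(x)|\lesssim\ez_m[\mu(B(x_0,\dz^{-m}r_0))]^{-1/p}$ with $\ez_m:=\dz^{m[(\bz\wedge\gz)-\omega(1/p-1)]}$ (which satisfies \eqref{eq-epcon} since $\bz,\gz>\omega(1/p-1)$), while $|a-Pa|\lesssim[\mu(B)]^{-1/p}$ on $B$; so $a-Pa$ is a harmless constant multiple of a global $(p,\fz,\vec\ez)$-molecule centered at $B(x_0,r_0)$. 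If $r_0\le1$: then $a$ is already a global $(p,\fz)$-atom, so $\|a\|_{H^p_\cw(X)}\le1$, and it remains to see that $Pa$ is a bounded multiple of a global $(p,\fz,\vec\ez)$-molecule centered at $B(x_0,1)$; here $\int_X Pa\,d\mu=0$ gives molecular condition (iii) (needed since the radius is $\le1$), the improved estimate gives $|Pa(x)|\lesssim r_0^\bz[\mu(B)]^{1-1/p}/V_1(x_0)$ on $B(x_0,1)$, which by \eqref{eq-doub} in the form $V_1(x_0)\le Cr_0^{-\omega}\mu(B)$ and $\bz>\omega(1/p-1)$ is $\lesssim[V_1(x_0)]^{-1/p}$, yielding condition (i), and the improved decay on the annulus at scale $\dz^{-m}$ ($m\in\nn$), combined with $[\mu(B)]^{1-1/p}[\mu(B(x_0,\dz^{-m}))]^{1/p-1}\le C\dz^{-m\omega(1/p-1)}r_0^{-\omega(1/p-1)}$ and again $\bz>\omega(1/p-1)$ (to absorb $r_0^{-\omega(1/p-1)}$ against $r_0^\bz$), yields condition (ii). (If $(2A_0)^{-1}<r_0\le1$, the small-scale cancellation gain is not available, but then $\mu(B)\sim V_1(x_0)$ by \eqref{eq-doub} and one argues as in the case $r_0\sim1$, using only the uniform bound and the plain decay.)

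The step I expect to be the main obstacle is the case $r_0\le1$: the atom lives at scale $r_0$ whereas $Pa$ spreads to scale $1$, and, with no reverse doubling available, $V_1(x_0)/\mu(B)$ is unbounded. The resolution hinges on the smoothness of $P$ producing a gain $r_0^\bz$ in $Pa$ which, via \eqref{eq-doub} and the hypothesis $\bz>\omega(1/p-1)$, exactly compensates this doubling blow-up; this makes it possible to keep $a$ as a genuine Coifman--Weiss atom and to recenter the molecule coming from $Pa$ at the ball $B(x_0,1)$, on which $\|Pa\|_{L^\fz(X)}\lesssim[V_1(x_0)]^{-1/p}$. The remaining ingredients — the continuity of $P^{\mathrm T}$ on $\go{\bz,\gz}$, the comparisons from Lemma~\ref{lem-add}, and the annular geometric sums — are routine and parallel arguments already used in Sections~\ref{s-at}--\ref{s-lp} and in \cite{hhllyy18}.
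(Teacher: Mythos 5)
Your argument is correct, and it follows the paper's overall strategy (atomic decomposition of $h^p(X)$ plus the molecular characterization of $H^p_\cw(X)$ from \cite{hhllyy18}), but the atom-level step is organized differently. The paper works with local $(p,2)$-atoms (via Theorem \ref{thm-h=}) and treats all radii uniformly: it shows that $a-Pa$ itself is a harmless constant multiple of a single $(p,2,\vec{\ez})$-molecule centered at $4A_0^2B$, i.e.\ at the scale $r_0$ of the atom, with the size bound coming from $\|a-Pa\|_{L^2(X)}\ls\|a\|_{L^2(X)}+\|\CM(a)\|_{L^2(X)}$, the cancellation from hypothesis (ii), and the annular decay from the estimate $|Pa(x)|\ls[\mu(B)]^{1-1/p}[r_0/d(x_0,x)]^{\gz}[V(x_0,x)]^{-1}$, where for $r_0\le 1$ the cancellation of $a$ together with the regularity of $P(x,\cdot)$ produces the decay measured \emph{relative to} $r_0$; hence no recentering is ever needed and there is no case distinction in the choice of the central ball. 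You instead use $(p,\fz)$-atoms and, for $r_0\le 1$, split $a-Pa$ into the genuine Coifman--Weiss atom $a$ plus the molecule $Pa$ recentered at the unit ball $B(x_0,1)$, compensating the doubling blow-up $V_1(x_0)/\mu(B)\ls r_0^{-\omega}$ by the regularity gain $r_0^{\bz}$ and the hypothesis $\bz>\omega(1/p-1)$. Both routes succeed; the paper's is more economical, while yours makes explicit why $\bz,\gz>\omega(1/p-1)$ substitutes for the missing reverse doubling, and your preliminary remark on the continuity of $P$ on $(\go{\bz,\gz})'$ records a step the paper leaves implicit. Two minor patches: when you center the molecule at $B(x_0,r_0)$ (case $r_0>1$, and similarly for the subcase $(2A_0)^{-1}<r_0\le1$ with center $B(x_0,1)$), your far-field estimate for $Pa$ is only valid for $d(x_0,x)\ge 2A_0r_0$, so either enlarge the central ball (the paper takes $4A_0^2B$) or handle the finitely many annuli meeting $\{d(x_0,x)<2A_0r_0\}$ with the uniform bound $|Pa|\ls[\mu(B)]^{-1/p}$ and doubling; and in the case $r_0\le 1$ you should invoke the $p$-triangle inequality of the quasi-norm to add the two pieces $a$ and $Pa$, which you implicitly do.
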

\begin{proof}
By Theorem \ref{thm-h=} and \cite[Propositioin 5.5]{hhllyy18}, we only need to show that, for any local
$(p,2)$-atom $a$ supported on $B:=B(x_0,r_0)$ for some $x_0\in X$ and $r_0\in(0,\fz)$, $a-Pa$ is a
harmless constant multiple of a $(p,2,\vec{\ez})$-molecule centered at $4A_0^2 B$ for some
$\vec{\ez}:=\{\ez_m\}_{m=1}^\fz\subset[0,\fz)$ satisfying \eqref{eq-epcon}. From (ii), it easily follows that
$\int_X [a(x)-Pa(x)]\,d\mu(x)=0$. By (i), we conclude that
$$
|P(x,y)|\ls\frac{1}{V_1(x)+V(x,y)}\lf[\frac 1{1+d(x,y)}\r]^\gz.
$$
Therefore, by the boundedness of $\CM$ on $L^2(X)$ (see, for instance, \cite[(3.6)]{cw77}), we have
$$
\|a-Pa\|_{L^2(X)}\ls\|a\|_{L^2(X)}+\|\CM(a)\|_{L^2(X)}\sim\|a\|_{L^2(X)}\ls[\mu(B)]^{1/2-1/p}
\sim\lf[\mu\lf(4A_0^2B\r)\r]^{1/2-1/p},
$$
which shows that $a-Pa$, modulo a harmless constant multiple, satisfies Definition \ref{def-mol}(ii).

Now we show that $a-Pa$, modulo a harmless constant multiple, also satisfies Definition \ref{def-mol}(iii).
We consider two case. When $r_0\in(0,1]$, then we have
$\int_X a(x)\,d\mu(x)=0$. By this and (i), we conclude that, for any $x\notin B(x_0,4A_0^2r_0)$,
\begin{align*}
|Pa(x)|&=\lf|\int_X P(x,z)a(z)\,d\mu(u)\r|\le
\int_{B}|P(x,z)-P(x,x_0)||a(z)|\,d\mu(u)\\
&\ls\int_{B}\lf[\frac{d(x_0,z)}{1+d(x_0,x)}\r]^\eta\frac 1{V_{1}(x_0)+V(x_0,x)}
\lf[\frac{1}{1+d(x_0,y)}\r]^\gz|a(u)|\,d\mu(u)\\
&\ls[\mu(B)]^{1-1/p}\lf[\frac{r_0}{1+d(x_0,x)}\r]^\eta\frac 1{V_{1}(x_0)+V(x_0,x)}
\lf[\frac{1}{1+d(x_0,x)}\r]^\gz\\
&\ls\lf[\frac{1}{\mu(B)}\r]^{1/p-1}\lf[\frac{r_0}{d(x_0,x)}\r]^\eta\frac 1{V(x_0,x)}.
\end{align*}
When $r_0\in(1,\fz)$, then, using a similar argument to that used in the estimation of \eqref{eq-qka}, we
conclude that, for any $x\notin B(x_0,4A_0^2r_0)$,
\begin{equation*}
|Pa(x)|\ls[\mu(B)]^{1-1/p}\frac{1}{V_1(x_0)+V(x_0,x)}\lf[\frac{1}{1+d(x_0,x)}\r]^\gz\ls
\lf[\frac 1{\mu(B)}\r]^{1/p-1}\frac 1{V(x_0,x)}\lf[\frac{r_0}{d(x_0,x)}\r]^\gz.
\end{equation*}
Altogether, we have, for any $x\notin B(x_0,4A_0^2r_0)$,
\begin{equation}\label{eq-pa}
|Pa(x)|\ls\lf[\frac 1{\mu(B)}\r]^{1/p-1}\frac 1{V(x_0,x)}\lf[\frac{r_0}{d(x_0,x)}\r]^\gz.
\end{equation}

Fix $m\in\nn$. For any $x\in[B(x_0,4A_0^2\dz^{-m}r_0)\setminus B(x_0,4A_0^2\dz^{-m+1}r_0)]=:R_m$, by
\eqref{eq-pa}, we conclude that
\begin{align*}
|Pa(x)|&\ls\dz^{m\gz}\lf[\frac 1{\mu(B)}\r]^{1/p-1}\frac 1{\mu(B(x_0,\dz^{-m}r_0))}\\
&\sim\dz^{m\gz}\lf[\mu\lf(B\lf(x_0,4A_0^2\dz^{-m}r_0\r)\r)\r]^{-1/p}
\lf[\frac{\mu(B(x_0,\dz^{-m}r_0))}{\mu(B(x_0,r_0))}\r]^{1/p-1}\\
&\ls\dz^{m[\gz-\omega(1/p-1)]}\lf[\mu\lf(B\lf(x_0,4A_0^2\dz^{-m}r_0\r)\r)\r]^{-1/p}.
\end{align*}
This, together with the H\"{o}lder inequality, further implies that
$$
\lf\|(a-Pa)\mathbf{1}_{R_m}\r\|_{L^2(X)}=\lf\|Pa\mathbf{1}_{R_m}\r\|_{L^2(X)}\ls\dz^{m[\gz-\omega(1/p-1)]}
\lf[\mu\lf(B\lf(x_0,4A_0^2\dz^{-m}r_0\r)\r)\r]^{1/2-1/p}.
$$
Then, choosing $\ez_m:=\dz^{m[\gz-\omega(1/p-1)]}$, we find that $\vec{\ez}:=\{\ez_m\}_{m=1}^\fz$
satisfies \eqref{eq-epcon} and
$$
\lf\|(a-Pa)\mathbf{1}_{R_m}\r\|_{L^2(X)}\ls\ez_m\lf[\mu\lf(B\lf(x_0,4A_0^2\dz^{-m}r_0\r)\r)\r]^{1/2-1/p}.
$$
Therefore, $a-Pa$ is a harmless multiple constant of a $(p,2,\vec{\ez})$-molecule. This finishes the proof
of Proposition \ref{prop-pa}.
\end{proof}

\subsection{The case $\mu(X)<\fz$}\label{ss-fz}

In this section, we consider the case $\mu(X)<\fz$. To this end, we first recall the definition of
\emph{atoms} in the case $\mu(X)<\fz$.

\begin{definition}\label{def-atomb}
Let $p\in(0,1]$ and $q\in(p,\fz]\cap[1,\fz]$. A function $a\in L^q(X)$ is called a \emph{$(p,q)$-atom}
if $a=[\mu(X)]^{-1/p}$ or there exists a ball $B:=B(x_0,r_0)$ for some $x_0\in X$ and $r_0\in(0,\fz)$ such
that $a$ satisfies $\textup{(i)}_1$ and $\textup{(i)}_2$ of Definition \ref{def-lat} and
$\int_X a(x)\,d\mu(x)=0$.
\end{definition}

For any $p\in(\om,1]$, $\bz,\ \gz\in(\omega(1/p-1),\eta)$ and $q\in(p,\fz]\cap[1,\fz]$, the atomic Hardy
space $H^{p,q}_\cw(X)$ is defined in the same way as the case $\mu(X)=\fz$. The space $H^{p,q}_\cw(X)$
is also independent of the choice of $q$ (see \cite{cw77}). We have the following conclusion.

\begin{proposition}\label{prop-h=H}
Let $p\in(\om,1]$, $\bz,\ \gz\in(\omega(1/p-1),\eta)$ and $q\in(p,\fz]\cap[1,\fz]$. Then, as subspaces of
$(\go{\bz,\gz})'$, $h^{p,q}_\at(X)=H^{p}_\cw(X)$ with equivalent norms.
\end{proposition}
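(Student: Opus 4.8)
The plan is to establish both inclusions directly at the level of atoms, exploiting two elementary facts peculiar to the finite-measure setting. First, as recalled in the proof of Proposition~\ref{prop-at}, when $\mu(X)<\fz$ one has $R_0:=\diam X<\fz$ (by \cite[Lemma 5.1]{ny97}); hence $B(x_0,R_0+1)=X$ for every $x_0\in X$, and applying \eqref{eq-doub} to the ball $B(x_0,1)$ with dilation factor $2R_0$ produces a constant $c_X\in(0,1]$, independent of $x_0$, with $\mu(B(x_0,1))\ge c_X\mu(X)$. Consequently any ball $B:=B(x_0,r_0)$ with $r_0\in(1,\fz)$ satisfies $\mu(B)\sim\mu(X)$ with equivalence constants independent of $B$. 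Second, $H^{p,q}_\cw(X)$ is independent of $q\in(p,\fz]\cap[1,\fz]$ (see \cite{cw77}), and so is $h^{p,q}_\at(X)$ by Theorem~\ref{thm-h=} together with Theorems~\ref{thm-at*} and~\ref{thm-m=a}; so it suffices to compare the two atomic spaces for one fixed admissible $q$, and to do so it is enough to pass between single atoms with uniform control, then take $\ell^p$-quasi-norms of the coefficients over the decompositions.

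For $H^p_\cw(X)\subset h^{p,q}_\at(X)$ I would observe that \emph{every} $(p,q)$-atom in the sense of Definition~\ref{def-atomb} is already a local $(p,q)$-atom. Indeed, the special atom $[\mu(X)]^{-1/p}$ is supported on $B(x_0,R_0+1)=X$, obeys $\|[\mu(X)]^{-1/p}\|_{L^q(X)}=[\mu(X)]^{1/q-1/p}=[\mu(B(x_0,R_0+1))]^{1/q-1/p}$, and, its defining radius exceeding $1$, is not required to have vanishing integral; a ball-supported $(p,q)$-atom satisfies $\textup{(i)}_1$ and $\textup{(i)}_2$ of Definition~\ref{def-lat} and moreover has $\int_X a\,d\mu=0$, which Definition~\ref{def-lat}(i) demands only when $r_0\le1$. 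Regrouping an atomic decomposition of $f\in H^p_\cw(X)$ (the series still converging in $(\go{\bz,\gz})'$) gives $f\in h^{p,q}_\at(X)$ with $\|f\|_{h^{p,q}_\at(X)}\le\|f\|_{H^p_\cw(X)}$.

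For $h^{p,q}_\at(X)\subset H^p_\cw(X)$ I would split local atoms according to their defining radius. If $a$ is a local $(p,q)$-atom on $B=B(x_0,r_0)$ with $r_0\in(0,1]$, then $\int_X a\,d\mu=0$, so $a$ is already a $(p,q)$-atom in the sense of Definition~\ref{def-atomb}. If $r_0\in(1,\fz)$, set $c:=\frac1{\mu(X)}\int_X a\,d\mu$ and write $a=(a-c\mathbf{1}_X)+c\mathbf{1}_X$. By the H\"older inequality and $\textup{(i)}_2$, $|c|\le\frac1{\mu(X)}\|a\|_{L^1(X)}\le\frac{[\mu(B)]^{1-1/p}}{\mu(X)}$; hence $|c|\,[\mu(X)]^{1/p}\ls1$ by $\mu(B)\sim\mu(X)$, so $c\mathbf{1}_X$ is a harmless constant multiple of the special atom $[\mu(X)]^{-1/p}$. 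The function $a-c\mathbf{1}_X$ is supported in $X=B(x_0,R_0+1)$, has $\int_X(a-c\mathbf{1}_X)\,d\mu=0$, and, again using $\mu(B)\sim\mu(X)$,
\[
\|a-c\mathbf{1}_X\|_{L^q(X)}\le\|a\|_{L^q(X)}+|c|\,[\mu(X)]^{1/q}\ls[\mu(B)]^{1/q-1/p}\sim[\mu(B(x_0,R_0+1))]^{1/q-1/p},
\]
so it is a harmless constant multiple of a $(p,q)$-atom. Thus every local $(p,q)$-atom is the sum of at most two bounded multiples of $(p,q)$-atoms, and summing the $\ell^p$-quasi-norms of the coefficients over a local-atomic decomposition of $f$ yields $f\in H^p_\cw(X)$ with $\|f\|_{H^p_\cw(X)}\le C\|f\|_{h^{p,q}_\at(X)}$.

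The only genuinely non-formal point — and hence the main obstacle — is the control of the average $c$ in the case $r_0\in(1,\fz)$, which is exactly where the comparability $\mu(B)\sim\mu(X)$, itself a consequence of $\diam X<\fz$ and the doubling condition \eqref{eq-doub}, is indispensable: without a finite diameter a non-canceling local atom on a large ball need not decompose into global atoms with summable coefficients. Everything else is bookkeeping about which cancellation condition each definition imposes, plus the routine verification that the regrouped atomic series converge in $(\go{\bz,\gz})'$.
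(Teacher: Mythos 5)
Your proposal is correct and follows essentially the same route as the paper: both directions are handled atom-by-atom, with every global $(p,q)$-atom (including the constant atom $[\mu(X)]^{-1/p}$, supported on $B(x_0,\diam X+1)=X$) seen to be a local atom, and every non-cancelling local atom on a large ball split as $a=(a-m_X(a))+m_X(a)$ into a cancellative atom on $X$ plus a bounded multiple of the constant atom, using $\diam X<\fz$ and doubling to get $\mu(B)\sim\mu(X)$. The paper's proof (Proposition \ref{prop-h=H}) carries out exactly this decomposition, so no further comment is needed.
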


\begin{proof}
Fix $p\in(\om,1]$, $\bz,\ \gz\in(\omega(1/p-1),\eta)$ and $q\in(p,\fz]\cap[1,\fz]$. To prove this
proposition, it suffices to show the following two statements:
\begin{enumerate}
\item a $(p,q)$-atom is also a local $(p,q)$-atom;
\item for any local $(p,q)$-atom $a$, $\|a\|_{H^{p,q}_\cw(X)}\ls 1$.
\end{enumerate}

We first prove (i). Indeed, suppose that $a$ is a $(p,q)$-atom. If $a$ satisfies $\textup{(i)}_1$ and
$\textup{(i)}_2$ of Definition \ref{def-lat} and $\int_X a(x)\,d\mu(x)=0$ for some $B:=B(x_0,r_0)$ with
$x_0\in X$ and $r_0\in(0,\fz)$, then, by Definition \ref{def-lat}, we easily know that $a$ is also
a local $(p,q)$-atom. Now suppose $a=[\mu(X)]^{-1/p}$. Note that $\mu(X)<\fz$ implies that $\diam X<\fz$
(see, for instance, \cite[Lemma 5.1]{ny97}). Fix $x_0\in X$ and let $R:=\diam X+1$. Then
$R\in(1,\fz)$, $B:=B(x_0,R)=X$ and
$\|a\|_{L^q(B)}=[\mu(X)]^{1/q-1/p}=[\mu(B)]^{1/q-1/p}$. Thus, $a$ is a local $(p,q)$-atom. This implies (i).

Next we show (ii). Suppose that $a$ is a local $(p,q)$-atom supported on $B:=B(x_0,r_0)$ for some $x_0\in X$
and $r_0\in(0,\fz)$. If $r_0\in (0,1]$, then, by Definition \ref{def-atomb}, we find that $a$ is exactly a
$(p,q)$-atom, which further implies that $\|a\|_{H^{p,q}_\cw(X)}\ls 1$. If $r_0\in(1,\fz)$, then, since
$\diam X<\fz$, it follows that $X=B(x_0,\diam X+1)$ and
\begin{equation}\label{eq-xb}
\mu(B)\sim\mu(B(x_0,\diam X+1))\sim \mu(X).
\end{equation}
Let $a_1:=a-m_X(a)$ and $a_2:=m_X(a)$. Then $a=a_1+a_2$. By the H\"{o}lder inequality and \eqref{eq-xb},
we have
\begin{equation}\label{eq-mxa}
|m_X(a)|=\lf|\frac 1{\mu(X)}\int_X a(x)\,d\mu(x)\r|\le\lf[\frac 1{\mu(X)}\int_X |a(x)|^q\,d\mu(x)\r]^{1/q}
\le [\mu(X)]^{-1/p},
\end{equation}
which implies that $a_2$ is a harmless constant multiple of a $(p,q)$-atom. For $a_1$, it is obvious that
$\int_X a_1(x)\,d\mu(x)=0$. Moreover, by the size condition of $a$, \eqref{eq-xb} and \eqref{eq-mxa}, we
further obtain $\|a_1\|_{L^q(X)}\ls[\mu(X)]^{1/q-1/p}$. Therefore, $a_1$ is a harmless constant multiple of
a $(p,q)$-atom supported on $X=B(x_0,\diam X+1)$. Therefore, $\|a\|_{H^{p,q}_\at(X)}\ls 1$. This finishes the
proof of (ii) and hence of Proposition \ref{prop-h=H}.
\end{proof}

\section{Finite atomic characterizations and dual spaces\\ of local Hardy 
spaces}\label{s-dual}

In this section, we first obtain the finite atomic characterizations of $h^p(X)$ for any given $p\in(\om,1]$
with $\omega$ and $\eta$, respectively, as in \eqref{eq-doub} and Definition \ref{def-iati}. As an
application, we give the dual space of $h^p(X)$.

\subsection{Finite atomic characterizations of local Hardy spaces}\label{ss-fin}

In this section, we concern about the finite atomic characterizations of $h^p(X)$. For any $p\in(\om,1]$
and $q\in(p,\fz]\cap[1,\fz]$ with $\omega$ and $\eta$, respectively, as in \eqref{eq-doub} and Definition
\ref{def-iati}, the \emph{finite atomic local Hardy space $h^{p,q}_\fin(X)$} is defined to be the set of all
$f\in L^q(X)$ satisfying that there exist $N\in\nn$,
local $(p,q)$-atoms $\{a_j\}_{j=1}^N$ and $\{\lz_j\}_{j=1}^N\subset\cc$ such that
\begin{equation}\label{eq-fdec}
f=\sum_{j=1}^N\lz_j a_j.
\end{equation}
For any $f\in h^{p,q}_\fin(X)$, its (quasi-)norm in $h^{p,q}_\fin(X)$ is defined by setting
$$
\|f\|_{h^{p,q}_\fin(X)}:=\inf\lf\{\lf(\sum_{j=1}^N|\lz_j|^p\r)^{1/p}\r\},
$$
where the infimum is taken over all the decompositions of $f$ as in \eqref{eq-fdec}.

Obviously, $h^{p,q}_\fin(X)$ is a dense subspace of $h^{p,q}_\at(X)$ and, for any $f\in h^{p,q}_\fin(X)$,
$\|f\|_{h^{p,q}_\at(X)}\le\|f\|_{h^{p,q}_\fin(X)}$. We now prove that the inverse inequality also holds
true to some certain extend. To this end, we denote by $\UC(X)$ the space of all absolutely continuous
functions on $X$. Then we have the following conclusion.

\begin{proposition}\label{prop-fin}
Suppose $p\in(\om,1]$ with $\omega$ and $\eta$, respectively, as in \eqref{eq-doub} and Definition
\ref{def-iati}. Then the following statements hold true:
\begin{enumerate}
\item if $q\in(p,\fz)\cap[1,\fz)$, then  $\|\cdot\|_{h^{p,q}_\fin(X)}$ and $\|\cdot\|_{h^{p,q}_\at(X)}$
are equivalent (quasi)-norms on $h^{p,q}_\fin(X)$;
\item  $\|\cdot\|_{h^{p,\fz}_\fin(X)}$ and $\|\cdot\|_{h^{p,\fz}_\at(X)}$ are equivalent (quasi)-norms on
$h^{p,q}_\fin(X)\cap\UC(X)$;
\item $h^{p,\fz}_\fin(X)\cap\UC(X)$ is a dense subspace of $h^{p,\fz}_\at(X)$.
\end{enumerate}
\end{proposition}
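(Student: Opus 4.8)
The plan is to carry out, for each of the three parts, the Meda--Sjögren--Vallarino truncation scheme for finite atomic decompositions, in the form already used for the global Hardy spaces $H^p_\cw(X)$ in \cite{hhllyy18}, inserting the changes forced by the fact that a local $(p,q)$-atom carries the cancellation condition $\int_X a\,d\mu=0$ only when the radius of its supporting ball is at most $1$. In all three parts the inequality $\|f\|_{h^{p,q}_\at(X)}\le\|f\|_{h^{p,q}_\fin(X)}$ is immediate, since a finite atomic decomposition is in particular an atomic decomposition; hence the content is the reverse estimate for (i) and (ii), and the approximation for (iii).

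For (i), fix $q\in(p,\fz)\cap[1,\fz)$ and $f\in h^{p,q}_\fin(X)$; then $f\in L^q(X)$ and $\supp f$ is bounded, so we may fix a ball $B_0:=B(x_0,r_0)$ with $\supp f\subset B_0$ and, after enlarging, $r_0>1$, so that \emph{no} error term produced below will require a vanishing integral. Normalizing $\|f\|_{h^{p,q}_\at(X)}=1$, we have $f\in L^q(X)\cap h^{*,p}(X)$, and the construction in the proof of Proposition \ref{prop-at} yields, for the level sets $\Omega^j:=\{f^\star_0>2^j\}$, the Calder\'on--Zygmund good/bad pieces $g^j,b^j$ together with the local $(p,\fz)$-atoms $a^j_k$ (supported in $B^j_k:=B(x^j_k,48A_0^5 r^j_k)$, with $\lz^j_k\sim 2^j[\mu(B^j_k)]^{1/p}$ and $\sum_{j,k}|\lz^j_k|^p\ls\|f^\star_0\|_{L^p(X)}^p\ls 1$) such that $f=\sum_j h^j$ in $(\go{\bz,\gz})'$, where $h^j=g^{j+1}-g^j=\sum_k\lz^j_k a^j_k$. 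One then truncates: for $M,j_1\in\nn$ large, write
\begin{equation*}
f=g^{-M}+\sum_{-M\le j<j_1}\,\sum_{k\in K_j}\lz^j_k a^j_k
+\sum_{-M\le j<j_1}\,\sum_{k\notin K_j}\lz^j_k a^j_k+b^{j_1},
\end{equation*}
with each $K_j\subset I_j$ finite (Proposition \ref{prop-ozdec}(vi), applied to the bounded set $\Omega^j$, lets us take $K_j$ to contain all the finitely many balls $B^j_k$ of measure above a prescribed threshold). Using Lemma \ref{lem-funbgj}(i),(iv), \eqref{eq-doub} and the Chebyshev bound $\mu(\Omega^j)\le 2^{-jp}\|f^\star_0\|_{L^p(X)}^p$: $g^{-M}$ is a multiple of a local $(p,q)$-atom of coefficient $\ls 2^{-M}[\mu(\Omega^{-M})]^{1/p}+o(1)\ls\|f^\star_0\|_{L^p(X)}$; $b^{j_1}$ is supported, for $j_1$ large, in the fixed ball $2A_0B_0$ (since $f^\star_0\ls\CM(f)$ there while $f^\star_0(x)\ls\|f\|_{L^1(X)}/V(x_0,x)$ outside it) and has $\|b^{j_1}\|_{L^q(X)}\to0$ by absolute continuity of $\int_\cdot|f|^q\,d\mu$, hence is a multiple of a local $(p,q)$-atom of coefficient $\to0$; and the $k$-leftover, supported in a fixed ball once $M,j_1$ are fixed and with $L^q$-norm $\ls 2^{j_1}\big[\sum_{j,\,k\notin K_j}\mu(B^j_k)\big]^{1/q}$, is a multiple of a local $(p,q)$-atom of arbitrarily small coefficient. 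The kept part is a \emph{finite} sum of local $(p,q)$-atoms with $\ell^p$-coefficient sum $\ls\|f^\star_0\|_{L^p(X)}^p$, so $\|f\|_{h^{p,q}_\fin(X)}\ls\|f^\star_0\|_{L^p(X)}\sim\|f\|_{h^{p,q}_\at(X)}$; when $\mu(X)<\fz$ one replaces $g^{-M}$ by the single ``global'' atom $g^{j_0}$ as in the proof of Proposition \ref{prop-at}.

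For (ii) the same scheme applies with $q=\fz$: here $|\langle f,\vz\rangle|\le\|f\|_{L^\fz(X)}\int_X|\vz|\,d\mu\ls\|f\|_{L^\fz(X)}$ forces $f^\star_0\ls\|f\|_{L^\fz(X)}$, so $\Omega^{j_1}=\emptyset$ (hence $b^{j_1}=0$) for $j_1$ large and only finitely many levels survive, and the single new point is the $L^\fz$-control of the $k$-leftover---since $f\in\UC(X)$, one has $\|h^j_k\|_{L^\fz(X)}\ls$ (constant $\times$ oscillation of $f$ over $B^j_k$), so putting into each $K_j$ all the finitely many balls $B^j_k$ of radius $\ge\rho_0$ leaves a leftover with $L^\fz$-norm controlled by the modulus of continuity of $f$ at scale $\rho_0$, hence a multiple of a local $(p,\fz)$-atom of coefficient $\to0$ as $\rho_0\to0$. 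For (iii), recall that $h^{p,\fz}_\fin(X)$ is dense in $h^{p,\fz}_\at(X)=h^p(X)$, so it suffices to approximate a fixed $f=\sum_{j=1}^N\lz_j a_j\in h^{p,\fz}_\fin(X)$ (with local $(p,\fz)$-atoms $a_j$ supported in balls $B_j$) in the $h^{p,\fz}_\at(X)$-quasi-norm by elements of $h^{p,\fz}_\fin(X)\cap\UC(X)$: one mollifies each $a_j$ by an $\exp$-{\rm IATI} operator $P_k$ (whose H\"older-regular kernel makes $P_k a_j$ uniformly continuous), multiplies by a cutoff supported in $2A_0B_j$, and corrects by a fixed small multiple of a normalized bump to restore $\int_X(\cdot)\,d\mu=0$ when the radius of $B_j$ is at most $(2A_0)^{-1}$; the resulting $a_j^\ez\in\UC(X)$ is a bounded multiple of a local $(p,\fz)$-atom, while $a_j-a_j^\ez$ is supported in $2A_0B_j$ and small in $L^q(X)$ for a fixed $q\in(p,\fz)$, hence a multiple of a local $(p,q)$-atom of coefficient $\to0$; since $h^{p,q}_\at(X)=h^p(X)=h^{p,\fz}_\at(X)$ (Theorem \ref{thm-h=}), $\sum_j\lz_j a_j^\ez\to f$ in $h^{p,\fz}_\at(X)$, which gives the density.

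The principal difficulty is the coefficient bookkeeping in (i) and (ii): each of the three error pieces---$g^{-M}$ absorbing the very negative levels, $b^{j_1}$ absorbing the very positive ones, and the $k$-leftover---must be recognized as a multiple of a \emph{single} local atom whose coefficient is bounded by $C\|f\|_{h^{p,q}_\at(X)}$ rather than by the uncontrolled truncation heights $2^{\pm N}$, and this is exactly what dictates the order in which the parameters $M$, $j_1$, $\rho_0$ (or the measure threshold defining the $K_j$) are chosen; it is also where the estimate $\mu(\Omega^j)\le 2^{-jp}\|f^\star_0\|_{L^p(X)}^p$, the absolute continuity of the integral of $|f|^q$ (for $q<\fz$), and the oscillation bound coming from $\UC(X)$ (for $q=\fz$) are all used. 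Relative to the global argument in \cite{hhllyy18}, the only genuinely new bookkeeping is checking which error pieces require a vanishing integral---sidestepped by taking $r_0>1$---and isolating the precise consequence of membership in $\UC(X)$.
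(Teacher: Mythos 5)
Your overall strategy (truncating the Calder\'on--Zygmund atomic decomposition of Proposition \ref{prop-at} in the Meda--Sj\"ogren--Vallarino style of \cite[Theorem 7.1]{hhllyy18}) is the same as the paper's, and your handling of the high levels, of the truncation in $k$, of the $\UC(X)$ split via the modulus of continuity, and of the density statement (iii) is broadly in line with it. The genuine gap is your treatment of the \emph{low} levels in (i) and (ii). You truncate at an arbitrary large level $-M$ and declare the good part $g^{-M}$ to be a single local $(p,q)$-atom ``of coefficient $\ls 2^{-M}[\mu(\Omega^{-M})]^{1/p}+o(1)$''. But an atom must be normalized by the measure of a \emph{ball} containing its support, and by Lemma \ref{lem-funbgj}(iv) one has $\supp g^{-M}\subset B_0\cup\Omega^{-M}$ (with $B_0\supset\supp f$ your fixed ball), the piece $\sum_{k\in I_{-M}^*}m^{-M}_k\phi^{-M}_k$ being spread over all of $\Omega^{-M}$; the smallest ball enclosing $\Omega^{-M}$ is not controlled by $\mu(\Omega^{-M})$. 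With the only decay estimate you invoke, $f^\star_0(x)\ls\|f\|_{L^1(X)}/V(x_0,x)$ off $2A_0B_0$, that enclosing ball has measure $\ls 2^{M}\|f\|_{L^1(X)}$, so the coefficient you actually obtain is of size $2^{-M}\cdot 2^{M/p}=2^{M(1/p-1)}$ (or $2^{M(1-p)(1/p-1/q)}$ if one uses the $L^q$ norm of $g^{-M}$ instead), which diverges as $M\to\fz$ whenever $p<1$; it is not $\ls\|f^\star_0\|_{L^p(X)}$. The same defect persists in (ii), since the negative levels are still infinite in number there.

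The paper closes exactly this gap with the key estimate \eqref{eq-f0}: if $\supp f\subset B(x_1,R)$ with $R\ge1$, then $f^\star_0(x)\le\wz c\,[\mu(B(x_1,R))]^{-1/p}$ for every $x\notin B(x_1,16A_0^4R)$, proved by multiplying the test function by a H\"older cutoff $\xi$ with $f\xi=f$ and then averaging $f^*_0$ over $B(x,d(x,x_1))$. This fixes the threshold $j'$ (the largest $j$ with $2^j\le\wz c\,[\mu(B(x_1,R))]^{-1/p}$), guarantees $\Omega^j\subset B(x_1,16A_0^4R)$ for all $j>j'$, hence that the high part $\ell=\sum_{j>j'}\sum_{k}\lz^j_ka^j_k$ is supported in this \emph{fixed} ball, and therefore that the \emph{entire} low part $h=f-\ell$ (not a truncation $g^{-M}$ at an arbitrary level) is supported in the same fixed ball with $\|h\|_{L^\fz(X)}\ls 2^{j'}\sim[\mu(B(x_1,R))]^{-1/p}$; since $16A_0^4R>1$, no cancellation is required, and $h$ is a single local $(p,\fz)$-atom with coefficient $\ls 1$. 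Without \eqref{eq-f0} (or an equivalent device identifying the correct cutoff level and a fixed supporting ball for the low-level piece), your coefficient bookkeeping for $g^{-M}$ does not close; once it is in place, the remaining steps you describe (smallness in $L^q$ of the leftover pieces supported in a fixed ball, the finite/small split for $q=\fz$, and the mollification argument for (iii)) can be carried through.
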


\begin{proof}
Since this proof is quite similar to that of \cite[Theorem 7.1]{hhllyy18}, we only present the key points of
this proof here. Suppose $f\in h^{p,q}_\fin(X)\subset h^{p,q}_\at(X)$ with $\|f\|_{h^{*,p}(X)}=1$. By Theorem
\ref{thm-at*}, to show (i) and (ii), it suffices to show that $\|f\|_{h^{p,q}_\fin(X)}\ls 1$. By the proof of
Proposition \ref{prop-at}, we find that
$$
f=\sum_{j\in\zz}\sum_{k\in I_j}\lz^j_k a^j_k=\sum_{j\in\zz}\sum_{k\in I_j} h^j_k=\sum_{j\in\zz}h^j
$$
both in $(\go{\bz,\gz})'$ and almost everywhere. Here and hereafter, for any $j\in\zz$ and $k\in I_j$,
$h^j$, $h^j_k$, $\lz^j_k$ and $a^j_k$ are as in the proof of Proposition \ref{prop-at}. Since
$f\in h^{p,q}_\fin(X)$, it follows that, for fixed $x_1\in X$, there exists $R\in[1,\fz)$ such that
$\supp f\subset B(x_1,R)$. We claim that there exists a positive constant $\wz c$ such that, for any
$x\notin B(x_1,16A_0^4R)$,
\begin{equation}\label{eq-f0}
f^\star_0(x)\le\wz c[\mu(B(x_1,R))]^{-1/p},
\end{equation}
where $f_0^\star$ is as in \eqref{4.2x}.
Assuming this for the moment, we now prove (i) and (ii) of this proposition. Let $j'$ be the
maximal integer such that $2^j\le\wz c[\mu(B(x_1,R))]^{-1/p}$. Define
\begin{equation}\label{eq-defhl}
h:=\sum_{j\le j'}\sum_{k\in I_j}\lz^j_ka^j_k\quad\textup{and}\quad
\ell:=\sum_{j>j'}\sum_{k\in I_j}\lz^j_ka^j_k.
\end{equation}
Then $f=h+l$. In what follows, for the sake of convenience, we elide the fact whether or not $I_j$ is finite
and simply write the summation $\sum_{k\in I_j}$ in \eqref{eq-defhl} into $\sum_{k=1}^\fz$. If $j>j'$, then
$\Omega^j\subset B(x_1,16A_0^4R)$. By the definition of $a^j_k$, we have
$\supp\ell\subset B(x_1,16A_0^4R)$. From this, $h=f-\ell$ and $\supp f\subset B(x_1,R)$, we deduce that
$\supp h\subset B(x_1,16A_0^4R)$. Moreover, by Proposition \ref{prop-ozdec}(v) and the proof of Proposition
\ref{prop-at}, we have
$$
\|h\|_{L^\fz(X)}\ls\sum_{j\le j'}\lf\|\sum_{k=1}^\fz h^j_k\r\|_{L^\fz(X)}\ls\sum_{j\le j'} 2^j
\ls[\mu(B(x_1,R))]^{-1/p}.
$$
This, combined with $\eqref{eq-doub}$ and $R\ge 1$, shows that $h$ is a harmless multiple
constant of a local $(p,\fz)$-atom, and hence also a local $(p,q)$-atom with $q\in(p,\fz)\cap[1,\fz)$.

To deal with $\ell$, we split into two cases. When $q\in(p,\fz)\cap[1,\fz)$, for any $N=(N_1,N_2)\in\nn^2$,
let
$$
\ell_N=\sum_{j=j'+1}^{N_1}\sum_{k=1}^{N_2}\lz^j_ka^j_k=\sum_{j=j'+1}^{N_1}\sum_{k=1}^{N_2}h^j_k.
$$
Then $\ell_N$ is a finite combination of local $(p,q)$-atoms and
$\sum_{j=j'+1}^{N_1}\sum_{k=1}^{N_2}|\lz^j_k|^p\ls 1$. Moreover, $\supp\ell_N\subset B(x_1,16A_0^4R)$,
which, together with the support of $\ell$ implies that $\supp(\ell-\ell_N)\subset B(x_1,16A_0^4R)$.
Using a similar argument to that used in the proof of \cite[Theorem 7.1(i)]{hhllyy18} with $f^\star$
thereby replaced by $f^\star_0$ in \eqref{4.2x}, we conclude
that, for any $\ez\in(0,\fz)$, there exists $N\in\nn^2$ such that $\|\ell-\ell_N\|_{L^q(X)}<\ez$.
Combining the above conclusions, we have $\|f\|_{h^{p,q}_\fin(X)}\ls 1$.
This proves (i) under the assumption \eqref{eq-f0}.

Now we consider the case $q=\fz$. In this case, $f\in\UC(X)\cap h^{p,\fz}_\fin(X)$. Notice that
$\|f^\star_0\|_{L^\fz(X)}\ls\|\CM(f)\|_{L^\fz(X)}\le c_0\|f\|_{L^\fz(X)}$, where $c_0$ is a positive constant
independent of $f$. Then let $j''>j'$ be the largest integer such that $2^j\le c_0\|f\|_{L^\fz(X)}$
with $j'$ as in \eqref{eq-defhl}.
Then $\ell=\sum_{j'<j\le j''}\sum_{k=1}^\fz h^j_k$ and, as in the proof of (i), $h$ is a harmless constant
multiple of a local $(p,\fz)$-atom. Since $f\in\UC(X)$, it follows that, for any
$\ez\in(0,\fz)$, there exists $\sigma\in(0,(2A_0)^{-2})$ such that $|f(x)-f(y)|\le\ez$
whenever $d(x,y)\le\sigma$. Split $\ell=\ell^\sigma_1+\ell^\sigma_2$ with
$$
\ell^\sigma_1:=\sum_{(j,k)\in G_1} h^j_k=\sum_{(j,k)\in G_1}\lz^j_k a^j_k\qquad \textup{and}\qquad
\ell^\sigma_2:=\sum_{(j,k)\in G_2} h^j_k,
$$
where
$$
G_1:=\{(j,k):\ 12A_0^3r^j_k\ge\sigma,\ j'<j\le j''\}\quad\textup{and}\quad
G_2:=\{(j,k):\ 12A_0^3r^j_k<\sigma,\ j'<j\le j''\}.
$$

For the term $\ell^\sigma_1$, notice that, for any $j'<j\le j''$,
$\Omega^j$ is bounded. Thus, by Proposition \ref{prop-ozdec}(vi), we find that $G_1$ is a finite set, which
further implies that $\ell^\sigma_1$ is a finite linear combination of local $(p,\fz)$-atoms with
$\sum_{(j,k)\in G_1}|\lz^j_k|^p\ls 1$.

For the term $\ell^\sigma_2$, notice that, for any $(j,k)\in G_2$, $r^k_j<(12A_0)^{-3}\sigma<(48A_0^5)^{-1}$.
This shows that $r^j_k\in I_{j,1}$ and the definition of $h^j_k$ coincides with that in
\cite[(4.12)]{hhllyy18}. Therefore, using a similar argument to that used in the proof of
\cite[Theorem 7.1(ii)]{hhllyy18}, we find that
$\supp\ell^\sigma_2\subset B(x_1,16A_0^4R)$ and $\|\ell^\sigma_2\|_{L^\fz(X)}\ls\ez$. Thus,
$\|f\|_{h^{p,\fz}_\fin(X)}\ls 1$, which completes the proof of (ii) under the assumption \eqref{eq-f0}.

Now we prove \eqref{eq-f0}. Let $x\notin B(x_1,16A_0^4R)$ and $\vz\in\go{\bz,\gz}$ satisfy
$\|\vz\|_{\CG(x,r,\bz,\gz)}\le 1$ for some $r\in(0,1]$. Then $r<(4A_0)^2d(x_1,x)/3$ because $R\ge 1$.
By \cite[Corollary 4.2]{ah13}, we know that there exists a function $\xi$ such that
$\mathbf{1}_{B(x_1,(2A_0)^{-4}d(x_1,x))}\le\xi\le\mathbf{1}_{B(x_1,(2A_0)^{-3}d(x_1,x))}$
and
$$
\|\xi\|_{\dot{C}^\eta(X)}:=\sup_{y\neq z}\frac{|\xi(y)-\xi(z)|}{[d(y,z)]^\eta}
\ls [d(x_1,x)]^{-\eta}.
$$
Since $\supp f\subset B(x_1,R)$, it follows that
$f\xi=f$. Let $\wz{\vz}:=\vz\xi$. By the estimation of \cite[(7.5)]{hhllyy18}, we conclude that, for any
$y\in B(x,d(x,x_1))$,
\begin{equation*}
\lf\|\wz{\vz}\r\|_{\CG(y,r,\bz,\gz)}\ls 1.
\end{equation*}
From this, it follows that, for any $y\in B(x,d(x,x_1))$,
$$
|\langle f,\vz\rangle|=\lf|\int_X f(z)\vz(z)\,d\mu(z)\r|=\lf|\int_X f(z)\xi(z)\vz(z)\,d\mu(z)\r|
=\lf|\langle f,\wz{\vz}\rangle\r|\ls f^*_0(y),
$$
which further implies that
\begin{equation*}
|\langle f,\vz\rangle|\ls\lf\{\frac 1{\mu(B(x,d(x,x_1)))}\int_{B(x,d(x,x_1))}\lf[f^*_0(y)\r]^p
\,d\mu(y)\r\}^{1/p}\ls[\mu(B(x_1,R))]^{-1/p}.
\end{equation*}
This proves \eqref{eq-f0} and hence (i) and (ii) of Proposition \ref{prop-fin}.

The proof of Proposition (iii) is similar to that of \cite[Theorem 7.1(iii)]{hhllyy18}; we omit the details.
This finishes the proof of Proposition \ref{prop-fin}.
\end{proof}

\subsection{Dual spaces of local Hardy spaces}\label{ss-dual}

As an application of Proposition \ref{prop-fin}, in this section, we consider the dual space of the
local Hardy space $h^p(X)$ for any given $p\in(\om,1]$. Indeed, it is known that the dual space of $h^1(X)$
is the \emph{space $\bmo(X)$} (the space of all functions with local bounded mean oscillations), which has been
shown in \cite{dy12}. Thus, it suffices to consider the case $p\in(\om,1)$. Let $q\in[1,\fz]$. Denote by
$L^q_\CB(X)$ the set of all measurable functions $f$ such that $f\mathbf{1}_B\in L^q(X)$ for any ball
$B\subset X$. For any $\az\in(0,\fz)$, $q\in(1,\fz)$, ball $B:=B(x_B,r_B)$ for some $x_B\in X$ and
$r_B\in(0,\fz)$ and $f\in L^q_\CB(X)$, define
$$
\FM_{\az,q}^B(f):=\begin{cases}
\displaystyle [\mu(B)]^{-\az-1/q}\lf[\int_B |f(x)-m_B(f)|^q\,d\mu(x)\r]^{1/q}
& \textup{if }r_B\in(0,1],\\
\displaystyle [\mu(B)]^{-\az-1/q}\lf[\int_B |f(x)|^q\,d\mu(x)\r]^{1/q} & \textup{if }r_B\in(1,\fz).
\end{cases}
$$
Here and hereafter, for any ball $B$ and measurable function $f\in L^1(B)$, let
$$
m_B(f):=\frac{1}{\mu(B)}\int_B f(y)\,d\mu(y).
$$
The \emph{local Campanato space $c_{\az,q}(X)$} is defined by setting
$$
c_{\az,q}(X):=\lf\{f\in L^q_\CB(X):\ \|f\|_{c_{\az,q}(X)}:=\sup_{B\ \textup{ball}} \FM_{\az,q}^B(f)<\fz\r\},
$$
where the supremum is taken over all balls $B\subset X$. Moreover, for any $f\in L^\fz_\CB(X)$ and
$\az\in(0,1)$, define
$$
\FN_{\az}^B(f):=\begin{cases}
\displaystyle \sup_{x,y\in B}\frac{|f(x)-f(y)|}{[\mu(B)]^{\az}} & \textup{if }r_B\in(0,1],\\
\displaystyle \frac{\|f\|_{L^\fz(B)}}{[\mu(B)]^{\az}}& \textup{if }r_B\in(1,\fz).
\end{cases}
$$
The \emph{local Lipschitz space $\ell_\az(X)$} is defined by setting
$$
\ell_\az(X):=\lf\{f\in L^\fz_\CB(X):\ \|f\|_{\ell_\az(X)}:=\sup_{B\ \textup{ball}} \FN_{\az}^B(f)<\fz\r\},
$$
where the supremum is taken over all balls $B\subset X$.

\begin{remark}
Suppose $X=\rn$. Then $L^q_\CB(\rn)=L^q_\loc(\rn)$ because any bounded closed subset of $\rn$ is compact.
Moreover, for any given $p\in(n/(n+1),1)$, $\ell_{1/p-1}(\rn)=\lip_{n(1/p-1)}(\rn)$ (see Proposition
\ref{prop-l=lip} below), where, for any $\az\in(0,1)$, the \emph{space $\lip_\az(\rn)$} is defined to be the
set of all $f\in L^\fz(\rn)$ such that
$$
\|f\|_{\lip_\az(\rn)}:=\|f\|_{L^\fz(\rn)}+\sup_{x\neq y}\frac{|f(x)-f(y)|}{|x-y|^\az}<\fz,
$$
which is the dual space of $h^p(\rn)$ (see \cite[Theorem 5]{goldberg79} and \cite[(7), p.\ 5]{tri92}). By
this, we know that the definition of $\ell_\az(X)$ is reasonable.
\end{remark}

\begin{proposition}\label{prop-l=lip}
For any $p\in(n/(n+1),1)$, $\ell_{1/p-1}(\rn)=\lip_{n(1/p-1)}(\rn)$ with equivalent norms.
\end{proposition}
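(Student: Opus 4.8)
The plan is to unwind both definitions in the Euclidean setting. In $\rn$ with the Lebesgue measure one has $\mu(B(x,r))=c_nr^n$, so that $[\mu(B)]^{1/p-1}\sim r^{n(1/p-1)}$ with implicit constants depending only on $n$ and $p$. Write $\az:=1/p-1$; the hypothesis $p\in(n/(n+1),1)$ is precisely $n\az\in(0,1)$, so that $\lip_{n\az}(\rn)$ is a genuine Hölder space. I would prove the two inclusions $\ell_\az(\rn)\subset\lip_{n\az}(\rn)$ and $\lip_{n\az}(\rn)\subset\ell_\az(\rn)$ together with the corresponding norm estimates, after noting that an $\ell_\az(\rn)$ function, by the small-ball part of $\FN_\az^B$, agrees almost everywhere with a Hölder continuous function for which all the pointwise suprema below make sense, and that $L^\fz_\CB(\rn)=L^\fz_\loc(\rn)$ here (as recorded in the preceding remark), so that both spaces live in the same ambient class.

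For $\ell_\az(\rn)\subset\lip_{n\az}(\rn)$: the essential point is that the $L^\fz$-bound must be extracted from the \emph{large}-radius clause of $\FN_\az^B$. Applying it to $B=B(x,2)$ for an arbitrary $x\in\rn$ gives $\|f\|_{L^\fz(B(x,2))}\le[\mu(B(x,2))]^\az\|f\|_{\ell_\az(\rn)}$, and since the right side is a fixed multiple of $\|f\|_{\ell_\az(\rn)}$ and $x$ is arbitrary, $\|f\|_{L^\fz(\rn)}\ls\|f\|_{\ell_\az(\rn)}$. This step is unavoidable: a finite homogeneous Hölder seminorm on the connected unbounded space $\rn$ does not by itself force boundedness, as $x\mapsto|x|^{n\az}$ shows. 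For the Hölder seminorm, given $x\neq y$, I split on the size of $|x-y|$: if $2|x-y|\le1$, then $B:=B(x,2|x-y|)$ has radius $\le1$ and contains $y$, so the small-ball clause gives $|f(x)-f(y)|\le[\mu(B)]^\az\|f\|_{\ell_\az(\rn)}\ls|x-y|^{n\az}\|f\|_{\ell_\az(\rn)}$; if $2|x-y|>1$, then $|f(x)-f(y)|\le2\|f\|_{L^\fz(\rn)}\ls\|f\|_{\ell_\az(\rn)}\ls|x-y|^{n\az}\|f\|_{\ell_\az(\rn)}$. Combining the two cases with the $L^\fz$-bound yields $\|f\|_{\lip_{n\az}(\rn)}\ls\|f\|_{\ell_\az(\rn)}$.

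Conversely, for $\lip_{n\az}(\rn)\subset\ell_\az(\rn)$ I would bound $\FN_\az^B(f)$ on a ball $B=B(x_B,r_B)$ in the two regimes of its definition. If $r_B\in(0,1]$, any $x,y\in B$ satisfy $|x-y|<2r_B$, so $|f(x)-f(y)|\le\|f\|_{\lip_{n\az}(\rn)}|x-y|^{n\az}\ls r_B^{n\az}\|f\|_{\lip_{n\az}(\rn)}\sim[\mu(B)]^\az\|f\|_{\lip_{n\az}(\rn)}$, i.e. $\FN_\az^B(f)\ls\|f\|_{\lip_{n\az}(\rn)}$. If $r_B\in(1,\fz)$, then $[\mu(B)]^\az=c_n^\az r_B^{n\az}\gtrsim1$, hence $\FN_\az^B(f)=\|f\|_{L^\fz(B)}/[\mu(B)]^\az\ls\|f\|_{L^\fz(\rn)}\le\|f\|_{\lip_{n\az}(\rn)}$. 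Taking the supremum over all balls gives $\|f\|_{\ell_\az(\rn)}\ls\|f\|_{\lip_{n\az}(\rn)}$, and the two estimates together are the assertion.

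I do not anticipate a real obstacle; the proof is essentially bookkeeping once the definitions are spelled out, and the only two points needing a moment's attention are the ones flagged above: locating the $L^\fz$-control inside the large-ball part of the $\ell_\az$-norm (rather than the small-ball part), and replacing an $\ell_\az(\rn)$ element by its continuous representative before invoking pointwise oscillation bounds.
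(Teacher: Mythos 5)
Your proof is correct and follows essentially the same route as the paper's: both directions are handled by the same case split on the ball radius (small balls giving the H\"older seminorm, large balls giving the $L^\fz$ control), with only cosmetic differences such as your use of the fixed ball $B(x,2)$ in place of the paper's limiting argument with $B(y_\ez,1+\ez)$ and $\ez\to 0^+$. Nothing further is needed.
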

\begin{proof}
We first suppose $f\in\lip_{n(1/p-1)}(\rn)$ and $B:=B(x,r)$ for some $x\in\rn$ and
$r\in(0,\fz)$. If $r\in(1,\fz)$, then
$\FN^B_{1/p-1}(f)\le\|f\|_{L^\fz(X)}/|B|^{1/p-1}\ls\|f\|_{\lip_{n(1/p-1)}(\rn)}$. If $r\in(0,1]$, then, for
any $x,\ y\in B$, $|x-y|<2r$ and hence $|f(x)-f(y)|\le |x-y|^{n(1/p-1)}\ls|B|^{1/p-1}$. Thus,
$\FN^B_{1/p-1}(f)\ls\|f\|_{\lip_{n(1/p-1)}(\rn)}$, which further implies that $f\in\ell_{1/p-1}(\rn)$ and
$\|f\|_{\ell_{1/p-1}(\rn)}\ls\|f\|_{\lip_{n(1/p-1)}(\rn)}$.

Conversely, suppose $f\in\ell_{1/p-1}(\rn)$ and $x,\ y\in X$. If $|x-y|<1$, then, for any
$\ez\in (0,1-|x-y|)$, $x,\ y\in B(x,|x-y|+\ez)$ and
\begin{equation*}
|f(x)-f(y)|\le\FN_{1/p-1}^{B(x,|x-y|+\ez)}(f)|B(x,|x-y|+\ez)|^{1/p-1}\ls\|f\|_{\ell_{1/p-1}(\rn)}
(|x-y|+\ez)^{n(1/p-1)}.
\end{equation*}
Taking $\ez\to 0^+$, we obtain
\begin{equation}\label{eq-con}
|f(x)-f(y)|\ls \|f\|_{\ell_{1/p-1}(\rn)}|x-y|^{n(1/p-1)}.
\end{equation}
By \eqref{eq-con}, we find that $f$ is continuous. Moreover, for any $x\in\rn$ and $\ez\in(0,\fz)$, taking
$y_\ez\in\rn$ such that $x\in B(y_\ez,1+\ez)$, we know that
$$
|f(x)|\le\FN_{1/p-1}^{B(y_\ez,1+\ez)}(f)|B(y_\ez,1+\ez)|^{1/p-1}\ls\|f\|_{\ell_{1/p-1}(\rn)}(1+\ez)^n.
$$
Letting $\ez\to 0^+$, we then have $|f(x)|\ls\|f\|_{\ell_{1/p-1}(\rn)}$. This further implies that
\begin{equation}\label{eq-lfz}
\|f\|_{L^\fz(\rn)}\ls\|f\|_{\ell_{1/p-1}(\rn)}.
\end{equation}

Finally, suppose $x,\ y\in X$ with $x\neq y$. If $|x-y|\ge 1$, then, by \eqref{eq-lfz}, we know that
$$
|f(x)-f(y)|\le 2\|f\|_{L^\fz(\rn)}\ls\|f\|_{\ell_{1/p-1}(\rn)}|x-y|^{n(1/p-1)}.
$$
This, together with \eqref{eq-con} and \eqref{eq-lfz}, further implies that $f\in\lip_{n(1/p-1)}(\rn)$ and
$\|f\|_{\lip_{n(1/p-1)}(\rn)}\ls\|f\|_{\ell_{1/p-1}(\rn)}$. This finishes the proof of Proposition
\ref{prop-l=lip}.
\end{proof}

Now we give the dual space of $h^p(X)$.

\begin{theorem}\label{thm-dual}
Let $p\in(\om,1)$ with $\omega$ and $\eta$, respectively, as in \eqref{eq-doub} and Definition
\ref{def-iati}, and let $q\in[1,\fz]$. When $q\in(1,\fz]$, then the dual space of $h^{p,q}_\at(X)$ is
$c_{1/p-1,q'}(X)$ in the following sense:
\begin{enumerate}
\item for any $f\in c_{1/p-1,q'}(X)$, the operator $L_f$, defined by setting, for any local $(p,q)$-atom $a$,
\begin{equation}\label{7.5x}
L_f(a):=\int_X f(x)a(x)\,d\mu(x),
\end{equation}
satisfies
\begin{equation}\label{eq-fa}
\lf|\int_X f(x)a(x)\,d\mu(x)\r|\le\|f\|_{c_{1/p-1,q'}(X)},
\end{equation}
which can be extended to a unique bounded linear functional $L_f$ on $h^{p,q}_\at(X)$;
\item for any $L\in(h^{p,q}_\at(X))'$, there exists a unique function $f\in c_{1/p-1,q'}(X)$ such that
$$
\|f\|_{c_{1/p-1,q'}(X)}\le C_2\|L\|_{(h^{p,q}_\at(X))'}
$$
and $L=L_f$ in $(h^{p,q}_\at(X))'$, where $L_f$ is as in \eqref{7.5x}
and $C_2$ a positive constant, independent of $L$, and $L_f$ as in (i).
\end{enumerate}
When $q=1$, then $(h^{p,1}(X))'=\ell_{1/p-1}(X)$ in the same sense as (i) and (ii) but with
$c_{1/p,q'}(X)$ replaced by $\ell_{1/p-1}(X)$.
\end{theorem}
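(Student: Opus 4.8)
The plan is to adapt the classical duality argument of Coifman and Weiss \cite[Theorem B]{cw77} to the present local setting. By Theorem \ref{thm-at*} the space $h^{p,q}_\at(X)$ does not depend on the admissible exponent $q$, and, since $h^{p,q}_\fin(X)$ is dense in $h^{p,q}_\at(X)$ and carries an equivalent (quasi-)norm there by Proposition \ref{prop-fin}, it suffices to prescribe a linear functional on local $(p,q)$-atoms, to verify that its value on each atom is bounded by $\|f\|_{c_{1/p-1,q'}(X)}$ (resp.\ by $\|f\|_{\ell_{1/p-1}(X)}$), and then to extend it by density. I first treat $q\in(1,\fz)$; the case $q=1$ runs the same way because $(L^1(B))'=L^\fz(B)$ is still available, whereas $q=\fz$ needs an extra density argument indicated below.

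For (i), fix $f\in c_{1/p-1,q'}(X)$ and let $a$ be a local $(p,q)$-atom supported on a ball $B:=B(x_0,r_0)$. If $r_0\in(0,1]$, then $\int_X a\,d\mu=0$, so $\int_X fa\,d\mu=\int_B[f-m_B(f)]a\,d\mu$, and the H\"older inequality together with $\textup{(i)}_2$ of Definition \ref{def-lat} gives
\begin{align*}
\lf|\int_X f(x)a(x)\,d\mu(x)\r|&\le\lf\|f-m_B(f)\r\|_{L^{q'}(B)}\|a\|_{L^q(X)}\\
&\le[\mu(B)]^{1/p-1+1/q'}[\mu(B)]^{1/q-1/p}\|f\|_{c_{1/p-1,q'}(X)}=\|f\|_{c_{1/p-1,q'}(X)},
\end{align*}
because $1/q+1/q'=1$. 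If $r_0\in(1,\fz)$, then no cancellation is available, but $\FM^B_{1/p-1,q'}(f)$ is defined through the full $L^{q'}(B)$-norm of $f$, so the same H\"older bound yields \eqref{eq-fa}. Hence the operator $L_f$ in \eqref{7.5x} satisfies $|L_f(a)|\le\|f\|_{c_{1/p-1,q'}(X)}$ for every local $(p,q)$-atom; since by Proposition \ref{prop-fin} the finite atomic (quasi-)norm is equivalent to $\|\cdot\|_{h^{p,q}_\at(X)}$ on the dense subspace $h^{p,q}_\fin(X)$ (resp.\ on $h^{p,q}_\fin(X)\cap\UC(X)$ when $q=\fz$), $L_f$ extends uniquely to a bounded linear functional on $h^{p,q}_\at(X)$, which is the first half of the asserted duality.

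For (ii), let $L\in(h^{p,q}_\at(X))'$. Fix $x_1\in X$ and an increasing sequence of balls $B_j:=B(x_1,R_j)$ with $R_j\in(1,\fz)$, $R_j\uparrow\fz$ and $\bigcup_{j\in\nn}B_j=X$. Since a local $(p,q)$-atom supported on a ball of radius exceeding $1$ need not satisfy any cancellation, for each $j$ every $h\in L^q(B_j)$ lies in $h^{p,q}_\at(X)$ with $\|h\|_{h^{p,q}_\at(X)}\ls[\mu(B_j)]^{1/p-1/q}\|h\|_{L^q(B_j)}$; thus $h\mapsto L(h)$ is a bounded linear functional on $L^q(B_j)$ and, by $(L^q(B_j))'=L^{q'}(B_j)$, is represented by some $g_j\in L^{q'}(B_j)$. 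Testing against $h\in L^q(B_j)\subset L^q(B_{j+1})$ forces $g_{j+1}|_{B_j}=g_j$ almost everywhere, so the $g_j$ glue to a single $f\in L^{q'}_\CB(X)$ with $L(a)=\int_X fa\,d\mu$ for every local $(p,q)$-atom $a$; by the density step above, $L=L_f$ on $h^{p,q}_\at(X)$. To control $\|f\|_{c_{1/p-1,q'}(X)}$, fix a ball $B$: when $r_B\in(1,\fz)$, testing $L$ against elements of $L^q(B)$ normalized to local atoms gives $\|f\|_{L^{q'}(B)}\le[\mu(B)]^{1/p-1/q}\|L\|_{(h^{p,q}_\at(X))'}$, whence $\FM^B_{1/p-1,q'}(f)\ls\|L\|_{(h^{p,q}_\at(X))'}$; when $r_B\in(0,1]$, testing $L$ only against those $h\in L^q(B)$ with $\int_B h\,d\mu=0$ (which normalize to local atoms with cancellation) and using that the dual of this mean-zero subspace of $L^q(B)$ is $L^{q'}(B)$ modulo constants, together with $\inf_{c\in\cc}\|f-c\|_{L^{q'}(B)}\sim\|f-m_B(f)\|_{L^{q'}(B)}$, again yields $\FM^B_{1/p-1,q'}(f)\ls\|L\|_{(h^{p,q}_\at(X))'}$. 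Taking the supremum over all balls shows $f\in c_{1/p-1,q'}(X)$ with the required bound, and uniqueness of $f$ follows since two representatives would agree on every $L^q(B_j)$.

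I expect the main obstacle to be the endpoint $q=\fz$: there $h^{p,\fz}_\fin(X)$ need not be dense in $h^{p,\fz}_\at(X)$ — only $h^{p,\fz}_\fin(X)\cap\UC(X)$ is (Proposition \ref{prop-fin}(iii)) — so the extension of $L_f$ in (i) must be performed on this smaller dense subspace, and in (ii) the representation $(L^q(B_j))'=L^{q'}(B_j)$ is no longer available; instead one represents $L$ on the uniformly continuous functions on $B_j$ by a Borel measure and shows, by testing against atoms, that it is absolutely continuous with respect to $\mu$, thereby producing the representing function $f$, which (as in Proposition \ref{prop-l=lip}) then admits a Lipschitz representative. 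A secondary but omnipresent point is to keep the local/global dichotomy $r_B\le1$ versus $r_B>1$ consistent throughout, since it is precisely the restriction of the cancellation condition to small balls that replaces the global Lipschitz space dual to $H^p_\cw(X)$ by $c_{1/p-1,q'}(X)$ (resp.\ $\ell_{1/p-1}(X)$).
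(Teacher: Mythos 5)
Your proposal is correct and follows essentially the same route as the paper: part (i) via the H\"older/cancellation estimate on atoms split according to $r_0\le 1$ or $r_0>1$ together with extension by density using Proposition \ref{prop-fin}, and part (ii) via representing $L$ on $L^q$ of an exhausting sequence of balls of radius greater than $1$ (where normalized $L^q$ functions are local atoms without cancellation), gluing the representatives, and then bounding the small-ball oscillation by testing against mean-zero normalized functions. The paper writes the argument only for $q=1$ and declares the remaining cases similar, so your general-$q$ version (including the flagged care at $q=\fz$) matches its intent.
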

\begin{proof}
We only prove Theorem \ref{thm-dual} when $q=1$. The proof of the case $q\in(1,\fz]$ is similar and we omit
the details. We first prove (i). Let $f\in\ell_{1/p-1}(X)$. By the definition of $\ell_{1/p-1}(X)$, we
find that $f$ is continuous. Suppose that $a$ is a local $(p,1)$-atom supported on $B:=B(x_0,r_0)$ for some
$x_0\in X$ and $r_0\in(0,\fz)$. If $r_0\in(0,1]$, then, by the cancellation of $a$, we obtain
\begin{align*}
\lf|\int_X f(x)a(x)\,d\mu(x)\r|&\le\int_B |f(x)-f(x_0)||a(y)|\,d\mu(y)
\le\FN_{1/p-1}^B(f)[\mu(B)]^{1/p-1}[\mu(B)]^{1-1/p}\\
&\le \|f\|_{\ell_{1/p-1}(X)}.
\end{align*}
If $r_0\in(1,\fz)$, then we have
\begin{align*}
\lf|\int_X f(x)a(x)\,d\mu(x)\r|&\le\int_B |f(x)a(x)|\,d\mu(x)
\le\FN_{1/p-1}^B(f)[\mu(B)]^{1/p-1}[\mu(B)]^{1-1/p}\\
&\le \|f\|_{\ell_{1/p-1}(X)}.
\end{align*}
This proves \eqref{eq-fa} for $q=1$. Thus, define $L_f$ as a linear functional on $h^{p,1}_\fin(X)$ by
setting, for any $\vz\in h^{p,1}_\fin(X)$,
\begin{equation}\label{eq-deflf}
\langle L_f,\vz\rangle=\sum_{j=1}^N\lz_j\int_X f(x)a(x)\,d\mu(x),
\end{equation}
when $\vz$ has an atomic decomposition $\vz=\sum_{j=1}^N \lz_ja_j$ for some $N\in\nn$,
$\{\lz_j\}_{j=1}^N\subset\cc$ and local $(p,1)$-atoms $\{a_j\}_{j=1}^N$. Moreover, $L_f$ is well defined
because all the summations appearing in \eqref{eq-deflf} are finite. Then, by \eqref{eq-fa} and
$p<1$, we have
\begin{equation*}
|\langle L_f,\vz\rangle|\le\sum_{j=1}^N|\lz_j|\lf|\int_X f(x)a_j(x)\,d\mu(x)\r|
\le\|f\|_{\ell_{1/p-1}(X)}\sum_{j=1}^N|\lz_j|\le\|f\|_{\ell_{1/p-1}(X)}\lf(\sum_{j=1}^N|\lz_j|^p\r)^{1/p}.
\end{equation*}
Taking infimum over all finite atomic decompositions of $\vz$ as above and applying Proposition
\ref{prop-fin}(i), we conclude that
$$
|\langle L_f,\vz\rangle|\le\|f\|_{\ell_{1/p-1}(X)}\|\vz\|_{h^{p,1}_\fin(X)}
\ls\|f\|_{\ell_{1/p-1}(X)}\|\vz\|_{h^{p,1}_\at(X)}.
$$
Since $h^{p,1}_\fin(X)$ is dense in $h^{p,1}_\at(X)$, it then follows that
$L_f$ can be uniquely extended to a bounded linear function on $h^{p,1}_\at(X)$. This proves (i).

Now we prove (ii). Fix a ball $B:=B(x_0,r_0)$ for some $x_0\in X$ and $r_0\in(1,\fz)$. For any
$\vz\in L^1(B)$ with $\|\vz\|_{L^1(B)}>0$, $\vz[\mu(B)]^{1-1/p}/\|\vz\|_{L^1(B)}$ is a local
$(p,1)$-atom. Therefore,
$$
|\langle L,\vz\rangle|=[\mu(B)]^{1/p-1}\|\vz\|_{L^1(B)}
\lf|\lf<L,\vz[\mu(B)]^{1-1/p}/\|\vz\|_{L^1(X)}\r>\r|\le\|L\|_{(h^{p,1}_\at(X))'}[\mu(B)]^{1/p-1}
\|\vz\|_{L^1(B)}.
$$
Thus, $L$ is also a bounded linear functional on $L^1(B)$ and hence there exists a unique
$l^{(B)}\in L^\fz(B)$ such that $\|l^{(B)}\|_{L^\fz(B)}\le \|L\|_{(h^{p,1}_\at(X))'}[\mu(B)]^{1/p-1}$ and,
for any $g\in L^1(X)$ with $\supp g\subset B$,
$$
\langle L,g\rangle=\int_X l^{(B)}(x)\mathbf{1}_B(x)g(x)\,d\mu(x).
$$
Fix $x_0\in X$ and, for any $n\in\nn$, let $B_n:=B(x_0,n+1)$ and $l_n:=l^{(B_n)}$ which is defined as above.
Then, by the uniqueness of $l^{(B)}$, we find that, for any $m,\ n\in\nn$ with $m\ge n$, $l_m=l_n$ in $B_n$.
Thus, it is reasonable to define $l$ as $l=\lim_{n\to\fz}l_n$. Then $l$ has the following properties:
\begin{enumerate}
\item for any $g\in L^1(X)$ with bounded support, $\int_X l(y)g(y)\,d\mu(y)=\langle L,g\rangle$;
\item for any ball $B:=B(x,r)$ for some $x\in X$ and $r\in(1,\fz)$,
$$
\lf\|l\mathbf{1}_B\r\|_{L^\fz(B)}\le\|L\|_{(h^{p,1}_\at(X))'}[\mu(B)]^{1/p-1}.
$$
\end{enumerate}

Now we prove $\|l\|_{\ell_{1/p-1}(X)}\ls\|L\|_{(h^{p,1}_\at(X))'}$. Let $B:=B(x,r)$ be a ball for some
$x\in X$ and $r\in(0,\fz)$. If $r\in(1,\fz)$, then, by the above (ii), we have
$\FN_{1/p-1}^B(l)\le\|L\|_{(h^{p,1}_\at(X))'}$. If $r\in(0,1]$, then we claim that, for any $\vz\in L^1(X)$
supported on $B$ with $\|\vz\|_{L^1(B)}=1$, $\psi:=[\vz-m_B(\vz)]\mathbf{1}_B[\mu(B)]^{1-1/p}/2$ is a local
$(p,1)$-atom. Indeed, it is obvious that $\supp\psi\subset B$. Moreover, we have
$$
\|\psi\|_{L^1(B)}\le[\mu(B)]^{1-1/p}\int_X|\vz(y)|\,d\mu(y)+\mu(B)|m_B(\vz)|
\le 2[\mu(B)]^{1-1/p}
$$
and, by the definition of $\vz$, we have $\int_X\vz(y)\,d\mu(y)=0$. Thus, $\psi$ is a local
$(p,1)$-atom. From this and the above (ii), we deduce that
\begin{align*}
\lf|\int_X[l(y)-m_B(l)]\vz(y)\,d\mu(y)\r|&=\lf|\int_B[l(y)-m_B(l)][\vz(y)-m_B(\vz)]\,d\mu(y)\r|\\
&=\lf|\int_B l(y)[\vz(y)-m_B(\vz)]\,d\mu(y)\r|\\
&=2[\mu(B)]^{1/p-1}\lf|\int_B l(y)\psi(y)\,d\mu(y)\r|\\
&=2[\mu(B)]^{1/p-1}|\langle L,\psi\rangle|\ls [\mu(B)]^{1/p-1}\|L\|_{(h^{p,1}_\at(X))'}.
\end{align*}
Thus, $\|l-m_B(l)\|_{L^\fz(B)}\ls [\mu(B)]^{1/p-1}\|L\|_{(h^{p,1}_\at(X))'}$, which further implies that
$\FN^B_{1/p-1}(l)\ls \|L\|_{h^{p,1}_\at(X)}$. Combining this with the previous proved case when
$r\in(1,\fz)$, we obtain $l\in\ell_{1/p-1}(X)$ and $\|l\|_{\ell_{1/p-1}(X)}\ls \|L\|_{h^{p,1}_\at(X)}$. This
finishes the proof of Theorem \ref{thm-dual}.
\end{proof}

From Theorem \ref{thm-dual}, we immediately deduce the following conclusion and we omit the details.

\begin{corollary}\label{cor-lip}
Let $p\in(\om,1)$ with $\omega$ and $\eta$, respectively, as in \eqref{eq-doub} and Definition
\ref{def-iati}, and let $q\in [1,\fz)$. Then $c_{1/p-1,q}(X)=\ell_{1/p-1}(X)$ in the sense of equivalent norms.
\end{corollary}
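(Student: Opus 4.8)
The plan is to deduce this directly from Theorem~\ref{thm-dual}, combined with the fact that the atomic local Hardy space $h^{p,q}_{\at}(X)$ is independent of the second index. First I would fix $q\in[1,\fz)$ and let $q'\in(1,\fz]$ denote its conjugate index. Since $p\in(\om,1)\subset(0,1)$, every $s\in[1,\fz]$ satisfies $s\in(p,\fz]\cap[1,\fz]$, so Theorem~\ref{thm-at*} applies with any such $s$ and gives $h^{*,p}(X)=h^{p,s}_{\at}(X)$ with equivalent quasi-norms, all as subspaces of $(\go{\bz,\gz})'$. In particular $h^{p,q'}_{\at}(X)=h^{p,1}_{\at}(X)$ with equivalent quasi-norms, and hence their continuous dual spaces coincide, with equivalent norms.

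Next I would apply Theorem~\ref{thm-dual} twice. Applied with the index $q'\in(1,\fz]$ it identifies $(h^{p,q'}_{\at}(X))'$ with $c_{1/p-1,(q')'}(X)=c_{1/p-1,q}(X)$, the identification being realized by the map $f\mapsto L_f$ with $L_f(a)=\int_X f(x)a(x)\,d\mu(x)$ for local $(p,q')$-atoms $a$; applied with the index $1$ it identifies $(h^{p,1}_{\at}(X))'$ with $\ell_{1/p-1}(X)$, again via $g\mapsto L_g$ with $L_g(a)=\int_X g(x)a(x)\,d\mu(x)$ for local $(p,1)$-atoms $a$. Combining these two identifications with the equality of the two dual spaces from the previous paragraph yields a norm equivalence between $c_{1/p-1,q}(X)$ and $\ell_{1/p-1}(X)$.

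It then remains to check that this abstract isomorphism is the identity at the level of functions, i.e.\ that a given $f\in c_{1/p-1,q}(X)$ and the associated $g\in\ell_{1/p-1}(X)$ satisfying $L_f=L_g$ must agree $\mu$-almost everywhere; this is the only point that requires a little care. From $\int_X(f-g)a\,d\mu=0$ for all local $(p,1)$-atoms $a$, one argues exactly as in the proof of Theorem~\ref{thm-dual}(ii): testing against atoms of the form $[\vz-m_B(\vz)]\mathbf 1_B$ forces $f-g$ to be constant on every ball of radius at most $1$, hence constant on $X$, while testing against atoms of the form $\vz\mathbf 1_B$ on balls $B$ of radius larger than $1$ (where no cancellation is required) forces $f-g=0$ on every such ball; since $X$ is covered by balls of radius larger than $1$ (arbitrarily large balls exist when $\mu(X)=\fz$, and $B(x_0,\diam X+1)=X$ when $\mu(X)<\fz$), we conclude $f=g$ $\mu$-a.e. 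Therefore $c_{1/p-1,q}(X)$ and $\ell_{1/p-1}(X)$ coincide as spaces of functions with equivalent norms, which is the assertion. The genuine content is confined to this last identification of the concrete pairings; the rest is bookkeeping with the $q$-independence of $h^{p,q}_{\at}(X)$ and with Theorem~\ref{thm-dual}.
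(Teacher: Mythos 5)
Your proposal is correct and follows exactly the route the paper intends: the paper deduces the corollary from Theorem \ref{thm-dual} together with the $q$-independence of $h^{p,q}_{\at}(X)$ (Theorem \ref{thm-at*}) and omits precisely the details you supply, namely that the two dual identifications are realized by the same integral pairing so the representing functions coincide a.e. Two minor remarks: since $f$ is only locally $L^q$, the test functions $\vz$ should be taken bounded (then $\vz\mathbf 1_B$, suitably normalized, is simultaneously a local $(p,q')$-atom and a local $(p,1)$-atom, so both integral representations of the functional apply), and your intermediate inference that constancy of $f-g$ on all balls of radius at most $1$ forces constancy on $X$ is not justified on a possibly ``disconnected'' $X$ --- but it is also unnecessary, because the test on balls of radius larger than $1$, which cover $X$, already yields $f=g$ $\mu$-a.e.
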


The following observation is of independent interest.

\begin{remark}
Let $p\in(\om,1)$ with $\omega$ and $\eta$, respectively, as in \eqref{eq-doub} and Definition
\ref{def-iati}, and let $q\in [1,\fz]$. By Theorem \ref{thm-dual}, we find that, for any
$f\in c_{1/p-1,q'}(X)$ and $g\in h^{p,q}_\at(X)$ [if $q=1$, we then use $\ell_{1/p-1}(X)$ to replace
$c_{1/p-1,q'}(X)$],
\begin{equation}\label{eq-ex}
\langle L_f,g\rangle=\sum_{j=1}^\fz\lz_j\int_X f(x)a_j(x)\,d\mu(x),
\end{equation}
where $L_f$ is as in Theorem \ref{thm-dual} and $g$ has an atomic decomposition
$g=\sum_{j=1}^\fz\lz_ja_j$ with $\{\lz_j\}_{j=1}^\fz\subset\cc$ satisfying $\sum_{j=1}^\fz|\lz_j|^p<\fz$ and
local $(p,q)$-atoms $\{a_j\}_{j=1}^\fz$. The operator $L_f$ is well defined on $h^{p,q}_\at(X)$ because of
the uniqueness of the extension. Moreover, it is easy to show that
$$
|\langle L_f,g\rangle|\le\|f\|_{c_{1/p-1,q'}(X)}\|g\|_{h^{p,q}_\at(X)}.
$$
\end{remark}

\paragraph{Acknowledgments.} Dachun Yang would like to thank Professors Galia Dafni and Hong Yue for them to
provide him the reference \cite{dmy16}.

\bigskip

\noindent Ziyi He, Dachun Yang (Corresponding author) and Wen Yuan

\medskip

\noindent Laboratory of Mathematics and Complex Systems (Ministry of Education of China),
School of Mathematical Sciences, Beijing Normal University, Beijing 100875, People's Republic of China

\smallskip

\noindent{\it E-mails:} \texttt{ziyihe@mail.bnu.edu.cn} (Z. He)

\noindent\phantom{{\it E-mails:} }\texttt{dcyang@bnu.edu.cn} (D. Yang)

\noindent\phantom{{\it E-mails:} }\texttt{wenyuan@bnu.edu.cn} (W. Yuan)

\end{document}